\DeclareMathOperator*{\tr}{tr}
\newtheoremstyle{mystyle}
  {}
  {}
  {\normalfont}
  { }
  {\bfseries}
  {}
  {10pt}
  { }
\theoremstyle{mystyle}
\newtheorem{thm}{Theorem}[section]
\newtheorem{lemma}{Lemma}[section]
\newtheorem{col}{Corollary}[section]
\newtheorem{prop}{Proposition}[section]
\newtheorem{rmk}{Remark}[section]
\newcommand{\argsup}{\mathop{\rm argsup}\limits}
\numberwithin{equation}{section} 
\title[Adaptive inference for jump diffusion processes]{Quasi-Likelihood Ratio Test for Jump-Diffusion Processes Based on Adaptive Maximum Likelihood Inference}
\author[N Nishikawa]{Hiromasa Nishikawa $^{1}$}
\author[T Kawai]{Tetsuya Kawai $^{2}$}
\author[M Uchida]{Masayuki Uchida $^{1,3}$}
\address{ $^{1}$ Graduate School of Engineering Science, Osaka University}
\address{$^{2}$Toyota Motor Corporation, Toyota, Japan}
\address{$^{3}$Center for Mathematical Modeling and Data Science (MMDS), Osaka University and JST CREST}
\begin{document}

\begin{abstract}
In this paper, we consider parameter estimation and quasi-likelihood ratio tests for multidimensional jump-diffusion processes defined by stochastic differential equations. In general, simultaneous estimation faces challenges such as an increase of computational time for optimization and instability of estimation accuracy as the dimensionality of parameters grows.
To address these issues, we propose an adaptive quasi-log likelihood function based on the joint quasi-log likelihood function introduced by \citet{Shimizu-Yoshida_JP,Shimizu-Yoshida} and \citet{Ogihara-Yoshida}. We then show that the resulting adaptive estimators possess consistency and asymptotic normality. Furthermore, we extend the joint quasi-log likelihood function proposed by \citet{Shimizu-Yoshida_JP,Shimizu-Yoshida} and \citet{Ogihara-Yoshida} and construct a test statistic using the proposed adaptive estimators.
We prove that the proposed test statistic converges in distribution to a $\chi^2$-distribution under the null hypothesis and that the associated test is consistent. Finally, we conduct numerical simulations using a specific jump-diffusion process model to examine the asymptotic behavior of the proposed adaptive estimators and test statistics.
\end{abstract}
\keywords{Adaptive estimation, Asymptotic theory; Discrete time observation; Quasi-log likelihood estimation; Stochastic differential equation; Jump-diffusion model, Consistent test; Quasi-likelihood ratio test}

\maketitle


\section{Introduction}
Given a filtered probability space $(\Omega,\mathcal{F},(\mathcal{F}_t)_{t\geq 0},P)$, let $X=(X_t)_{t\geq 0}$ be a $d$-dimensional c\'{a}dl\'{a}g $(\mathcal{F}_t)$-adapted process satisfying the following stochastic differential equation:
\begin{equation}\label{jump-diff:model}
\begin{cases}
dX_t = b(X_{t-}, \beta) dt + a(X_{t-}, \alpha)dW_t + \displaystyle\int_{E}c(X_{t-}, z, \beta)p(dt,dz), \quad t \in [0,T], \\
X_0 = x_0,
\end{cases}
\end{equation}
where $x_0$ is a $d$-dimensional random variable, $W_{t}$ is an $s$-dimensional standard $(\mathcal{F}_t)$-Brownian motion,
$E=\mathbb{R}^d\setminus\{0\}$, and $p(dt,dz)$ is a Poisson random measure on $\mathbb{R}_+\times E$ with compensator $q^{\beta}(dt,dz)=\mathbb{E}_\beta[p(dt,dz)]$. 
We set $q^{\beta}(dt,dz)=f_\beta(z)dzdt$ and $f_\beta(z)=\lambda(\beta)F_\beta(z)$, where $\lambda(\beta)$ is a positive function of $\beta$ and $F_\beta(z)$ is a probability density function.
We assume for any $t\ge0$, $\sigma(W_u-W_t; \ u\ge t)$, $\mathcal{F}_t$ and $\sigma\left(p(A\cap((t,\infty)\times E)\right); \ A\subset\mathbb{R}^d\times E \text{ is a Borel set})$ are independent.
Let $\alpha\in\Theta_{\alpha}\subset\mathbb{R}^{p},\ \beta \in \Theta_{\beta} \subset \mathbb{R}^{q}$, $\theta =(\alpha,\beta)$, $\Theta := \Theta_{\alpha} \times \Theta_{\beta}$ be a compact and convex parameter space. Moreover, $a:{\mathbb{R}}^{d} \times {\Theta}_{\alpha} \to {\mathbb{R}}^{d}\otimes {\mathbb{R}}^{s}$, $b:{ \mathbb{R}}^{d} \times {\Theta}_{\beta} \to {\mathbb{R}}^{d}$
and $c:{ \mathbb{R}}^{d} \times E\times{\Theta}_{\beta} \to {\mathbb{R}}^{d}$ are known except for the parameter $\theta$, and the true parameter $\theta_0=(\alpha_0,\beta_0)$ belongs to $\mathrm{Int}(\Theta)$. The data are discrete observations $(X_{t^n_i})_{0\le i \le n}$, where $t^n_i = i h_n$ for $i=0,1,\ldots,n$, and the discretization step $h_n$ satisfies $h_n \to 0$ and $nh_n=T \to \infty\ \mathrm{as}\ n\to\infty$. Moreover, we will assume $n h_n^{1+\delta} \to 0$ for some $\delta\in(0,1)$ later.
In this setting, we consider the problem of estimating the unknown parameters $\theta=(\alpha,\beta)$ and the hypothesis testing problem in an ergodic jump-diffusion process model based on discrete observation data.
Jump-diffusion process models are used in various applications, including the modeling of option prices in financial markets. Therefore, statistical analysis of jump-diffusion process models is important.
As a prior study on the simultaneous estimation of parameters in an ergodic jump-diffusion process model based on discrete observations, \citet{Shimizu-Yoshida_JP,Shimizu-Yoshida} established the consistency and asymptotic normality of the quasi-maximum likelihood estimator under the conditions $h_n\to 0$, $nh_n\to \infty$ and $nh_n^2\to 0$.
In quasi-likelihood analysis for jump-diffusion process models, it is necessary to allocate the increments of the data to either the continuous part or the jump part of the quasi-likelihood function, which requires distinguishing whether an increment includes a jump or not. To address this issue, \citet{Shimizu-Yoshida_JP,Shimizu-Yoshida} proposed a threshold-based filtering method, enabling asymptotic identification of jumps. Moreover, \citet{Ogihara-Yoshida} relaxed the regularity conditions imposed by \citet{Shimizu-Yoshida_JP,Shimizu-Yoshida} and demonstrated the convergence of moments. Additionally, \citet{Masuda} provided conditions under which multidimensional jump-diffusion process models satisfy the ergodicity property, allowing the estimation of parameters for L\'{e}vy-Ornstein-Uhlenbeck (L\'{e}vy-OU) processes. \citet{Ogihara-Uehara} established the local asymptotic normality of ergodic jump-diffusion process models based on discrete observation data, thereby proving the asymptotic efficiency of the quasi-maximum likelihood estimator proposed by \citet{Shimizu-Yoshida_JP,Shimizu-Yoshida} and \citet{Ogihara-Yoshida}.
It is generally known that joint estimation suffers from computational inefficiencies, such as increased optimization time and instability of estimation accuracy, as the dimensionality of the parameters increases. To address this issue, we propose an adaptive quasi-log likelihood function based on the simultaneous quasi-log likelihood function introduced in \citet{Ogihara-Yoshida}.  Optimizing the diffusion parameter $\alpha$ separately from the drift and jump parameters $\beta$, we aim to improve the computational efficiency and stability. In particular, whereas \citet{Shimizu-Yoshida_JP,Shimizu-Yoshida} and \citet{Ogihara-Yoshida} used a single threshold for distinguishing the continuous and jump components, our proposed method utilizes three thresholds for estimation, leading to improved estimation accuracy. In this paper, we demonstrate the consistency and asymptotic normality of the adaptive quasi-maximum likelihood estimator derived from the proposed adaptive quasi-log likelihood function.
Other prior studies on jump-diffusion process models include \citet{Mancini}, who proposed a consistent estimator for the volatility parameter in non-ergodic jump-diffusion process models, and \citet{Gloter}, who, by focusing on the estimation of the drift parameter in ergodic jump-diffusion process models, relaxed the balance conditions related to the sampling frequency $h_n$. Moreover, \citet{Inatsugu-Yoshida} proposed a highly accurate estimation method for the diffusion term in non-ergodic jump-diffusion process models using a Global Jump Filter.

Next, as an application of the constructed estimator, we consider a quasi-likelihood ratio test for the unknown parameter $\theta=(\alpha,\beta)$ in an ergodic jump-diffusion process model. In the quasi-likelihood ratio test based on the joint quasi-log likelihood function of \citet{Shimizu-Yoshida_JP,Shimizu-Yoshida} and \citet{Ogihara-Yoshida}, the simultaneous estimator is used in the construction of the test statistic. However, this approach suffers from issues such as an increase of optimization time and instability of estimation accuracy.
To address these issues, in this paper, we construct a quasi-likelihood ratio test statistic using a modified simultaneous quasi-log likelihood function, in which the number of threshold parameters used in the construction of the simultaneous quasi-log likelihood function in \citet{Ogihara-Yoshida} is expanded from one to two, along with the proposed adaptive quasi-maximum likelihood estimator. We also discuss its asymptotic properties. This approach improves the computational efficiency and stabilizes numerical calculations.
In particular, for the test statistic, we introduce five thresholds in the construction of the adaptive estimator and the quasi-likelihood ratio, allowing for further improvements in testing accuracy.
Furthermore, studies on adaptive testing methods have been conducted not only for jump-diffusion process models but also for other stochastic processes. For example, adaptive testing methods for ergodic diffusion process models have been discussed in \citet{Kitagawa-Uchida}, \citet{Nakakita-Uchida}, and \citet{Kawai-Uchida}.

This paper is organized as follows.
In Section \ref{sec4.2:m}, we provide definitions of notation and assumptions.
In Section \ref{sec4.3:m}, we propose joint and adaptive quasi-log likelihood functions based on \citet{Shimizu-Yoshida_JP,Shimizu-Yoshida}, \citet{Ogihara-Yoshida}, and discuss the asymptotic properties of the estimator derived from it.
In Section \ref{sec:test}, we construct a test statistic using the results of Section \ref{sec4.3:m} and describe its asymptotic properties.
In Section \ref{sec:sim}, we conduct numerical simulations for the estimators and test statistics proposed in Sections \ref{sec4.3:m} and \ref{sec:test}, using a specific L\'{e}vy-OU process model.
Finally, in Section \ref{sec:proof}, we provide proofs of the theorems established in this paper.


\section{Notation and assumptions}\label{sec4.2:m}
Let us introduce some notation.
\begin{enumerate}
\item[1.] We set the true value of $\lambda(\beta)$ by $\lambda_0=\lambda(\beta_0)=\int_E f_{\beta_0}(z)dz$.

\item[2.] For a vector $\kappa=(\kappa_1,\ldots,\kappa_l)^\top$, $\partial_{\kappa_i}:=\frac{\partial}{\partial {\kappa_i}}$,\  $\partial^2_{\kappa_i}:=\frac{\partial^2}{\partial {\kappa_i^2}}$, $\partial^2_{\kappa_i\kappa_j}:=\frac{\partial^2}{\partial {\kappa_i}\partial {\kappa_j}}$, $\partial_\kappa:=(\partial_{\kappa_1},\ldots,\partial_{\kappa_l})^\top$ and $\partial_\kappa^2:=(\partial^2_{\kappa_i\kappa_j})_{1\le i,j\le l}$, where $\top$ stands for the transpose.

\item[3.] For a function $g$ defined on $\mathbb{R}^d\times\Theta$, $g_{i-1}(\theta)$ denotes the value $g(X_{t_{i-1}^n},\theta)$.
If $g$ is a vector or a matrix function, then we express its components with upper index. For example, if $g$ is a vector, then its $k$-component is $g^{(k)}$, and if $g$ is a matrix, then its $(k,l)$-component is $g^{(k,l)}$.

\item[4.] Let $\mathcal{F}_{i-1}^n:=\mathcal{F}_{t_{i-1}^n}, \quad\Delta X_i^n:=X_{t_i^n}-X_{t_{i-1}^n},\quad\Delta X_t:=X_t-X_{t-}, \quad\bar{X}_{i,n}(\beta):=\Delta X_i^n-h_nb_{i-1}(\beta)$, $\quad S(x,\alpha):=a(x,\alpha)a(x,\alpha)^\top$.

\item[5.] For a matrix $A$, we define that $|A|=\sqrt{\mathrm{tr}(AA^\top)}$.

\item[6.] We often use the notation $C$ (resp. $C_k$) as a general positive constant (resp. depending on the index $k$), therefore we sometimes use the same character for different constants from line to line without specially mentioning.

\item[7.] Let $u_n$ be a real valued sequence. $R:\Theta\times\mathbb{R}\times\mathbb{R}^d\to\mathbb{R}$ denotes a function for which there exists a constant $C>0$ such that for any
$(\theta,x,n)\in\Theta\times\mathbb{R}^d\times\mathbb{N}$,
\begin{align*}
|R(\theta,u_n,x)|\le u_nC(1+|x|)^C,
\end{align*}
and we set $\tilde{R}(\theta,u_n,x):=1-R(\theta,u_n,x)$.

\item[8.] If we write $X$, then it means the solution to \eqref{jump-diff:model} with $\theta=\theta_0$. 

\item[9.]  The symbols $\overset{P}{\longrightarrow}$ and $\overset{d}{\longrightarrow}$ stand for
convergence in probability and convergence in distribution, respectively.

\end{enumerate}

We make the following assumptions to obtain main results.

\begin{enumerate}
\item[{[\bf A1]}] There exist a constant $C>0$ and a function $\zeta(z)$ of polynomial growth in $z$ such that for all $x,y \in\mathbb{R}^d$,
\begin{align*}
&|a(x,\alpha_0)-a(y,\alpha_0)|+|b(x,\beta_0)-b(y,\beta_0)|\le C|x-y|,\\
&|c(x,z,\beta_0)-c(x,z,\beta_0)|\le\zeta(z)|x-y|,\quad |c(x,z,\beta_0)|\le\zeta(z)(1+|x|).
\end{align*}

\item[{[\bf A2]}] The jump diffusion process $X$ is ergodic with its invariant measure $\pi(dx)$: For any $\pi$-integrable function
$f$, it holds that
\begin{align*}
\frac{1}{T}\int_{0}^{T}{f(X_t)dt}\overset{P}{\longrightarrow}\int f(x)\pi(dx)
\end{align*}
as $T\to\infty$. Moreover, we assume the stationarity of $X$ for simplicity.

\item[{[\bf A3]}] For any $p\ge 1$, $\displaystyle  \sup_{t\ge0} E_{\theta}[|X_{t}|^p]<\infty$.

\item[{[\bf A4]}] For each $\alpha$ and $\beta$, the derivatives $\partial_{x}^k a(x,\alpha)$ and $\partial_{x}^k b(x,\beta)$ $(k=0,1,2,3,4)$ exist on $\mathbb{R}^d$ and they are continuous in
$x$. Moreover, for each fixed $x$, the derivatives
$\partial_{\alpha}^l a(x,\alpha)$ and $\partial_{\beta}^l b(x,\beta)$ $(l=1,2,3)$ exist, which are continuous on
$\Theta_\alpha$ and $\Theta_\beta$, respectively. Furthermore, $a$, $b$ and their derivatives are of at most polynomial growth in $x$ uniformly in $\theta$: 
\begin{align*}
|\partial_{x}^k a(x,\alpha)|,\ |\partial_{x}^k b(x,\beta)|,\ |\partial_{\alpha}^l a(x,\alpha)|,\ |\partial_{\beta}^l b(x,\beta)|\le C(1+|x|)^C\quad(x\in\mathbb{R}^d,\ \theta\in\Theta),
\end{align*}
for $k=0,1,2,3,4$ and $l=1,2,3$.

\item[{[\bf A5]}] There exists constants $r>0$ and $K>0$ such that $f_{\beta_0}(z)\boldsymbol{1}_{\{|z|\le r\}}\le K|z|^{1-d}$. Moreover, for any $p\ge1$, 
\begin{align*}
\sup_{\beta\in\Theta_\beta}\int_E |z|^p f_\beta(z)dz<\infty.
\end{align*}

\item[{[\bf A6]}] For each $(\beta,x)\in\Theta_\beta\times\mathbb{R}^d$, the mapping $z\mapsto y=c(x,z,\beta)$ is an injection from $E$ to $E$ and has an inverse
$z=c^{-1}(x,y,\beta)$ from the image of $c$ onto $E$, which is differentiable with respect to $y$. Furthermore, the set $B:=\mathrm{Im}(c(x,\cdot,\beta))=\{y\in E;\  ^\exists x\in E \text{ s.t. } y=c(x,z,\beta)\}\in\mathbb{R}^d$ is open and independent of
$(x,\beta)$, and the set $\{(x,y)\in\mathbb{R}^d\times E; x\in\mathbb{R}^d,\ y\in B\}$ is a Borel set.
Moreover, we set
\begin{align*}
\Psi_\beta(y,x)=f_{\beta}(c^{-1}(x,y,\beta))J(x,y,\beta)\quad(x\in\mathbb{R}^d,\ y\in B,\ \beta\in\Theta_\beta),
\end{align*}
where $J(x,y,\beta)$ is the absolute value of the Jacobian of $c^{-1}(x,y,\beta)$, and the set $A=\{y\in B; \Psi_\beta(y,x)\neq0\}$ does not depend on $(x,\beta)$.

\item[{[\bf A7]}] There exist positive constants $c_0>0$ and $r_1>0$ such that 
\begin{align*}
|y|\ge c_0|c^{-1}(x,y,\beta_0)|,\quad\left(x\in \mathbb{R}^d,\ y\in B\cap\{y; |y|\le r_1\}\right).
\end{align*}

\item[{[\bf A8]}]$\displaystyle \inf_{x, \alpha} \det(S(x, \alpha ))>0$.

\item[{[\bf A9]}] 
$\det S(x,\alpha) =\det S(x, \alpha_{0} ) \ \text{for} \ \text{a.s.}\  \text{all} \ x \Longrightarrow \alpha = \alpha_{0}.$

$b(x,\beta) = b(x, \beta_{0} )\  \text{and} \ \Psi_\beta(x,y)=\Psi_{\beta_0}(x,y) \ \text{for} \ \text{a.s.}\  \text{all} \ (x,y) \Longrightarrow \beta = \beta_{0}.$

\item[{[\bf A10]}] The function $\Psi_\beta(y,x)$ is differentiable with respect to $x$ and $y$, and three times 
continuously differentiable with respect to $\beta$.
Moreover, for $x\in\mathbb{R}^d$,
\begin{align*}
&\int_{B}\sup_{\beta\in\Theta_\beta}\left|\partial_{{\beta}}^k\Psi_{\beta}(y,x)\right|dy\le C(1+|x|)^C\quad(k=0,1,2,3),\\
&\int_{B}\sup_{\beta\in\Theta_\beta}\left|\partial_x\partial_{{\beta}}^l\Psi_{\beta}(y,x)\right|dy\le C(1+|x|)^C\quad(l=0,1,2),\\
&\int_{A}\sup_{\beta\in\Theta_\beta}\left|\partial_{{\beta}}^k\log\Psi_{\beta}(y,x)\right|\Psi_{\beta_0}(y,x)dy\le C(1+|x|)^C\quad(k=0,1,2,3).
\end{align*}

\item[{[\bf A11]}] There exists some $dy$-integrable function $L(y,\theta)$, which does not depend on $x$, such that
\[\left|\partial_x\left(\partial_\theta^l\log\Psi_\beta(y,x)\varphi_n(x,y)\Psi_{\beta_0}(y,x)\right)\right|\leq L(y,\theta),\ (x\in\mathbb{R}^d,\ y\in B,\ \theta\in\Theta),\]
for $l=0,1,2$.
\end{enumerate}

Let $I(\theta;\theta_0)$ be a $(p+q)\times (p+q)$-matrix such that 
\begin{align*}
I(\theta;\theta_0)=\begin{pmatrix}
I_a(\alpha;\alpha_0)&O\\
O&I_{b,c}(\theta;\theta_0)
\end{pmatrix},
\end{align*}
where
\begin{align*}
I_a^{(i,j)}(\alpha;\alpha_0)&=\frac{1}{2}\int \left(\mathrm{tr}\left[\partial^2_{\alpha_i\alpha_j}S^{-1}(x,\alpha) S(x,\alpha_0)\right]+\partial^2_{\alpha_i\alpha_j}\log\det S(x,\alpha) \right)\pi(dx)\quad(1\le i,j\le p),\\
I_{b,c}^{(i,j)}(\theta;\theta_0)&=I_{b}^{(i,j)}(\theta;\theta_0)+I_{c}^{(i,j)}(\beta;\beta_0)\quad(1\le i,j\le q),\notag\\
I_b^{(i,j)}(\theta;\theta_0)&=\int\left(\left(\partial^2_{\beta_i\beta_j}b(x,\beta)\right)^\top S^{-1}(x,\alpha)\left(b(x,\beta)-b(x,\beta_0)\right)+\left(\partial_{{\beta_i}}b(x,\beta)\right)^\top S^{-1}(x,\alpha)\partial_{{\beta_j}}b (x,\beta)\right)\pi(dx),\\
I_c^{(i,j)}(\beta;\beta_0)&=\iint_{A} \left(\partial^2_{\beta_i\beta_j}\Psi_\beta(y,x)-\left(\partial^2_{\beta_i\beta_j}\log\Psi_\beta(y,x)\right)\Psi_{\beta_0}(y,x)\right)dy\pi(dx).
\end{align*}
In particular, we have
\begin{align}
I_a^{(i,j)}(\alpha_0;\alpha_0)&=\frac{1}{2}\int \mathrm{tr}\left[S^{-1}\left(\partial_{{\alpha_i}}S\right) S^{-1}\left(\partial_{{\alpha_j}}S\right)\right](x,\alpha_0)\pi(dx)\quad(1\le i,j\le p),\label{I_a:jump}\\
I_{b,c}^{(i,j)}(\theta_0;\theta_0)&=I_{b}^{(i,j)}(\theta_0)+I_{c}^{(i,j)}(\beta_0)\quad(1\le i,j\le q),\notag\\
I_b^{(i,j)}(\theta_0;\theta_0)&=\int \left(\partial_{{\beta_i}}b(x,\beta_0)\right)^\top S^{-1}(x,\alpha_0)\partial_{{\beta_j}}b (x,\beta_0)\pi(dx),\label{I_b:jump}\\
I_c^{(i,j)}(\beta_0;\beta_0)&=\iint_{A} \frac{\partial_{{\beta_i}}\Psi_{\beta_0}\partial_{{\beta_j}}\Psi_{\beta_0}}{\Psi_{\beta_0}}(y,x)dy\pi(dx),\label{I_c:jump}
\end{align}
with $\theta=\theta_0$.

\begin{enumerate}
\item[{[\bf A12]}]
$I(\theta_0;\theta_0)$ is non-singular for $\theta_0\in\mathrm{Int}(\Theta)$. 
\end{enumerate}

Finally, let us introduce a truncation function $\varphi_n$ to ensure the integrability of quasi-log likelihood functions in the next section.

\begin{enumerate}
\item[{[\bf A13]}] 
At least one of the following two conditions holds true.
\begin{enumerate}
\item[(i)] For $k=0,1,2,3$ and $l,m=0,1$, there exists a constant $C>0$ such that
\begin{align*}
\left|\partial_{{x}}^m\partial_{{y}}^l\partial_{{\beta}}^k\log\Psi_\beta(y,x)\right|\le C(1+|y|)^C(1+|x|)^C\quad((x,y,\beta)\in\mathbb{R}^d\times E\times \Theta_\beta).
\end{align*}
Moreover, there exists a constant $C'>0$, which depends on $C$, such that
\begin{align*}
\int_{B}|y|^c\sup_{\beta\in\Theta_\beta}\left|\partial_x\Psi_\beta(y,x)\right|dy\leq C'(1+|x|)^{C'}.
\end{align*}
In this case, we define the truncation function as $\varphi_n\equiv1$.

\item[(ii)]  There exists a sequence of real valued Borel functions $\{\varphi_n(x,y)\}_{n\in\mathbb{N}}$ on $\mathbb{R}^d\times E$, possessing the following properties:
$0\le\varphi_n\le1,\quad\varphi_n\to1\quad \pi(dx)\times dy $ -a.s., and there exists a constant $M>0$ such that $\varphi_n(x,y)=0$ whenever $(x,y)\in D_n$, where $D_n=D_n(M)$ is the set defined by
\begin{small}
\begin{align*}
D_n&=\bigcup_{k=0}^3\bigcup_{l=0}^1\left\{(x,y)\in\mathbb{R}^d\times E;\ \sup_{\beta\in\Theta_\beta}\left|\partial_{{x}}^l\partial_{{\beta}}^k\log\Psi_\beta(y,x)\right|\ge\varepsilon_n^{-((k+l)\lor1)}\cdot M(1+|x|)^M\right\}\\
&\qquad\bigcup\bigcup_{k=0}^3\left\{(x,y)\in\mathbb{R}^d\times E;\ \sup_{\beta\in\Theta_\beta}\left|\partial_y\partial_{{\beta}}^k\log\Psi_\beta(y,x)\right|\ge\varepsilon_n^{-(k+1)}\cdot M(1+|x|)^M(1+|y|)^M\right\}.
\end{align*}
\end{small}
Moreover, $\varphi_n$ is differentiable with respect to $x$ and $y$.  $\partial_x\varphi_n$ and $\partial_y\varphi_n$ are continuous in $x$ and $y$, respectively. In addition,
\begin{align*}
\partial_x\varphi_n=\partial_y\varphi_n=0\quad\text{on }\ D_n,\quad\sup_{(x,y)\in\mathbb{R}^d\times E}|\partial_x\varphi_n|+\sup_{(x,y)\in\mathbb{R}^d\times E}|\partial_y\varphi_n|=O(\varepsilon_n^{-1}).\
\end{align*}
\end{enumerate} 
\end{enumerate}

Next, we introduce a real valued sequence $\varepsilon_n$ to ensure the asymptotic properties of the estimators for $\theta$, and set the balance conditions with $n, h_n$ and $\varepsilon_n$.
\begin{enumerate}
    \item[{[\bf B1]}]
    $\varepsilon_n\to0, nh_n\varepsilon_n^4\to\infty, h_n\varepsilon_n^{-8}\to0\ as\ n\to\infty$.
    \item[{[\bf B2]}]
    $\varepsilon_n\to0, nh_n\varepsilon_n^4\to\infty, nh_n^2\varepsilon_n^{-4}\to0\ as\ n\to\infty$.
    \item[{[\bf B3]}]
    There exists a constant $\delta\in(0,1)$ such that $nh_n^{1+\delta}\to 0\ as\ n\to\infty$.
\end{enumerate}
\begin{rmk}We can give some examples of the values $h_n,\varepsilon_n$ which fulfill the balance conditions {[\bf B1]} and {[\bf B2]}.
    Example 1 : $h_n=n^{-1/4}$, $\varepsilon_n=n^{-1/64}$.
    Example 2 : $h_n=n^{-2/3}$, $\varepsilon_n=n^{-1/16}$.
    In particular, Example 1 satisfies the condition {[\bf B1]}, but not {[\bf B2]}.
\end{rmk}
\begin{rmk}\label{balance}
    The condition {\bf{[B2]}} implies {\bf{[B1]}}, but not vice versa.
    This is because, under the condition {\bf{[B2]}}, $h_n\varepsilon_n^{-8}=(nh_n^2\varepsilon_n^{-4})/(nh_n\varepsilon_n^4)\to 0$, while Example 1 shows that  {\bf{[B1]}}$\Rightarrow${\bf{[B2]}} is not valid.
\end{rmk}
\begin{rmk}\label{balance2}
    The condition {\bf{[B1]}} implies $h_n\to 0,\ nh_n\to \infty$, and {\bf{[B2]}} implies $nh_n^2\to 0$.

    This is because, under the condition {\bf{[B1]}}, $h_n=(h_n\varepsilon_n^{-8})\cdot\varepsilon_n^8\to 0$, $nh_n=(nh_n\varepsilon_n^4)\cdot \varepsilon^{-4}\to\infty$, and also
    under the condition {\bf{[B2]}}, $nh_n^2=(nh_n\varepsilon_n^{-4})\cdot\varepsilon_n^4\to 0$.
\end{rmk}
According to Remark \ref{balance2}, the condition {\bf{[B2]}} implies more strict condition $nh_n^2\to 0$ for the consistency of the estimator for $\theta$. Therefore, we need the condition {\bf{[B1]}} for the proof of the consistency. And also, according to Remark \ref{balance}, the proposition, which holds under the condition {\bf{[B1]}}, holds under the condition {\bf{[B2]}}.


\section{Quasi-maximum likelihood estimation}\label{sec4.3:m}
\subsection{Joint and adaptive estimator}
Firstly, we introduce quasi-log likelihood function for 
joint estimation.
 Let $D_1$, $D_2$ be constants satisfying that $D_1,D_2>0$ and $0<\rho_1,\rho_2<1/2$, 
\begin{align}
&l_n(\theta):=\bar{l}_n(\theta)+\tilde{l}_n(\beta),\label{l:joint}\\
&\bar{l}_n(\theta):=-\frac{1}{2}\sum_{i=1}^{n}\left\{h_n^{-1}(\bar{X}_{i,n}(\beta))^\top S_{i-1}^{-1}(\alpha)\bar{X}_{i,n}(\beta)+\log\det S_{i-1}(\alpha)\right\}\boldsymbol{1}_{\{|\Delta X_i^n|\le D_1h_n^{\rho_1}\}},\label{l-bar:jump}\\
&\tilde{l}_n(\beta):=\sum_{i=1}^{n}\left(\log\Psi_\beta(\Delta X_i^n,X_{t_{i-1}^n})\right)\varphi_n(X_{t_{i-1}^n},\Delta X_i^n)\boldsymbol{1}_{\{|\Delta X_i^n|> D_2h_n^{\rho_2}\}}\notag\\
&\qquad\qquad\qquad-h_n\sum_{i=1}^{n}\int_{B}\Psi_\beta(y,X_{t_{i-1}^n})\varphi_n(X_{t_{i-1}^n},y)dy.\label{l-tilde:jump}
\end{align}
Secondly, we introduce quasi-log likelihood function for adaptive estimation.
Let constants $D_1,D_2,D_3>0$ and $0<\rho_1,\rho_2,\rho_3<1/2$, 
\begin{align}
&l_n^{(1)}(\alpha):=-\frac{1}{2}\sum_{i=1}^{n}\left\{h_n^{-1}(\Delta X_i^n)^\top S_{i-1}^{-1}(\alpha)\Delta X_i^n+\log\det S_{i-1}(\alpha)\right\}\boldsymbol{1}_{\{|\Delta X_i^n|\le D_1h_n^{\rho_1}\}},\label{l1:jump}\\
&l_n^{(2)}(\beta|\bar{\alpha}):=\bar{l}_n^{(2)}(\beta|\bar{\alpha})+\tilde{l}_n^{(2)}(\beta),\notag\\
&\bar{l}_n^{(2)}(\beta|\bar{\alpha}):=-\frac{1}{2h_n}\sum_{i=1}^{n}(\bar{X}_{i,n}(\beta))^\top S_{i-1}^{-1}(\bar{\alpha})\bar{X}_{i,n}(\beta)\boldsymbol{1}_{\{|\Delta X_i^n|\le D_3h_n^{\rho_3}\}},\label{l2-bar:jump}\\
&\tilde{l}_n^{(2)}(\beta):=\sum_{i=1}^{n}\left(\log\Psi_\beta(\Delta X_i^n,X_{t_{i-1}^n})\right)\varphi_n(X_{t_{i-1}^n},\Delta X_i^n)\boldsymbol{1}_{\{|\Delta X_i^n|> D_2h_n^{\rho_2}\}}\notag\\
&\qquad\qquad\qquad-h_n\sum_{i=1}^{n}\int_{B}\Psi_\beta(y,X_{t_{i-1}^n})\varphi_n(X_{t_{i-1}^n},y)dy.\label{l2-tilde:jump}
\end{align}
Our joint and adaptive quasi-log likelihood function functions are based on the quasi-log likelihood function in
\citet{Shimizu-Yoshida_JP,Shimizu-Yoshida} and \citet{Ogihara-Yoshida}. For joint estimation, we modified their quasi-log likelihood function by changing their filters into two sets: ${\{|\Delta X_i^n|\le D_1h_n^{\rho_1}\}}$ and ${\{|\Delta X_i^n|> D_2h_n^{\rho_2}\}}$.
For adaptive estimation, we divide their quasi-log likelihood function into the two step functions which enable us to optimize the parameters $\alpha$ and $\beta$ separately. Moreover, 
we distinguish the filters for each function, and three filters ${\{|\Delta X_i^n|\le D_1h_n^{\rho_1}\}}$, ${\{|\Delta X_i^n|\le D_3h_n^{\rho_3}\}}$ and ${\{|\Delta X_i^n|> D_2h_n^{\rho_2}\}}$ are adopted. 
we estimate both parameters $\alpha$ and $\beta$ more accurately since we choose thresholds from larger region than region 
By using these two filters for joint estimation and three filters for adaptive estimation,  
it can be expected that
we estimate both parameters $\alpha$ and $\beta$ more accurately since we choose thresholds from a larger region than that 
of estimation in \citet{Shimizu-Yoshida_JP,Shimizu-Yoshida} and \citet{Ogihara-Yoshida}.

Using joint and adaptive quasi-log likelihood functions above, we define our joint estimator $\hat{\theta}_n=(\hat{\alpha}_n,\hat{\beta}_n)$ and adaptive estimator $\check{\theta}_n=(\check{\alpha}_n,\check{\beta}_n)$ for $\theta=(\alpha,\beta)$ as follows:
\begin{align*}
\hat{\theta}_{n}&:=\mathrm{argmax}_{\theta\in\Theta}l_n(\theta),\\
\check{\alpha}_{n}:=\mathrm{argmax}_{\alpha\in\Theta_\alpha}&l_n^{(1)}(\alpha),\
\check{\beta}_{n}:=\mathrm{argmax}_{\beta\in\Theta_\beta}l_n^{(2)}(\beta|\check{\alpha}_{n}).
\end{align*}
First, we state the consistency for our estimators $\hat{\theta}_n$, $\check{\theta}_n$.
Then, we adopt the balance condition {[\bf B1]} for $\varepsilon_n$.
In order to obtain the consistency under {\bf[A13]}-(ii), we define the following set:
\begin{align*}
B_1(k):=\left\{\rho\in(0,\frac{1}{2});\ h_n^\rho\epsilon_n^{-k}\to 0\ as\ n\to\infty\right\}. 
\end{align*}
Our theorem and corollary for consistency of 
$\check{\theta}_n$ and $\hat{\theta}_n$
are the following. 
\begin{thm}\label{thm1-2:jump}Assume {\bf{[A1]}}-{\bf{[A11]}},{\bf{[B1]}}, and  either [$\textbf{C}_1 \textbf{1}$] or [$\textbf{C}_1 \textbf{2}$], which are the following:
\begin{enumerate}
\item[{[$\textbf{C}_1 \textbf{1}$]}]  
Fulfill $\rho_1,\rho_2,\rho_3\in(0,\frac{1}{2})$ and {\bf{[A13]}}-(i). 
\item[{[$\textbf{C}_1 \textbf{2}$]}]  
Fulfill $\rho_1,\rho_3\in(0,\frac{1}{2})$, $\rho_2\in B_1(1)$ and {\bf{[A13]}}-(ii). 
\end{enumerate}
Then, $\check{\theta}_n \overset{P}{\longrightarrow} \theta_0$.
\end{thm}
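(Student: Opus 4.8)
The plan is to treat the two stages of the adaptive procedure in turn, following in each case the classical route for M-estimators: (a) exhibit a suitably normalized contrast difference converging in probability, uniformly over the compact parameter set, to a deterministic limit $\mathbb{Y}(\cdot)$; (b) check that $\mathbb{Y}$ attains its maximum only at the true value; (c) conclude by a standard argmax argument as in \citet{Shimizu-Yoshida,Ogihara-Yoshida}. For $\check\alpha_n$, since $\alpha$ enters only through the diffusion coefficient the natural normalization of $l_n^{(1)}$ is $1/n$, so put $\mathbb{Y}_n^{(1)}(\alpha):=n^{-1}\bigl(l_n^{(1)}(\alpha)-l_n^{(1)}(\alpha_0)\bigr)$. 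Writing $\Delta X_i^n=h_nb_{i-1}(\beta_0)+a_{i-1}(\alpha_0)\Delta W_i^n+(\text{jump part})$ with $\Delta W_i^n=W_{t_i^n}-W_{t_{i-1}^n}$, I would use the standard moment estimates for increments of $X$ together with the threshold $\boldsymbol{1}_{\{|\Delta X_i^n|\le D_1h_n^{\rho_1}\}}$, which (as $\rho_1<1/2$) asymptotically keeps the no-jump intervals and, by [A5] and [A7], discards those carrying a jump, to show
\[
h_n^{-1}E\bigl[\Delta X_i^n(\Delta X_i^n)^\top\boldsymbol{1}_{\{|\Delta X_i^n|\le D_1h_n^{\rho_1}\}}\,\big|\,\mathcal{F}_{i-1}^n\bigr]=S_{i-1}(\alpha_0)+R(\theta_0,h_n^{\gamma},X_{t_{i-1}^n})
\]
for some $\gamma>0$, the complementary indicator contributing $o_P(1)$ on average. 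Combined with the ergodicity [A2], the moment bound [A3], the growth bound [A4], a martingale-difference law of large numbers, and a routine uniform-in-$\alpha$ estimate over the compact $\Theta_\alpha$, this gives
\begin{align*}
&\sup_{\alpha\in\Theta_\alpha}\bigl|\mathbb{Y}_n^{(1)}(\alpha)-\mathbb{Y}^{(1)}(\alpha)\bigr|\overset{P}{\longrightarrow}0,\\
&\mathbb{Y}^{(1)}(\alpha):=-\frac12\int\!\left(\mathrm{tr}\bigl[S^{-1}(x,\alpha)S(x,\alpha_0)\bigr]-d+\log\frac{\det S(x,\alpha)}{\det S(x,\alpha_0)}\right)\!\pi(dx).
\end{align*}
For each $x$ the integrand equals $\sum_{j=1}^{d}(\mu_j-1-\log\mu_j)\ge0$, with $\mu_1,\dots,\mu_d>0$ the eigenvalues of $S^{-1}(x,\alpha)S(x,\alpha_0)$ (well defined by [A8]), vanishing only when $S(x,\alpha)=S(x,\alpha_0)$; hence $\mathbb{Y}^{(1)}(\alpha)=0$ forces $\det S(\cdot,\alpha)=\det S(\cdot,\alpha_0)$ $\pi$-a.e. and then $\alpha=\alpha_0$ by [A9]. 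As $\mathbb{Y}^{(1)}$ is continuous on the compact $\Theta_\alpha$, $\sup_{|\alpha-\alpha_0|\ge\varepsilon}\mathbb{Y}^{(1)}(\alpha)<0$ for every $\varepsilon>0$, and the argmax argument yields $\check\alpha_n\overset{P}{\longrightarrow}\alpha_0$.

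For $\check\beta_n$ the natural normalization of $l_n^{(2)}(\cdot\mid\bar\alpha)$ is $1/(nh_n)$, and I would work with $\mathbb{Y}_n^{(2)}(\beta):=(nh_n)^{-1}\bigl(l_n^{(2)}(\beta\mid\check\alpha_n)-l_n^{(2)}(\beta_0\mid\check\alpha_n)\bigr)$. The point is that the dominant $O_P(n)$ part of $\bar l_n^{(2)}(\cdot\mid\check\alpha_n)$ does not depend on $\beta$ and cancels in this difference, which is precisely what makes the plug-in of $\check\alpha_n$ harmless even though $\check\alpha_n$ converges only at the usual rate. Writing $\bar X_{i,n}(\beta)=\bar X_{i,n}(\beta_0)+h_n\bigl(b_{i-1}(\beta_0)-b_{i-1}(\beta)\bigr)$ and $\bar X_{i,n}(\beta_0)=a_{i-1}(\alpha_0)\Delta W_i^n+(\text{terms negligible under }\boldsymbol{1}_{\{|\Delta X_i^n|\le D_3h_n^{\rho_3}\}})$, the cross term with $\Delta W_i^n$ is $O_P((nh_n)^{-1/2})$ after normalization, while the remaining drift contribution converges uniformly in $\beta$ to $-\tfrac12\int(b(x,\beta)-b(x,\beta_0))^\top S^{-1}(x,\alpha_0)(b(x,\beta)-b(x,\beta_0))\pi(dx)$ — replacing $S^{-1}(\check\alpha_n)$ by $S^{-1}(\alpha_0)$ here costs only $o_P(1)$, since it multiplies an $O_P(1)$ average and $\check\alpha_n\overset{P}{\longrightarrow}\alpha_0$. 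For $\tilde l_n^{(2)}$ (which involves neither $\alpha$ nor $\check\alpha_n$) I would argue that $\boldsymbol{1}_{\{|\Delta X_i^n|>D_2h_n^{\rho_2}\}}$ asymptotically isolates the intervals with exactly one jump — no-jump intervals are excluded with overwhelming probability because $\rho_2<1/2$, and intervals with two or more jumps contribute only $O_P(h_n)=o_P(1)$ after normalization by $nh_n$ under [B1] — so that, using $\varphi_n\to1$ and the integrability bounds [A10]--[A11] with the ergodic theorem applied to the jumps of $X$,
\[
(nh_n)^{-1}\tilde l_n^{(2)}(\beta)\overset{P}{\longrightarrow}\iint_A\bigl(\log\Psi_\beta(y,x)\bigr)\Psi_{\beta_0}(y,x)\,dy\,\pi(dx)-\iint_B\Psi_\beta(y,x)\,dy\,\pi(dx),
\]
uniformly in $\beta$.

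Altogether $\sup_\beta\bigl|\mathbb{Y}_n^{(2)}(\beta)-\mathbb{Y}^{(2)}(\beta)\bigr|\overset{P}{\longrightarrow}0$, where, using that $A$ is independent of $\beta$ and $\Psi_\beta$ vanishes on $B\setminus A$,
\begin{align*}
\mathbb{Y}^{(2)}(\beta)=&-\frac12\int\bigl(b(x,\beta)-b(x,\beta_0)\bigr)^\top S^{-1}(x,\alpha_0)\bigl(b(x,\beta)-b(x,\beta_0)\bigr)\pi(dx)\\
&-\iint_A\left(\frac{\Psi_\beta}{\Psi_{\beta_0}}-1-\log\frac{\Psi_\beta}{\Psi_{\beta_0}}\right)\!(y,x)\,\Psi_{\beta_0}(y,x)\,dy\,\pi(dx).
\end{align*}
Both terms are $\le0$: the first vanishes only if $b(\cdot,\beta)=b(\cdot,\beta_0)$ $\pi$-a.e., the second only if $\Psi_\beta=\Psi_{\beta_0}$ $(\pi\times dy)$-a.e., and then $\beta=\beta_0$ by the second part of [A9]. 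A final argmax argument (the uniform convergence above already incorporates the plug-in of $\check\alpha_n$), together with $\check\alpha_n\overset{P}{\longrightarrow}\alpha_0$, gives $\check\beta_n\overset{P}{\longrightarrow}\beta_0$, hence $\check\theta_n\overset{P}{\longrightarrow}\theta_0$.

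I expect the real difficulty to be the threshold bookkeeping — proving rigorously that $\boldsymbol{1}_{\{|\Delta X_i^n|\le D_jh_n^{\rho_j}\}}$ and $\boldsymbol{1}_{\{|\Delta X_i^n|>D_2h_n^{\rho_2}\}}$ split the continuous and jump contributions with errors that stay negligible after the two different normalizations. This rests on delicate moment and tail estimates for increments of the jump-diffusion of the type developed by \citet{Shimizu-Yoshida,Ogihara-Yoshida} (for which [A1], [A3], [A5], [A7] are needed), and, under [A13]-(ii), on controlling the interaction between the truncation scale $\varepsilon_n$ and the thresholds — this is where the requirement $\rho_2\in B_1(1)$ and the balance condition [B1] enter. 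The remaining ingredients — the two convexity inequalities used for identifiability, the ergodic/LLN limits, and the uniform-in-parameter passages — are routine given the assumptions.
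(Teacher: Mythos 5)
Your proposal is correct and follows essentially the same route as the paper: a uniform law of large numbers for $n^{-1}l_n^{(1)}$ and for the $\beta$-differenced, $(nh_n)^{-1}$-normalized second-stage contrast (so the dominant diffusion term cancels and the plug-in of $\check{\alpha}_n$ is absorbed by continuity and its consistency), identifiability via the same Gaussian/Kullback--Leibler and $x-1-\log x\ge 0$ inequalities combined with {\bf [A9]}, and threshold/truncation bookkeeping delegated to Shimizu--Yoshida/Ogihara--Yoshida-type moment estimates, which is exactly what the paper's preliminary propositions supply. The only cosmetic differences are that the paper proves uniform convergence of $n^{-1}l_n^{(1)}(\alpha)$ itself rather than the difference at $\alpha_0$, and phrases the $\alpha$-identifiability through Jensen's inequality for Gaussian densities instead of your eigenvalue inequality.
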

\begin{col}\label{thm1-1:jump}
Assume {\bf{[A1]}}-{\bf{[A11]}}, {\bf{[B1]}}, and either {[$\textbf{D}_1 \textbf{1}$]} or {[$\textbf{D}_1 \textbf{2}$]}, which are the following:
\begin{enumerate}
\item[{[$\textbf{D}_1 \textbf{1}$]}]  
Fulfill $\rho_1,\rho_2\in(0,\frac{1}{2})$ and {\bf{[A13]}}-(i). 
\item[{[$\textbf{D}_1 \textbf{2}$]}]   
Fulfill $\rho_1\in(0,\frac{1}{2})$, $\rho_2\in B_1(1)$ and {\bf{[A13]}}-(ii).
\end{enumerate}
Then, $\hat{\theta}_n \overset{P}{\longrightarrow} \theta_0$.
\end{col}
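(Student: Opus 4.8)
The plan is the standard M-estimation route: prove that appropriate normalisations of $l_n$ converge in probability, uniformly on the compact set $\Theta$, to deterministic limit functions whose maximisers isolate $\alpha_0$ and $\beta_0$, and then apply the argmax theorem. Because $\alpha$ enters $l_n$ at rate $n$ while $\beta$ enters at rate $nh_n$, I would run the argument in two stages. What makes this a corollary of Theorem~\ref{thm1-2:jump} is that the joint quasi-log likelihood $l_n=\bar l_n+\tilde l_n$ uses the same jump part $\tilde l_n$ as the adaptive $\tilde l_n^{(2)}$, while its continuous part $\bar l_n(\theta)$ is exactly a single-threshold, drift-corrected version (with $\bar X_{i,n}(\beta)$ in place of $\Delta X_i^n$) of the array behind $l_n^{(1)}$; hence the uniform laws of large numbers and the threshold/truncation estimates obtained in the proof of Theorem~\ref{thm1-2:jump} apply after setting $D_3=D_1$, $\rho_3=\rho_1$ and collapsing the three filters to two, which is why {[$\textbf{D}_1\textbf{1}$]}/{[$\textbf{D}_1\textbf{2}$]} require only $\rho_1,\rho_2$.

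\emph{Stage 1: $\hat\alpha_n\overset{P}{\to}\alpha_0$.} First I would establish
\[
\sup_{\theta\in\Theta}\Bigl|\tfrac1n l_n(\theta)-\mathbb{Y}_a(\alpha)\Bigr|\overset{P}{\longrightarrow}0,\qquad \mathbb{Y}_a(\alpha):=-\tfrac12\int\Bigl(\mathrm{tr}\bigl[S^{-1}(x,\alpha)S(x,\alpha_0)\bigr]+\log\det S(x,\alpha)\Bigr)\pi(dx).
\]
Here $\tfrac1n\tilde l_n(\beta)=O_p(h_n)\to0$ uniformly in $\beta$; in $\tfrac1n\bar l_n(\theta)$ the drift correction carried by $\bar X_{i,n}(\beta)$ and the martingale cross term are of lower order, the threshold $\boldsymbol{1}_{\{|\Delta X_i^n|\le D_1 h_n^{\rho_1}\}}$ is asymptotically harmless by the jump-filtering estimates (using $\rho_1\in(0,\tfrac12)$ and {\bf{[A1]}}--{\bf{[A5]}},{\bf{[A7]}}), and the surviving quadratic-variation and $\log\det$ terms converge to $\mathbb{Y}_a$ by the ergodic theorem {\bf{[A2]}}. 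Since for positive definite $M$ one has $\mathrm{tr}(M)-\log\det M\ge d$ with equality iff $M=I$, applying this to $M=S(x,\alpha_0)S^{-1}(x,\alpha)$ gives $\mathbb{Y}_a(\alpha)\le\mathbb{Y}_a(\alpha_0)$ with equality iff $S(x,\alpha)=S(x,\alpha_0)$ for $\pi$-a.e.\ $x$, which by {\bf{[A9]}} forces $\alpha=\alpha_0$; compactness of $\Theta_\alpha$ and continuity ({\bf{[A4]}},{\bf{[A8]}}) then make $\alpha_0$ a well-separated maximiser, and the argmax theorem yields $\hat\alpha_n\overset{P}{\to}\alpha_0$.

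\emph{Stage 2: $\hat\beta_n\overset{P}{\to}\beta_0$.} Next I would work with $\mathbb{Z}_n(\alpha,\beta):=\tfrac1{nh_n}\bigl(l_n(\alpha,\beta)-l_n(\alpha,\beta_0)\bigr)$ and prove $\sup_{(\alpha,\beta)\in\Theta}\bigl|\mathbb{Z}_n(\alpha,\beta)-\mathbb{Y}_{b,c}(\alpha,\beta)\bigr|\overset{P}{\to}0$ with $\mathbb{Y}_{b,c}=\mathbb{Y}_b+\mathbb{Y}_c$,
\[
\mathbb{Y}_b(\alpha,\beta):=-\tfrac12\int\bigl(b(x,\beta)-b(x,\beta_0)\bigr)^\top S^{-1}(x,\alpha)\bigl(b(x,\beta)-b(x,\beta_0)\bigr)\pi(dx),
\]
\[
\mathbb{Y}_c(\beta):=\iint_{A}\Bigl(\Psi_{\beta_0}\log\tfrac{\Psi_\beta}{\Psi_{\beta_0}}-\Psi_\beta+\Psi_{\beta_0}\Bigr)(y,x)\,dy\,\pi(dx)-\iint_{B\setminus A}\Psi_\beta(y,x)\,dy\,\pi(dx).
\]
For the continuous part, expanding $\bar l_n(\alpha,\beta)-\bar l_n(\alpha,\beta_0)$ in $b_{i-1}(\beta)-b_{i-1}(\beta_0)$ gives a term linear in $\bar X_{i,n}(\beta_0)$ that is a martingale-type sum of order $O_p((nh_n)^{-1/2})$, plus a quadratic term that, after the $h_n^{-1}$ and $h_n^2$ cancel, averages to $\mathbb{Y}_b$ by {\bf{[A2]}}. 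For the jump part, $\tfrac1{nh_n}\bigl(\tilde l_n(\beta)-\tilde l_n(\beta_0)\bigr)\overset{P}{\to}\mathbb{Y}_c(\beta)$: the filter $\boldsymbol{1}_{\{|\Delta X_i^n|>D_2 h_n^{\rho_2}\}}$ asymptotically picks out exactly the jump intervals, $\varphi_n\to1$, and the compensation formula identifies the limit of the jump sum with $\int_0^T\!\int_B\log(\Psi_\beta/\Psi_{\beta_0})(y,X_{t-})\Psi_{\beta_0}(y,X_{t-})\,dy\,dt$ while the explicit integral term of $\tilde l_n$ converges to $\int_0^T\!\int_B(\Psi_\beta-\Psi_{\beta_0})(y,X_{t-})\,dy\,dt$; dividing by $T=nh_n$ and applying {\bf{[A2]}} yields $\mathbb{Y}_c$ (here $\rho_2\in(0,\tfrac12)$ under {\bf{[A13]}}-(i) or $\rho_2\in B_1(1)$ under {\bf{[A13]}}-(ii), with {\bf{[A5]}}--{\bf{[A7]}},{\bf{[A10]}},{\bf{[A11]}} controlling the filter and truncation errors under {\bf{[B1]}}). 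Now $\mathbb{Y}_b\le0$ because $S^{-1}$ is positive definite, with equality iff $b(\cdot,\beta)=b(\cdot,\beta_0)$ $\pi$-a.e., and $\mathbb{Y}_c\le0$ by strict concavity of $u\mapsto\Psi_{\beta_0}\log u-u$ together with the penalty off $A$, with equality iff $\Psi_\beta=\Psi_{\beta_0}$ a.e.\ on $B$; by the second part of {\bf{[A9]}} the only zero of $\mathbb{Y}_{b,c}(\alpha,\cdot)$ is $\beta_0$, so $\beta_0$ is a well-separated maximiser on the compact $\Theta_\beta$. Since the joint argmax gives $\mathbb{Z}_n(\hat\alpha_n,\hat\beta_n)\ge0$, and $\mathbb{Y}_{b,c}$ is jointly continuous, Stage 1 yields $0\le\mathbb{Z}_n(\hat\alpha_n,\hat\beta_n)=\mathbb{Y}_{b,c}(\hat\alpha_n,\hat\beta_n)+o_p(1)=\mathbb{Y}_{b,c}(\alpha_0,\hat\beta_n)+o_p(1)$, whence $\hat\beta_n\overset{P}{\to}\beta_0$ and therefore $\hat\theta_n\overset{P}{\to}\theta_0$.

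The main obstacle is the uniform law of large numbers for the thresholded and truncated triangular arrays: controlling, uniformly over $\Theta$ and under {\bf{[B1]}}, the contribution of increments misclassified by the thresholds (a jump-free increment exceeding $D_1 h_n^{\rho_1}$, or a single-jump increment that does not exceed $D_2 h_n^{\rho_2}$) as well as the error from $\varphi_n\not\equiv1$ under {\bf{[A13]}}-(ii). This needs moment bounds combining Burkholder--Davis--Gundy-type inequalities for the continuous component with the jump-size tail estimates of {\bf{[A5]}} and {\bf{[A7]}}; it is precisely the analytic heart of Theorem~\ref{thm1-2:jump} and is taken over here after recognising the joint filter as the degenerate single-threshold case $D_3=D_1$, $\rho_3=\rho_1$ of the adaptive construction. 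The remaining pieces --- the two information inequalities, the ergodic averaging, and the two-rate argmax argument --- are then routine.
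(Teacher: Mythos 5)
Your proposal is correct and follows essentially the same route as the paper: it reduces the joint criterion to the adaptive one via the identification $D_3=D_1$, $\rho_3=\rho_1$ (so $\tilde l_n=\tilde l_n^{(2)}$ and $\bar l_n(\alpha,\beta)-\bar l_n(\alpha,\beta_0)=\bar l_n^{(2)}(\beta|\alpha)-\bar l_n^{(2)}(\beta_0|\alpha)$), reuses the uniform limit theorems from the proof of Theorem \ref{thm1-2:jump}, and runs the same two-stage argmax argument with the limits $U_1$ and $\bar U^{(2)}_{\beta_0}+\tilde U^{(2)}_{\beta_0}(\cdot)-\tilde U^{(2)}_{\beta_0}(\beta_0)$ together with {\bf [A9]}. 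The minor deviations (treating $\tfrac1n\tilde l_n=O_p(h_n)$ uniformly instead of cancelling it in the argmax inequality, and the extra $B\setminus A$ term in $\mathbb{Y}_c$, which vanishes under {\bf [A6]}) are immaterial.
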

\begin{rmk}\label{notempty:thm1}
Under the condition {\bf{[B1]}}, 
$B_1(1)\neq \emptyset$.
\end{rmk}
Next, let us state the asymptotic normality for our estimators $\hat{\theta}_n$, $\check{\theta}_n$. In order to obtain this, we need to assume that the balance condition {\bf [B3]}, and  for $\varepsilon_n$, we assume the balance condition {\bf{[B2]}}.
In addition, to ensure asymptotic normality under the {\bf[A13]}-(ii), we define the following sets:
\begin{align*}
    &B_2:= \left\{\rho\in(0,\frac{1}{2});\ nh_n^{1+2\rho}\epsilon_n^{-2}\to 0\ as\ n\to\infty\right\},\\
    &B_3(\delta):= \left\{\rho\in(0,\frac{1}{2});\ h_n^{3\rho-\frac{1}{2}}\epsilon_n^{-1}\to 0\ as\ n\to\infty,\ \rho\geq \frac{1+\delta}{6},\ \rho>\frac{1}{5}\right\}. 
\end{align*}
Our theorem and corollary for asymptotic normality are the following.
\begin{thm}\label{thm2-2:jump}
Assume {\bf{[A1]}}-{\bf{[A12]}}, {\bf{[B2]}}, {\bf{[B3]}}, and either [$\textbf{C}_2 \textbf{1}$] or [$\textbf{C}_2 \textbf{2}$], which are the following:
\begin{enumerate}
\item[{[$\textbf{C}_2 \textbf{1}$]}] 
Fulfill $\rho_1\in(\frac{1}{5},\frac{1}{2})\cap[\frac{1+\delta}{6},\frac{1}{2})$, $\rho_2\in[\frac{\delta}{2},\frac{1}{2})$, $\rho_3\in[\frac{\delta}{4},\frac{1}{2})$ and
{\bf{[A13]}}-(i). 
\item[{[$\textbf{C}_2 \textbf{2}$]}]  
Fulfill $\rho_1\in B_3(\delta)$, $\rho_2\in B_2$, $\rho_3\in [(\frac{\delta}{4}\wedge\frac{1}{16}),\frac{1}{2})$ and {\bf{[A13]}}-(ii). 
\end{enumerate}
Then,  
$(\sqrt{n}(\check{\alpha}_n-\alpha_0), \sqrt{nh_n}(\check{\beta}_n-\beta_0))\overset{d}{\to}N(0,I(\theta_0;\theta_0)^{-1})$.
\end{thm}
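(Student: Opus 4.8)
The plan is to reduce the statement to a vector martingale central limit theorem for the normalized scores and a uniform law of large numbers for the normalized observed informations, exploiting the two-stage (adaptive) structure and the block-diagonal form of $I(\theta_0;\theta_0)$. Write
\[
\mathbb{G}_n^{(1)}:=\tfrac{1}{\sqrt{n}}\,\partial_\alpha l_n^{(1)}(\alpha_0),\qquad
\mathbb{G}_n^{(2)}:=\tfrac{1}{\sqrt{nh_n}}\,\partial_\beta l_n^{(2)}(\beta_0\,|\,\alpha_0),
\]
\[
\mathcal{J}_n^{(1)}(\alpha):=-\tfrac{1}{n}\,\partial_\alpha^2 l_n^{(1)}(\alpha),\qquad
\mathcal{J}_n^{(2)}(\beta\,|\,\alpha):=-\tfrac{1}{nh_n}\,\partial_\beta^2 l_n^{(2)}(\beta\,|\,\alpha).
\]
Since Theorem~\ref{thm1-2:jump} already gives $\check\alpha_n\overset{P}{\to}\alpha_0$, $\check\beta_n\overset{P}{\to}\beta_0$, and $\theta_0\in\mathrm{Int}(\Theta)$, the first-order conditions $\partial_\alpha l_n^{(1)}(\check\alpha_n)=0$, $\partial_\beta l_n^{(2)}(\check\beta_n\,|\,\check\alpha_n)=0$ hold on an event of probability tending to one, so a first-order Taylor expansion yields
\begin{align*}
\sqrt{n}\,(\check\alpha_n-\alpha_0)&=\Bigl(\int_0^1\mathcal{J}_n^{(1)}\bigl(\alpha_0+u(\check\alpha_n-\alpha_0)\bigr)\,du\Bigr)^{-1}\mathbb{G}_n^{(1)},\\
\sqrt{nh_n}\,(\check\beta_n-\beta_0)&=\Bigl(\int_0^1\mathcal{J}_n^{(2)}\bigl(\beta_0+u(\check\beta_n-\beta_0)\,|\,\check\alpha_n\bigr)\,du\Bigr)^{-1}\tfrac{1}{\sqrt{nh_n}}\,\partial_\beta l_n^{(2)}(\beta_0\,|\,\check\alpha_n).
\end{align*}
Hence it suffices to prove: (I) $(\mathbb{G}_n^{(1)},\mathbb{G}_n^{(2)})\overset{d}{\to}N\bigl(0,\mathrm{diag}(I_a(\alpha_0;\alpha_0),I_{b,c}(\theta_0;\theta_0))\bigr)$; (II) $\sup_\alpha|\mathcal{J}_n^{(1)}(\alpha)-I_a(\alpha;\alpha_0)|\overset{P}{\to}0$ and $\sup_\theta|\mathcal{J}_n^{(2)}(\beta\,|\,\alpha)-I_{b,c}(\theta;\theta_0)|\overset{P}{\to}0$; (III) replacing $\check\alpha_n$ by $\alpha_0$ inside $\partial_\beta l_n^{(2)}(\beta_0\,|\,\cdot)$ costs only $o_P(\sqrt{nh_n})$.

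For (I) I would apply a martingale CLT (Jacod--Shiryaev / Hall--Heyde type) after decomposing each normalized score into a sum of $\mathcal{F}_i^n$-measurable summands, in four sub-steps. First, on the filtering events $\{|\Delta X_i^n|\le D_j h_n^{\rho_j}\}$ replace $\Delta X_i^n$ by its continuous part $a_{i-1}(\alpha_0)(W_{t_i^n}-W_{t_{i-1}^n})+h_n b_{i-1}(\beta_0)$, and identify $\{|\Delta X_i^n|>D_2 h_n^{\rho_2}\}$ with ``exactly one jump in $(t_{i-1}^n,t_i^n]$''; the errors are $o_P(1)$ under \textbf{[B2]}, \textbf{[B3]} and the ranges on $(\rho_1,\rho_2,\rho_3)$ in $[\textbf{C}_2\textbf{1}]/[\textbf{C}_2\textbf{2}]$, and here the threshold estimates of \citet{Shimizu-Yoshida} and \citet{Ogihara-Yoshida} (and the truncation $\varphi_n$ of \textbf{[A13]}, with the accompanying moment bounds) are used. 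Second, verify the predictable-mean condition $\sum_i E[\,\cdot\mid\mathcal{F}_{i-1}^n]\overset{P}{\to}0$: for $\mathbb{G}_n^{(1)}$ the identity $E[(\Delta W)^\top M\Delta W\mid\mathcal{F}_{i-1}^n]=h_n\,\mathrm{tr}\,M$ makes the leading quadratic term cancel $\partial_\alpha\log\det S_{i-1}$; for $\mathbb{G}_n^{(2)}$ the drift part is an asymptotic martingale increment because of the recentring in $\bar X_{i,n}(\beta)$, and the jump part is compensated by $-h_n\sum_i\int_B\partial_\beta\Psi_{\beta_0}\varphi_n\,dy$ (using $\partial_\beta\Psi_{\beta_0}=(\partial_\beta\log\Psi_{\beta_0})\Psi_{\beta_0}$ and the change of variables $y=c(x,z,\beta_0)$). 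Third, compute the predictable quadratic variation and pass to the limit via the ergodic theorem \textbf{[A2]}: the $(\Delta W)^{\otimes2}-h_n I_s$ fluctuations give $I_a(\alpha_0;\alpha_0)$ as in \eqref{I_a:jump}, the $a_{i-1}\Delta W$ fluctuations give $I_b(\theta_0;\theta_0)$ as in \eqref{I_b:jump}, the compensated jump sums give $I_c(\beta_0;\beta_0)$ as in \eqref{I_c:jump} (the integral over $A$ appearing because $\Psi_{\beta_0}=0$ off $A$); the off-diagonal blocks vanish since the driving noises --- second-order Wiener chaos, first-order Wiener chaos and the Poisson jumps --- are mutually orthogonal and the small- and large-increment filters are asymptotically disjoint. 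Fourth, check a Lyapunov bound $\sum_i E[|\cdot|^4\mid\mathcal{F}_{i-1}^n]\overset{P}{\to}0$ using \textbf{[A3]} and \textbf{[A5]}.

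For (II) I would prove the two uniform laws of large numbers by the same replacement of $\Delta X_i^n$ (by its continuous part, resp. single-jump contribution), followed by the ergodic theorem, controlling the remainders uniformly on the compact $\Theta$ with \textbf{[A1]}--\textbf{[A11]} and the moment bounds; together with consistency, continuity of $I_a,I_{b,c}$ in $\theta$ and non-degeneracy \textbf{[A12]}, the two bracketed integrals above converge in probability to $I_a(\alpha_0;\alpha_0)$ and $I_{b,c}(\theta_0;\theta_0)$ and are invertible with probability tending to one. For (III) I would expand once more in the $\alpha$-slot: $\tfrac{1}{\sqrt{nh_n}}\partial_\beta l_n^{(2)}(\beta_0\,|\,\check\alpha_n)=\mathbb{G}_n^{(2)}+\tfrac{1}{\sqrt{nh_n}}\bigl(\int_0^1\partial_\alpha\partial_\beta l_n^{(2)}(\beta_0\,|\,\alpha_0+u(\check\alpha_n-\alpha_0))\,du\bigr)(\check\alpha_n-\alpha_0)$; since $\tilde l_n^{(2)}$ does not involve $\alpha$, $\partial_\alpha\partial_\beta l_n^{(2)}=\partial_\alpha\partial_\beta\bar l_n^{(2)}$ is, at $\beta=\beta_0$, a martingale-type sum of predictable quadratic variation $O_P(nh_n)$, so $\partial_\alpha\partial_\beta l_n^{(2)}(\beta_0\,|\,\alpha)=O_P(\sqrt{nh_n})$ uniformly in $\alpha$; with $\check\alpha_n-\alpha_0=O_P(n^{-1/2})$ from (I)--(II), the correction is $O_P(n^{-1/2})=o_P(1)$. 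Combining, $\sqrt{n}(\check\alpha_n-\alpha_0)=I_a(\alpha_0;\alpha_0)^{-1}\mathbb{G}_n^{(1)}+o_P(1)$ and $\sqrt{nh_n}(\check\beta_n-\beta_0)=I_{b,c}(\theta_0;\theta_0)^{-1}\mathbb{G}_n^{(2)}+o_P(1)$, and (I) with Slutsky's lemma gives the limit $N(0,\mathrm{diag}(I_a(\alpha_0;\alpha_0)^{-1},I_{b,c}(\theta_0;\theta_0)^{-1}))=N(0,I(\theta_0;\theta_0)^{-1})$.

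I expect the main obstacle to be sub-step one of part (I) (and, through it, sub-step three): controlling the three thresholds simultaneously within the narrow admissible window for $(\rho_1,\rho_2,\rho_3)$ while carrying the truncation $\varphi_n$, and showing that after discarding the increments with no jump one is left with asymptotically exactly the single-jump increments, so that the jump part of $\partial_\beta l_n^{(2)}$ contributes precisely $I_c(\beta_0;\beta_0)$ and has vanishing covariance with the Brownian-driven parts. The moment bounds established by \citet{Ogihara-Yoshida} will be the workhorse for these estimates.
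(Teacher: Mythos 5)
Your overall route is essentially the paper's own: Taylor expansion of the two first-order conditions on the event where consistency places $(\check\alpha_n,\check\beta_n)$ in the interior, uniform convergence of the normalized second derivatives to $-I_a$ and $-I_{b,c}$ (the paper's \eqref{thm2:goal1-jump}--\eqref{thm2:goal2-jump}), the plug-in of $\check\alpha_n$ handled through the mixed derivative $\partial^2_{\alpha\beta}l_n^{(2)}$ (the paper's \eqref{thm2:goal3-jump}; note that the uniform bound $o_P(n\sqrt{h_n})$ proved there is all that is needed once $\sqrt{n}(\check\alpha_n-\alpha_0)=O_P(1)$, and it is easier to obtain uniformly in $\alpha$ than your $O_P(\sqrt{nh_n})$ claim), and a Hall--Heyde martingale CLT for the two scores with vanishing conditional means, block-diagonal limiting conditional covariance and a Lyapunov-type condition, which is exactly the list \eqref{MCLT1-jump}--\eqref{MCLT10-jump} checked in the paper via Propositions \ref{prop2:jump}--\ref{prop7:jump}.

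The one step that fails as written is your Lyapunov condition with the fixed exponent $4$. For the diffusion score $\xi_i^{m_1}(\alpha_0)$ the dangerous contribution comes from increments containing exactly one jump that nevertheless pass the filter $\{|\Delta X_i^n|\le D_1h_n^{\rho_1}\}$: on this event $h_n^{-4}|\Delta X_i^n|^8\le D_1^8h_n^{8\rho_1-4}$, while its conditional probability is of order $h_n^{1+\rho_1}$ (Proposition \ref{prop2:jump}), so $\sum_i \mathbb{E}[|n^{-1/2}\xi_i^{m_1}(\alpha_0)|^4\mid\mathcal F_{i-1}^n]$ contains a term of order $n^{-1}h_n^{9\rho_1-3}=(nh_n^{3-9\rho_1})^{-1}$, and this order is sharp whenever the jump density is bounded away from zero near the origin (e.g.\ the Gaussian jump density of Section \ref{sec:sim}). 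The theorem allows $\rho_1\in(\tfrac15,\tfrac13)$ together with rates for which $nh_n^{3-9\rho_1}\not\to\infty$: for instance $\rho_1=0.21$, $\delta=0.05$, $h_n=n^{-0.96}$, $\varepsilon_n=n^{-0.001}$ satisfy {\bf{[B2]}}, {\bf{[B3]}} and [$\textbf{C}_2\textbf{1}$], yet $nh_n^{3-9\rho_1}=n^{-0.0656}\to0$, so the fourth-moment Lindeberg sum diverges. The remedy is the one used in the paper's proof of \eqref{MCLT9-jump}: take a $(2+\nu_1)$-moment condition with $\nu_1\in\bigl(0,\tfrac{5\rho_1-1}{1-2\rho_1}\bigr]$, so that the exponent $(5+2\nu_1)\rho_1-1-\nu_1$ is nonnegative; this is precisely where the restriction $\rho_1>\tfrac15$ is consumed. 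A related issue arises for the $\bar X_{i,n}$-part of the $\beta$-score when $\rho_3<\tfrac14$; the paper handles it in \eqref{MCLT10-jump} by taking $\nu_2>\max\{\tfrac{4-2\delta}{\delta},2\}$, which is where $\rho_3\ge\tfrac{\delta}{4}$ enters (with a sharper decomposition over the jump-number events your exponent $4$ could be salvaged for that block, but not for the $\alpha$-block). Apart from this choice of Lyapunov exponents, and the attendant bookkeeping showing where the constraints on $\rho_1,\rho_2,\rho_3$ and the sets $B_2,B_3(\delta)$ are actually used, your plan coincides with the paper's proof.
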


\begin{col}\label{thm2-1:jump}
Assume {\bf{[A1]}}-{\bf{[A12]}}, {\bf{[B2]}}, {\bf{[B3]}}, and either [$\textbf{D}_2 \textbf{1}$] or [$\textbf{D}_2 \textbf{2}$], which are the following:
\begin{enumerate}
\item[{[$\textbf{D}_2 \textbf{1}$]}]
Fulfill $\rho_1\in(\frac{1}{5},\frac{1}{2})\cap[\frac{1+\delta}{6},\frac{1}{2})$, $\rho_2\in[\frac{\delta}{2},\frac{1}{2})$ and {\bf{[A13]}}-(i). 
\item[{[$\textbf{D}_2 \textbf{1}$]}]
Fulfill $\rho_1\in B_3(\delta)$, $\rho_2\in B_2$ and {\bf{[A13]}}-(ii). 
\end{enumerate}
Then,  
$(\sqrt{n}(\hat{\alpha}_n-\alpha_0), \sqrt{nh_n}(\hat{\beta}_n-\beta_0))\overset{d}{\to}N(0,I(\theta_0;\theta_0)^{-1})$.
\end{col}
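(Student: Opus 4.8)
The plan is to run the classical argmax/Taylor-expansion argument for $M$-estimators, organised around the block-diagonal structure of $I(\theta_0;\theta_0)$ and the additive decomposition $l_n(\theta)=\bar{l}_n(\theta)+\tilde{l}_n(\beta)$ of \eqref{l:joint}. By Corollary \ref{thm1-1:jump} — whose hypotheses are met, since the rate restrictions in [$\textbf{D}_2\textbf{1}$]/[$\textbf{D}_2\textbf{2}$] together with \textbf{[B2]} force $\rho_1\in(0,\tfrac12)$ and $\rho_2\in B_1(1)$ — we already have $\hat\theta_n\overset{P}{\longrightarrow}\theta_0$; since $\theta_0\in\mathrm{Int}(\Theta)$ and $\hat\theta_n$ maximises $l_n$, on an event of probability tending to $1$ the score equation $\partial_\theta l_n(\hat\theta_n)=0$ holds. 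Writing $\Phi_n=\mathrm{diag}(\sqrt n,\dots,\sqrt n,\sqrt{nh_n},\dots,\sqrt{nh_n})$ ($p$ entries $\sqrt n$, then $q$ entries $\sqrt{nh_n}$) and $\Delta_n=\Phi_n(\hat\theta_n-\theta_0)$, a mean-value expansion gives $\Delta_n=-\bigl(\Phi_n^{-1}\int_0^1\partial_\theta^2 l_n(\theta_0+u(\hat\theta_n-\theta_0))\,du\,\Phi_n^{-1}\bigr)^{-1}\Phi_n^{-1}\partial_\theta l_n(\theta_0)$, so it suffices to establish (a) $\Phi_n^{-1}\partial_\theta l_n(\theta_0)\overset{d}{\longrightarrow}N(0,I(\theta_0;\theta_0))$ and (b) $-\Phi_n^{-1}\partial_\theta^2 l_n(\bar\theta_n)\Phi_n^{-1}\overset{P}{\longrightarrow}I(\theta_0;\theta_0)$ for every $\bar\theta_n\overset{P}{\longrightarrow}\theta_0$, and then combine these with the non-singularity of $I(\theta_0;\theta_0)$ from \textbf{[A12]} via Slutsky's lemma and the continuous mapping theorem.

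For (a) I would substitute into $\partial_\alpha\bar{l}_n(\theta_0)$, $\partial_\beta\bar{l}_n(\theta_0)$ and $\partial_\beta\tilde{l}_n(\beta_0)$ the It\^{o}--Taylor expansion of the increments $\Delta X_i^n$, separating increments that carry a large jump from purely continuous ones; the filters $\boldsymbol{1}_{\{|\Delta X_i^n|\le D_1h_n^{\rho_1}\}}$ and $\boldsymbol{1}_{\{|\Delta X_i^n|>D_2h_n^{\rho_2}\}}$ realise this separation asymptotically, the summed mis-classification probabilities being $o(\sqrt n)$ and $o(\sqrt{nh_n})$ exactly when $\rho_1\in(\tfrac15,\tfrac12)\cap[\tfrac{1+\delta}6,\tfrac12)$ (resp. $\rho_1\in B_3(\delta)$) and $\rho_2\in[\tfrac\delta2,\tfrac12)$ (resp. $\rho_2\in B_2$) hold under \textbf{[B2]}--\textbf{[B3]}. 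After centering, $\Phi_n^{-1}\partial_\theta l_n(\theta_0)$ is a martingale-difference array for $(\mathcal{F}_i^n)$ whose predictable quadratic variation converges, by ergodicity \textbf{[A2]}, to the explicit forms \eqref{I_a:jump}, \eqref{I_b:jump}, \eqref{I_c:jump} (the $\alpha\alpha$-block from the normalised diffusion quadratic forms; the $\beta\beta$-block as $I_b+I_c$ from the drift-gradient term of $\bar{l}_n$ and the jump score of $\tilde{l}_n$; the $\alpha\beta$-block vanishing because its leading contribution is an odd Gaussian conditional moment and the remaining terms are $O(h_n)$, negligible after the $n^{-1/2}(nh_n)^{-1/2}$ scaling), while the Lindeberg condition follows from \textbf{[A3]}, \textbf{[A4]}, \textbf{[A5]}, \textbf{[A10]}; the martingale central limit theorem then yields (a).

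For (b) I would differentiate $l_n$ twice, show that the normalised Hessian is to leading order insensitive to the particular threshold used, re-apply the jump/non-jump decomposition, and pass to the limit term by term with the ergodic theorem \textbf{[A2]}, controlling remainders with \textbf{[A4]}, \textbf{[A10]}, \textbf{[A11]} and the bound $\sup|\partial_x\varphi_n|+\sup|\partial_y\varphi_n|=O(\varepsilon_n^{-1})$ against $nh_n\varepsilon_n^4\to\infty$; the $\alpha\alpha$-block then converges to $I_a(\alpha_0;\alpha_0)$, the drift part of the $\beta\beta$-block to $I_b(\theta_0;\theta_0)$, the jump part to $I_c(\beta_0;\beta_0)$, and the $\alpha\beta$-block to $O$, reproducing $I(\theta_0;\theta_0)$ uniformly along $\bar\theta_n\overset{P}{\longrightarrow}\theta_0$. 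Since the statement is phrased as a corollary, a shorter route is also available: deduce it from Theorem \ref{thm2-2:jump} by showing $\sqrt n(\hat\alpha_n-\check\alpha_n)\overset{P}{\longrightarrow}0$ (both estimators are driven by the same continuous filter $D_1h_n^{\rho_1}$, the drift correction $h_nb_{i-1}$ being asymptotically negligible at rate $\sqrt n$) and $\sqrt{nh_n}(\hat\beta_n-\check\beta_n)\overset{P}{\longrightarrow}0$ (comparing the $\beta$-score equations of the joint and adaptive schemes, the only structural difference — one continuous threshold versus the pair $D_2h_n^{\rho_2},D_3h_n^{\rho_3}$ — contributing an $o_P((nh_n)^{-1/2})$ error under $\rho_1\in B_3(\delta)$, $\rho_2\in B_2$), and then applying Slutsky's lemma.

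I expect the main obstacle to be the simultaneous closing of two error budgets inside Step (a): first, the jump-filtering error — bounding, summed over $i$, the probability that a jump increment lies below $D_2h_n^{\rho_2}$ and that of a purely continuous increment exceeding $D_1h_n^{\rho_1}$, measured against the $\sqrt n$ and $\sqrt{nh_n}$ normalisations, which is precisely what dictates the arithmetic restrictions on $\rho_1,\rho_2$ and the need for \textbf{[B2]} and \textbf{[B3]}; and second, in case \textbf{[A13]}-(ii), the error from replacing $\varphi_n$ by $1$, which must be absorbed using $\sup|\partial_x\varphi_n|+\sup|\partial_y\varphi_n|=O(\varepsilon_n^{-1})$ against $nh_n\varepsilon_n^4\to\infty$. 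Once these are under control, the rest is bookkeeping with the ergodic theorem and the polynomial-growth estimates \textbf{[A3]}--\textbf{[A5]}, \textbf{[A10]}--\textbf{[A11]}.
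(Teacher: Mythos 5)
Your primary route — consistency from Corollary \ref{thm1-1:jump}, the score equation on an event of probability tending to one, a mean-value expansion of $\partial_\theta l_n$, a martingale CLT for the normalised score with the filter-misclassification budgets controlled by Proposition-\ref{prop2:jump}-type bounds and the $\varphi_n$-truncation absorbed via $nh_n\varepsilon_n^4\to\infty$, and uniform convergence of the normalised Hessian blocks to $-I_a$, $-I_{b,c}$ and $O$ — is exactly the paper's proof of this corollary (the paper merely organises it by writing the joint score and Hessian as the adaptive ones plus drift-correction terms $\psi_i^{m_1}$ and re-verifying the ten martingale CLT conditions). Your suggested shorter route via $D_n^{1/2}(\hat\theta_n-\check\theta_n)=o_p(1)$ is essentially the content of Proposition \ref{prop:test}, which the paper proves \emph{using} this corollary's expansions, so if you take that path you must establish the negligibility of the score differences directly (which amounts to the same $\psi_i$ and threshold-mismatch estimates) rather than citing it, to avoid circularity.
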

\begin{rmk}\label{notempty:thm2}
Under the conditions {\bf[B2]}, {\bf{[B3]}}, $B_2\neq\emptyset,\ B_3(\delta)\neq\emptyset$.
\end{rmk}
\begin{rmk}\label{subset:thm2}
Under the conditions {\bf[B2]}, {\bf{[B3]}}, $B_2\subset B_1(3)$. 
\end{rmk}

\begin{rmk}\label{efficient:jump}
The proposed adaptive estimator $\hat{\theta}_n$ has asymptotic efficiency, see  \citet{Ogihara-Uehara}.
\end{rmk}

\begin{rmk}\label{thmcomp:jump}
Let us compare our results with those of the previous studies, \citet{Shimizu-Yoshida_JP,Shimizu-Yoshida} and \citet{Ogihara-Yoshida})
\begin{enumerate}
\item[(i)]  Compared with the assumption {[\bf A6]} in \citet{Shimizu-Yoshida_JP,Shimizu-Yoshida}, our assumption {[\bf A5]} allows general jump densities such as a normal distribution. On the other hand, the condition $nh_n^2\to 0$ is needed for the proof of the asymptotic normality in \citet{Shimizu-Yoshida_JP,Shimizu-Yoshida}. We need a little more strict condition $nh_n^{1+\delta}\to 0$. Moreover, our conditions regarding $\rho$ are stronger than their conditions. 
In particular, the sets $B_1(k),B_2$ and $B_3(\delta)$, which may restrict the range of $\rho$, are unique to our setting. 
If we assume {[\bf A1]}-{[\bf A11]} in \citet{Shimizu-Yoshida_JP,Shimizu-Yoshida} instead of ours, then we can prove Theorem \ref{thm1-2:jump}, Theorem \ref{thm2-2:jump}, Corollary \ref{thm1-1:jump} and Corollary \ref{thm2-1:jump} without $B_1(k),B_2$ and $B_3(\delta)$. If the constant $\gamma$, which satisfies assumption {[\bf A6]} in \citet{Shimizu-Yoshida_JP,Shimizu-Yoshida}, 
is large, then the range of their $\rho$ may become larger than ours.  
\item[(ii)] Compared with the conditions for $\varepsilon_n$ in \citet{Ogihara-Yoshida}, our conditions for $\varepsilon_n$ are mild.
Moreover, our argument is more general than theirs regarding the range of choices for $\rho$ and $\delta$. In actual, we impose the condition $nh_n^{1+\delta}\to 0$ for $\delta\in(0,1)$, while they assume $n^{-3/5}\leq h_n\leq n^{-4/7}$. This means that in their setting, $\frac{2}{3}<\delta<\frac{3}{4}$, and our range of $\rho$ is larger than theirs. The reason for this is that their study considers the convergence of moments, 
while our aim is to show the convergence in distribution of the proposed estimators.
\end{enumerate}
\end{rmk}

\subsection{The case in which the drift and jump parameter are split independently}\label{sec4.4:m}

We discuss a model which is expressed as follows:
\begin{equation}\label{jump-diff:model2}
\begin{cases}
dX_t = b(X_{t-},\beta) dt + a(X_{t-},\alpha)dW_t+\int_{E} c(X_{t-},z,\gamma)p(dt,dz) \quad t\in[0,T],\\
X_0 = x_0,
\end{cases}
\end{equation}
where $\alpha\in\Theta_\alpha\subset\mathbb{R}^p$, $\beta\in\Theta_\beta\subset\mathbb{R}^q$, $\gamma\in\Theta_\gamma\subset\mathbb{R}^r$, the parameter $\beta$ is independent of $\gamma$ and the jump density $f$ is written by $f_\gamma(z)=\lambda(\gamma)F_\gamma(z)$.
For this setting, we introduce an adaptive estimators for $\theta=(\alpha,\beta,\gamma)$.
By using quasi-log likelihood functions\eqref{l1:jump}-\eqref{l2-tilde:jump}, our adaptive estimators are defined as follows:
\begin{equation*}
\check{\alpha}_{n}:=\mathrm{argmax}_{\alpha\in\Theta_\alpha}l_n^{(1)}(\alpha), \check{\beta}_{n}:=\mathrm{argmax}_{\beta\in\Theta_\beta}\bar{l}_n^{(2)}(\beta|\check{\alpha}_{n}),\ \check{\gamma}_{n}:=\mathrm{argmax}_{\gamma\in\Theta_\gamma}\tilde{l}_n^{(2)}(\gamma),\ \check{\theta}_n=(\check{\alpha}_{n},\check{\beta}_{n},\check{\gamma}_{n}).
\end{equation*}
For the order of calculations, we note the following two points. {{First}} $\check{\beta}_n$ must be calculated after the calculation for $\check{\alpha}_n$. {{Second}} we can calculate $\check{\gamma}_n$ in any order because the quasi-log likelihood function for $\check{\gamma}_n$ is independent of $(\check{\alpha}_n$, $\check{\beta}_n)$.

Let true parameter values $\theta_0=(\alpha_0,\beta_0,\gamma_0)$ and 
$J(\theta;\theta_0)$ be a $(p+q+r)\times (p+q+r)$-matrix  such that
\begin{align*}
J(\theta;\theta_0)=\begin{pmatrix}
I_a(\alpha;\alpha_0)&0&0\\
0&I_{b}((\alpha,\beta);(\alpha_0,\beta_0))&0\\
0&0&I_{c}(\gamma;\gamma_0)
\end{pmatrix},
\end{align*}
where $I_a(\alpha;\alpha_0)$, $I_b((\alpha,\beta);(\alpha_0,\beta_0))$ and $I_c(\gamma;\gamma_0)$ are defined by \eqref{I_a:jump}-\eqref{I_c:jump}. In order to obtain asymptotic properties for this model, we fix the assumptions {\bf{[A5]}}-{\bf{[A7]}}, {\bf{[A9]}}-{\bf{[A13]}}.
Except for {\bf{[A9]}} and {\bf{[A12]}}, 
we replace $\beta$ with $\gamma$ in these assumptions.
For {\bf{[A9]}} and {\bf{[A12]}},
we set the following new assumptions:
\begin{enumerate}
\item[{[\bf A9']}] 
$\det S(x,\alpha) =\det S(x, \alpha_{0} ) \ \text{for} \ \text{a.s.}\ \text{all} \ x \Longrightarrow \alpha = \alpha_{0}.$

$b(x,\beta) = b(x, \beta_{0} )\ \text{for} \ \text{a.s.}\ \text{all} \ \ (x,y) \Longrightarrow \beta = \beta_{0}.$

$\Psi_\gamma(y,x)=\Psi_{\gamma_0}(y,x)  \ \text{for} \ \text{a.s.}\ \text{all} \  \ (x,y) \Longrightarrow \gamma = \gamma_{0}.$
\item[{\bf{[A12']}}]
$J(\theta_0;\theta_0)$ is non-singular for $\theta_0\in\mathrm{Int}(\Theta)$.
\end{enumerate}
Under these fixed assumptions, the following consistency and asymptotic normality are hold.
\begin{thm}\label{thm3:jump}
Assume {\bf{[A1]}}-{\bf{[A8]}}, {\bf{[A9']}}, {\bf{[A10]}}, {\bf{[A11]}}, {\bf{[B1]}}, and either [$\textbf{C}_1\textbf{1}$] or  [$\textbf{C}_1\textbf{2}$] of Theorem \ref{thm1-2:jump}.
Then, $\check{\theta}_n\to\theta_0$.
\end{thm}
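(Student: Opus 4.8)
The plan is to establish the three convergences $\check\alpha_n\overset{P}{\longrightarrow}\alpha_0$, $\check\beta_n\overset{P}{\longrightarrow}\beta_0$ and $\check\gamma_n\overset{P}{\longrightarrow}\gamma_0$ one at a time, exploiting the sequential (and, for $\gamma$, entirely decoupled) structure of the estimator, and then to combine them. Throughout I would use the standard argmax-consistency device: if a random field $\mathbb{U}_n(\cdot)$ converges in probability, uniformly on the relevant compact parameter set, to a nonrandom continuous limit $\mathbb{U}(\cdot)$ attaining a strict maximum at a single point, then every maximiser of $\mathbb{U}_n$ converges in probability to that point. Since the statement may borrow [$\textbf{C}_1\textbf{1}$] / [$\textbf{C}_1\textbf{2}$] of Theorem~\ref{thm1-2:jump}, almost all of the analytic work is inherited: the threshold (jump-filter) lemmas showing that $\boldsymbol{1}_{\{|\Delta X_i^n|\le D h_n^{\rho}\}}$ asymptotically retains precisely the jump-free increments while $\boldsymbol{1}_{\{|\Delta X_i^n|> D_2 h_n^{\rho_2}\}}$ asymptotically retains precisely the increments carrying a jump, the moment estimates for $\bar X_{i,n}$ under {\bf{[A1]}}--{\bf{[A4]}}, the small-jump control from {\bf{[A5]}} and {\bf{[A7]}}, and the treatment of the truncation $\varphi_n$ under {\bf{[A13]}}, are exactly as in the proof of Theorem~\ref{thm1-2:jump} and would simply be cited. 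For the first step, the field $l_n^{(1)}$ of \eqref{l1:jump} is literally the one used for the adaptive estimator of model \eqref{jump-diff:model}, so that argument applies verbatim: $n^{-1}l_n^{(1)}(\alpha)$ converges uniformly on $\Theta_\alpha$ to $-\tfrac12\int\big(\mathrm{tr}[S^{-1}(x,\alpha)S(x,\alpha_0)]+\log\det S(x,\alpha)\big)\pi(dx)$, which has a strict maximum at $\alpha_0$ by {\bf{[A8]}} together with the first line of {\bf{[A9']}} (identical to the first line of {\bf{[A9]}}), whence $\check\alpha_n\overset{P}{\longrightarrow}\alpha_0$.

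For the second step I would work with $\mathbb{U}_n^{(2)}(\beta\mid\bar\alpha):=(nh_n)^{-1}\big(\bar l_n^{(2)}(\beta\mid\bar\alpha)-\bar l_n^{(2)}(\beta_0\mid\bar\alpha)\big)$, which has the same maximiser in $\beta$ as $\bar l_n^{(2)}(\cdot\mid\bar\alpha)$. Writing $\bar X_{i,n}(\beta)=\bar X_{i,n}(\beta_0)-h_n\big(b_{i-1}(\beta)-b_{i-1}(\beta_0)\big)$ and expanding the quadratic form, the martingale cross term is $O_p\big((nh_n)^{-1/2}\big)=o_p(1)$ uniformly in $\beta$, while the remaining term converges, by the ergodic theorem {\bf{[A2]}} (with the moment and growth bounds {\bf{[A3]}}, {\bf{[A4]}} and the threshold lemma applied to $\boldsymbol{1}_{\{|\Delta X_i^n|\le D_3 h_n^{\rho_3}\}}$), to $-\tfrac12\int\big(b(x,\beta)-b(x,\beta_0)\big)^\top S^{-1}(x,\alpha_0)\big(b(x,\beta)-b(x,\beta_0)\big)\pi(dx)$; I would make this convergence joint-uniform in $(\beta,\bar\alpha)$ over $\Theta_\beta$ times a neighbourhood of $\alpha_0$, so that the random $\bar\alpha=\check\alpha_n\overset{P}{\longrightarrow}\alpha_0$ may legitimately be substituted. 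This limit is $\le 0$, with equality only at $\beta_0$, by {\bf{[A8]}} and the second line of {\bf{[A9']}}; it is exactly here that the refinement of {\bf{[A9]}} into {\bf{[A9']}} is needed, since for model \eqref{jump-diff:model2} one must identify $\beta$ from the drift $b$ alone rather than from $(b,\Psi_\beta)$ jointly. Hence $\check\beta_n\overset{P}{\longrightarrow}\beta_0$.

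For the third step, note that $\tilde l_n^{(2)}$ of \eqref{l2-tilde:jump} (with $\beta$ relabelled $\gamma$) involves neither $\check\alpha_n$ nor $\check\beta_n$, so $\check\gamma_n$ is a genuine one-parameter M-estimator and no plug-in is needed. Normalising by $(nh_n)^{-1}=T^{-1}$, the jump-filtered logarithmic term converges, by ergodicity together with the compensator description of the jumps through $\Psi_{\gamma_0}$ and $\varphi_n\to1$, to $\iint_{A}\log\Psi_\gamma(y,x)\,\Psi_{\gamma_0}(y,x)\,dy\,\pi(dx)$, and the compensator term to $\iint_{B}\Psi_\gamma(y,x)\,dy\,\pi(dx)$; the integrability and regularity needed for these limits and for their uniformity in $\gamma$ are supplied by {\bf{[A5]}}, {\bf{[A10]}}, {\bf{[A11]}} and {\bf{[A13]}}. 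The resulting limit $\iint_{A}\log\Psi_\gamma(y,x)\,\Psi_{\gamma_0}(y,x)\,dy\,\pi(dx)-\iint_{B}\Psi_\gamma(y,x)\,dy\,\pi(dx)$ attains a strict maximum at $\gamma_0$ by a Kullback--Leibler / Jensen argument combined with the third line of {\bf{[A9']}}, so $\check\gamma_n\overset{P}{\longrightarrow}\gamma_0$. Collecting the three steps gives $\check\theta_n=(\check\alpha_n,\check\beta_n,\check\gamma_n)\overset{P}{\longrightarrow}\theta_0$.

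Beyond what is inherited from Theorem~\ref{thm1-2:jump}, the one genuinely new point --- and the main obstacle --- is the bookkeeping around $\check\beta_n$: one must check that discarding the $\tilde l_n^{(2)}$-summand from the $\beta$-optimisation is harmless, i.e. that the drift contrast alone is still coercive at $\beta_0$ (the content of the modified identifiability assumption {\bf{[A9']}}), and one must upgrade the convergence of $\mathbb{U}_n^{(2)}(\cdot\mid\bar\alpha)$ to be uniform in $\bar\alpha$ near $\alpha_0$ so that the random $\check\alpha_n$ can be inserted; the remaining ingredients are routine given the machinery already developed for Theorem~\ref{thm1-2:jump}.
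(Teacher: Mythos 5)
Your proposal is correct and follows essentially the route the paper intends: the paper omits the detailed proof of Theorem \ref{thm3:jump}, stating that it is obtained by repeating the proof of Theorem \ref{thm1-2:jump} while dividing the argument for $l_n^{(2)}$ into the parts for $\bar{l}_n^{(2)}$ (drift, identified via the second line of {\bf [A9']}) and $\tilde{l}_n^{(2)}$ (jump, identified via the third line of {\bf [A9']}), which is exactly your three-step scheme with the same uniform-convergence, Jensen/Kullback--Leibler identification, and plug-in-of-$\check{\alpha}_n$ ingredients. The points you flag as new (coercivity of the drift contrast alone under {\bf [A9']}, uniformity in $\bar{\alpha}$ so that $\check{\alpha}_n$ may be substituted) are precisely the modifications the paper relies on, so no gap remains.
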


\begin{thm}\label{thm4:jump}
Assume {\bf{[A1]}}-{\bf{[A8]}}, {\bf{[A9']}}, {\bf{[A10]}}, {\bf{[A11]}}, {\bf{[A12']}}, {\bf{[B2]}}, {\bf{[B3]}}, and either [$\textbf{C}_2\textbf{1}$] or [$\textbf{C}_2\textbf{2}$] of Theorem \ref{thm2-2:jump}.
Then, $(\sqrt{n}(\check{\alpha}_n-\alpha_0), \sqrt{nh_n}(\check{\beta}_n-\beta_0),\sqrt{nh_n}(\check{\gamma}_n-\gamma_0))\overset{d}{\to}N(0,J(\theta_0;\theta_0)^{-1})$.
\end{thm}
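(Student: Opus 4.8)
The plan is to reduce Theorem \ref{thm4:jump} to the asymptotic machinery already built for Theorem \ref{thm2-2:jump}, using the block structure of model \eqref{jump-diff:model2}. Since $\beta$ and $\gamma$ are variation independent, $\tilde l_n^{(2)}$ is a function of $\gamma$ alone (with $\Psi_\gamma$ in place of $\Psi_\beta$), $\bar l_n^{(2)}(\cdot\,|\,\check\alpha_n)$ is a function of $\beta$ alone once $\check\alpha_n$ is fixed, and $l_n^{(1)}$ is a function of $\alpha$ alone. By Theorem \ref{thm3:jump} and $\theta_0\in\mathrm{Int}(\Theta)$, with probability tending to one the three argmaxima are interior and solve
\[
\partial_\alpha l_n^{(1)}(\check\alpha_n)=0,\qquad \partial_\beta\bar l_n^{(2)}(\check\beta_n\,|\,\check\alpha_n)=0,\qquad \partial_\gamma\tilde l_n^{(2)}(\check\gamma_n)=0.
\]

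Next I would Taylor-expand each score around the corresponding true value, e.g.
\[
0=\partial_\alpha l_n^{(1)}(\alpha_0)+\Big(\!\int_0^1\!\partial_\alpha^2 l_n^{(1)}(\alpha_0+u(\check\alpha_n-\alpha_0))\,du\Big)(\check\alpha_n-\alpha_0),
\]
and analogously for $\beta$ (retaining the dependence on $\check\alpha_n$) and for $\gamma$. Multiplying by the normalizer $\mathrm{diag}(\sqrt n\,I_p,\ \sqrt{nh_n}\,I_q,\ \sqrt{nh_n}\,I_r)$, the proof splits into: (a) a central limit theorem for the normalized score vector $\big(\tfrac1{\sqrt n}\partial_\alpha l_n^{(1)}(\alpha_0),\ \tfrac1{\sqrt{nh_n}}\partial_\beta\bar l_n^{(2)}(\beta_0\,|\,\alpha_0),\ \tfrac1{\sqrt{nh_n}}\partial_\gamma\tilde l_n^{(2)}(\gamma_0)\big)$; (b) convergence in probability of the normalized (integrated) Hessians to the diagonal blocks $I_a(\alpha_0;\alpha_0)$, $I_b(\theta_0;\theta_0)$, $I_c(\gamma_0;\gamma_0)$ of $J(\theta_0;\theta_0)$; and (c) negligibility of the plug-in error coming from replacing $\check\alpha_n$ by $\alpha_0$ inside $\partial_\beta\bar l_n^{(2)}$.

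For (a), the three blocks are asymptotically independent centered martingale sums: the $\alpha$-block is a quadratic form in the threshold-filtered continuous increments, the $\beta$-block is linear in the drift-adjusted increments $\bar X_{i,n}$, and the $\gamma$-block comes from the jump increments selected by $\boldsymbol{1}_{\{|\Delta X_i^n|>D_2h_n^{\rho_2}\}}$ together with the compensator correction. The $\alpha$-$\beta$ cross conditional covariance vanishes in the limit because it involves an odd Brownian moment; the $\alpha$-$\gamma$ and $\beta$-$\gamma$ cross covariances vanish because, asymptotically, the "big increment" event isolates exactly the increments carrying a jump, whose support is disjoint from that used by the continuous/drift blocks. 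Applying a martingale central limit theorem exactly as in the proof of Theorem \ref{thm2-2:jump}, and identifying the conditional variances via ergodicity \textbf{[A2]} and the moment bound \textbf{[A3]}, gives convergence of the normalized score to $N(0,J(\theta_0;\theta_0))$. The truncation errors from the threshold filters and from $\varphi_n$ under \textbf{[A13]}-(ii) are absorbed by \textbf{[B2]}, \textbf{[B3]} and by $\rho_1\in B_3(\delta)$, $\rho_2\in B_2$, $\rho_3\in[(\tfrac{\delta}{4}\wedge\tfrac1{16}),\tfrac12)$ — verbatim the estimates in the proof of Theorem \ref{thm2-2:jump}, with $\gamma$ substituted for $\beta$ in the jump term. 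For (b), one differentiation further and \textbf{[A2]}--\textbf{[A4]}, \textbf{[A10]} yield $-\tfrac1n\partial_\alpha^2 l_n^{(1)}\overset{P}{\longrightarrow}I_a(\alpha_0;\alpha_0)$, $-\tfrac1{nh_n}\partial_\beta^2\bar l_n^{(2)}(\cdot\,|\,\check\alpha_n)\overset{P}{\longrightarrow}I_b(\theta_0;\theta_0)$, $-\tfrac1{nh_n}\partial_\gamma^2\tilde l_n^{(2)}\overset{P}{\longrightarrow}I_c(\gamma_0;\gamma_0)$, uniformly on a shrinking neighbourhood of the true value, so that consistency allows evaluation at $\theta_0$; nonsingularity is \textbf{[A12']}.

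The main obstacle is step (c). Writing $\partial_\beta\bar l_n^{(2)}(\beta_0\,|\,\check\alpha_n)=\partial_\beta\bar l_n^{(2)}(\beta_0\,|\,\alpha_0)+\big(\int_0^1\partial_\alpha\partial_\beta\bar l_n^{(2)}(\beta_0\,|\,\alpha_0+u(\check\alpha_n-\alpha_0))\,du\big)(\check\alpha_n-\alpha_0)$, I must show the second term is $o_P(\sqrt{nh_n})$. Because $\partial_\beta\bar X_{i,n}=-h_n\partial_\beta b_{i-1}$, the mixed derivative $\partial_\alpha\partial_\beta\bar l_n^{(2)}$ is a sum of $n$ terms each linear in the conditionally centered $O_P(\sqrt{h_n})$ vector $\bar X_{i,n}$, hence a mean-zero martingale of order $O_P(\sqrt{nh_n})$; multiplied by $(nh_n)^{-1/2}$ and by $\check\alpha_n-\alpha_0=O_P(n^{-1/2})$ (from the $\alpha$-block of Theorem \ref{thm2-2:jump}'s argument) it is $O_P(n^{-1/2})=o_P(1)$. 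Thus $\check\alpha_n$ may be replaced by $\alpha_0$ without altering the $\sqrt{nh_n}$-limit of the $\beta$-component, and combining the three asymptotically independent Gaussian scores with the block-diagonal positive-definite limit Hessian $J(\theta_0;\theta_0)$ yields $(\sqrt n(\check\alpha_n-\alpha_0),\ \sqrt{nh_n}(\check\beta_n-\beta_0),\ \sqrt{nh_n}(\check\gamma_n-\gamma_0))\overset{d}{\longrightarrow}N(0,J(\theta_0;\theta_0)^{-1})$.
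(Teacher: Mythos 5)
Your proposal is correct and takes essentially the same route the paper intends: the paper omits the detailed proof of Theorem \ref{thm4:jump}, stating that it follows the proof of Theorem \ref{thm2-2:jump} once the argument for $l_n^{(2)}$ is split into $\bar{l}_n^{(2)}$ (for $\beta$) and $\tilde{l}_n^{(2)}$ (for $\gamma$), which is exactly your three-block scheme of score CLT with vanishing cross terms, block Hessian limits $I_a$, $I_b$, $I_c$, and the $\check{\alpha}_n$ plug-in controlled through the mixed derivative. For your step (c), instead of the pointwise mean-zero-martingale heuristic (which by itself is not uniform along the segment between $\alpha_0$ and $\check{\alpha}_n$), simply invoke the uniform convergence $\sup_{(\bar{\alpha},\beta)\in\Theta}\bigl|\frac{1}{n\sqrt{h_n}}\partial_{\alpha\beta}^2 l_n^{(2)}(\beta|\bar{\alpha})\bigr|\overset{P}{\to}0$ already established in \eqref{ab-2der}/\eqref{thm2:goal3-jump}, which combined with $\sqrt{n}(\check{\alpha}_n-\alpha_0)=O_p(1)$ gives the required negligibility.
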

In a similar way to the proof of 
Theorems \ref{thm1-2:jump} and \ref{thm2-2:jump}, we can prove 
Theorems \ref{thm3:jump} and \ref{thm4:jump}, respectively. 
By dividing the argument for $l_n^{(2)}$ into that for $\bar{l}_n^{(2)}$ and $\tilde{l}_n^{(2)}$, in particular, we can show the above statements. Therefore, we omit detailed proofs.


\section{Quasi-likelihood ratio test}\label{sec:test}
We consider a statistical hypothesis testing problem for model (\ref{jump-diff:model}) as follows. 
Let $k$ and $l$ be known integer values.
\begin{align}\label{test:hypo}
\begin{cases}
H_0:\alpha^{(1)}=\cdots=\alpha^{(k)}=0,\beta^{(1)}=\cdots=\beta^{(l)}=0, \\
H_1:\text{not}\ H_0,
\end{cases}
\end{align}
where $1\leq k\leq p$, $1\leq l\leq q$. 
We set $\Theta_0=\{\theta\in\Theta\ |\ \theta \ \text{satisfies $H_0$} \}$, 
$\Theta_{\alpha_0}=\{ \alpha\in\Theta_\alpha\ |\  \alpha \  \text{satisfies $H_0$} \}$ and
$\Theta_{\beta_0}=\{\beta\in\Theta_\beta\ |\ \beta \ \text{satisfies $H_0$} \}$. 
We assume that $\Theta_0$, $\Theta_{\alpha_0}$ and $\Theta_{\beta_0}$ are compact convex sets.
More general cases, in which the null hypothesis is expressed as the form $H_0':\ g_1(\alpha)=0,\ldots,g_k(\alpha)=0\ \mathrm{and}\ h_1(\beta)=0,\ldots,h_l(\beta)=0$ with some smooth real valued functions $g_1,\ldots,g_k$ and $h_1,\ldots,h_l$, can be put into $H_0$ by a reparametrization. Let $\tilde{\theta}_n$ and $\tilde{\theta}^*_n$ be estimators on $\Theta$ and $\Theta_0$, respectively.
Then, we define the quasi-likelihood ratio test statistics $\Lambda_n$ with the joint quasi-log likelihood function $l_n$ defined by \eqref{l:joint} as follows:
\begin{align}\label{test:stat}
\Lambda_n(\tilde{\theta}_n,\tilde{\theta}^*_n)=-2(l_n(\tilde{\theta}^*_n)-l_n(\tilde{\theta}_n)),
\end{align}
and we define $\hat{\theta}_n^*=(\hat{\alpha}_n^*,\hat{\beta}_n^*)$ and $\check{\theta}_n^*=(\check{\alpha}_n^*,\check{\beta}_n^*)$ as follows:
\begin{align*}
\hat{\theta}_{n}^*&:=\mathrm{argmax}_{\theta\in\Theta_0}l_n(\theta),\\
\check{\alpha}_{n}^*:=\mathrm{argmax}_{\alpha\in\Theta_{\alpha_0}}&l_n^{(1)}(\alpha),\
\check{\beta}_{n}^*:=\mathrm{argmax}_{\beta\in\Theta_{\beta_0}}l_n^{(2)}(\beta|\check{\alpha}_{n}^*).
\end{align*}

\subsection{Asymptotic distribution of test statistics}
We state the asymptotic distribution of the test statistics $\Lambda_n$ under $H_0$. We make the following assumption to obtain this.
\begin{enumerate}
\item[{[{\bf T1}]}] Let $\hat{\theta}_n$ and $\hat{\theta}_n^*$ be the joint quasi-maximum likelihood estimators on $\Theta$ and $\Theta_0$, respectively, and $\tilde{\theta}_n,\tilde{\theta}_n^*$ be the estimators on $\Theta$ and $\Theta_0$, respectively. For all $\theta\in\Theta$, it holds that $D_n^\frac{1}{2}(\hat{\theta}_n-\tilde{\theta}_n)=o_p(1)$, and that $ D_n^\frac{1}{2}(\hat{\theta}^{*}_n-\tilde{\theta}^*_n)=o_p(1)$ under $H_0$.
\end{enumerate}
Our theorem regarding asymptotic distribution of test statistics is the following:
\begin{thm}\label{thm5:H0test}
Assume {\bf{[A1]}}-{\bf{[A12]}}, {\bf{[B2]}}, {\bf{[B3]}}, {[{\bf T1}]}, and either {[$\textbf{D}_2 \textbf{1}$]} or {[$\textbf{D}_2 \textbf{2}$]} of Corollary \ref{thm2-1:jump}. 
Then, under $H_0$,
$\Lambda_n(\tilde{\theta}_n,\tilde{\theta}^*_n)\overset{d}{\to}\chi_{k+l}^2$.
\end{thm}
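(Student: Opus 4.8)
The plan is to run the classical Wilks-type argument, taking as inputs the asymptotic linearization of the quasi-maximum likelihood estimators contained in the proof of Theorem~\ref{thm2-2:jump} and Corollary~\ref{thm2-1:jump}, and then to identify the limiting law as $\chi^2_{k+l}$ by a projection computation. Throughout put $D_n:=\mathrm{diag}(nI_p,\,nh_nI_q)$, so that Corollary~\ref{thm2-1:jump} reads $D_n^{1/2}(\hat\theta_n-\theta_0)\overset{d}{\to}N(0,I(\theta_0;\theta_0)^{-1})$, and write $I:=I(\theta_0;\theta_0)$. \emph{Reduction to the joint QMLE.} Since $\theta_0\in\mathrm{Int}(\Theta)$ and $\hat\theta_n\overset{P}{\to}\theta_0$, eventually $\partial_\theta l_n(\hat\theta_n)=0$, so a Taylor expansion with vanishing linear term gives $l_n(\tilde\theta_n)-l_n(\hat\theta_n)=\tfrac12\bigl(D_n^{1/2}(\tilde\theta_n-\hat\theta_n)\bigr)^\top\bigl[D_n^{-1/2}\partial_\theta^2 l_n(\bar\theta_n)D_n^{-1/2}\bigr]\bigl(D_n^{1/2}(\tilde\theta_n-\hat\theta_n)\bigr)$ at an intermediate point $\bar\theta_n\overset{P}{\to}\theta_0$. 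By the uniform law of large numbers for the normalized Hessian established in the proof of Theorem~\ref{thm2-2:jump} the bracket is $O_p(1)$, while $D_n^{1/2}(\tilde\theta_n-\hat\theta_n)=o_p(1)$ by [{\bf T1}]; hence $l_n(\tilde\theta_n)-l_n(\hat\theta_n)=o_p(1)$. Under $H_0$ the same computation, using the interior first-order condition $\partial_{\theta^{(2)}}l_n(\hat\theta_n^*)=0$ on $\Theta_0$ (valid because $\theta_0$ lies in the relative interior of $\Theta_0$) together with $\tilde\theta_n^{*(1)}=\hat\theta_n^{*(1)}=0$, gives $l_n(\tilde\theta_n^*)-l_n(\hat\theta_n^*)=o_p(1)$. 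Therefore $\Lambda_n(\tilde\theta_n,\tilde\theta_n^*)=\Lambda_n(\hat\theta_n,\hat\theta_n^*)+o_p(1)$ and it suffices to analyse the joint quasi-maximum likelihood estimators.

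\emph{Linearization.} Partition $\theta=(\theta^{(1)},\theta^{(2)})$, where $\theta^{(1)}$ gathers the $k+l$ coordinates set to $0$ under $H_0$ and $\theta^{(2)}$ the remaining $p+q-k-l$, and partition $I$ and $D_n$ accordingly, with $I_{22}$ the $\theta^{(2)}$-block; since $I$ is a sum of Gram-type forms, cf.\ \eqref{I_a:jump}--\eqref{I_c:jump}, and is nonsingular by [{\bf A12}], it is positive definite, hence so is its principal submatrix $I_{22}$. Put $\Delta_n:=D_n^{-1/2}\partial_\theta l_n(\theta_0)$, which satisfies $\Delta_n\overset{d}{\to}N(0,I)$ by the central limit theorem obtained in the proof of Theorem~\ref{thm2-2:jump}. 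Expanding $\partial_\theta l_n(\hat\theta_n)=0$ and $\partial_{\theta^{(2)}}l_n(\hat\theta_n^*)=0$ around $\theta_0$ (using $\hat\theta_n^{*(1)}=\theta_0^{(1)}=0$ for the latter) and inverting the Hessian through its limit $-I$ yields $D_n^{1/2}(\hat\theta_n-\theta_0)=I^{-1}\Delta_n+o_p(1)$ and $D_n^{1/2}(\hat\theta_n^*-\theta_0)=(0,\,I_{22}^{-1}\Delta_n^{(2)})+o_p(1)$, the latter obtained by applying the argument of Theorem~\ref{thm2-2:jump} to the sub-model in which $\theta^{(1)}$ is fixed at $0$, whose hypotheses hold under $H_0$. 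Substituting into $\Lambda_n(\hat\theta_n,\hat\theta_n^*)=\bigl(D_n^{1/2}(\hat\theta_n^*-\hat\theta_n)\bigr)^\top\bigl[-D_n^{-1/2}\partial_\theta^2 l_n(\bar\theta_n)D_n^{-1/2}\bigr]\bigl(D_n^{1/2}(\hat\theta_n^*-\hat\theta_n)\bigr)$, replacing the bracket by its limit $I$, and simplifying gives $\Lambda_n(\hat\theta_n,\hat\theta_n^*)=\Delta_n^\top I^{-1}\Delta_n-\Delta_n^{(2)\top}I_{22}^{-1}\Delta_n^{(2)}+o_p(1)$.

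\emph{Identification of the limit.} By the above and the continuous mapping theorem, $\Lambda_n\overset{d}{\to}\Delta^\top I^{-1}\Delta-\Delta^{(2)\top}I_{22}^{-1}\Delta^{(2)}$ with $\Delta\sim N(0,I)$. Writing $W:=I^{-1/2}\Delta\sim N(0,\mathrm{Id}_{p+q})$ and $S_2$ for the matrix selecting the $\theta^{(2)}$-coordinates, so that $I_{22}=S_2IS_2^\top$ and $\Delta^{(2)}=S_2\Delta$, the limit equals $W^\top(\mathrm{Id}_{p+q}-M)W$ with $M:=I^{1/2}S_2^\top I_{22}^{-1}S_2I^{1/2}$; from $S_2IS_2^\top=I_{22}$ one checks $M^\top=M=M^2$, so $\mathrm{Id}_{p+q}-M$ is an orthogonal projection of rank $(p+q)-(p+q-k-l)=k+l$, whence $W^\top(\mathrm{Id}_{p+q}-M)W\sim\chi^2_{k+l}$. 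This proves $\Lambda_n(\tilde\theta_n,\tilde\theta_n^*)\overset{d}{\to}\chi^2_{k+l}$ under $H_0$.

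\emph{Main obstacle.} The nontrivial ingredients are the two asymptotics imported from Section~\ref{sec4.3:m}: the central limit theorem $\Delta_n\overset{d}{\to}N(0,I)$ with covariance block-diagonal across the two rates $\sqrt n$ and $\sqrt{nh_n}$ (in particular, that the asymptotic covariance of the score coincides with minus the limiting Hessian, which makes the sandwich collapse to $I^{-1}$), and the uniform convergence $-D_n^{-1/2}\partial_\theta^2 l_n(\bar\theta_n)D_n^{-1/2}\overset{P}{\to}I$ for $\bar\theta_n\overset{P}{\to}\theta_0$; both must be read off from the proof of Theorem~\ref{thm2-2:jump}, where they are established in the presence of the threshold indicators $\boldsymbol{1}_{\{|\Delta X_i^n|\le D_jh_n^{\rho_j}\}}$ and the truncation $\varphi_n$. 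The only additional work is the linearization of the constrained estimator $\hat\theta_n^*$, handled by applying the same machinery to the restricted model (its hypotheses, in particular nonsingularity of $I_{22}$, being inherited under $H_0$); the concluding linear-algebra identity is routine.
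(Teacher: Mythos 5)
Your proposal is correct and follows essentially the same route as the paper's own proof: reduce to the joint quasi-maximum likelihood estimators via [{\bf T1}] and Taylor expansions, linearize $\hat{\theta}_n$ and $\hat{\theta}_n^*$ through the common normalized score with the limiting information $I(\theta_0;\theta_0)$ (the paper's Lemma \ref{lemma:H0test}, whose matrix $H$ plays the role of your $(0,\,I_{22}^{-1}\Delta_n^{(2)})$ since $I$ is block diagonal across $\alpha$ and $\beta$), and identify the limiting quadratic form as $\chi^2_{k+l}$. The only cosmetic differences are that the paper treats the joint-QMLE case first and kills the linear term in the constrained reduction by an $O_p(1)\cdot o_p(1)$ bound (via $D_n^{-1/2}\partial_\theta l_n(\hat{\theta}_n^*)=O_p(1)$) rather than your exact cancellation along the $\Theta_0$-directions, and it cites Ferguson's lemma for the chi-square identification instead of your explicit idempotent-projection computation.
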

From the view point of numerical analysis, the simultaneous estimators are quite unstable when the dimension of the parameter space is large. On the other hand, the adaptive estimators have good behaviors.
The following proposition shows our adaptive estimators can be applied to Theorem \ref{thm5:H0test}.
\begin{prop}\label{prop:test}
Assume {\bf{[A1]}}-{\bf{[A12]}}, {\bf{[B2]}}, {\bf{[B3]}}, and either ``{[$\textbf{C}_2 \textbf{1}$]} of Theorem \ref{thm2-2:jump} and {[$\textbf{D}_2 \textbf{1}$]} of Corollary \ref{thm2-1:jump}'' or ``{[$\textbf{C}_2 \textbf{2}$]} of Theorem \ref{thm2-2:jump} and {[$\textbf{D}_2 \textbf{2}$]} of Corollary \ref{thm2-1:jump}''.
Then, the adaptive estimator $(\tilde{\theta}_n,\tilde{\theta}_n^*)=(\check{\theta}_n,\check{\theta}_n^*)$ satisfies [{\bf T1}]. 
\end{prop}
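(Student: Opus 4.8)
The plan is to verify condition [{\bf T1}] for the pair $(\tilde\theta_n,\tilde\theta_n^*)=(\check\theta_n,\check\theta_n^*)$ by showing that the joint QMLE and the adaptive estimator are close enough on the scale $D_n^{1/2}$, where $D_n=\mathrm{diag}(\sqrt n\,\mathbf 1_p,\sqrt{nh_n}\,\mathbf 1_q)$. The key observation is that both estimators have already been shown, in Corollary \ref{thm2-1:jump} and Theorem \ref{thm2-2:jump}, to satisfy
\[
D_n^{1/2}(\hat\theta_n-\theta_0)\overset{d}{\to}N(0,I(\theta_0;\theta_0)^{-1}),\qquad
D_n^{1/2}(\check\theta_n-\theta_0)\overset{d}{\to}N(0,I(\theta_0;\theta_0)^{-1}),
\]
under the stated hypotheses (here I am reading $D_n^{1/2}$ as $\mathrm{diag}(n^{1/4}\mathbf 1_p,(nh_n)^{1/4}\mathbf 1_q)$, matching the rates $\sqrt n$ and $\sqrt{nh_n}$ appearing in those results). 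So both are $\sqrt{}$-consistent at the appropriate block rates; what is \emph{not} immediate is that their difference is $o_p(1)$ on the \emph{faster} scale $D_n^{1/2}$ rather than merely $O_p(1)$.

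First I would recall the first-order expansions underlying the two asymptotic normality proofs. Both estimators are $Z$-estimators: $\hat\theta_n$ zeroes $\partial_\theta l_n$, while $\check\alpha_n$ zeroes $\partial_\alpha l_n^{(1)}$ and $\check\beta_n$ zeroes $\partial_\beta l_n^{(2)}(\cdot\,|\,\check\alpha_n)$. In each case a Taylor expansion around $\theta_0$ gives
\[
D_n^{1/2}(\hat\theta_n-\theta_0)= I(\theta_0;\theta_0)^{-1}\,D_n^{-1/2}\partial_\theta l_n(\theta_0)+o_p(1),
\]
and analogously for the adaptive estimator with its own score vector $(\,D_n^{-1/2}\partial_\alpha l_n^{(1)}(\alpha_0),\,D_n^{-1/2}\partial_\beta l_n^{(2)}(\beta_0|\alpha_0)\,)$, using that the block structure of $I(\theta_0;\theta_0)$ (diagonal in $\alpha$ and $\beta$) makes the adaptive plug-in of $\check\alpha_n$ into $l_n^{(2)}$ asymptotically negligible — this is exactly the point already exploited in the proof of Theorem \ref{thm2-2:jump}. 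Thus [{\bf T1}] reduces to showing that the normalized score vectors of $l_n$ and of the adaptive pair coincide up to $o_p(1)$:
\[
D_n^{-1/2}\bigl(\partial_\theta l_n(\theta_0)-(\partial_\alpha l_n^{(1)}(\alpha_0),\,\partial_\beta l_n^{(2)}(\beta_0|\alpha_0))\bigr)=o_p(1).
\]
For the $\beta$-block both scores share the same $\tilde l_n$ and $\tilde l_n^{(2)}$ pieces, so only the $\bar l_n$ versus $\bar l_n^{(2)}$ continuous parts differ, and these differ only through the centering $b_{i-1}(\beta)$ in $\bar X_{i,n}(\beta)$ (present in both) and the threshold constants $(D_1,\rho_1)$ versus $(D_3,\rho_3)$; under the threshold ranges in $[\textbf{C}_2\textbf{1}]/[\textbf{C}_2\textbf{2}]$ and $[\textbf{D}_2\textbf{1}]/[\textbf{D}_2\textbf{2}]$, the two indicator sets differ only on increments containing jumps, whose contribution is asymptotically negligible at the relevant rate by the jump-filtering estimates that underlie Theorems \ref{thm1-2:jump}–\ref{thm2-2:jump}. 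For the $\alpha$-block, $\partial_\alpha l_n(\theta_0)$ involves $\bar X_{i,n}(\beta_0)=\Delta X_i^n-h_nb_{i-1}(\beta_0)$ whereas $\partial_\alpha l_n^{(1)}(\alpha_0)$ uses $\Delta X_i^n$ directly; the difference is $O(h_n)$ per increment inside the continuous-part indicator, and one checks $D_n^{-1/2}$ times the sum of these terms is $o_p(1)$ using $nh_n^2\varepsilon_n^{-4}\to0$ (condition [\textbf{B2}]) exactly as in the corresponding step of the asymptotic normality proof.

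The same argument applies verbatim under $H_0$ to the restricted estimators $\hat\theta_n^*$ and $\check\theta_n^*$: the constraint sets $\Theta_0$, $\Theta_{\alpha_0}$, $\Theta_{\beta_0}$ are compact convex, $\theta_0\in\Theta_0$ under $H_0$, and the restricted $Z$-estimators admit the analogous expansions with $I(\theta_0;\theta_0)$ replaced by its restriction to the linear subspace $\{\alpha^{(1)}=\cdots=\alpha^{(k)}=0,\ \beta^{(1)}=\cdots=\beta^{(l)}=0\}$ (which is still block-diagonal and nonsingular on that subspace by [\textbf{A12}]); so $D_n^{1/2}(\hat\theta_n^*-\tilde\theta_n^*)=o_p(1)$ under $H_0$ follows identically. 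Combining the unrestricted and restricted statements gives [{\bf T1}], and hence $(\check\theta_n,\check\theta_n^*)$ is admissible in Theorem \ref{thm5:H0test}.

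I expect the main obstacle to be the rigorous control of the \emph{threshold mismatch} in the $\beta$-block — showing that replacing the filter $\{|\Delta X_i^n|\le D_1 h_n^{\rho_1}\}$ by $\{|\Delta X_i^n|\le D_3 h_n^{\rho_3}\}$ (with possibly very different $\rho$'s, constrained only by the ranges in $[\textbf{C}_2]/[\textbf{D}_2]$) changes the $\alpha$- and $\beta$-scores by $o_p(D_n^{1/2})$ and not merely $O_p(D_n^{1/2})$. This requires the refined jump-detection bounds: on increments with a jump, the continuous-part summand is large but the probability of such an increment is $O(h_n)$, and the tail estimate $f_{\beta_0}(z)\mathbf 1_{\{|z|\le r\}}\le K|z|^{1-d}$ from [\textbf{A5}] together with the threshold gap must be combined so that the expected number of misclassified increments times the per-increment contribution, scaled by $D_n^{-1/2}$, vanishes. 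This is precisely the kind of estimate carried out (for a single threshold) in the proof of Theorem \ref{thm2-2:jump}, and the work here is to check it is uniform across the multiple thresholds; everything else is a routine repetition of the $Z$-estimator linearization already established in the paper, so I would organize the write-up to cite those lemmas and only spell out the threshold-comparison step in detail.
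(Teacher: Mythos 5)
Your proposal follows essentially the same route as the paper's proof: Taylor/Z-estimator linearizations of the joint and adaptive estimators at $\theta_0$ reduce [{\bf T1}] to showing $\frac{1}{\sqrt n}\left(\partial_\alpha l_n(\theta_0)-\partial_\alpha l_n^{(1)}(\alpha_0)\right)=o_p(1)$ and $\frac{1}{\sqrt{nh_n}}\left(\partial_\beta l_n(\theta_0)-\partial_\beta l_n^{(2)}(\beta_0|\alpha_0)\right)=o_p(1)$, which the paper establishes exactly as you sketch, via the drift-centering bound and the threshold-mismatch (indicator-difference) estimates based on Proposition \ref{prop2:jump} under the stated $\rho$-ranges, and then treats the restricted estimators under $H_0$ by the same argument on the reduced parameter space. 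Two minor remarks: the paper's write-up also allows the jump-part thresholds of the joint and adaptive functions to differ ($\bar\rho_2\neq\rho_2$) and controls that mismatch by the same device, and throughout these proofs $D_n^{\frac12}$ is effectively used as $\mathrm{diag}(\sqrt n E_p,\sqrt{nh_n}E_q)$ — the rate your score-comparison argument in fact delivers.
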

\begin{rmk}
Proposition \ref{prop:test} shows that we can choose up to five thresholds to compose 
the test statistics $\Lambda_n$; in five thresholds, three thresholds are for adaptive estimators, and two thresholds are for joint quasi-log likelihood function. 
\end{rmk}
\begin{col}\label{col:H0test}
Assume {\bf{[A1]}}-{\bf{[A12]}}, {\bf{[B2]}}, {\bf{[B3]}}, and either ``{[$\textbf{C}_2 \textbf{1}$]} of Theorem \ref{thm2-2:jump} and {[$\textbf{D}_2 \textbf{1}$]} of Corollary \ref{thm2-1:jump}'' or ``{[$\textbf{C}_2 \textbf{2}$]} of Theorem \ref{thm2-2:jump} and {[$\textbf{D}_2 \textbf{2}$]} of Corollary \ref{thm2-1:jump}''.
Then, under $H_0$,
$\Lambda_n(\check{\theta}_n,\check{\theta}^*_n)\overset{d}{\to}\chi_{k+l}^2$.
\end{col}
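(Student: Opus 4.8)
The plan is to deduce Corollary \ref{col:H0test} by combining Theorem \ref{thm5:H0test} with Proposition \ref{prop:test}. Theorem \ref{thm5:H0test} asserts that for \emph{any} pair of estimators $(\tilde\theta_n,\tilde\theta_n^*)$ satisfying assumption [{\bf T1}] (together with the standing assumptions [{\bf A1}]--[{\bf A12}], [{\bf B2}], [{\bf B3}], and one of the $\rho$-regimes), the quasi-likelihood ratio statistic $\Lambda_n(\tilde\theta_n,\tilde\theta_n^*)$ converges in distribution to $\chi^2_{k+l}$ under $H_0$. Proposition \ref{prop:test} shows that, under exactly the hypotheses stated in the corollary, the adaptive pair $(\check\theta_n,\check\theta_n^*)$ is a legitimate choice for $(\tilde\theta_n,\tilde\theta_n^*)$, i.e. it satisfies [{\bf T1}]. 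So the proof is essentially a two-line invocation: first verify that the hypotheses of the corollary imply the hypotheses of both Theorem \ref{thm5:H0test} and Proposition \ref{prop:test}, then substitute.

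The one genuine point to check is the compatibility of the $\rho$-conditions. Theorem \ref{thm5:H0test} requires [$\textbf{D}_2\textbf{1}$] or [$\textbf{D}_2\textbf{2}$] of Corollary \ref{thm2-1:jump} (conditions on $\rho_1,\rho_2$ for the joint quasi-log likelihood $l_n$ appearing in $\Lambda_n$), whereas Proposition \ref{prop:test} requires a $\textbf{C}_2$-type condition of Theorem \ref{thm2-2:jump} paired with a $\textbf{D}_2$-type condition of Corollary \ref{thm2-1:jump} (conditions on the thresholds $\rho_1,\rho_2,\rho_3$ entering the adaptive functions $l_n^{(1)},l_n^{(2)}$). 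The hypothesis of the corollary is precisely the ``$\textbf{C}_2$ and $\textbf{D}_2$'' pairing of Proposition \ref{prop:test}, and this pairing already contains the [$\textbf{D}_2\textbf{1}$]/[$\textbf{D}_2\textbf{2}$] clause needed by Theorem \ref{thm5:H0test}. Hence both results apply simultaneously with no conflict among the threshold exponents; I would state this explicitly so the reader sees that the five thresholds (three adaptive, two joint) can indeed be chosen consistently, as noted in the remark following Proposition \ref{prop:test}.

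Concretely, the argument runs as follows. Assume the hypotheses of Corollary \ref{col:H0test}. By Proposition \ref{prop:test}, the adaptive estimators $(\check\theta_n,\check\theta_n^*)$ satisfy [{\bf T1}]: that is, $D_n^{1/2}(\hat\theta_n-\check\theta_n)=o_p(1)$ and $D_n^{1/2}(\hat\theta_n^*-\check\theta_n^*)=o_p(1)$ under $H_0$, where $\hat\theta_n,\hat\theta_n^*$ are the joint quasi-maximum likelihood estimators on $\Theta$ and $\Theta_0$. Since the hypotheses of the corollary also include [{\bf A1}]--[{\bf A12}], [{\bf B2}], [{\bf B3}] and the relevant [$\textbf{D}_2$] condition of Corollary \ref{thm2-1:jump}, all hypotheses of Theorem \ref{thm5:H0test} are met with $(\tilde\theta_n,\tilde\theta_n^*)=(\check\theta_n,\check\theta_n^*)$. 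Applying Theorem \ref{thm5:H0test} with this choice yields, under $H_0$,
\begin{align*}
\Lambda_n(\check\theta_n,\check\theta_n^*)\overset{d}{\longrightarrow}\chi^2_{k+l},
\end{align*}
which is the assertion of the corollary.

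I do not expect any serious obstacle here: the corollary is purely a specialization/combination step, and all the analytic work (the stochastic expansion of $\Lambda_n$ around the joint MLE, the $\chi^2$ limit, and the $o_p(1)$-closeness of adaptive and joint estimators) has already been carried out in Theorem \ref{thm5:H0test} and Proposition \ref{prop:test}. The only thing warranting a sentence of care is making the bookkeeping of the threshold-exponent conditions transparent, so that it is clear the stated hypotheses genuinely supply everything both prerequisites demand; beyond that the proof is immediate and can be stated in a few lines.
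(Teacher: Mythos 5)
Your proposal is correct and coincides with the paper's own argument: the paper states that Corollary \ref{col:H0test} follows immediately from Theorem \ref{thm5:H0test} combined with Proposition \ref{prop:test}, which is exactly the two-step substitution you carry out. Your extra remark on the compatibility of the threshold-exponent conditions is a harmless (and reasonable) elaboration of the same route.
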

By Theorem \ref{thm5:H0test} and Proposition \ref{prop:test}, the proof of Corollary \ref{col:H0test} is obvious.
\subsection{Consistency of test}
Next, we consider alternative hypothesis $H_1$.
 Let $\theta_1=(\alpha_1,\beta_1)$, which is the true parameter under $H_1$,  and $\pi^*$ be invariant probability measure for $\theta_1$.  
 We define $\theta^*$ as follows:
\begin{align*}
 \theta^*=(\alpha^*,\beta^*), \ \ \
\alpha^*=\argsup_{\alpha\in\Theta_{\alpha_0}}U_1^*(\alpha,\alpha_1),\ \ \ \beta^*=\argsup_{\beta\in\Theta_{\beta_0}}V_{\beta_1}^*(\alpha^*,\beta),
\end{align*}
where
\begin{align}
U_1^*(\alpha,\alpha_1)&:=-\frac{1}{2}\int_{}^{}{\left\{\mathrm{tr}\left(S^{-1}(x,\alpha)S(x,\alpha_1)\right)+\log\det S(x,\alpha)\right\}}\pi^*(dx),\\
\bar{U}_{\beta_1}^{(2)^*}(\alpha,\beta)&:= -\frac{1}{2}\int{(b(x,\beta)-b(x,\beta_1))^\top S^{-1}(x,\alpha)(b(x,\beta)-b(x,\beta_1))}\pi^*(dx),\\
\tilde{U}_{\beta_1}^{(2)^*}(\beta)&:=\iint_A{\left\{(\log\Psi_\beta (y,x))\Psi_{\beta_1} (y,x)-\Psi_\beta (y,x)\right\}}dy\pi^*(dx),\\
V_{\beta_1}^*(\alpha,\beta)&:=\bar{U}_{\beta_1}^{(2)^*}(\alpha,\beta)+\tilde{U}_{\beta_1}^{(2)^*}(\beta)-\tilde{U}_{\beta_1}^{(2)^*}(\beta_1).
\end{align}
We make the following assumptions to obtain consistency of the test:
\begin{itemize}
\item[{\bf [E1]}]
\begin{enumerate}
\item[ (i)]
For any $\varepsilon>0$, 
\begin{align*}
\sup_{\{\alpha\in\Theta_{\alpha_0}:|\alpha-\alpha^*|\geq\varepsilon\}}(U_1^*(\alpha,\alpha_1)-U_1^*(\alpha^*,\alpha_1))<0.
\end{align*}
\item[ (ii)]
For any $\varepsilon>0$, 
\begin{align*}
\sup_{\{\beta\in\Theta_{\beta_0}:|\beta-\beta^*|\geq\varepsilon\}}(V_{\beta_1}^*(\alpha^*,\beta)-V_{\beta_1}^*(\alpha^*,\beta^*))<0.
\end{align*}
\end{enumerate}
\item[{\bf [E2]}]
For any $\theta\in\Theta$, $I(\theta;\theta_1)$ is non-singular.
\end{itemize}
\begin{enumerate}
\item[{[{\bf T2}]}] Let $\hat{\theta}_n$ be the joint quasi-log likelihood estimator on $\Theta$, and $\tilde{\theta}_n$ and $\tilde{\theta}_n^*$ be the estimators on $\Theta$ and $\Theta_0$, respectively. For all $\theta\in\Theta$, it holds that $D_n^\frac{1}{2}(\hat{\theta}_n-\tilde{\theta}_n)=o_p(1)$, and that $\tilde{\theta}_n\overset{P}{\to}\theta_1$ and $\tilde{\theta}^*_n\overset{P}{\to}\theta^*$ under $H_1$.
\end{enumerate}
For $\varepsilon\in(0,1)$, $\chi_{k+l,\varepsilon}^2$ represents the upper $\varepsilon$ point of $\chi_{k+l}^2$.
Our theorem for consistent test is the following:
\begin{thm}\label{thm6:H1test}
Assume {\bf{[A1]}}-{\bf{[A12]}}, {\bf{[B2]}}, {\bf{[B3]}}, [{\bf E1}], [{\bf E2}], {[{\bf T2}]}, and either {[$\textbf{D}_2 \textbf{1}$]} or {[$\textbf{D}_2 \textbf{2}$]} of Corollary \ref{thm2-1:jump}.
Then, under $H_1$, $P(\Lambda_n(\tilde{\theta}_n,\tilde{\theta}^*_n)>\chi_{k+l,\epsilon}^2)\to 1$.
\end{thm}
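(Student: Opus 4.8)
The plan is to prove that $\Lambda_n(\tilde\theta_n,\tilde\theta^*_n)\overset{P}{\longrightarrow}+\infty$ under $H_1$; since $\chi^2_{k+l,\epsilon}$ is a fixed number, $P(\Lambda_n>\chi^2_{k+l,\epsilon})\to1$ then follows at once. First I would reduce to the joint quasi-maximum likelihood estimator. Writing $\Lambda_n=2(l_n(\tilde\theta_n)-l_n(\tilde\theta^*_n))$, note that $l_n(\tilde\theta^*_n)\le\sup_{\theta\in\Theta_0}l_n(\theta)$ because $\tilde\theta^*_n\in\Theta_0$, while $l_n(\tilde\theta_n)=l_n(\hat\theta_n)+o_p(1)$: by [{\bf T2}] one has $D_n^{1/2}(\hat\theta_n-\tilde\theta_n)=o_p(1)$ and $\hat\theta_n\overset{P}{\to}\theta_1\in\mathrm{Int}(\Theta)$, so a second-order expansion of $l_n$ about its interior maximiser $\hat\theta_n$, using the boundedness in probability of $D_n^{-1/2}\partial_\theta^2 l_n(\cdot)D_n^{-1/2}$ near $\theta_1$ (established as in the proof of Theorem \ref{thm2-2:jump}, now under $P_{\theta_1}$), gives $0\le l_n(\hat\theta_n)-l_n(\tilde\theta_n)=o_p(1)$. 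With $l_n(\hat\theta_n)\ge l_n(\theta_1)$ this yields $\Lambda_n\ge2\bigl(l_n(\theta_1)-\sup_{\theta\in\Theta_0}l_n(\theta)\bigr)-o_p(1)$, so it suffices to show $l_n(\theta_1)-\sup_{\theta\in\Theta_0}l_n(\theta)\overset{P}{\to}+\infty$.

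Next I would set up the two-scale asymptotics of $l_n$ under $P_{\theta_1}$ (invariant measure $\pi^*$), obtained by repeating the ergodic-theorem and jump-filter arguments behind Theorems \ref{thm1-2:jump}--\ref{thm2-2:jump} with $\pi$ replaced by $\pi^*$: uniformly in $\theta\in\Theta$, (a) $\frac1n l_n(\alpha,\beta)=U_1^*(\alpha,\alpha_1)+o_p(1)$, with $\alpha\mapsto l_n(\alpha,\beta_1)-nU_1^*(\alpha,\alpha_1)$ and its $\alpha$-gradient of order $O_p(\sqrt n)$ near $\alpha_1$; and (b) $\frac1{nh_n}\bigl(l_n(\alpha,\beta)-l_n(\alpha,\beta_1)\bigr)=V_{\beta_1}^*(\alpha,\beta)+o_p(1)$, where $V_{\beta_1}^*(\cdot,\beta_1)\equiv0$. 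From the pointwise inequality $\mathrm{tr}(S(x,\alpha)^{-1}S(x,\alpha_1))+\log\det S(x,\alpha)\ge d+\log\det S(x,\alpha_1)$, the elementary inequality $\Psi_{\beta_1}\log(\Psi_\beta/\Psi_{\beta_1})\le\Psi_\beta-\Psi_{\beta_1}$, and the identifiability [{\bf A9}]: $\alpha\mapsto U_1^*(\alpha,\alpha_1)$ has a unique, well-separated maximiser at $\alpha_1$ over $\Theta_\alpha$, which is a critical point with negative-definite Hessian $-I_a(\alpha_1;\alpha_1)$ (nondegeneracy by [{\bf E2}]), while $V_{\beta_1}^*(\alpha,\beta)\le0$ with equality only at $\beta_1$; and, by [{\bf E1}], $\theta^*=(\alpha^*,\beta^*)$ is a well-separated maximiser of the restricted contrasts. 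For $(\alpha,\beta)\in\Theta_{\alpha_0}\times\Theta_{\beta_0}$, writing
\[
l_n(\theta_1)-l_n(\alpha,\beta)=\bigl(l_n(\alpha_1,\beta_1)-l_n(\alpha,\beta_1)\bigr)-\bigl(l_n(\alpha,\beta)-l_n(\alpha,\beta_1)\bigr)
\]
and using that by (b) the last bracket equals $nh_n V_{\beta_1}^*(\alpha,\beta)+o_p(nh_n)$, hence is $\le o_p(nh_n)$ uniformly (as $V_{\beta_1}^*\le0$), I would deduce
\[
l_n(\theta_1)-\sup_{\Theta_0}l_n\ \ge\ \inf_{\alpha\in\Theta_{\alpha_0},\,\beta\in\Theta_{\beta_0}}\Bigl\{\bigl(l_n(\alpha_1,\beta_1)-l_n(\alpha,\beta_1)\bigr)-nh_n V_{\beta_1}^*(\alpha,\beta)\Bigr\}-o_p(nh_n).
\]

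To bound the infimum, observe that $H_1$ forbids $\theta_1\in\Theta_0=\Theta_{\alpha_0}\times\Theta_{\beta_0}$, so either $\alpha_1\notin\Theta_{\alpha_0}$, or $\alpha_1\in\Theta_{\alpha_0}$ and then $\beta_1\notin\Theta_{\beta_0}$. In the first regime, uniqueness of the maximiser gives $\delta_1:=U_1^*(\alpha_1,\alpha_1)-U_1^*(\alpha^*,\alpha_1)>0$, so by (a) $l_n(\alpha_1,\beta_1)-l_n(\alpha,\beta_1)\ge n\delta_1-o_p(n)$ uniformly over $\Theta_{\alpha_0}$; since $-nh_n V_{\beta_1}^*\ge0$ and $o_p(nh_n)=o_p(n)$, this gives $l_n(\theta_1)-\sup_{\Theta_0}l_n\ge n\delta_1-o_p(n)\to+\infty$. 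In the second regime every $\beta\in\Theta_{\beta_0}$ has $\beta\ne\beta_1$, hence $-m:=\sup_{\beta\in\Theta_{\beta_0}}V_{\beta_1}^*(\alpha_1,\beta)<0$; choosing $\eta>0$ so small that $\sup\{V_{\beta_1}^*(\alpha,\beta):|\alpha-\alpha_1|\le\eta,\ \beta\in\Theta_{\beta_0}\}\le-m/2$, the curvature bound in (a) gives, for $|\alpha-\alpha_1|\le\eta$, $l_n(\alpha_1,\beta_1)-l_n(\alpha,\beta_1)\ge nc_0|\alpha-\alpha_1|^2-O_p(\sqrt n)|\alpha-\alpha_1|\ge-O_p(1)$ (minimising the quadratic), so the bracket there is $\ge-O_p(1)+nh_nm/2$, while for $|\alpha-\alpha_1|>\eta$ it is $\ge n\,c(\eta)-o_p(n)$ with $c(\eta)>0$ by well-separation; thus $l_n(\theta_1)-\sup_{\Theta_0}l_n\ge\min\{nh_nm/2-O_p(1),\,n\,c(\eta)-o_p(n)\}-o_p(nh_n)\to+\infty$ since $nh_n\to\infty$ and $h_n\to0$. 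In both regimes $\Lambda_n\overset{P}{\to}+\infty$, which finishes the proof.

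The main obstacle is the second regime, where $H_1$ fails only through the drift/jump parameter $\beta$: the $O(n)$-sized diffusion part of $l_n$ then makes no net contribution to separating $\theta_1$ from $\Theta_0$, the divergence occurring only at the slower scale $nh_n$, so one must show that the $O(n)$ terms cancel up to $o_p(nh_n)$ uniformly over the data-dependent $\alpha$-directions near $\alpha_1$ — which is precisely why the quadratic/curvature estimate in (a) and the refined expansion (b), not mere consistency of the estimators, are needed, and where the balance condition $nh_n\to\infty$ (from [{\bf B2}], cf.\ Remark \ref{balance2}) is used. Establishing (a) and (b) uniformly with the claimed fluctuation orders under $P_{\theta_1}$, via the threshold and jump-filter machinery of \citet{Shimizu-Yoshida_JP,Shimizu-Yoshida} and \citet{Ogihara-Yoshida}, is technically the bulk of the work but is a routine transcription of what is already carried out for Theorems \ref{thm1-2:jump}--\ref{thm2-2:jump}.
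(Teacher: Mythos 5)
Your proposal is correct in substance, but it reaches the conclusion by a genuinely different route than the paper in the critical case where $H_1$ is violated only through $\beta$. The paper also splits $H_1$ into $H_1^{(1)}$ ($\alpha$-constraint violated) and $H_1^{(2)}$ ($\alpha$-constraint satisfied, $\beta$-constraint violated); in $H_1^{(1)}$ it shows $\tfrac1n\Lambda_n\overset{P}{\to}2\bigl(U_1^*(\alpha_1,\alpha_1)-U_1^*(\alpha^*,\alpha_1)\bigr)>0$, close to your regime 1. In $H_1^{(2)}$, however, the paper does not anchor at $l_n(\theta_1)$: it inserts the point $(\tilde{\alpha}_n^*,\beta_1)$ and uses $l_n(\hat{\theta}_n)\ge l_n(\tilde{\alpha}_n^*,\beta_1)$ to get $\Lambda_n\ge 2\{(l_n(\tilde{\theta}_n)-l_n(\hat{\theta}_n))+(l_n(\tilde{\alpha}_n^*,\beta_1)-l_n(\tilde{\theta}_n^*))\}$; since the same random $\alpha$-argument $\tilde{\alpha}_n^*$ appears in both terms of the second difference, the $O(n)$-scale diffusion part cancels exactly, and the uniform convergences at scale $nh_n$ plus $\tilde{\theta}_n^*\overset{P}{\to}\theta^*$ (from [{\bf T2}]) give $\tfrac{1}{nh_n}\bar{\Lambda}_n\overset{P}{\to}-2V_{\beta_1}^*(\alpha^*,\beta^*)>0$, after which Lemma \ref{lemma:H1} finishes both cases. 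Your argument instead keeps $l_n(\theta_1)$ as the benchmark and absorbs the possible loss against the data-dependent $\alpha$ through a quadratic expansion of $\alpha\mapsto l_n(\alpha,\beta_1)$ at $\alpha_1$. What each buys: the paper's insertion trick is ``softer'' --- it needs only uniform laws of large numbers and the consistency $\tilde{\theta}_n^*\to\theta^*$, with no score bound or curvature estimate; your route needs the CLT-level facts under $P_{\theta_1}$ (score order at $\theta_1$, uniform Hessian convergence, nondegeneracy of $I_a(\alpha_1;\alpha_1)$ via [{\bf E2}]), but in exchange it never uses $\tilde{\theta}_n^*\to\theta^*$ (only $D_n^{1/2}(\hat{\theta}_n-\tilde{\theta}_n)=o_p(1)$ and $\tilde{\theta}_n\to\theta_1$), so [{\bf E1}] enters only implicitly.

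One correction to how you state step (a): the claim that $\alpha\mapsto l_n(\alpha,\beta_1)-nU_1^*(\alpha,\alpha_1)$ has $\alpha$-gradient of order $O_p(\sqrt n)$ \emph{near} $\alpha_1$ is stronger than the paper's estimates deliver --- the centering error in the ergodic approximation (e.g.\ the $O(\sqrt{h_n}\varepsilon_n^{-4})$ bound per summand in the proof of Proposition \ref{prop4:jump}) contributes a term of order $n\sqrt{h_n}\varepsilon_n^{-4}\gg\sqrt n$ away from $\alpha_1$, so the centered process is not $O_p(\sqrt n)$-smooth uniformly. Fortunately your curvature inequality only uses the gradient \emph{at} $\theta_1$, namely $\partial_\alpha l_n(\alpha_1,\beta_1)=O_p(\sqrt n)$ (indeed $o_p(n\sqrt{h_n})$ would suffice), together with the uniform convergence of $\tfrac1n\partial_\alpha^2 l_n(\cdot,\beta_1)$ to $-I_a(\cdot;\alpha_1)$ as in \eqref{a-2der:col} transplanted to $P_{\theta_1}$; restate the step in that form and your proof goes through.
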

The following proposition shows our adaptive estimators can be applied to Theorem \ref{thm6:H1test}.
\begin{prop}
Assume the assumption of Proposition \ref{prop:test}. Moreover, assume {\bf{[E1]}} and {\bf{[E2]}}. Then, the adaptive estimator $(\tilde{\theta}_n,\tilde{\theta}_n^*)=(\check{\theta}_n,\check{\theta}^*_n)$ satisfies [{\bf T2}].
\end{prop}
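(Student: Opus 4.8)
The plan is to verify the three components of \textbf{[T2]} for $(\tilde{\theta}_n,\tilde{\theta}_n^*)=(\check{\theta}_n,\check{\theta}_n^*)$: (a) $D_n^{1/2}(\hat{\theta}_n-\check{\theta}_n)=o_p(1)$ for any data-generating parameter in $\Theta$ (in particular under $H_1$); (b) $\check{\theta}_n\overset{P}{\to}\theta_1$ under $H_1$; and (c) $\check{\theta}_n^*\overset{P}{\to}\theta^*$ under $H_1$. Items (a) and (b) I would obtain by citation. Claim (a) is precisely the first assertion of \textbf{[T1]}, which Proposition \ref{prop:test} establishes for an arbitrary data-generating parameter in $\Theta$, hence also when the truth is $\theta_1$. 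Claim (b) follows from Theorem \ref{thm1-2:jump}, read with the true parameter taken to be $\theta_1\in\mathrm{Int}(\Theta)$: the hypotheses \textbf{[A1]}--\textbf{[A11]} are assumed here (with $\theta_0$ replaced by $\theta_1$), \textbf{[B2]} implies \textbf{[B1]} (Remark \ref{balance}), and [$\textbf{C}_2 \textbf{1}$]$\Rightarrow$[$\textbf{C}_1 \textbf{1}$], [$\textbf{C}_2 \textbf{2}$]$\Rightarrow$[$\textbf{C}_1 \textbf{2}$] (the latter since $B_2\subset B_1(3)\subset B_1(1)$ by Remark \ref{subset:thm2} and $\varepsilon_n\to0$), so all hypotheses of Theorem \ref{thm1-2:jump} hold. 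The substantive part is (c).

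For $\check{\alpha}_n^*=\mathrm{argmax}_{\alpha\in\Theta_{\alpha_0}}l_n^{(1)}(\alpha)$, I would first establish the uniform law of large numbers $\sup_{\alpha\in\Theta_{\alpha_0}}|n^{-1}l_n^{(1)}(\alpha)-U_1^*(\alpha,\alpha_1)|\overset{P}{\to}0$. This is the lemma identifying the limit of $n^{-1}l_n^{(1)}$ in the proof of Theorem \ref{thm1-2:jump}, re-read with the data generated by $\theta_1$: the filter $\boldsymbol{1}_{\{|\Delta X_i^n|\le D_1h_n^{\rho_1}\}}$ asymptotically discards the jump intervals, on the surviving intervals $h_n^{-1}\Delta X_i^n(\Delta X_i^n)^\top$ has $\mathcal{F}_{i-1}^n$-conditional mean $S_{i-1}(\alpha_1)+o_p(1)$, and \textbf{[A2]} (invariant measure $\pi^*$) yields the stated limit. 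Since \textbf{[E1]}-(i) is exactly the well-separation condition identifying $\alpha^*$ as the maximizer of $U_1^*(\cdot,\alpha_1)$ over the compact set $\Theta_{\alpha_0}$, the standard $M$-estimation consistency argument gives $\check{\alpha}_n^*\overset{P}{\to}\alpha^*$.

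For $\check{\beta}_n^*=\mathrm{argmax}_{\beta\in\Theta_{\beta_0}}l_n^{(2)}(\beta|\check{\alpha}_n^*)$, I would recentre: put $\mathbb{V}_n(\beta|\bar{\alpha}):=(nh_n)^{-1}(l_n^{(2)}(\beta|\bar{\alpha})-\bar{l}_n^{(2)}(\beta_1|\bar{\alpha}))$; since $\bar{l}_n^{(2)}(\beta_1|\bar{\alpha})$ does not depend on $\beta$, $\check{\beta}_n^*=\mathrm{argmax}_{\beta\in\Theta_{\beta_0}}\mathbb{V}_n(\beta|\check{\alpha}_n^*)$. Expanding $\bar{X}_{i,n}(\beta)=\bar{X}_{i,n}(\beta_1)+h_n(b_{i-1}(\beta_1)-b_{i-1}(\beta))$ in the quadratic form $\bar{l}_n^{(2)}$, the $\bar{X}_{i,n}(\beta_1)$-quadratic piece cancels in the difference, the cross term is a martingale-type sum of order $(nh_n)^{-1/2}\to0$, and the surviving $h_n$-term converges --- via \textbf{[A2]}, \textbf{[A4]}, \textbf{[A8]} and the moment bounds \textbf{[A3]} at $\theta_1$ --- uniformly in $(\beta,\bar{\alpha})$ to $\bar{U}_{\beta_1}^{(2)^*}(\bar{\alpha},\beta)$. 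For the jump part I would show $(nh_n)^{-1}\tilde{l}_n^{(2)}(\beta)\overset{P}{\to}\tilde{U}_{\beta_1}^{(2)^*}(\beta)$ uniformly in $\beta$, using that $\boldsymbol{1}_{\{|\Delta X_i^n|>D_2h_n^{\rho_2}\}}$ asymptotically selects exactly the jump intervals (\textbf{[A5]}, \textbf{[A7]}, the balance conditions and the admissible range of $\rho_2$ from [$\textbf{C}_2 \textbf{1}$]/[$\textbf{C}_2 \textbf{2}$]), that the image under $y=c(x,z,\beta_1)$ of the jump compensator has $y$-density $\Psi_{\beta_1}(y,x)$, that $\varphi_n\to1$, and that \textbf{[A10]}--\textbf{[A13]} provide the truncation and integrability bounds --- these being the Section \ref{sec:proof} estimates for Theorem \ref{thm1-2:jump}, re-read at $\theta_1$. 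Combining, $\sup_{\beta\in\Theta_{\beta_0},\,\bar{\alpha}\in\Theta_\alpha}\bigl|\mathbb{V}_n(\beta|\bar{\alpha})-\bar{U}_{\beta_1}^{(2)^*}(\bar{\alpha},\beta)-\tilde{U}_{\beta_1}^{(2)^*}(\beta)\bigr|\overset{P}{\to}0$; continuity of $\bar{U}_{\beta_1}^{(2)^*}$ in its first argument together with $\check{\alpha}_n^*\overset{P}{\to}\alpha^*$ then gives uniform convergence on $\Theta_{\beta_0}$ of $\mathbb{V}_n(\cdot|\check{\alpha}_n^*)$ to $\bar{U}_{\beta_1}^{(2)^*}(\alpha^*,\cdot)+\tilde{U}_{\beta_1}^{(2)^*}(\cdot)$, which equals $V_{\beta_1}^*(\alpha^*,\cdot)$ up to the $\beta$-free constant $\tilde{U}_{\beta_1}^{(2)^*}(\beta_1)$. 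As \textbf{[E1]}-(ii) is the well-separation condition for $\beta^*$ over $\Theta_{\beta_0}$, the $M$-estimation argument gives $\check{\beta}_n^*\overset{P}{\to}\beta^*$, so $\check{\theta}_n^*\overset{P}{\to}(\alpha^*,\beta^*)=\theta^*$. (Assumption \textbf{[E2]} is not needed for \textbf{[T2]}; it enters later, in Theorem \ref{thm6:H1test}.)

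I expect the main obstacle to be the uniform control --- over $\beta\in\Theta_{\beta_0}$ and over the plugged-in $\bar{\alpha}$ ranging near $\alpha^*$ --- of $\tilde{l}_n^{(2)}$ and of the recentred $\bar{l}_n^{(2)}$ in the misspecified regime where the fitted $\beta$ does not match the data-generating $\beta_1$; this requires re-deriving, with $\theta_0$ replaced by $\theta_1$, the threshold-classification estimates (jump intervals detected, pure-diffusion intervals rejected at the $\rho$-rates admitted by [$\textbf{C}_2 \textbf{1}$]/[$\textbf{C}_2 \textbf{2}$]), the negligibility of multiple-jump intervals, and the truncation-function bounds, all under the law of $X$ at $\theta_1$. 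Since those arguments use only \textbf{[A1]}--\textbf{[A13]} (now read at $\theta_1$) and never any special role of $\theta_0$, the transfer is essentially mechanical, and the remaining pieces --- claims (a), (b) and the two $M$-estimation passages --- are routine.
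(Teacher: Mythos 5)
Your proposal is correct and follows essentially the same route as the paper: obtain $D_n^{\frac{1}{2}}(\hat{\theta}_n-\check{\theta}_n)=o_p(1)$ from the proof of Proposition \ref{prop:test}, $\check{\theta}_n\overset{P}{\to}\theta_1$ from Theorem \ref{thm1-2:jump}, and prove $\check{\theta}_n^*\overset{P}{\to}\theta^*$ by rerunning the consistency argument of Theorem \ref{thm1-2:jump} on the restricted parameter space (the paper phrases this via the reduced-dimension reparametrization $\bar{\theta}$) with the limits $U_1^*$ and $V_{\beta_1}^*$ and with {\bf [E1]} supplying the well-separation/identifiability conditions. Your write-up merely makes explicit the uniform laws of large numbers and the recentring of $\bar{l}_n^{(2)}$ that the paper summarizes as ``in an analogous manner,'' and your observation that {\bf [E2]} is not actually used for {\bf [T2]} is consistent with the paper's proof.
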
\label{prop:H1test}
\begin{rmk}
Theorem \ref{thm6:H1test} and Proposition \ref{prop:H1test} show that 
 the test with proposed test statistics based on our adaptive estimator is consistent.
 
\end{rmk}
\begin{col}\label{col:H1test}
Assume {\bf{[A1]}}-{\bf{[A12]}}, {\bf{[B2]}}, {\bf{[B3]}}, [{\bf E1}], [{\bf E2}], and either ``{[$\textbf{C}_2 \textbf{1}$]} of Theorem \ref{thm2-2:jump} and {[$\textbf{D}_2 \textbf{1}$]} of Corollary \ref{thm2-1:jump}'' or ``{[$\textbf{C}_2 \textbf{2}$]} of Theorem \ref{thm2-2:jump} and {[$\textbf{D}_2 \textbf{2}$]} of Corollary \ref{thm2-1:jump}''. Then, under $H_1$, $P(\Lambda_n(\check{\theta}_n,\check{\theta}^*_n)>\chi_{r+l,\epsilon}^2)\to 1$.
\end{col}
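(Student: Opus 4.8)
The plan is to derive Corollary~\ref{col:H1test} directly from Theorem~\ref{thm6:H1test} and Proposition~\ref{prop:H1test}. First I would verify that the hypotheses assumed here are exactly the ones needed to invoke Proposition~\ref{prop:H1test}: the latter requires ``the assumption of Proposition~\ref{prop:test}'', i.e.\ {\bf[A1]}--{\bf[A12]}, {\bf[B2]}, {\bf[B3]} and one of the paired clauses ``{[$\textbf{C}_2\textbf{1}$]} and {[$\textbf{D}_2\textbf{1}$]}'' or ``{[$\textbf{C}_2\textbf{2}$]} and {[$\textbf{D}_2\textbf{2}$]}'', together with {\bf[E1]} and {\bf[E2]}; all of these are among the present assumptions. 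Hence Proposition~\ref{prop:H1test} gives that the adaptive pair $(\check{\theta}_n,\check{\theta}^*_n)$ satisfies {[{\bf T2}]}.

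Next I would feed this into Theorem~\ref{thm6:H1test}, whose hypotheses are {\bf[A1]}--{\bf[A12]}, {\bf[B2]}, {\bf[B3]}, {\bf[E1]}, {\bf[E2]}, {[{\bf T2}]}, and one of {[$\textbf{D}_2\textbf{1}$]}, {[$\textbf{D}_2\textbf{2}$]} of Corollary~\ref{thm2-1:jump}. The last requirement is already contained in the paired clause we assumed, and {[{\bf T2}]} for $(\check{\theta}_n,\check{\theta}^*_n)$ was established in the previous step. Applying Theorem~\ref{thm6:H1test} with $(\tilde{\theta}_n,\tilde{\theta}^*_n)=(\check{\theta}_n,\check{\theta}^*_n)$ then yields $P(\Lambda_n(\check{\theta}_n,\check{\theta}^*_n)>\chi^2_{k+l,\varepsilon})\to1$ under $H_1$, which is the assertion (the subscript $r+l$ in the displayed statement should read $k+l$).

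Although both ingredients are available to us, it is worth saying where the real content sits. The decisive point is {[{\bf T2}]} for the adaptive estimators, which splits into: (a) consistency $\check{\theta}_n\overset{P}{\to}\theta_1$ under $H_1$, obtained by repeating the proof of Theorem~\ref{thm1-2:jump} with the true value relabelled $\theta_1$; (b) consistency $\check{\theta}^*_n\overset{P}{\to}\theta^*$ of the constrained estimator, where $\theta^*=(\alpha^*,\beta^*)$ is the pseudo-true value defined by maximizing the limiting contrasts $U_1^*(\cdot,\alpha_1)$ over $\Theta_{\alpha_0}$ and $V_{\beta_1}^*(\alpha^*,\cdot)$ over $\Theta_{\beta_0}$, the identifiability conditions {\bf[E1]}(i)--(ii) being precisely what singles out these maximizers; and (c) the asymptotic equivalence $D_n^{1/2}(\hat{\theta}_n-\check{\theta}_n)=o_p(1)$, which holds because the joint and adaptive estimators share the same first-order stochastic expansion around $\theta_1$ (Theorem~\ref{thm2-2:jump} and Corollary~\ref{thm2-1:jump}), with the common Gaussian limit $N(0,I(\theta_1;\theta_1)^{-1})$, the non-singularity {\bf[E2]} of $I(\theta;\theta_1)$ keeping the relevant quadratic forms non-degenerate. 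I expect part (b)---the localization and contrast arguments identifying $\theta^*$ on the constrained spaces $\Theta_{\alpha_0}$, $\Theta_{\beta_0}$---to be the main obstacle, as this is where the alternative genuinely enters and cannot be reduced verbatim to the null-hypothesis theory.

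Finally, granting {[{\bf T2}]}, the divergence $P(\Lambda_n(\check{\theta}_n,\check{\theta}^*_n)>\chi^2_{k+l,\varepsilon})\to1$ follows from the fact that, under $H_1$, the gap $l_n(\check{\theta}_n)-l_n(\check{\theta}^*_n)$ is of order $nh_n$ times a strictly positive limiting contrast difference between $\theta_1$ and $\theta^*$ (positivity guaranteed by {\bf[E1]} and {\bf[E2]}), so that $\Lambda_n\overset{P}{\to}\infty$ and in particular exceeds the fixed critical value $\chi^2_{k+l,\varepsilon}$ with probability tending to one.
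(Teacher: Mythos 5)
Your proposal is correct and matches the paper's argument: the paper also proves Corollary \ref{col:H1test} simply by verifying that the hypotheses allow Proposition \ref{prop:H1test} to supply {[{\bf T2}]} for $(\check{\theta}_n,\check{\theta}^*_n)$ and then invoking Theorem \ref{thm6:H1test}. Your remark that the subscript $r+l$ should read $k+l$ is also consistent with the paper's usage.
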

By Theorem \ref{thm6:H1test} and Proposition \ref{prop:H1test}, the proof of Corollary \ref{col:H1test} is obvious.

\subsection{The case in which the drift and jump parameter are split independently}
We consider a statistical hypothesis testing problem for model (\ref{jump-diff:model2})  as follows.
Let $k$, $l$ and $m$ be known  integer values.
\begin{align}
\begin{cases}
H_0:\alpha^{(1)}=\cdots=\alpha^{(k)}=0,\beta^{(1)}=\cdots=\beta^{(l)}=0,\gamma^{(1)}=\cdots=\gamma^{(m)} =0, \\
H_1:\text{not}\ H_0,
\end{cases}
\end{align}
 where $1\leq k\leq p$, $1\leq l\leq q$. 
 Let $\Theta_0=\{\theta\in\Theta\ |\  \theta \ \text{satisfies $H_0$} \}$,
$\Theta_{\alpha_0}=\{\alpha\in\Theta_\alpha\ |\  \alpha \ \text{satisfies $H_0$} \}$,
$\Theta_{\beta_0}=\{\beta\in\Theta_\beta\ |\  \beta \ \text{satisfies $H_0$} \}$ and
$\Theta_{\gamma_0}=\{\gamma\in\Theta_\gamma\ |\  \gamma \ \text{satisfies $H_0$} \}$. 
 We suppose that  $\Theta_0$,  $\Theta_{\alpha_0}$, $\Theta_{\beta_0}$ and $\Theta_{\gamma_0}$
are compact convex sets. More general cases, in which the null hypothesis is expressed as the form $H_0':\ g_1(\alpha)=0,\ldots,g_k(\alpha)=0\ \mathrm{and}\ h_1(\beta)=0,\ldots,h_l(\beta)=0\ \mathrm{and}\ i_1(\gamma)=0,\ldots,i_m(\gamma)=0$ with the some smooth real valued functions $g_1,\ldots,g_k$, $h_1,\ldots,h_l$ and $i_1,\ldots,i_m$, can be put into $H_0$ by a  reparametrization.
Let $\tilde{\theta}_n$ and $\tilde{\theta}^*_n$ be estimators on $\Theta$ and $\Theta_0$, respectively. 
Then, we define the quasi-likelihood ratio test statistics $\Lambda_n$ with the joint quasi-log likelihood function $l_n(\theta)=\bar{l}_n(\alpha,\beta)+\tilde{l}_n(\gamma)$ defined by (\ref{l-bar:jump}) and (\ref{l-tilde:jump}) as follows:
\begin{align}
\Lambda(\tilde{\theta}_n,\tilde{\theta}^*_n)=-2(l_n(\tilde{\theta}^*_n)-l_n(\tilde{\theta}_n)),
\end{align}
and we define $\hat{\theta}_n^*$ and $\check{\theta}_n^*$ as follows:
\begin{align*}
\hat{\theta}_{n}^*&:=\mathrm{argmax}_{\theta\in\Theta_0}l_n(\theta),\\
\check{\alpha}_{n}^*:=\mathrm{argmax}_{\alpha\in\Theta_{\alpha_0}}l_n^{(1)}(\alpha),\
&\check{\beta}_{n}^*:=\mathrm{argmax}_{\beta\in\Theta_{\beta_0}}\bar{l}_n^{(2)}(\beta|\check{\alpha}_{n}),\ 
\check{\gamma}_{n}^*:=\mathrm{argmax}_{\gamma\in\Theta_{\gamma_0}}\tilde{l}_n^{(2)}(\gamma).
\end{align*}
Then, we have the following corollary.
\begin{col}\label{col:H0test'}
Assume {\bf{[A1]}}-{\bf{[A8]}}, {\bf {[A9']}}, {\bf {[A10]}}, {\bf {[A11]}}, {\bf {[A12']}}, {\bf{[B2]}}, {\bf{[B3]}}, and either ``{[$\textbf{C}_2 \textbf{1}$]} of Theorem \ref{thm2-2:jump} and {[$\textbf{D}_2 \textbf{1}$]} of Corollary \ref{thm2-1:jump}'' or ``{[$\textbf{C}_2 \textbf{2}$]} of Theorem \ref{thm2-2:jump} and {[$\textbf{D}_2 \textbf{2}$]} of Corollary \ref{thm2-1:jump}''.
Then, under $H_0$,
$\Lambda_n(\check{\theta}_n,\check{\theta}^*_n)\overset{d}{\to}\chi_{k+l}^2$.
\end{col}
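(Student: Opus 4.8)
The plan is to treat Corollary~\ref{col:H0test'} as the counterpart, for model~\eqref{jump-diff:model2}, of Corollary~\ref{col:H0test}, so that it follows by combining a split-model analog of Theorem~\ref{thm5:H0test} with a split-model analog of Proposition~\ref{prop:test}. Let $D_n^{1/2}$ denote the diagonal normalizing matrix having $p$ entries equal to $\sqrt{n}$ and $q+r$ entries equal to $\sqrt{nh_n}$, so that by Theorem~\ref{thm4:jump} one has $D_n^{1/2}(\check\theta_n-\theta_0)\overset{d}{\to}N(0,J(\theta_0;\theta_0)^{-1})$, and recall that under $H_0$ the true value $\theta_0$ belongs to $\Theta_0$. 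The first step is to establish, for the adaptive estimators $\check\theta_n,\check\theta^*_n$ of the split model, the analog of [{\bf T1}]: $D_n^{1/2}(\hat\theta_n-\check\theta_n)=o_p(1)$ on $\Theta$ and $D_n^{1/2}(\hat\theta^*_n-\check\theta^*_n)=o_p(1)$ on $\Theta_0$ under $H_0$, where $\hat\theta_n,\hat\theta^*_n$ maximize $l_n(\theta)=\bar l_n(\alpha,\beta)+\tilde l_n(\gamma)$ over $\Theta,\Theta_0$. Since $\tilde l_n=\tilde l_n^{(2)}$, the $\gamma$-block of $\hat\theta_n$ coincides with $\check\gamma_n$ (and likewise under $H_0$), so only the $(\alpha,\beta)$-block requires an approximation argument; there, exactly as in the proof of Proposition~\ref{prop:test}, both the joint maximizer of $\bar l_n$ and the two-step pair $(\check\alpha_n,\check\beta_n)$ defined via $l_n^{(1)}$ and $\bar l_n^{(2)}$ admit the stochastic expansion $D_n^{1/2}(\hat\theta_n-\theta_0)=-\bigl(D_n^{-1/2}\partial^2_\theta l_n(\theta_0)D_n^{-1/2}\bigr)^{-1}D_n^{-1/2}\partial_\theta l_n(\theta_0)+o_p(1)$, with $\check\theta_n$ in place of $\hat\theta_n$ giving the same right-hand side, because the block-wise scores and Hessians of the two-step adaptive procedure agree with the corresponding blocks of $D_n^{-1/2}\partial_\theta l_n(\theta_0)$ and $D_n^{-1/2}\partial^2_\theta l_n(\theta_0)D_n^{-1/2}$ up to $o_p(1)$ once the filters are matched up to negligible terms; the consistency and asymptotic normality needed to pin down the leading term and the $O_p(1)$ bounds are Theorems~\ref{thm3:jump} and~\ref{thm4:jump}, and the same arguments cover $\hat\theta_n,\hat\theta^*_n$.

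Given the analog of [{\bf T1}], the next step is the Taylor reduction $\Lambda_n(\check\theta_n,\check\theta^*_n)=\Lambda_n(\hat\theta_n,\hat\theta^*_n)+o_p(1)$: expanding $l_n$ to second order around $\hat\theta_n$ and around $\hat\theta^*_n$, the linear terms are $o_p(1)$ because $D_n^{-1/2}\partial_\theta l_n=o_p(1)$ at the respective maximizers, while the quadratic remainders are bounded by $|D_n^{1/2}(\hat\theta_n-\check\theta_n)|^2\,\|D_n^{-1/2}\partial^2_\theta l_n D_n^{-1/2}\|=o_p(1)\cdot O_p(1)$, using that the normalized Hessian converges in probability to $-J(\theta_0;\theta_0)$ (with the obvious restriction in the constrained case). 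It therefore suffices to prove the split-model version of Theorem~\ref{thm5:H0test}, i.e.\ $\Lambda_n(\hat\theta_n,\hat\theta^*_n)\overset{d}{\to}\chi^2_{k+l}$ under $H_0$. For this one uses the stochastic expansion above together with the block-diagonal form of $J(\theta_0;\theta_0)$, which decouples the $\alpha$-, $\beta$- and $\gamma$-directions, and the classical algebraic identity rewriting $-2(l_n(\hat\theta^*_n)-l_n(\hat\theta_n))$ as a quadratic form in the components of $D_n^{-1/2}\partial_\theta l_n(\theta_0)$ corresponding to the coordinate directions eliminated by $H_0$; the central limit theorem for $D_n^{-1/2}\partial_\theta l_n(\theta_0)$ (whose limiting covariance equals $J(\theta_0;\theta_0)$ by the information-matrix identity, the quasi-likelihood being efficient, cf.\ Remark~\ref{efficient:jump}) and Slutsky's theorem then give convergence to the stated $\chi^2$ law. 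Combining this with the Taylor reduction and the analog of [{\bf T1}] yields the claim.

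I expect the main obstacle to be the first step, and precisely the filter-matching inside it: showing that replacing the single filter of \citet{Ogihara-Yoshida} by the several thresholds $\{|\Delta X_i^n|\le D_1h_n^{\rho_1}\}$, $\{|\Delta X_i^n|\le D_3h_n^{\rho_3}\}$, $\{|\Delta X_i^n|>D_2h_n^{\rho_2}\}$ used in $l_n^{(1)},\bar l_n^{(2)},\tilde l_n^{(2)}$, alongside the thresholds in $l_n=\bar l_n+\tilde l_n$, perturbs the normalized score and Hessian by only $o_p(1)$. This is exactly where the balance conditions {\bf[B1]}--{\bf[B3]} and the constraints on $\rho_1,\rho_2,\rho_3$ in [$\textbf{C}_2 \textbf{1}$]/[$\textbf{C}_2 \textbf{2}$] and [$\textbf{D}_2 \textbf{1}$]/[$\textbf{D}_2 \textbf{2}$] enter, controlling the probabilities that an increment carrying a jump is classified as continuous and vice versa. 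Since these estimates are already carried out for Theorems~\ref{thm3:jump} and~\ref{thm4:jump}, obtained by splitting the analysis of $l_n^{(2)}$ into its $\bar l_n^{(2)}$ and $\tilde l_n^{(2)}$ parts, and since the Taylor-reduction and $\chi^2$-algebra steps are word-for-word those in the proofs of Proposition~\ref{prop:test} and Theorem~\ref{thm5:H0test}, it is enough here to indicate the modifications rather than reproduce the computations, which is why the result is recorded as a corollary.
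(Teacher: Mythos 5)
Your proposal follows essentially the same route as the paper, which likewise reduces Corollary \ref{col:H0test'} to split-model analogs of Theorem \ref{thm5:H0test} and Proposition \ref{prop:test} obtained by dividing the argument for $l_n^{(2)}$ into its $\bar l_n^{(2)}$ and $\tilde l_n^{(2)}$ parts, exactly the reduction you describe and the paper omits. One small caveat: your claim that the $\gamma$-block of $\hat\theta_n$ coincides with $\check\gamma_n$ holds only when the jump threshold in the joint $\tilde l_n$ equals that in $\tilde l_n^{(2)}$; in the general five-threshold setting the $\gamma$-score also requires the same filter-matching estimate you invoke for the $(\alpha,\beta)$-block, which is already supplied by the corresponding step in the proof of Proposition \ref{prop:test}.
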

Next, in order to discuss consistency of the test, 
 let $\theta_1=(\alpha_1,\beta_1,\gamma_1)$, which is the true parameter under $H_1$, 
and we make the following assumptions instead of assumptions {\bf {[E1]}} and {\bf {[E2]}}.
\begin{itemize}
\item[{\bf [E1']}]
\begin{enumerate}
\item[ (i)] 
For all $\varepsilon>0$, 
\begin{align*}
\sup_{\{\alpha\in\Theta_{\alpha_0}:|\alpha-\alpha^*|\geq\varepsilon\}}(U_1^*(\alpha,\alpha_1)-U_1^*(\alpha^*,\alpha_1))<0.
\end{align*}
\item[ (ii)]
For all $\varepsilon>0$, 
\begin{align*}
\sup_{\{\beta\in\Theta_{\beta_0}:|\beta-\beta^*|\geq\varepsilon\}}(\bar{U}_{\beta_1}^{(2)^*}(\alpha^*,\beta)-\bar{U}_{\beta_1}^{(2)^*}(\alpha^*,\beta^*))<0.
\end{align*}
\item[ (iii)] 
For all $\varepsilon>0$, 
\begin{align*}
\sup_{\{\gamma\in\Theta_{\gamma_0}:|\gamma-\gamma^*|\geq\varepsilon\}}(\tilde{U}_{\gamma_1}^{(2)^*}(\gamma)-\tilde{U}_{\gamma_1}^{(2)^*}(\gamma^*))<0.
\end{align*}
\end{enumerate}
\item[{\bf [E2']}]
For all $\theta\in\Theta$, $J(\theta)$ is non-singular.
\end{itemize}
Under the fixed assumptions, we have the following corollary.
\begin{col}\label{col:H1test'}
Assume {\bf{[A1]}}-{\bf{[A8]}}, {\bf {[A9']}}, {\bf {[A10]}}, {\bf {[A11]}}, {\bf {[A12']}}, {\bf{[B2]}}, {\bf{[B3]}}, [{\bf E1'}], [{\bf E2'}], and either ``{[$\textbf{C}_2 \textbf{1}$]} of Theorem \ref{thm2-2:jump} and {[$\textbf{D}_2 \textbf{1}$]} of Corollary \ref{thm2-1:jump}'' or ``{[$\textbf{C}_2 \textbf{2}$]} of Theorem \ref{thm2-2:jump} and {[$\textbf{D}_2 \textbf{2}$]} of Corollary \ref{thm2-1:jump}''. Then, under $H_1$, $P(\Lambda_n(\check{\theta}_n,\check{\theta}^*_n)>\chi_{r+l,\epsilon}^2)\to 1$.
\end{col}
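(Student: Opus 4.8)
The plan is to follow the proof of Theorem~\ref{thm6:H1test} and Corollary~\ref{col:H1test}, adapted to model~\eqref{jump-diff:model2}, whose joint quasi-log likelihood $l_n(\theta)=\bar{l}_n(\alpha,\beta)+\tilde{l}_n(\gamma)$ now splits into three blocks. Since $\chi^2_{r+l,\epsilon}$ is a fixed constant it suffices to prove that $\Lambda_n(\check{\theta}_n,\check{\theta}^*_n)\overset{P}{\to}\infty$ under $H_1$. First I would verify that $(\check{\theta}_n,\check{\theta}^*_n)$ satisfies the three-block analogue of [{\bf T2}]. Under $H_1$ the model~\eqref{jump-diff:model2} is correctly specified at $\theta_1=(\alpha_1,\beta_1,\gamma_1)$, so Theorem~\ref{thm3:jump} gives $\check{\theta}_n\overset{P}{\to}\theta_1$ and Theorem~\ref{thm4:jump} gives asymptotic normality at rate $D_n^{1/2}$; comparing this expansion with that of the joint maximiser $\hat{\theta}_n=\mathrm{argmax}_{\Theta}l_n$ --- which, because $l_n$ separates, is a continuous-part quasi-maximum likelihood estimator in $(\alpha,\beta)$ and a separate maximiser in $\gamma$, both asymptotically linear with the same influence representation as $\check{\theta}_n$ --- yields $D_n^{1/2}(\hat{\theta}_n-\check{\theta}_n)=o_p(1)$, exactly as in Proposition~\ref{prop:H1test}. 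For the constrained estimators I would run the usual contrast-function argument on the compact convex sets $\Theta_{\alpha_0},\Theta_{\beta_0},\Theta_{\gamma_0}$, using ergodicity under $\pi^*$, the uniform limits $n^{-1}l_n^{(1)}(\alpha)\to U_1^*(\alpha,\alpha_1)$, $(nh_n)^{-1}\bar{l}_n^{(2)}(\beta\,|\,\alpha)\to\bar{U}_{\beta_1}^{(2)^*}(\alpha,\beta)$, $(nh_n)^{-1}\tilde{l}_n^{(2)}(\gamma)\to\tilde{U}_{\gamma_1}^{(2)^*}(\gamma)$, and the identification hypothesis [{\bf E1'}], to get $\check{\alpha}_n^*\overset{P}{\to}\alpha^*$, $\check{\beta}_n^*\overset{P}{\to}\beta^*$, $\check{\gamma}_n^*\overset{P}{\to}\gamma^*$; [{\bf E2'}] supplies the nondegeneracy of the limiting Hessians that enter these arguments.

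Granting these facts, $l_n(\check{\theta}_n)=l_n(\hat{\theta}_n)+o_p(1)$ by a second-order Taylor expansion at $\hat{\theta}_n$ (vanishing gradient, rescaled Hessian $D_n^{-1/2}\partial_\theta^2 l_n D_n^{-1/2}$ of exact order one), while $l_n(\check{\theta}^*_n)\le l_n(\hat{\theta}^*_n)$ since $\check{\theta}^*_n\in\Theta_0$; hence $\Lambda_n(\check{\theta}_n,\check{\theta}^*_n)\ge\Lambda_n(\hat{\theta}_n,\hat{\theta}^*_n)+o_p(1)$ and it is enough to show $\Lambda_n(\hat{\theta}_n,\hat{\theta}^*_n)\overset{P}{\to}\infty$. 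Because $l_n$ separates, $\tfrac12\Lambda_n(\hat{\theta}_n,\hat{\theta}^*_n)=A_n+B_n$ with $A_n:=\sup_{\Theta_\alpha\times\Theta_\beta}\bar{l}_n-\sup_{\Theta_{\alpha_0}\times\Theta_{\beta_0}}\bar{l}_n\ge0$ and $B_n:=\sup_{\Theta_\gamma}\tilde{l}_n-\sup_{\Theta_{\gamma_0}}\tilde{l}_n\ge0$. From $(nh_n)^{-1}\tilde{l}_n(\gamma)\to\tilde{U}_{\gamma_1}^{(2)^*}(\gamma)$ uniformly and continuity, $(nh_n)^{-1}B_n\to\tilde{U}_{\gamma_1}^{(2)^*}(\gamma_1)-\tilde{U}_{\gamma_1}^{(2)^*}(\gamma^*)\ge0$ by the entropy inequality $a\log(a/b)-a+b\ge0$ applied to $\Psi_{\gamma_1},\Psi_\gamma$; this is strict exactly when $\Psi_{\gamma^*}\ne\Psi_{\gamma_1}$ on $A$, i.e.\ (by the $\gamma$-part of [{\bf A9'}]) when $\gamma^*\ne\gamma_1$, i.e.\ when $\gamma_1\notin\Theta_{\gamma_0}$.

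For $A_n$ I would use a two-scale expansion of $\bar{l}_n$. At the coarse scale $n^{-1}\bar{l}_n(\alpha,\beta)\to U_1^*(\alpha,\alpha_1)$ uniformly (the drift terms are $O_p(nh_n)=o_p(n)$), so $n^{-1}A_n\to U_1^*(\alpha_1,\alpha_1)-U_1^*(\alpha^*,\alpha_1)\ge0$ by [{\bf E1'}](i) and the entropy inequality for $S(\cdot,\alpha),S(\cdot,\alpha_1)$; this is strict when $\alpha^*\ne\alpha_1$ (i.e.\ $\alpha_1\notin\Theta_{\alpha_0}$), in which case $A_n\gtrsim n$. When $\alpha^*=\alpha_1$ the coarse scale cancels and one passes to the scale $nh_n$: writing $\bar{l}_n(\alpha,\beta)=L_n^{(0)}(\alpha)+L_n^{(1)}(\alpha,\beta)$, the $\beta$-free quadratic-form-and-$\log\det$ part (of order $n$) plus the drift part (of order $nh_n$), a uniform-in-$\alpha$ second-order Taylor expansion of $L_n^{(0)}$ about $\alpha_1$ together with the $\sqrt n$-localisation of both the constrained and the unconstrained $\alpha$-maximiser (from the asymptotic normality and nondegeneracy of $I_a(\alpha_1;\alpha_1)$) shows that the $L_n^{(0)}$-contribution to $A_n$ is $O_p(1)$, whereas $(nh_n)^{-1}$ times the $L_n^{(1)}$-contribution tends to $-\bar{U}_{\beta_1}^{(2)^*}(\alpha_1,\beta^*)\ge0$, strict exactly when $b(\cdot,\beta^*)\ne b(\cdot,\beta_1)$, i.e.\ (by the $\beta$-part of [{\bf A9'}]) when $\beta^*\ne\beta_1$, i.e.\ when $\beta_1\notin\Theta_{\beta_0}$. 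Under $H_1$ at least one of $\alpha_1\notin\Theta_{\alpha_0}$, $\beta_1\notin\Theta_{\beta_0}$, $\gamma_1\notin\Theta_{\gamma_0}$ holds; since $A_n,B_n\ge0$ and $\tfrac12\Lambda_n(\hat{\theta}_n,\hat{\theta}^*_n)=A_n+B_n\ge\max(A_n,B_n)$, the corresponding lower bound --- a positive multiple of $n$ or of $nh_n$ --- forces $\Lambda_n(\hat{\theta}_n,\hat{\theta}^*_n)\overset{P}{\to}\infty$, hence $\Lambda_n(\check{\theta}_n,\check{\theta}^*_n)\overset{P}{\to}\infty$, and $P(\Lambda_n(\check{\theta}_n,\check{\theta}^*_n)>\chi^2_{r+l,\epsilon})\to1$ under $H_1$.

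The step I expect to be the main obstacle is the two-scale bookkeeping for $A_n$ in the case $\alpha^*=\alpha_1$: the $O(n)$ parts of the two suprema of $\bar{l}_n$ cancel only to within $O_p(1)$, so one must show that the $\alpha$-maximisation contributes the same amount, up to $o_p(nh_n)$, to the constrained and to the unconstrained supremum --- which needs the $\sqrt n$-rate for both maximisers together with a uniform-in-$\alpha$ Hessian bound --- and only then can the surviving $nh_n$-order gap be read off from the drift contrast $-\bar{U}_{\beta_1}^{(2)^*}(\alpha_1,\beta^*)$. One must also check that all of these limits are insensitive to which thresholds are used (two in $l_n$, three in the adaptive estimators): the indicator $\boldsymbol{1}_{\{|\Delta X_i^n|\le D h_n^{\rho}\}}$ asymptotically discards only increments containing a jump, so it affects none of the leading terms, which is verified exactly as in the proofs of Theorems~\ref{thm1-2:jump} and \ref{thm2-2:jump}.
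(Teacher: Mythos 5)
Your overall architecture is essentially the one the paper intends: reduce to the joint maximisers via the [{\bf T2}]-type bound $D_n^{\frac12}(\hat{\theta}_n-\check{\theta}_n)=o_p(1)$ together with $l_n(\check{\theta}^*_n)\le l_n(\hat{\theta}^*_n)$, use the separation $l_n(\theta)=\bar{l}_n(\alpha,\beta)+\tilde{l}_n(\gamma)$ to write $\tfrac12\Lambda_n(\hat{\theta}_n,\hat{\theta}^*_n)=A_n+B_n$, and split $H_1$ into the $\alpha$-, $\beta$- and $\gamma$-violation cases with divergence at rate $n$, $nh_n$, $nh_n$ respectively; the $\alpha$-violation case ($n^{-1}A_n\to U_1^*(\alpha_1,\alpha_1)-U_1^*(\alpha^*,\alpha_1)>0$) and the $\gamma$-violation case ($(nh_n)^{-1}B_n\to\tilde{U}_{\gamma_1}^{(2)^*}(\gamma_1)-\tilde{U}_{\gamma_1}^{(2)^*}(\gamma^*)>0$) are handled exactly as in the proof of Theorem \ref{thm6:H1test}, and your justification of $l_n(\check{\theta}_n)-l_n(\hat{\theta}_n)=o_p(1)$ mirrors the paper's use of the argument for \eqref{thm5-2:eq2}.

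The gap is in the drift-only case ($\alpha^*=\alpha_1$, $\beta_1\notin\Theta_{\beta_0}$), i.e.\ precisely the step you flag. Your two-scale bookkeeping needs the $L_n^{(0)}$-contributions of the unconstrained and the constrained $\alpha$-maximisers to agree up to $o_p(nh_n)$, and you justify this by ``the $\sqrt n$-localisation of both\ldots maximisers (from the asymptotic normality and nondegeneracy of $I_a$)''. But under this part of $H_1$ the constrained model is misspecified in the drift, so none of the asymptotic normality results of the paper (Theorem \ref{thm2-2:jump}, Corollary \ref{thm2-1:jump}, or their constrained versions as used in Lemma \ref{lemma:H0test}) applies to $\hat{\theta}^*_n$: the $\alpha$-score of the constrained joint maximiser carries an extra $O_p(nh_n)$ drift-bias term, and a localisation such as $|\hat{\alpha}^*_n-\alpha_1|=O_p(n^{-1/2}+h_n)$ (which would indeed suffice, since $nh_n^2\to0$ under [{\bf B2}]) would have to be proved from scratch; as written the step is unsupported. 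The paper avoids this entirely: in the proof of Theorem \ref{thm6:H1test} (case $H_1^{(2)}$) it lower-bounds $l_n(\hat{\theta}_n)\ge l_n(\tilde{\alpha}^*_n,\beta_1)$, so that the two constrained-side terms share the \emph{same} $\alpha$ and the $O_p(n)$ diffusion part cancels identically, leaving $\frac{1}{nh_n}\bigl(l_n(\tilde{\alpha}^*_n,\beta_1)-l_n(\tilde{\theta}^*_n)\bigr)\to -V_{\beta_1}^*(\alpha^*,\beta^*)>0$; in the split model the same insertion with $\bar{l}_n(\check{\alpha}^*_n,\beta_1)$ yields the gap $-\bar{U}_{\beta_1}^{(2)^*}(\alpha^*,\beta^*)>0$ at order $nh_n$ via [{\bf E1'}](ii) and [{\bf A9'}], the $\gamma$-block being treated separately through $\tilde{l}_n$ (this is what ``dividing the argument for $l_n^{(2)}$ into that for $\bar{l}_n^{(2)}$ and $\tilde{l}_n^{(2)}$'' refers to). This route does use the convergence $\check{\theta}^*_n\overset{P}{\to}\theta^*$ (the [{\bf T2}]-analogue from Proposition \ref{prop:H1test}), which your detour through $\hat{\theta}^*_n$ was meant to avoid, but it is much cheaper than the misspecified-localisation your expansion requires. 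Either repair — prove the localisation of the constrained $\alpha$-maximiser under drift misspecification, or adopt the same-$\alpha$ insertion — closes your argument; without one of them the drift-only case is not established.
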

In a similar way to the proof of  Corollaries \ref{col:H0test} and \ref{col:H1test}, 
we can prove  Corollaries \ref{col:H0test'} and \ref{col:H1test'}, respectively. By dividing the argument for $l_n^{(2)}$ into that for $\bar{l}_n^{(2)}$ and $\tilde{l}_n^{(2)}$, in particular, we can show the above statements. Therefore, we omit detailed proofs.


\section{An example and simulation study}\label{sec:sim}
Let $R_i^+>R_i^-\ (i=1,2,3,5),\ R_4^+,R_4^->0$.
We consider the following one-dimensional L\'{e}vy-OU model.
\begin{align}\label{sim:model}
dX_t=-\beta X_{t-}dt+\alpha dW_t+\int_E zp(dt,dz)\ \ t\in[0,T],
\end{align}
where the initial value $X_0$ follows the invariant probability measure $\pi$, the jump density $f_\gamma$ is written by
\begin{align*}
    f_\gamma(z)=\lambda\frac{1}{\sqrt{2\pi\sigma^2}}\exp\left\{-\frac{(z-\mu)^2}{2\sigma^2}\right\},\ \ \gamma=(\lambda,\mu,\sigma^2),
\end{align*}
and $(\alpha, \beta,\lambda,\mu,\sigma^2)\in[R_1^-,R_1^+]\times[R_2^-,R_2^+]\times[R_3^-,R_3^+]\times[-R_4^-,R_4^+]\times[R_5^-,R_5^+]$. 
Then
\begin{align*}
    \log\Psi_\gamma(y,x)=\log\lambda-\frac{1}{2}\log(2\pi\sigma^2)-\frac{(y-\mu)^2}{2\sigma^2}.
\end{align*}
We treat an adaptive estimation and test for $\theta=(\alpha,\beta,\gamma)=(\alpha, \beta,\lambda,\mu,\sigma^2)$.
Since the jump distribution is normal, we can show that model (\ref{sim:model}) satisfies {\bf [A1]}-{\bf [A8]}, {\bf [A9']}, {\bf [A10]}, {\bf [A11]}, {\bf [A12']} and {\bf [A13]}-(i). Therefore, we omit the truncation function $\varphi_n$
, and quasi-log likelihood functions are as follows:
\begin{align*}
    &l_n^{(1)}(\alpha)=-\frac{1}{2}\sum_{i=1}^n\{h_n^{-1}\alpha^{-2}(\Delta X_i^n)^2+2\log\alpha\}\boldsymbol{1}_{\{|\Delta X_i^n|\le D_1h_n^{\rho_1}\}},\\
    &\bar{l}_n^{(2)}(\beta|\alpha)=-\frac{1}{2\alpha^2h_n}\sum_{i=1}^n(\Delta X_i^n+\beta h_nX_{t_{i-1}^n})^2\boldsymbol{1}_{\{|\Delta X_i^n|\le D_3h_n^{\rho_3}\}},\\
    &\tilde{l}_n^{(2)}(\gamma)=\sum_{i=1}^n\log\Psi_\gamma(\Delta X_i^n,X_{t_{i-1}^n})\boldsymbol{1}_{\{|\Delta X_i^n|> D_2h_n^{\rho_2}\}}-\lambda nh_n.
\end{align*}
We set $n_1=\sum_{i=1}^n\boldsymbol{1}_{\{|\Delta X_i^n|\le D_1h_n^{\rho_1}\}}$ and $n_2=\sum_{i=1}^n\boldsymbol{1}_{\{|\Delta X_i^n|> D_2h_n^{\rho_2}\}}$. 
Then the adaptive estimator $(\check{\alpha}_n,\check{\beta}_n,\check{\lambda}_n,\check{\mu}_n,\check{\sigma}_n^2)$ can be calculated as
\begin{align*}
\check{\alpha}_{n}&=\sqrt{\frac{1}{n_1h_n}\sum_{i=1}^{n}{(\Delta X_i^n)^2}\boldsymbol{1}_{\{|\Delta X_i^n|\le D_1h_n^{\rho_1}\}}},\quad
\check{\beta}_{n}=-\frac{\sum_{i=1}^{n}{X_{t_{i-1}^n}\Delta X_i^n}\boldsymbol{1}_{\{|\Delta X_i^n|\le D_3h_n^{\rho_3}\}}}{h_n\sum_{i=1}^{n}{X_{t_{i-1}^n}^2}\boldsymbol{1}_{\{|\Delta X_i^n|\le D_3h_n^{\rho_3}\}}}, \\
\check{\lambda}_{n}&=\frac{n_2}{nh_n},\quad \check{\mu}_{n}=\frac{1}{n_2}\sum_{i=1}^{n}{\Delta X_i^n\boldsymbol{1}_{\{|\Delta X_i^n|> D_2h_n^{\rho_2}\}}},\quad
\check{\sigma}_{n}^2=\frac{1}{n_2}\sum_{i=1}^{n}{(\Delta X_i^n-\check{\mu}_n)^2\boldsymbol{1}_{\{|\Delta X_i^n|> D_2h_n^{\rho_2}\}}}.
\end{align*} 
In our simulation, we set $\theta_0=(2,2.5,6,0,4.5)$ and for simplicity, we took $D_1=D_2=D_3=1$. Note that values of $\check{\alpha}_n$ and $\check{\beta}_n$ depend on $\rho_1$ and $\rho_3$, respectively, and the values of $\check{\lambda}_n,\check{\mu}_n$ and $\check{\sigma}_n^2$ depend on $\rho_2$. Theorem \ref{thm2-2:jump} shows that the following convergence holds:
\begin{align*}
(\sqrt{n}(\check{\alpha}_n-\alpha_0), \sqrt{nh_n}(\check{\beta}_n-\beta_0),\sqrt{nh_n}(\check{\gamma}_n-\gamma_0))\overset{d}{\to}N(0,K^{-1}),
\end{align*}
where
\begin{align*}
K=\mathrm{diag}\left(\frac{2}{\alpha_0^2},\frac{\mu_2}{\alpha_0^2},\frac{1}{\lambda_0},\frac{\lambda_0}{\sigma_0^2},\frac{\lambda_0}{2\sigma_0^4}\right),\quad
\mu_2=\int x^2\pi(dx).
\end{align*}
Let $n=10^6$ and $h_n=n^{-2/3}$, which means $T=100$. For all $\varepsilon\in(0,\frac{1}{2})$, if we set $\delta=\frac{1}{2}+\varepsilon$, it holds that $nh_n^{1+\delta}\to0$. Hence, we can choose $\rho_1$ and $\rho_2$ from $(\frac{1}{4},\frac{1}{2})$, and take $\rho_3$ from $(\frac{1}{8},\frac{1}{2})$ for Theorem \ref{thm2-2:jump} to hold. In our simulation, we conducted the adaptive estimation with each values of $\rho_1,\rho_2$ and $\rho_3$. 
We generated 1000 independent sample paths for each setting and compared simulation results.
Table \ref{table1:jump} shows sample means of the simulated adaptive estimators when $\rho_1,\rho_2$ and $\rho_3$ vary from 0.255 to 0.3 in increments of 0.005. We know that it is most difficult to estimate the jump intensity and variance parameters $\lambda$ and $\sigma^2$ among the five parameters. In particular, only the cases around $\rho_2=0.26$ can estimate $\lambda$ and $\sigma^2$, precisely.
Next, we consider $\check{\alpha}_n$ and $\check{\beta}_n$ related to $\rho_1$ and $\rho_3$, respectively, and $\check{\lambda}_n$, $\check{\mu}_n$ and $\check{\sigma}_n^2$ related to $\rho_2$ as one group each. Figure \ref{fig:ada_qq} shows QQ-plot of the simulated adaptive estimators with $\rho$ which takes from $0.255$ to $0.285$.
Figure \ref{fig:ada_qq} illustrates that
the most suitable choices of $\rho_1$ and $\rho_2$ are $\rho_1=0.285$ and $\rho_2=0.26$. Moreover, while the behavior of $\check{\beta}_n$ is the most robust of all these estimators, taking into account the results in Table \ref{table1:jump}, it can be seen that $\rho_3=0.255$ is the most suitable choice. Since the joint estimation in \citet{Shimizu-Yoshida_JP,Shimizu-Yoshida} can only use one kind of $\rho$, this results imply that we should utilize our adaptive estimator for estimating each parameter more accurately.
While our adaptive estimator has better behavior than the joint estimator, 
it is still difficult to choose 
optimal thresholds 
$\rho_1,\rho_2$ and $\rho_3$ in practice. However, by the definition of $l_n^{(1)},\bar{l}_n^{(2)}$ and $\tilde{l}_n^{(2)}$, we have an insight that the thresholds can be determined in different ways for $\rho_1,\rho_2$ and $\rho_3$. In particular, we can consider the choosing problem of the threshold in the continuous part and the jump part, separately.
\begin{table}[htbp]
\centering
\caption{Sample mean (true parameter value) of the simulated adaptive estimators.} 
\label{table1:jump}
\renewcommand{\arraystretch}{1.3}
\scalebox{1}{
\begin{tabular}{l|ccccc}
$\rho_1,\rho_2,\rho_3$&$\alpha (2)$&$\beta (2.5)$&$\lambda (6)$&$\mu (0)$&$\sigma^2 (20.25)$\\
\hline\hline
0.255&2.00370&2.50007&   5.91793&-0.00090&20.51549\\0.26&2.00362&2.49988&   5.95749&-0.00087&20.37968\\0.265&2.00346&2.49949&   6.04572&-0.00087&20.08283\\0.27&2.00312&2.49868&   6.24211&-0.00087&19.45233\\0.275&2.00249&2.49714&   6.65118&-0.00085&18.25820\\0.28&2.00137&2.49435&   7.45195&-0.00074&16.30029\\0.285&1.99951&2.48973&   8.91656&-0.00062&13.62686\\0.29&1.99657&2.48237&  11.47515&-0.00048&10.59289\\0.295&1.99217&2.47140&  15.69133&-0.00029& 7.74991\\0.3&1.98589&2.45590&  22.31719&-0.00021& 5.45139
\end{tabular}
}
\end{table}

\begin{figure}[htbp]
\centering
\includegraphics[width=\columnwidth]{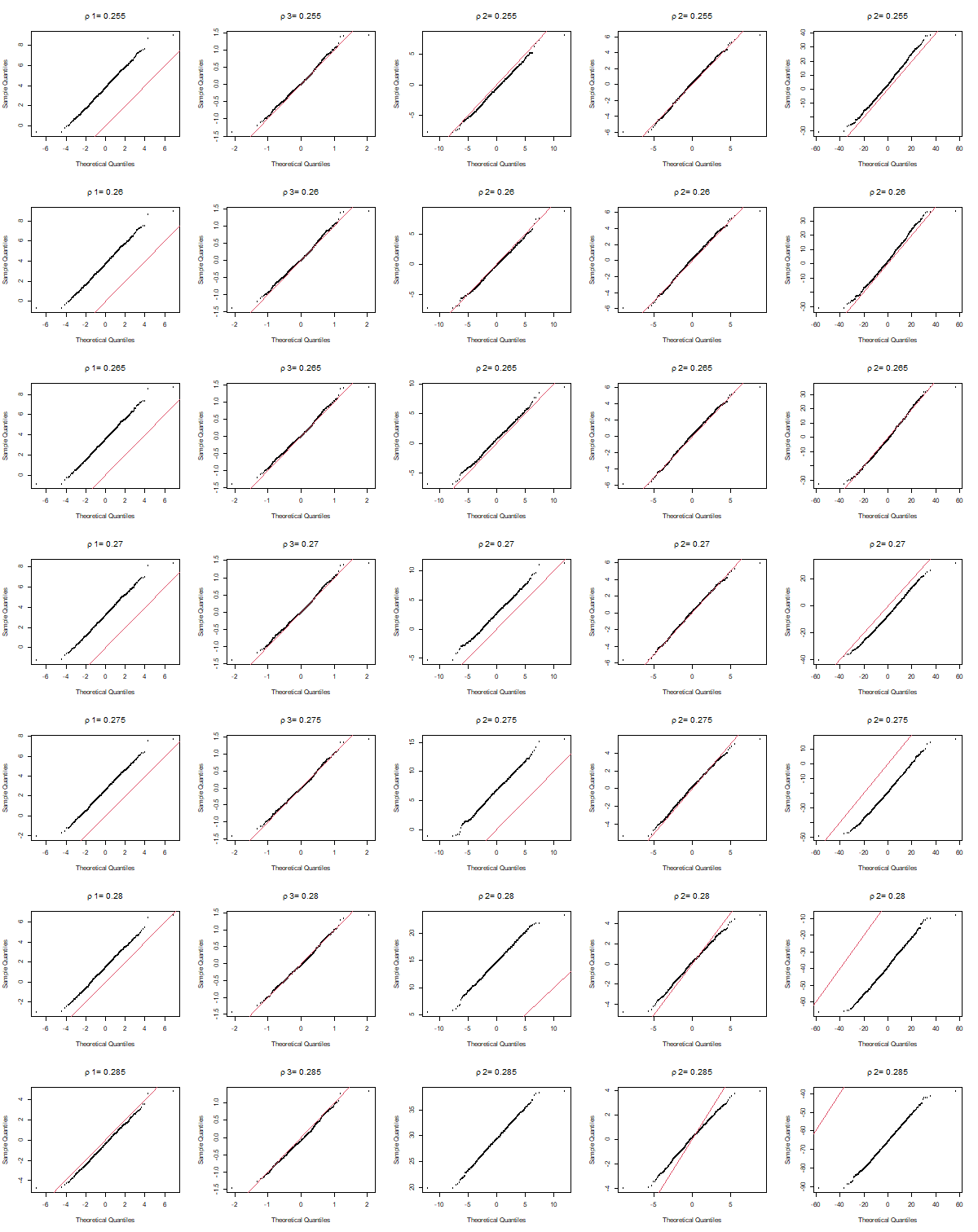}	 
\caption{QQ-plot of the simulated adaptive estimators. From left to right: the estimator for $\alpha,\beta,\lambda,\mu,\sigma^2$. From top to bottom: it is set that $\rho_1=\rho_2=\rho_3=0.255,0.26,0.265,0.27,0.275,0.28,0.285$. The solid line is $y=x$.
}
\label{fig:ada_qq}
\end{figure}

Next, we consider the adaptive test. 
The model setup is the same as estimation, and we set the hypothesis testing problem as follows:
\begin{align*}
    \begin{cases}
        H_0:\ \ \alpha=2,\ \beta=2.5,\ \lambda=6,\ \mu=0,\ \sigma^2=20.25, \\
        H_1:\ \ \mathrm{not}\ H_0.
    \end{cases}
\end{align*}
First, we simulate the asymptotic behavior of the adaptive test statistic under $H_0$. 
Joint quasi-log likelihood function, used for composing the adaptive test statistic, is as follows:
\begin{align*}
l_n(\theta)&=-\frac{1}{2}\sum_{i=1}^{n}\left\{h_n^{-1}\alpha^{-2}(\Delta X_i^n+\beta h_n X_{t_{i-1}^n})^2+2\log\alpha\right\}{{\boldsymbol{1}_{\left\{|\Delta X_i^n|\le \bar{D}_1h_n^{\bar{\rho}_1}\right\}}}}\\
&\qquad+\sum_{i=1}^{n}\log f_\gamma(\Delta X_i^n){{\boldsymbol{1}_{\left\{|\Delta X_i^n|> \bar{D}_2h_n^{\bar{\rho}_2}\right\}}}}-\lambda nh_n.
\end{align*}
Then we can calculate the adaptive test statistics as
\begin{align*}
\Lambda_n(\check{\theta}_n,\check{\theta}^*_n)&= -\frac{1}{{\check{\alpha}_n^2}h_n}\sum_{i=1}^n (\Delta X_i^n+h_nX_{t_{i-1}^n}{\check{\beta}_n})^2{{\boldsymbol{1}_{\left\{|\Delta X_i^n|\le \bar{D}_1h_n^{\bar{\rho}_1}\right\}}}}\\
&\quad +\frac{1}{{(\check{\alpha}_n^*)^2}h_n}\sum_{i=1}^n (\Delta X_i^n+h_nX_{t_{i-1}^n}{\check{\beta}_n^*})^2{{\boldsymbol{1}_{\left\{|\Delta X_i^n|\le \bar{D}_1h_n^{\bar{\rho}_1}\right\}}}}\\
&\quad 2{n_1}\log\frac{{\check{\alpha}_n}}{{\check{\alpha}_n^*}}+2{n_2}\log\frac{{\check{\lambda}_n}}{{\check{\lambda}_n^*}}-2({\check{\lambda}_n}-{\check{\lambda}_n^*})nh_n-{n_2}\log\frac{{\check{\sigma}_n^2}}{{\check{\sigma}_n^{*2}}}\\
&\quad -\frac{1}{{\check{\sigma}_n^2}}\sum_{i=1}^n(\Delta X_i^n-{\check{\mu}_n})^2{{\boldsymbol{1}_{\left\{|\Delta X_i^n|> \bar{D}_2h_n^{\bar{\rho}_2}\right\}}}}-\frac{1}{{\check{\sigma}_n^{*2}}}\sum_{i=1}^n(\Delta X_i^n-{\check{\mu}_n^*})^2{{\boldsymbol{1}_{\left\{|\Delta X_i^n|> \bar{D}_2h_n^{\bar{\rho}_2}\right\}}}}.
\end{align*}
Theorem \ref{thm5:H0test} shows that the following convergence holds:
\begin{align*}
    \Lambda_n(\check{\theta}_n,\check{\theta}_n^*)\overset{d}{\to}\chi^2_{5}.
\end{align*}
For simplicity, we set $\bar{D}_1=\bar{D}_2=1$. The adaptive estimators on the constrained parameter space $\Theta_0$ are given by $\check{\theta}_n^*=(2,2.5,6,0,20.25)$. In this simulation, we set  $n=10^6,h_n=n^{-2/3}$ again, and then $\bar{\rho}_1$ and $\bar{\rho}_2$ can be chosen from $(\frac{1}{4},\frac{1}{2})$. 
First, we fixed the adaptive estimators used for constructing the test statistic at $\rho_1=0.285,\rho_2=0.26$ and $\rho_3=0.255$, which yielded the best results in the simulation, and confirmed the behavior when we changed the values of $\bar{\rho}_1$ and $\bar{\rho}_2$. 
Figure \ref{fig:test_best_qq} shows QQ-plot of the simulated adaptive test statistics with $\rho_1$ and $\rho_2$ which takes from 0.255 to 0.285 and from 0.255 to 0.275, separately.
From Figure \ref{fig:test_best_qq}, it can be confirmed that the adaptive test statistic converges to its asymptotic distribution if we take five thresholds suitably. In particular, we see that the suitable ranges of $\bar{\rho}_1$ and $\bar{\rho}_2$ are $0.255\le \bar{\rho}_1\le 0.27$ and $0.255\le \bar{\rho}_2\le 0.265$, respectively. This implies that while there is no issue in aligning $\rho_3$ with $\bar{\rho}_1$, and $\rho_2$ with $\bar{\rho}_2$, we should exercise caution in aligning $\rho_1$ with $\bar{\rho}_1$. Figure \ref{fig:pattern1_qq} shows QQ-plot of the simulated adaptive test statistics with $\rho_3=0.255,\rho_2=0.26$, and with $\bar{\rho}_1=\rho_1$ and  $\bar{\rho}_2$ which takes from 0.255 to 0.285 and 0.255 to 0.275, respectively.
Figure \ref{fig:pattern1_qq} suggests that
$\rho_1$, which is related to estimation for the diffusion coefficient, and $\bar{\rho}_1$, which is contained in the continuous part of joint quasi-log likelihood function, should not be set to the same value. 
This error arises because the adaptive estimator does not maximize the joint quasi-log likelihood function $l_n$.
In actual, the joint estimators $\hat{\theta}_n=(\hat{\alpha}_n,\hat{\beta}_n,\hat{\lambda}_n,\hat{\mu}_n,\hat{\sigma}_n^2)$ are as follows: 
\begin{align*}
\hat{\alpha}_{n}&=\sqrt{\frac{1}{\bar{n}_1h_n}\sum_{i=1}^{n}{(\Delta X_i^n+\hat{\beta}_n h_n X_{t_{i-1}^n})^2}\boldsymbol{1}_{\{|\Delta X_i^n|\le \bar{D}_1h_n^{\bar{\rho}_1}\}}},\quad
\hat{\beta}_{n}=-\frac{\sum_{i=1}^{n}{X_{t_{i-1}^n}\Delta X_i^n}\boldsymbol{1}_{\{|\Delta X_i^n|\le \bar{D}_1h_n^{\bar{\rho}_1}\}}}{h_n\sum_{i=1}^{n}{X_{t_{i-1}^n}^2}\boldsymbol{1}_{\{|\Delta X_i^n|\le \bar{D}_1h_n^{\bar{\rho}_1}\}}},\\
\hat{\lambda}_{n}&=\frac{\bar{n}_2}{nh_n},\quad \hat{\mu}_{n}=\frac{1}{\bar{n}_2}\sum_{i=1}^{n}{\Delta X_i^n\boldsymbol{1}_{\{|\Delta X_i^n|> \bar{D}_2h_n^{\bar{\rho}_2}\}}},\quad
\hat{\sigma}_{n}^{2}=\frac{1}{\bar{n}_2}\sum_{i=1}^{n}{(\Delta X_i^n-\hat{\mu}_n)^2\boldsymbol{1}_{\{|\Delta X_i^n|> \bar{D}_2h_n^{\bar{\rho}_2}\}}},
\end{align*} 
where $\bar{n}_1=\sum_{i=1}^n\boldsymbol{1}_{\{|\Delta X_i^n|\le \bar{D}_1h_n^{\bar{\rho_1}}\}}$ and $\bar{n}_2=\sum_{i=1}^n\boldsymbol{1}_{\{|\Delta X_i^n|> \bar{D}_2h_n^{\bar{\rho}_2}\}}$.
Through the comparison of the adaptive and joint estimators, it is found that these estimators are identical, except for estimation of $\alpha$. Therefore, we take note that we should decide not only $\rho_1$ and $\bar{\rho}_1$ separately, but also the other thresholds.
The conclusion is that, in the model (\ref{sim:model}), for the construction of the adaptive test statistic, the threshold for estimating the diffusion coefficient and the threshold for the continuous component in the joint quasi-log likelihood function, which is used for the construction of quasi-likelihood ratio, should be determined separately. However, the remaining thresholds can be aligned.
Figure \ref{fig:pattern4_qq} shows QQ-plot of the simulated adaptive test statistic with $\rho_1=0.285$, and with $\bar{\rho}_1=\rho_3$ and $\bar{\rho}_2=\rho_2$, where $\bar{\rho}_1=\rho_3$ varies from 0.255 to 0.285 and $\bar{\rho}_2=\rho_2$ varies from 0.255 to 0.275, respectively. Since $0.255\le\bar{\rho}_1=\rho_3\le 0.27$ and $0.255\le \bar{\rho}_2=\rho_2\le 0.265$ from Figure \ref{fig:pattern4_qq}, it can be seen that by setting a separate threshold for $\rho_1$, it is fine to set $\bar{\rho}_1=\rho_3$ and $\bar{\rho}_2=\rho_2$.
In particular, by observing the diagonal elements, it is evident that there is no issue with aligning all the thresholds except for $\rho_1$.
Next, we check whether our adaptive test is consistent or not. We set null and alternative hypotheses as follows:
\begin{align*}
    \begin{cases}
        H_0':\ \ \alpha=2.01,\beta=2.5,\lambda=6,\mu=0,\sigma^2=20.25, \\
        H_1':\ \ \mathrm{not}\ H_0'.
    \end{cases}
\end{align*}
Then we generated 1000 independent sample paths for each setting under $H_1'$($\theta_1=(2,2.5,6,0,20.25)$), and for the construction of the test statistic, we fixed the thresholds used for estimation at $\rho_1=0.285,\rho_2=0.26$ and $\rho_3=0.255$, which yielded good results in the simulation. Moreover, we examined the number of rejections 
when we changed $\bar{\rho}_1$ and $\bar{\rho}_2$, which were used in the construction of the quasi-likelihood ratio.
Table \ref{table:test-consistent} shows the number of rejections 
when $\bar{\rho}_1$ and $\bar{\rho}_2$ change from 0.255 to 0.265. 
From this table, it is found that the power converges to 1.
\begin{figure}[htbp]
\centering
\includegraphics[width=\columnwidth]{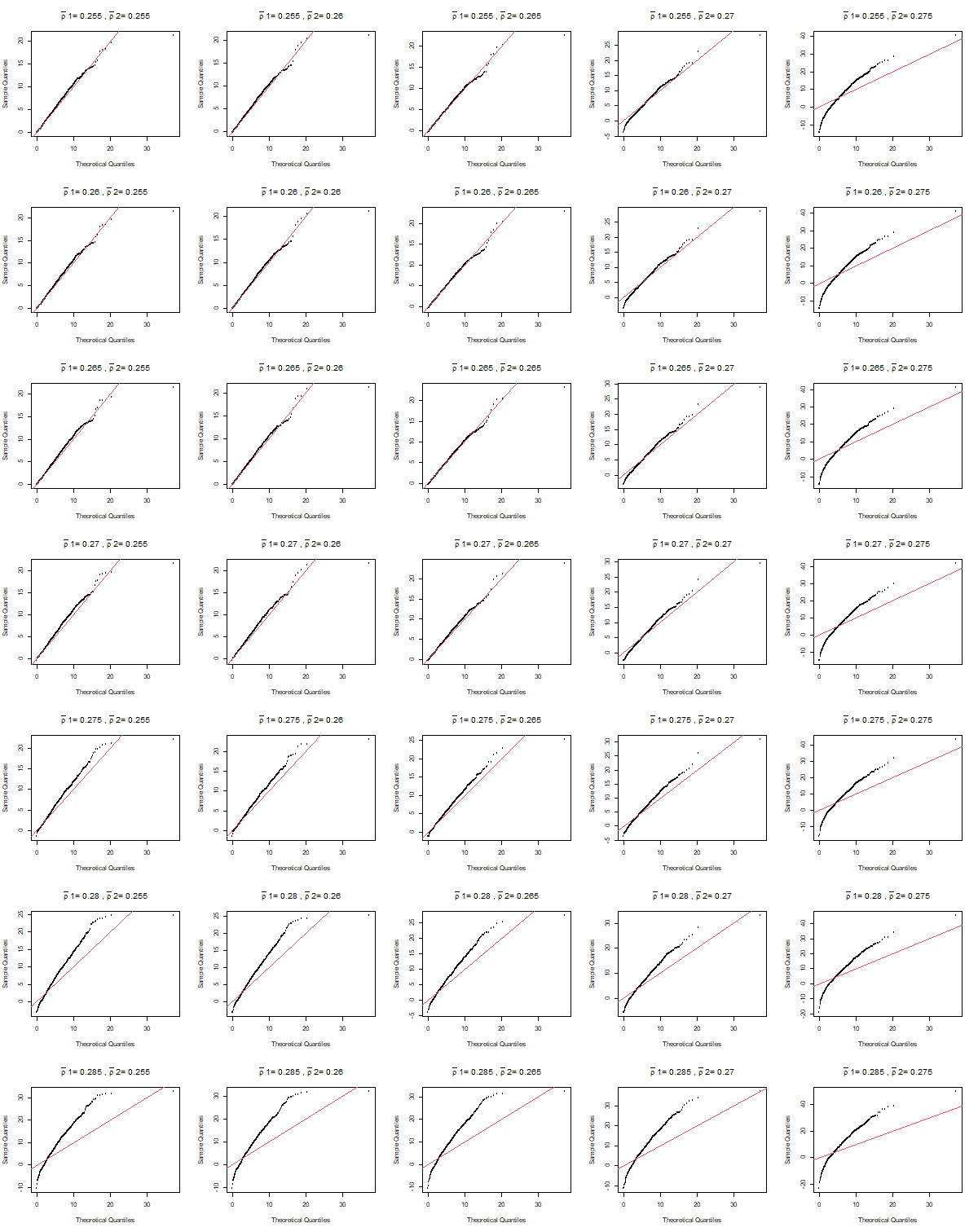}	
\caption{QQ-plot of the simulated adaptive test statistic with $\rho_1=0.285,\rho_2=0.26$, $\rho_3=0.255$, $\bar{\rho}_1$ and $\bar{\rho}_2$. From left to right: it is set that $\bar{\rho}_2=0.255,0.26,0.265,0.27,0.275$. From top to bottom: it is set that $\bar{\rho}_1=0.255,0.26,0.265,0.27,0.275,0.28,0.285$. The solid line is $y=x$.}
\label{fig:test_best_qq}
\end{figure}
\begin{figure}[htbp]
\centering
\includegraphics[width=\columnwidth]{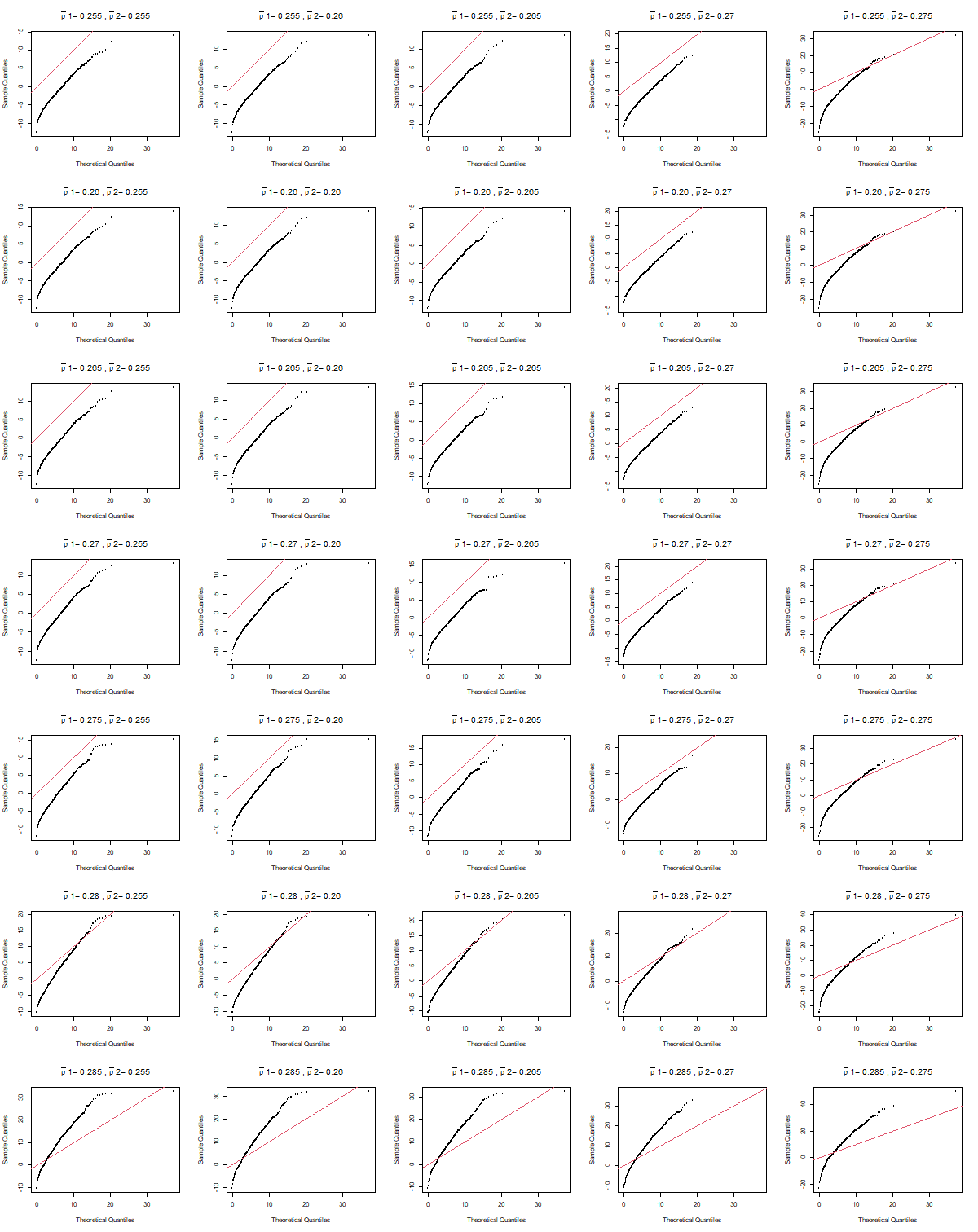}	
\caption{QQ-plot of the simulated adaptive test statistic with $\rho_2=0.26,\rho_3=0.255,\rho_1=\bar{\rho}_1$ and $\bar{\rho}_2$. From left to right: it is set that $\bar{\rho}_2=0.255,0.26,0.265,0.27,0.275$. From top to bottom: it is set that $\rho_1=\bar{\rho}_1=0.255,0.26,0.265,0.27,0.275,0.28,0.285$. The solid line is $y=x$.}
\label{fig:pattern1_qq}
\end{figure}
\begin{figure}[htbp]
\centering
\includegraphics[width=\columnwidth]{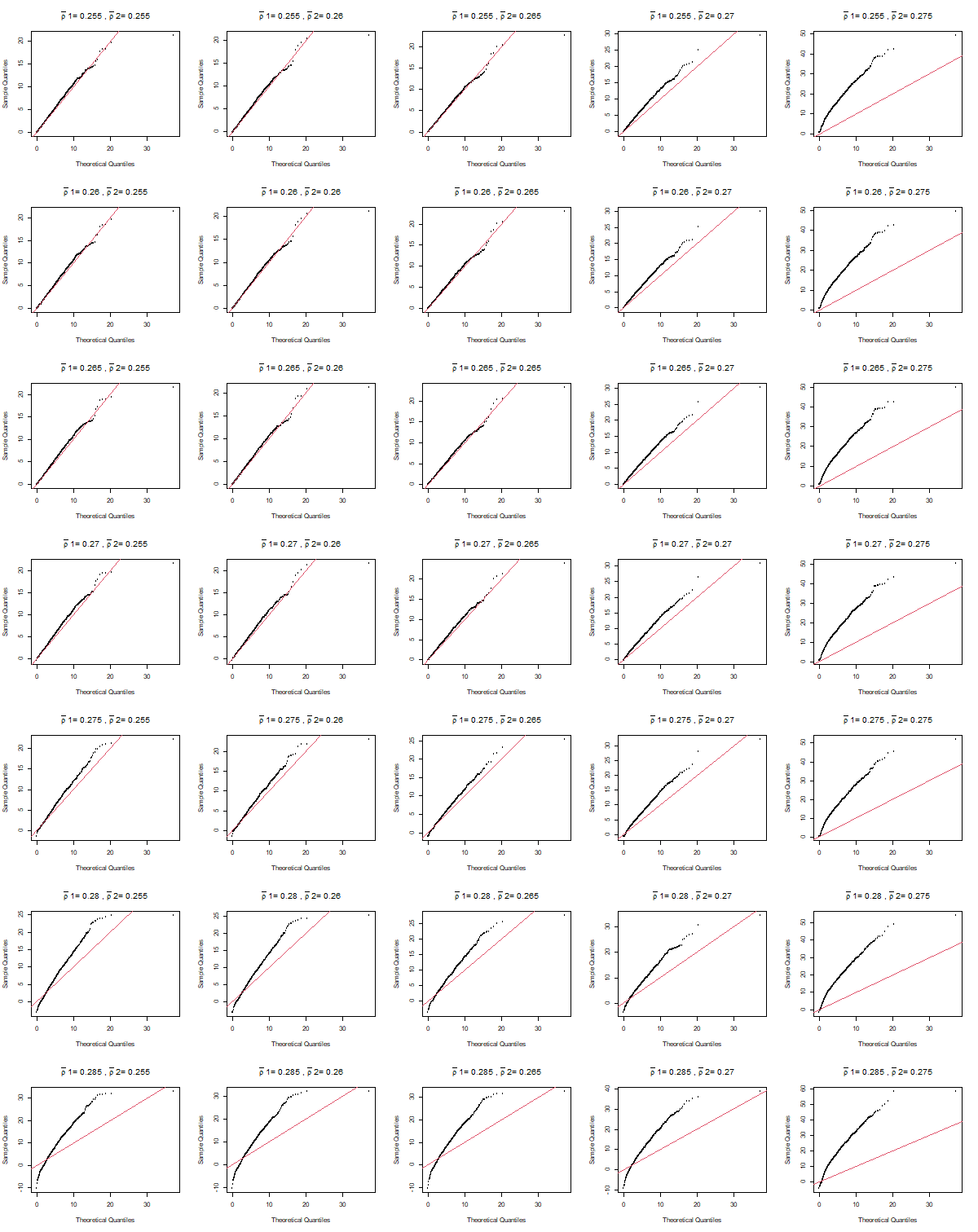}	
\caption{QQ-plot of the simulated adaptive test statistic with $\rho_1=0.285,\rho_2=\bar{\rho}_2$ and $\rho_3=\bar{\rho}_1$. From left to right: it is set that $\rho_2=\bar{\rho}_2=0.255,0.26,0.265,0.27,0.275$. From top to bottom: it is set that $\rho_3=\bar{\rho}_1=0.255,0.26,0.265,0.27,0.275,0.28,0.285$. The solid line is $y=x$.
}
\label{fig:pattern4_qq}
\end{figure}
\begin{table}[htbp]
\begin{center}
\caption{Rejection number with $\rho_1=0.285,\rho_2=0.26$, $\rho_3=0.255$, $\bar{\rho}_1$ and $\bar{\rho}_2$. From left to right: it is set that $\bar{\rho}_2=0.255,0.26,0.265$. From top to bottom: it is set that $\bar{\rho}_1=0.255,0.26,0.265$.}
\label{table:test-consistent}
\begin{tabular}{c||ccc}
${\bar{\rho}_1}$,${\bar{\rho}_2}$&0.255&0.26&0.265\\
\hline\hline
0.255&1.000&1.000&1.000\\
0.26&1.000&1.000&1.000\\
0.265&1.000&1.000&1.000\\
\end{tabular}
\end{center}
\end{table}
\FloatBarrier%


\section{Proofs}\label{sec:proof}
In this section, we sometimes omit the true parameter values without specially mentioning. For example, 
we  abbreviate $q^{\beta_0}(ds,dz)$ as $q(ds,dz)$ or $a(X_s,\alpha_0)$ as $a(X_s)$ and so on.
\subsection{Proofs of Chapter \ref{sec4.3:m}}
\subsubsection{Preliminary results}
We show some propositions and lemmas for proving theorems in Chapter \ref{sec4.3:m}.
\begin{prop}{(\textbf{\citet{Shimizu-Yoshida_JP,Shimizu-Yoshida}})}\label{prop1:jump}
Suppose {\bf{[A1]}}, {\bf{[A3]}} and {\bf{[A5]}}. For $k\ge 2$, $k\in\mathbb{N}$, $t_{i-1}^n\le t\le t_i^n$,
\begin{equation}\label{eq1:prop1:jump}
\mathbb{E}\left[|X_t-X_{t_{i-1}^n}|^k\ |\mathcal{F}_{i-1}^n\right]\le C_k|t-t_{i-1}^n|(1+|X_{t_{i-1}^n}|)^{C_k}.
\end{equation}
If $g$ is a function defined on $\mathbb{R}^d\times\Theta$ and is polynomial growth in $x$ uniformly in $\theta$, then, 
\begin{equation}\label{eq2:prop1:jump}
\mathbb{E}\left[|g(X_t,\theta)| \ |\mathcal{F}_{i-1}^n\right]\le C(1+|X_{t_{i-1}^n}|)^{C}.
\end{equation}
\end{prop}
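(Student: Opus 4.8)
\textbf{Proof plan for Proposition \ref{prop1:jump}.}

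The plan is to establish \eqref{eq1:prop1:jump} first and then derive \eqref{eq2:prop1:jump} as an easy consequence. For \eqref{eq1:prop1:jump}, I would write the increment $X_t - X_{t_{i-1}^n}$ by integrating the stochastic differential equation \eqref{jump-diff:model} over $[t_{i-1}^n, t]$, splitting it into a drift term $\int_{t_{i-1}^n}^t b(X_{s-},\beta_0)\,ds$, a diffusion term $\int_{t_{i-1}^n}^t a(X_{s-},\alpha_0)\,dW_s$, and a jump term $\int_{t_{i-1}^n}^t\int_E c(X_{s-},z,\beta_0)\,p(ds,dz)$. By the elementary inequality $|x+y+w|^k \le 3^{k-1}(|x|^k+|y|^k+|w|^k)$, it suffices to bound the conditional $k$-th moment of each of the three pieces. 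For the drift piece, Jensen's inequality (or Hölder in time) together with the polynomial growth of $b$ from \textbf{[A4]} reduces the bound to $|t-t_{i-1}^n|^k$ times a conditional moment of $\sup_{s}(1+|X_s|)^{C}$, which is controlled by \textbf{[A3]}; since $|t-t_{i-1}^n|\le h_n\to 0$, the factor $|t-t_{i-1}^n|^k$ is dominated by $|t-t_{i-1}^n|$. For the diffusion piece, I would apply the Burkholder--Davis--Gundy inequality conditionally on $\mathcal{F}_{i-1}^n$, turning the $k$-th moment of the stochastic integral into the $k/2$-th moment of $\int_{t_{i-1}^n}^t |a(X_{s-},\alpha_0)|^2\,ds$, which again by polynomial growth and Jensen is bounded by $|t-t_{i-1}^n|^{k/2}$ times a conditional moment, and $|t-t_{i-1}^n|^{k/2}\le C|t-t_{i-1}^n|$ when $k\ge 2$. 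For the jump piece, I would use the moment estimates for integrals against a Poisson random measure (e.g., Kunita-type inequalities, or a direct compensation argument), invoking \textbf{[A1]} and \textbf{[A5]}: the growth bound $|c(x,z,\beta_0)|\le \zeta(z)(1+|x|)$ with $\zeta$ of polynomial growth together with $\sup_{\beta}\int_E |z|^p f_\beta(z)\,dz<\infty$ for all $p$ gives control of all moments of the jump integral, producing a bound of the form $C_k|t-t_{i-1}^n|(1+|X_{t_{i-1}^n}|)^{C_k}$ (the jump term already contributes a linear factor in $|t-t_{i-1}^n|$ through the intensity). Throughout I would handle the appearance of $\sup_s|X_s|$-type terms by using the Markov/tower property to revert to functions of $X_{t_{i-1}^n}$, which is exactly where \textbf{[A3]} is used.

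For \eqref{eq2:prop1:jump}, given a function $g$ of polynomial growth in $x$ uniformly in $\theta$, say $|g(x,\theta)|\le C(1+|x|)^{C}$, I would write $1+|X_t| \le 1+|X_{t_{i-1}^n}| + |X_t - X_{t_{i-1}^n}|$, raise to the power $C$, and take conditional expectation; the cross terms are handled by \eqref{eq1:prop1:jump} applied with a large enough exponent (and $|t-t_{i-1}^n|\le h_n\le 1$ bounded), leaving a bound of the desired form $C(1+|X_{t_{i-1}^n}|)^{C}$.

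The main obstacle is the jump term: one must be careful to separate the genuinely ``small-time'' linear-in-$(t-t_{i-1}^n)$ behavior coming from the Poisson intensity from the contribution of the (possibly large) jump sizes, and to do this uniformly via the polynomial moment bounds in \textbf{[A5]}. Concretely, the $k$-th moment of $\int_{t_{i-1}^n}^t\int_E c(X_{s-},z,\beta_0)\,p(ds,dz)$ should be decomposed into the compensated martingale part, controlled by a BDG/Kunita estimate giving $|t-t_{i-1}^n|$ times moments of $\int_E |\zeta(z)|^k f_{\beta_0}(z)\,dz$ (finite by \textbf{[A5]}), plus the drift-type part $\int_{t_{i-1}^n}^t\int_E c(X_{s-},z,\beta_0) f_{\beta_0}(z)\,dz\,ds$, which contributes $|t-t_{i-1}^n|^k$ and is thus even smaller. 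Since this is precisely the estimate proved in \citet{Shimizu-Yoshida_JP,Shimizu-Yoshida}, I would cite their argument for the detailed bookkeeping rather than reproduce it in full.
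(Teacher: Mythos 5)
The paper itself gives no proof of this proposition: it is imported from \citet{Shimizu-Yoshida_JP,Shimizu-Yoshida}, with Remark \ref{proofskip1:jump} merely noting that the slightly different assumptions here still allow an analogous proof. Your outline reconstructs exactly that standard argument — split $X_t-X_{t_{i-1}^n}$ into drift, Brownian and Poisson-integral parts, bound each conditional $k$-th moment via Jensen/H\"older, Burkholder--Davis--Gundy and a Kunita-type (compensation) estimate using \textbf{[A1]} and \textbf{[A5]}, note that for $k\ge2$ the powers $|t-t_{i-1}^n|^{k}$ and $|t-t_{i-1}^n|^{k/2}$ are dominated by $|t-t_{i-1}^n|$, and then deduce \eqref{eq2:prop1:jump} from \eqref{eq1:prop1:jump} by expanding $(1+|X_{t_{i-1}^n}|+|X_t-X_{t_{i-1}^n}|)^{C}$ — so in structure you are on the same route the paper points to.

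There is, however, one genuine gap, plus a small slip. The slip: you invoke \textbf{[A4]} for the growth of $b$ (and implicitly $a$), but \textbf{[A4]} is not among the hypotheses; this is harmless, since \textbf{[A1]} (global Lipschitz continuity at the true parameter) already gives linear growth of $a(\cdot,\alpha_0)$ and $b(\cdot,\beta_0)$, and $c$ is controlled by $|c(x,z,\beta_0)|\le\zeta(z)(1+|x|)$. The gap: after the decomposition, every integrand involves $X_{s-}$ for $s\in[t_{i-1}^n,t]$, and you propose to control the resulting conditional moments of $\sup_s(1+|X_s|)^{C}$ ``by \textbf{[A3]} via the Markov/tower property''. \textbf{[A3]} only bounds the unconditional moments $\sup_t E[|X_t|^p]$ by a deterministic constant; what the estimate needs is a bound of the \emph{conditional} expectation given $\mathcal{F}_{i-1}^n$ by $C(1+|X_{t_{i-1}^n}|)^{C}$, which is essentially \eqref{eq2:prop1:jump} itself — so this step, as written, is circular. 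The correct tool is a conditional Gronwall argument: set $\phi(t)=\mathbb{E}\bigl[\sup_{t_{i-1}^n\le u\le t}|X_u-X_{t_{i-1}^n}|^k\ \big|\ \mathcal{F}_{i-1}^n\bigr]$, use linear growth, BDG and the compensated Poisson estimate to obtain $\phi(t)\le C_k(1+|X_{t_{i-1}^n}|)^{C_k}\,|t-t_{i-1}^n|+C_k\int_{t_{i-1}^n}^{t}\phi(s)\,ds$, and conclude by Gronwall's lemma; with this replacement your decomposition yields \eqref{eq1:prop1:jump}, and \eqref{eq2:prop1:jump} then follows exactly as you describe.
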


\begin{rmk}\label{proofskip1:jump}
 Assumptions in Proposition \ref{prop1:jump} are slightly different from those in \citet{Shimizu-Yoshida_JP,Shimizu-Yoshida}. However, we can prove Proposition \ref{prop1:jump} in an analogous manner to 
 the proof in \citet{Shimizu-Yoshida_JP,Shimizu-Yoshida}. Moreover, for Lemma \ref{lem1:jump}, 
 Propositions \ref{prop3:jump}, \ref{prop5:jump}, \ref{prop6:jump}, we can verify that similar arguments hold.
\end{rmk}
We define the random times $\tau_i^n$ and $\eta_i^n$ as follows:
\begin{align*}
    \tau_i^n&:=\inf\{t\in[t_{i-1}^n,t_i^n)\ ;\ |\Delta X_t|>0\},\\
    \eta_i^n&:=\sup\{t\in[t_{i-1}^n,t_i^n)\ ;\ |\Delta X_t|>0\}.
\end{align*}
If the infimum or supremum on the right-hand side does not exist, we define the random times  equal to $t_{i}^n$.
The random times $\tau_i^n$ and $\eta_i^n$  denote the first and last jump time on $[t_{i-1}^n,t_i^n)$, respectively.  
\begin{lemma}{(\textbf{\citet{Shimizu-Yoshida_JP,Shimizu-Yoshida}})}\label{lem1:jump}
Suppose {\bf{[A1]}}, {\bf{[A3]}} and {\bf{[A5]}}. For $D>0$, $\rho\in[0,1/2)$ and any $p\ge 1$, 
\begin{align}
P\left(\sup_{t\in[t_{i-1}^n,\tau_i^n)}|X_t-X_{t_{i-1}^n}|>Dh_n^\rho\ |\mathcal{F}_{i-1}^n\right)&=R(\theta,h_n^p,X_{t_{i-1}^n}),\label{eq1:lem2:jump}\\
P\left(\sup_{t\in[\eta_{i}^n,t_i^n)}|X_{t_i^n}-X_{t}|>Dh_n^\rho\ |\mathcal{F}_{i-1}^n\right)&=R(\theta,h_n^p,X_{t_{i-1}^n}),\label{eq2:lem2:jump}
\end{align}
 where $\sup\emptyset=-\infty$ and each function $R$ does not depend on $i$.
\end{lemma}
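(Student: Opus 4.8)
The plan is to reduce each of \eqref{eq1:lem2:jump} and \eqref{eq2:lem2:jump} to a high-order moment estimate for the \emph{continuous} part of the dynamics and then apply Markov's inequality, using crucially that $\rho<1/2$.

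For \eqref{eq1:lem2:jump}: on the stochastic interval $[t_{i-1}^n,\tau_i^n)$ the process $X$ carries no jumps, so it coincides pathwise with the solution $\widetilde{X}^{(i)}$ of the jump-free equation $d\widetilde{X}_t=b(\widetilde{X}_{t},\beta_0)\,dt+a(\widetilde{X}_{t},\alpha_0)\,dW_t$ (same $W$) started from $\widetilde{X}^{(i)}_{t_{i-1}^n}=X_{t_{i-1}^n}$; hence $\sup_{t\in[t_{i-1}^n,\tau_i^n)}|X_t-X_{t_{i-1}^n}|\le\sup_{t\in[t_{i-1}^n,t_i^n]}|\widetilde{X}^{(i)}_t-X_{t_{i-1}^n}|$. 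Applying the Burkholder--Davis--Gundy inequality to the martingale part and H\"older's inequality to the drift part, and using the polynomial-growth bounds {\bf{[A4]}} together with {\bf{[A3]}} (equivalently, arguing as in Proposition \ref{prop1:jump} for the jump-free equation), one obtains for every $m\ge1$ and all $n$ with $h_n\le1$
\[
\mathbb{E}\!\left[\sup_{t\in[t_{i-1}^n,t_i^n]}\big|\widetilde{X}^{(i)}_t-X_{t_{i-1}^n}\big|^{2m}\ \Big|\ \mathcal{F}_{i-1}^n\right]\le C_m h_n^{m}\big(1+|X_{t_{i-1}^n}|\big)^{C_m},
\]
with $C_m$ independent of $i$. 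The essential point is that discarding the jump term improves the order from $h_n$ in \eqref{eq1:prop1:jump} to $h_n^{m}$. Markov's inequality then yields
\[
P\!\left(\sup_{t\in[t_{i-1}^n,\tau_i^n)}|X_t-X_{t_{i-1}^n}|>Dh_n^\rho\ \Big|\ \mathcal{F}_{i-1}^n\right)\le C_m D^{-2m}h_n^{m(1-2\rho)}\big(1+|X_{t_{i-1}^n}|\big)^{C_m},
\]
and, since $1-2\rho>0$, choosing $m$ with $m(1-2\rho)\ge p$ bounds the right-hand side by $h_n^{p}C(1+|X_{t_{i-1}^n}|)^{C}$, i.e.\ by $R(\theta,h_n^p,X_{t_{i-1}^n})$; this proves \eqref{eq1:lem2:jump}, and since all constants depend only on $m$, $D$ and the coefficient bounds, $R$ does not depend on $i$.

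For \eqref{eq2:lem2:jump} the same scheme applies, but the last jump time $\eta_i^n$ is anticipating (it is not an $(\mathcal{F}_t)$-stopping time), so the conditioning must be enlarged: work conditionally on the $\sigma$-field $\mathcal{G}_i$ generated by $\mathcal{F}_{i-1}^n$ together with the restriction of the Poisson random measure $p$ to $[t_{i-1}^n,t_i^n)\times E$. Given $\mathcal{G}_i$, the time $\eta_i^n$ is deterministic and, by the assumed independence of $W$ and $p$, on $[\eta_i^n,t_i^n)$ the c\`adl\`ag process $X$ evolves according to the SDE driven only by $W$ with no further jumps, so that $X_{t_i^n}-X_t=\int_t^{t_i^n}b(X_{s-})\,ds+\int_t^{t_i^n}a(X_{s-})\,dW_s$ for $t\in[\eta_i^n,t_i^n)$. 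A conditional Burkholder--Davis--Gundy estimate gives
\[
\mathbb{E}\!\left[\sup_{t\in[\eta_i^n,t_i^n)}|X_{t_i^n}-X_t|^{2m}\ \Big|\ \mathcal{G}_i\right]\le C_m h_n^{m}\,\mathbb{E}\!\left[\Big(1+\sup_{s\in[t_{i-1}^n,t_i^n]}|X_s|\Big)^{C_m}\ \Big|\ \mathcal{G}_i\right],
\]
and taking $\mathbb{E}[\,\cdot\mid\mathcal{F}_{i-1}^n]$ and using $\mathbb{E}[(1+\sup_{s\in[t_{i-1}^n,t_i^n]}|X_s|)^{C_m}\mid\mathcal{F}_{i-1}^n]\le C(1+|X_{t_{i-1}^n}|)^{C}$ (which follows by arguing as in Proposition \ref{prop1:jump}) reduces \eqref{eq2:lem2:jump} to exactly the Markov-inequality step above. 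The main obstacle is precisely this second case: one must handle the fact that $\eta_i^n$ depends on the future by conditioning on the Poisson measure before invoking BDG; everything else is the same routine combination of moment bounds as in \citet{Shimizu-Yoshida_JP,Shimizu-Yoshida} (cf.\ Remark \ref{proofskip1:jump}).
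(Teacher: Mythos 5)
The paper does not actually prove this lemma: it is quoted from \citet{Shimizu-Yoshida_JP,Shimizu-Yoshida}, and Remark \ref{proofskip1:jump} only asserts that the argument carries over under the present assumptions. Your proof follows exactly the route of the cited source and of the surrounding results in this paper: bound the increment by the continuous (jump-free) part, get a conditional moment estimate of order $h_n^{m}$ for the $2m$-th moment via Burkholder--Davis--Gundy, and conclude by Markov's inequality, using $\rho<1/2$ to choose $m$ with $m(1-2\rho)\ge p$; your treatment of $\eta_i^n$ by enlarging the conditioning with the Poisson data is the same device the paper uses later (the auxiliary process $\tilde{X}$ in the proof of Proposition \ref{prop7:jump}). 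So the approach is correct and consistent with the cited one. One cosmetic point: the lemma assumes only {\bf[A1]}, {\bf[A3]}, {\bf[A5]}, so you should not invoke {\bf[A4]}; the Lipschitz condition {\bf[A1]} at $\theta_0$ already gives the linear growth of $a(\cdot,\alpha_0)$, $b(\cdot,\beta_0)$ needed for the moment bound.

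There is one genuine subtlety in your argument for \eqref{eq2:lem2:jump}. You claim that $\eta_i^n$ is deterministic given $\mathcal{G}_i=\mathcal{F}_{i-1}^n\vee\sigma\bigl(p|_{[t_{i-1}^n,t_i^n)\times E}\bigr)$. That identifies $\eta_i^n$ with the last atom of $p$, which requires that every atom of $p$ produces a nonzero displacement, i.e. $c(x,z,\beta_0)\neq0$; this is guaranteed under {\bf[A6]}--{\bf[A7]} but not under the lemma's hypotheses {\bf[A1]}, {\bf[A3]}, {\bf[A5]} alone, so as written the conditioning step has a gap in the degenerate case. The fix is easy and in fact removes the enlargement altogether: since a.s.\ no jump of $X$ occurs in $(\eta_i^n,t_i^n]$, one has pathwise, for every $t\in[\eta_i^n,t_i^n)$,
\begin{align*}
|X_{t_i^n}-X_t|=\left|\int_t^{t_i^n}b(X_{s-},\beta_0)\,ds+\int_t^{t_i^n}a(X_{s-},\alpha_0)\,dW_s\right|
\le\sup_{u\in[t_{i-1}^n,t_i^n]}\left|\int_u^{t_i^n}b(X_{s-},\beta_0)\,ds+\int_u^{t_i^n}a(X_{s-},\alpha_0)\,dW_s\right|,
\end{align*}
and the right-hand side involves only a deterministic time interval, so BDG and the conditional moment bounds (as in Proposition \ref{prop1:jump}) applied directly under $\mathcal{F}_{i-1}^n$ give the $C_mh_n^m(1+|X_{t_{i-1}^n}|)^{C_m}$ estimate, after which your Markov step concludes as in the first part. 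With this modification the proof is complete under exactly the stated assumptions.
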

Let $J_i^n=p\left((t_{i-1}^n,t_i^n]\times E\right)$, 
\begin{align*}
C_{i,0}^n(D,\rho)&=\left\{J_i^n=0, |\Delta X_i^n|\le Dh_n^\rho\right\},\quad D_{i,0}^n(D,\rho)=\left\{J_i^n=0, |\Delta X_i^n|> Dh_n^\rho\right\},\\
C_{i,1}^n(D,\rho)&=\left\{J_i^n=1, |\Delta X_i^n|\le Dh_n^\rho\right\},\quad D_{i,1}^n(D,\rho)=\left\{J_i^n=1, |\Delta X_i^n|> Dh_n^\rho\right\},\\
C_{i,2}^n(D,\rho)&=\left\{J_i^n\ge2, |\Delta X_i^n|\le Dh_n^\rho\right\},\quad D_{i,2}^n(D,\rho)=\left\{J_i^n\ge2, |\Delta X_i^n|> Dh_n^\rho\right\}.
\end{align*}
Then, we can express
\begin{align*}
\left\{|\Delta X_i^n|\le Dh_n^\rho\right\}=\bigcup_{j=0}^{2} {C_{i,j}^n(D,\rho)},\quad \left\{|\Delta X_i^n|> Dh_n^\rho\right\}=\bigcup_{j=0}^{2} {D_{i,j}^n(D,\rho)}.
\end{align*}

\begin{prop}\label{prop2:jump}
Suppose {\bf{[A1]}}, {\bf{[A3]}} and {\bf{[A5]}}-{\bf{[A7]}}. For $D,D_1,D_2>0$, $\rho,\rho_1,\rho_2\in(0,1/2)$, any $p\ge 1$ and sufficiently large $n$,
\begin{alignat*}{3}
&P(C_{i,0}^n(D,\rho)\ |\mathcal{F}_{i-1}^n)=\tilde{R}(\theta,h_n,X_{t_{i-1}^n}),\quad 
&&P(D_{i,0}^n(D,\rho)\ |\mathcal{F}_{i-1}^n)={R}(\theta,h_n^p,X_{t_{i-1}^n}),\\
&P(C_{i,1}^n(D,\rho)\ |\mathcal{F}_{i-1}^n)={R}(\theta,h_n^{1+\rho},X_{t_{i-1}^n}),\quad
&&P(D_{i,1}^n(D,\rho)\ |\mathcal{F}_{i-1}^n)=\lambda_0h_n\tilde{R}(\theta,h_n^\rho,X_{t_{i-1}^n}),\\
&P(C_{i,2}^n(D,\rho)\ |\mathcal{F}_{i-1}^n)\le\lambda_0^2h_n^2,\quad
&&P(D_{i,2}^n(D,\rho)\ |\mathcal{F}_{i-1}^n)\le\lambda_0^2h_n^2,\\
&P\left(\left\{|\Delta X_i^n|\le D_1h_n^{\rho_1}\right\}\cap\left\{|\Delta X_i^n| > D_2h_n^{\rho_2}\right\}\ |\mathcal{F}_{i-1}^n\right)&&=R(\theta,h_n^{1+\rho_1},X_{t_{i-1}^n}).
\end{alignat*}

\end{prop}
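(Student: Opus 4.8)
The plan is to decompose each event according to the number of jumps $J_i^n$ occurring on the interval $(t_{i-1}^n,t_i^n]$, exploiting that $J_i^n\sim\mathrm{Poisson}(\lambda(\beta_0)h_n)$ conditionally on $\mathcal{F}_{i-1}^n$ (because the compensator has the form $f_{\beta_0}(z)\,dz\,dt$ and, by the independence hypothesis in the model setup, the Poisson measure on $(t_{i-1}^n,\infty)\times E$ is independent of $\mathcal{F}_{i-1}^n$). This immediately gives $P(J_i^n=0\mid\mathcal{F}_{i-1}^n)=e^{-\lambda_0 h_n}=\tilde R(\theta,h_n,X_{t_{i-1}^n})$, $P(J_i^n=1\mid\mathcal{F}_{i-1}^n)=\lambda_0 h_n e^{-\lambda_0 h_n}=\lambda_0 h_n\tilde R(\theta,h_n,X_{t_{i-1}^n})$, and $P(J_i^n\ge 2\mid\mathcal{F}_{i-1}^n)\le \tfrac12\lambda_0^2 h_n^2$; the last two bounds of the proposition are then immediate since $C_{i,2}^n$ and $D_{i,2}^n$ are both subsets of $\{J_i^n\ge 2\}$.

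For the remaining four estimates I would argue case by case. \emph{(a)} On $\{J_i^n=0\}$ the process has no jump on the interval, so $|\Delta X_i^n|\le\sup_{t\in[t_{i-1}^n,t_i^n)}|X_t-X_{t_{i-1}^n}|$, and applying Lemma \ref{lem1:jump} with $\tau_i^n=t_i^n$ (no jump case) shows $P(D_{i,0}^n\mid\mathcal{F}_{i-1}^n)=P(J_i^n=0,\,|\Delta X_i^n|>Dh_n^\rho\mid\mathcal{F}_{i-1}^n)=R(\theta,h_n^p,X_{t_{i-1}^n})$ for every $p$; then $P(C_{i,0}^n\mid\mathcal{F}_{i-1}^n)=P(J_i^n=0\mid\mathcal{F}_{i-1}^n)-P(D_{i,0}^n\mid\mathcal{F}_{i-1}^n)=\tilde R(\theta,h_n,X_{t_{i-1}^n})-R(\theta,h_n^p,X_{t_{i-1}^n})=\tilde R(\theta,h_n,X_{t_{i-1}^n})$. \emph{(b)} On $\{J_i^n=1\}$, write $\Delta X_i^n=(\text{continuous-type increment before the jump})+\Delta X_{\tau_i^n}+(\text{increment after the jump})$. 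The single jump size $\Delta X_{\tau_i^n}=c(X_{\tau_i^n-},z,\beta_0)$ has, by [A5]–[A7], a comparably small law: using $|c^{-1}|\le c_0^{-1}|y|$ near the origin together with the density bound $f_{\beta_0}(z)\mathbf 1_{\{|z|\le r\}}\le K|z|^{1-d}$ one controls the probability that $|\Delta X_{\tau_i^n}|$ is small (it is $O(h_n^{\rho})$-type mass, uniformly in $X_{t_{i-1}^n}$ up to polynomial factors). Combining with Lemma \ref{lem1:jump} for the two endpoint pieces gives $P(C_{i,1}^n\mid\mathcal{F}_{i-1}^n)=P(J_i^n=1,|\Delta X_i^n|\le Dh_n^\rho\mid\mathcal{F}_{i-1}^n)=R(\theta,h_n^{1+\rho},X_{t_{i-1}^n})$, the extra $h_n^\rho$ coming from the small-jump probability on top of the $\lambda_0 h_n$ factor from $\{J_i^n=1\}$. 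Subtracting, $P(D_{i,1}^n\mid\mathcal{F}_{i-1}^n)=\lambda_0 h_n\tilde R(\theta,h_n,X_{t_{i-1}^n})-R(\theta,h_n^{1+\rho},X_{t_{i-1}^n})=\lambda_0 h_n\tilde R(\theta,h_n^{\rho},X_{t_{i-1}^n})$, absorbing both the $e^{-\lambda_0 h_n}$ expansion and the $h_n^{\rho}$ correction into the $\tilde R(\theta,h_n^\rho,\cdot)$ factor (note $\rho<1/2<1$ so $h_n^\rho$ dominates $h_n$).

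\emph{(c)} For the last identity, observe that the event $\{|\Delta X_i^n|\le D_1h_n^{\rho_1}\}\cap\{|\Delta X_i^n|>D_2h_n^{\rho_2}\}$ is contained in $\bigl(C_{i,0}^n(D_2,\rho_2)^c\cap C_{i,0}^n(D_1,\rho_1)\bigr)\cup\{J_i^n\ge 1\}$; on $\{J_i^n=0\}$ it is contained in $D_{i,0}^n(D_2,\rho_2)$, which is $R(\theta,h_n^p,\cdot)$ by part (a); on $\{J_i^n=1\}$ it is contained in $C_{i,1}^n(D_1,\rho_1)$, which is $R(\theta,h_n^{1+\rho_1},\cdot)$ by part (b); and $\{J_i^n\ge 2\}$ contributes $O(h_n^2)=R(\theta,h_n^{1+\rho_1},\cdot)$ since $\rho_1<1/2<1$. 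Summing the three contributions yields $R(\theta,h_n^{1+\rho_1},X_{t_{i-1}^n})$.

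The main obstacle is part (b), specifically showing that a single jump lands in the small ball $\{|\Delta X_i^n|\le Dh_n^\rho\}$ with probability of order $h_n^{\rho}$ (uniformly in the conditioning variable, modulo polynomial factors): this requires carefully pairing the jump-size change-of-variables from [A6]–[A7] with the density growth control [A5] and with the Lemma \ref{lem1:jump} estimates on the pre- and post-jump "diffusive" displacements, and tracking that the thresholds $Dh_n^\rho$ for $\Delta X_i^n$ translate into thresholds of the same order for $\Delta X_{\tau_i^n}$ via $|c^{-1}(x,y,\beta_0)|\le c_0^{-1}|y|$. Everything else reduces to Poisson tail expansions and the already-established Lemma \ref{lem1:jump}; as in Remark \ref{proofskip1:jump}, these steps follow the pattern of \citet{Shimizu-Yoshida_JP,Shimizu-Yoshida}.
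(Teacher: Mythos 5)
Your proposal follows essentially the same route as the paper's proof: conditioning on the Poisson jump count $J_i^n$, using Lemma \ref{lem1:jump} for the no-jump/diffusive displacements (hence $D_{i,0}^n$ and, by subtraction, $C_{i,0}^n$), controlling the one-jump event $C_{i,1}^n$ by splitting according to whether the jump size is small (density bound [A5] after translating the threshold through [A7]) or large (in which case the diffusive pieces must exceed $2Dh_n^\rho$, handled again by Lemma \ref{lem1:jump}), obtaining $D_{i,1}^n$ by subtraction, and decomposing the final intersection event over $J_i^n=0,1,\ge 2$ exactly as the paper does. The sketch is correct, and the step you flag as the main obstacle (the $O(h_n^{\rho})$ mass of a single small jump) is resolved precisely by the [A5]–[A7] pairing you describe, which is the paper's argument.
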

\begin{rmk}\label{proofcompare1:jump}
The statements of Proposition \ref{prop2:jump} generalize the results of Lemma 2 of \citet{Ogihara-Yoshida}. Our argument allows us more freedom in the choice of $\rho$ compared to theirs. 
\end{rmk}

\begin{proof}
Proofs except for the last statement are almost the same as those of Lemma 2.2 of \citet{Shimizu-Yoshida_JP,Shimizu-Yoshida}.
First, it is obvious that $P(C_{i,2}^n(D,\rho)\ |\mathcal{F}_{i-1}^n)\le\lambda_0^2h_n^2$ and $P(D_{i,2}^n(D,\rho)\ |\mathcal{F}_{i-1}^n)\le\lambda_0^2h_n^2$.
Next, on $C_{i,1}^n(D,\rho)$, we have
\begin{align*}
&P(C_{i,1}^n(D,\rho)\ |\mathcal{F}_{i-1}^n)\\
&=P\left(\left|(X_{t_{i}^n}-X_{\tau_{i}^n})+(X_{\tau_{i-}^n}-X_{t_{i-1}^n})+\Delta X_{\tau_{i}^n}\right|\le Dh_n^{\rho}\ ,\ J_i^n=1 \ |\mathcal{F}_{i-1}^n\right)\\
&\le P\left(\left|(X_{t_{i}^n}-X_{\tau_{i}^n})+(X_{\tau_{i-}^n}-X_{t_{i-1}^n})+\Delta X_{\tau_{i}^n}\right|\le Dh_n^{\rho}\ ,\ |\Delta Z_{\tau_{i}^n}|>\frac{3Dh_n^\rho}{c_0}\ ,\ J_i^n=1 \ |\mathcal{F}_{i-1}^n\right)\\
&\quad+P\left(|\Delta Z_{\tau_{i}^n}|\le\frac{3Dh_n^\rho}{c_0}\ ,\ J_i^n=1 \ |\mathcal{F}_{i-1}^n\right),
\end{align*}
where $c_0$ is the constant in condition {\bf[A7]} and $\Delta Z_{\tau_{i}^n}$ has density $F_{\beta_0}$ under $\mathcal{F}_{i-1}^n$. Under {\bf[A7]},
\begin{align*}
\left|(X_{t_{i}^n}-X_{\tau_{i}^n})+(X_{\tau_{i-}^n}-X_{t_{i-1}^n})+\Delta X_{\tau_{i}^n}\right|\le Dh_n^{\rho},\quad |\Delta Z_{\tau_{i}^n}|>\frac{3Dh_n^\rho}{c_0},
\end{align*} 
and $|\Delta X_{\tau_{i}^n}|$ is small enough, then it holds that 
\begin{align*}
\left|X_{t_{i}^n}-X_{\tau_{i}^n}\right|+\left|X_{\tau_{i-}^n}-X_{t_{i-1}^n}\right|&\ge |\Delta X_{\tau_{i}^n}|-Dh_n^\rho\\
&=\left|c(X_{\tau_{i}^n},\Delta Z_{\tau_i^n},\beta_0)\right|-Dh_n^\rho\\
&\ge c_0|\Delta Z_{\tau_{i}^n}|-Dh_n^\rho\\
&>2Dh_n^\rho.
\end{align*}
Therefore,  we see from Lemma \ref{lem1:jump} and {\bf[A5]} that for large $n$,   
\begin{align*}
&P(C_{i,1}^n(D,\rho)\ |\mathcal{F}_{i-1}^n)\\
&\le P\left(\sup_{t\in[\eta_{i}^n,t_i^n)}|X_{t_i^n}-X_{t}|+\sup_{t\in[t_{i-1}^n,\tau_i^n)}|X_t-X_{t_{i-1}^n}|>2Dh_n^\rho\ |\mathcal{F}_{i-1}^n\right)\\
&\quad+\lambda_0h_ne^{-\lambda_0h_n}\cdot\frac{1}{\lambda_0}\int_{|z|\le\frac{3Dh_n^\rho}{c_0}}f_{\beta_0}(z) dz\\
&\le P\left(\sup_{t\in[\eta_{i}^n,t_i^n)}|X_{t_i^n}-X_{t}|>Dh_n^\rho\ |\mathcal{F}_{i-1}^n\right)+P\left(\sup_{t\in[t_{i-1}^n,\tau_i^n)}|X_t-X_{t_{i-1}^n}|>Dh_n^\rho\ |\mathcal{F}_{i-1}^n\right)\\
&\quad+h_ne^{-\lambda_0h_n}\cdot K\int_{|z|\le\frac{3Dh_n^\rho}{c_0}}|z|^{1-d} dz\\
&=R(\theta,h_n^p,X_{t_{i-1}^n})+h_ne^{-\lambda_0h_n}\cdot CK\cdot\frac{3Dh_n^\rho}{c_0}\\
&=R(\theta,h_n^{1+\rho},X_{t_{i-1}^n})
\end{align*}
for $p\ge1+\rho$.
Hence it holds that
\begin{align*}
P(D_{i,1}^n(D,\rho)\ |\mathcal{F}_{i-1}^n)&=P(J_i^n=1\ |\mathcal{F}_{i-1}^n)-P(C_{i,1}^n(D,\rho)\ |\mathcal{F}_{i-1}^n)\\
&=\lambda_0h_ne^{-\lambda_0h_n}-R(\theta,h_n^{1+\rho},X_{t_{i-1}^n})\\
&=\lambda_0h_ne^{-\lambda_0h_n}\tilde{R}(\theta,h_n^\rho,X_{t_{i-1}^n})\\
&=\lambda_0h_n\tilde{R}(\theta,h_n^\rho,X_{t_{i-1}^n})+\lambda_0h_n(e^{-\lambda_0h_n}-1)\tilde{R}(\theta,h_n^\rho,X_{t_{i-1}^n})\\
&=\lambda_0h_n\tilde{R}(\theta,h_n^\rho,X_{t_{i-1}^n}).
\end{align*}
For $D_{i,0}^n(D,\rho)$, applying Lemma \ref{lem1:jump} again, we have 
\begin{align*}
P(D_{i,0}^n(D,\rho)\ |\mathcal{F}_{i-1}^n)=
P\left(\left|X_{\tau_{i}^n}-X_{t_{i-1}^n}\right|>Dh_n^\rho\ ,\ \tau_i^n=t_i^n\ |\mathcal{F}_{i-1}^n\right)=R(\theta,h_n^p,X_{t_{i-1}^n}).
\end{align*}
Therefore, 
\begin{align*}
P(C_{i,0}^n(D,\rho)\ |\mathcal{F}_{i-1}^n)&=P(J_i^n=0\ |\mathcal{F}_{i-1}^n)-P(D_{i,0}^n(D,\rho)\ |\mathcal{F}_{i-1}^n)\\
&=e^{-\lambda_0h_n}-R(\theta,h_n^{p},X_{t_{i-1}^n})\\
&=(e^{-\lambda_0h_n}-1)+\tilde{R}(\theta,h_n^{p},X_{t_{i-1}^n})\\
&=\tilde{R}(\theta,h_n,X_{t_{i-1}^n}).
\end{align*}
Finally, a simple computation yields that 
\begin{align*}
&P\left(\left\{|\Delta X_i^n|\le D_1h_n^{\rho_1}\right\}\cap\left\{|\Delta X_i^n| > D_2h_n^{\rho_2}\right\}\ |\mathcal{F}_{i-1}^n\right)\\
&=\sum_{j=0}^{\infty}{}P\left(\left\{|\Delta X_i^n|\le D_1h_n^{\rho_1}\right\}\cap\left\{|\Delta X_i^n| > D_2h_n^{\rho_2}\right\}\cap\{J_i^n=j\}\ |\mathcal{F}_{i-1}^n\right)\\
&\le\sum_{j=0}^{1}{}P\left(\left\{|\Delta X_i^n|\le D_1h_n^{\rho_1}\right\}\cap\left\{|\Delta X_i^n| > D_2h_n^{\rho_2}\right\}\cap\{J_i^n=j\}\ |\mathcal{F}_{i-1}^n\right)+P\left(J_i^n\ge2\ |\mathcal{F}_{i-1}^n\right)\\
&\le P(D_{i,0}^n(D_2,\rho_2)\ |\mathcal{F}_{i-1}^n)+P(C_{i,1}^n(D_1,\rho_1)\ |\mathcal{F}_{i-1}^n)+2\lambda_0^2h_n^2\\
&=R(\theta,h_n^p,X_{t_{i-1}^n})+R(\theta,h_n^{1+\rho},X_{t_{i-1}^n})+2\lambda_0^2h_n^2\\
&=R(\theta,h_n^{1+\rho},X_{t_{i-1}^n})
\end{align*}
for $p\ge1+\rho$. This completes the proof.
\end{proof}

\begin{prop}({\textbf{\citet{Shimizu-Yoshida_JP,Shimizu-Yoshida}}})\label{prop3:jump}
Suppose {\bf{[A1]}} and {\bf{[A3]}}-{\bf{[A7]}}. Then for $k_j=1,\ldots,d\ (j=1,2,3,4)$,
\begin{align}
\mathbb{E}\left[\bar{X}_{i,n}^{(k_1)}\boldsymbol{1}_{C_{i,0}^n(D,\rho)} \ |\mathcal{F}_{i-1}^n\right]&=R(\theta,h_n^2,X_{t_{i-1}^n}),\label{eq1:prop3:jump}\\
\mathbb{E}\left[\bar{X}_{i,n}^{(k_1)}\bar{X}_{i,n}^{(k_2)}\boldsymbol{1}_{C_{i,0}^n(D,\rho)} \ |\mathcal{F}_{i-1}^n\right]&=h_nS_{i-1}^{(k_1,k_2)}(\alpha_0)+R(\theta,h_n^2,X_{t_{i-1}^n}),\label{eq2:prop3:jump}\\
\mathbb{E}\left[\bar{X}_{i,n}^{(k_1)}\bar{X}_{i,n}^{(k_2)}\bar{X}_{i,n}^{(k_3)}\boldsymbol{1}_{C_{i,0}^n(D,\rho)} \ |\mathcal{F}_{i-1}^n\right]&=R(\theta,h_n^2,X_{t_{i-1}^n}),\label{eq3:prop3:jump}\\
\mathbb{E}\left[\bar{X}_{i,n}^{(k_1)}\bar{X}_{i,n}^{(k_2)}\bar{X}_{i,n}^{(k_3)}\bar{X}_{i,n}^{(k_4)}\boldsymbol{1}_{C_{i,0}^n(D,\rho)} \ |\mathcal{F}_{i-1}^n\right]&=h_n^2(S_{i-1}^{(k_1,k_2)}S_{i-1}^{(k_3,k_4)}+S_{i-1}^{(k_1,k_3)}S_{i-1}^{(k_2,k_4)}+S_{i-1}^{(k_1,k_4)}S_{i-1}^{(k_2,k_3)})(\alpha_0)\notag\\
&\qquad+R(\theta,h_n^3,X_{t_{i-1}^n}).\label{eq4:prop3:jump}
\end{align}
\end{prop}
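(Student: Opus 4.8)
The plan is to reduce all four identities to standard moment expansions for the increment of a purely continuous diffusion, exploiting that on $C_{i,0}^n(D,\rho)$ the process $X$ has no jumps, and then to recombine; all four are proved by the same argument, with \eqref{eq4:prop3:jump} the most delicate. Throughout write $\bar{X}_{i,n}=\bar{X}_{i,n}(\beta_0)$. First I would observe that on $C_{i,0}^n(D,\rho)\subset\{J_i^n=0\}$ there is no jump of $p$ in $(t_{i-1}^n,t_i^n]$, so on that event $X$ coincides on $[t_{i-1}^n,t_i^n]$ with the solution $\tilde{X}$ of $d\tilde{X}_t=b(\tilde{X}_t,\beta_0)\,dt+a(\tilde{X}_t,\alpha_0)\,dW_t$ with $\tilde{X}_{t_{i-1}^n}=X_{t_{i-1}^n}$. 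Putting $\bar{\tilde{X}}_{i,n}:=\tilde{X}_{t_i^n}-X_{t_{i-1}^n}-h_nb_{i-1}(\beta_0)$, one has $\bar{X}_{i,n}\boldsymbol{1}_{C_{i,0}^n(D,\rho)}=\bar{\tilde{X}}_{i,n}\boldsymbol{1}_{\{|\tilde{X}_{t_i^n}-X_{t_{i-1}^n}|\le Dh_n^\rho\}}\boldsymbol{1}_{\{J_i^n=0\}}$. Since $\{J_i^n=0\}$ depends only on $p$ on $(t_{i-1}^n,t_i^n]$, it is, by the assumed independence of $W$ and $p$, conditionally independent (given $\mathcal{F}_{i-1}^n$) of $\bar{\tilde{X}}_{i,n}$ and of the truncation event, both of which are $\mathcal{F}_{i-1}^n\vee\sigma(W_s-W_{t_{i-1}^n};\,t_{i-1}^n\le s\le t_i^n)$-measurable; as $P(J_i^n=0\ |\ \mathcal{F}_{i-1}^n)=e^{-\lambda_0h_n}$, this yields
\begin{align*}
\mathbb{E}\!\left[\textstyle\prod_{j=1}^{m}\bar{X}_{i,n}^{(k_j)}\boldsymbol{1}_{C_{i,0}^n(D,\rho)}\ \Big|\ \mathcal{F}_{i-1}^n\right]
&=e^{-\lambda_0h_n}\,\mathbb{E}\!\left[\textstyle\prod_{j=1}^{m}\bar{\tilde{X}}_{i,n}^{(k_j)}\,\boldsymbol{1}_{\{|\tilde{X}_{t_i^n}-X_{t_{i-1}^n}|\le Dh_n^\rho\}}\ \Big|\ \mathcal{F}_{i-1}^n\right].
\end{align*}

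Then I would remove the truncation. By Burkholder--Davis--Gundy and Gronwall for the continuous SDE, $\mathbb{E}[|\tilde{X}_{t_i^n}-X_{t_{i-1}^n}|^{2N}\ |\ \mathcal{F}_{i-1}^n]\le C_Nh_n^{N}(1+|X_{t_{i-1}^n}|)^{C_N}$ and $\mathbb{E}[|\bar{\tilde{X}}_{i,n}|^{2N}\ |\ \mathcal{F}_{i-1}^n]\le C_Nh_n^{N}(1+|X_{t_{i-1}^n}|)^{C_N}$ for all $N\ge1$. Chebyshev's inequality then gives $P(|\tilde{X}_{t_i^n}-X_{t_{i-1}^n}|>Dh_n^\rho\ |\ \mathcal{F}_{i-1}^n)\le C_Nh_n^{N(1-2\rho)}(1+|X_{t_{i-1}^n}|)^{C_N}$, which, since $\rho<1/2$, is for $N$ large a remainder $R(\theta,h_n^p,X_{t_{i-1}^n})$ for any prescribed $p$; by Cauchy--Schwarz together with the moment bound on $\bar{\tilde{X}}_{i,n}$, deleting the indicator $\boldsymbol{1}_{\{|\tilde{X}_{t_i^n}-X_{t_{i-1}^n}|\le Dh_n^\rho\}}$ costs only an $R(\theta,h_n^p,X_{t_{i-1}^n})$. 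It then remains to expand $e^{-\lambda_0h_n}\,\mathbb{E}[\prod_{j=1}^{m}\bar{\tilde{X}}_{i,n}^{(k_j)}\ |\ \mathcal{F}_{i-1}^n]$ for $m=1,2,3,4$.

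For this last step I would write $\bar{\tilde{X}}_{i,n}=D_{i,n}+M_{i,n}$ with $D_{i,n}=\int_{t_{i-1}^n}^{t_i^n}(b(\tilde{X}_s,\beta_0)-b(X_{t_{i-1}^n},\beta_0))\,ds$ and $M_{i,n}=\int_{t_{i-1}^n}^{t_i^n}a(\tilde{X}_s,\alpha_0)\,dW_s$, and apply Itô's formula to the coefficients and to products of the coordinates of $M_{i,n}$, at each centering replacing $\tilde{X}_s$ by $X_{t_{i-1}^n}$ and using the generator to gain a power of $s-t_{i-1}^n$, with remainders controlled by {\bf[A1]}, {\bf[A3]}, {\bf[A4]} and the moment bounds above. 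Since $M_{i,n}$ has conditionally zero mean and its leading Gaussian approximation has conditional covariance $h_nS_{i-1}$ with $S_{i-1}=S(X_{t_{i-1}^n},\alpha_0)$, these standard diffusion-increment (Kessler-type) computations give
\begin{align*}
\mathbb{E}[\bar{\tilde{X}}_{i,n}^{(k_1)}\ |\ \mathcal{F}_{i-1}^n]&=R(\theta,h_n^2,X_{t_{i-1}^n}),\\
\mathbb{E}[\bar{\tilde{X}}_{i,n}^{(k_1)}\bar{\tilde{X}}_{i,n}^{(k_2)}\ |\ \mathcal{F}_{i-1}^n]&=h_nS_{i-1}^{(k_1,k_2)}+R(\theta,h_n^2,X_{t_{i-1}^n}),\\
\mathbb{E}[\bar{\tilde{X}}_{i,n}^{(k_1)}\bar{\tilde{X}}_{i,n}^{(k_2)}\bar{\tilde{X}}_{i,n}^{(k_3)}\ |\ \mathcal{F}_{i-1}^n]&=R(\theta,h_n^2,X_{t_{i-1}^n}),\\
\mathbb{E}[\bar{\tilde{X}}_{i,n}^{(k_1)}\bar{\tilde{X}}_{i,n}^{(k_2)}\bar{\tilde{X}}_{i,n}^{(k_3)}\bar{\tilde{X}}_{i,n}^{(k_4)}\ |\ \mathcal{F}_{i-1}^n]&=h_n^2\big(S_{i-1}^{(k_1,k_2)}S_{i-1}^{(k_3,k_4)}+S_{i-1}^{(k_1,k_3)}S_{i-1}^{(k_2,k_4)}+S_{i-1}^{(k_1,k_4)}S_{i-1}^{(k_2,k_3)}\big)+R(\theta,h_n^3,X_{t_{i-1}^n}).
\end{align*}
Finally, $e^{-\lambda_0h_n}=1+R(\theta,h_n,X_{t_{i-1}^n})$ and $S_{i-1}$ and its products have polynomial growth by {\bf[A4]}, so multiplying the four displays by $e^{-\lambda_0h_n}$ and absorbing the $O(h_n^2)$ (resp. $O(h_n^3)$) corrections into the remainders produces exactly \eqref{eq1:prop3:jump}--\eqref{eq4:prop3:jump}.

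The hard part will be the sharp remainder bookkeeping in the last step, especially for \eqref{eq4:prop3:jump}: a crude estimate of the correction to the Gaussian fourth moment yields only $O(h_n^{5/2})$, whereas $O(h_n^3)$ is claimed. Getting the correct order forces one to split each coefficient fluctuation $S^{(k,l)}(\tilde{X}_s)-S_{i-1}^{(k,l)}$ into its martingale and drift parts and apply Itô once more, so that a second power of $s-t_{i-1}^n$ appears before integrating in $s$, and to estimate the drift--martingale cross terms with matching care. Steps 1 and 2 are routine; the independence device they use is the same one already exploited in the proof of Proposition \ref{prop2:jump}.
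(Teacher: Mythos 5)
The paper does not actually prove Proposition \ref{prop3:jump}: the statement is imported from \citet{Shimizu-Yoshida_JP,Shimizu-Yoshida}, and Remark \ref{proofskip1:jump} merely asserts that the argument there carries over under the present assumptions, so there is no in-paper proof to compare against. Your argument is a correct, self-contained derivation along exactly the lines that underlie the cited result and the related proofs the paper does write out: restriction to the no-jump event and identification of $X$ with the continuous diffusion $\tilde X$ there (pathwise uniqueness under {\bf [A1]} at the true parameter); factorization of the conditional expectation using the standing assumption that $\mathcal{F}_{t_{i-1}^n}$, the future increments of $W$ and the Poisson measure on $(t_{i-1}^n,\infty)\times E$ are mutually independent, which is the same device used for the auxiliary process $\tilde X$ in the proofs of Propositions \ref{prop2:jump} and \ref{prop7:jump}; removal of the threshold indicator by arbitrarily high-order tail bounds, valid since $\rho<1/2$; and Kessler-type conditional moment expansions under {\bf [A4]}. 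You also correctly diagnose the only delicate point: to reach the $R(\theta,h_n^3,\cdot)$ remainder in \eqref{eq4:prop3:jump} one must center the coefficient fluctuations $S(\tilde X_s,\alpha_0)-S_{i-1}(\alpha_0)$ once more via It\^o's formula, so that the drift part is of order $s-t_{i-1}^n$ and the martingale part is killed at first order by $\mathbb{E}[M_u\,|\,\mathcal{F}_{i-1}^n]=0$, since a one-step estimate only yields $O(h_n^{5/2})$; with that refinement, the Cauchy--Schwarz bounds for the drift--martingale cross terms and the harmless factor $e^{-\lambda_0 h_n}=1+O(h_n)$, all four displays follow as you state.
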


\begin{prop}\label{prop4:jump}
Assume {\bf{[A1]}}-{\bf{[A3]}}, {\bf{[A5]}}-{\bf{[A7]}} and {\bf{[B1]}}. 
 Let $g^{(n)}$ : $\mathbb{R}^d\times\Theta\to\mathbb{R}$ be a function whcih satisfies the following conditions that
\begin{align*}
&|g^{(n)}(x,\theta)|\le C(1+|x|)^C,\quad |\partial_x g^{(n)}(x,\theta)|\le C\cdot \varepsilon_n^{-4}(1+|x|)^C,\quad
|\partial_\theta g^{(n)}(x,\theta)|\le C(1+|x|)^C,
\end{align*}
and that there exist  a function $g$ : $\mathbb{R}^d\times\Theta\to\mathbb{R}$ for each $\theta\in\Theta$ such that
\begin{align*}
g^{(n)}(x,\theta)\longrightarrow g(x,\theta)\quad \pi\text{-}a.s.\quad (n\to\infty).
\end{align*}
Then $g$ is a $\pi$-integrable function and the following types of convergence hold:
\begin{alignat*}{3}
&\text{(i)}\  &&\sup_{\theta\in\Theta}\left|\frac{1}{n}\sum_{i=1}^{n}{g_{i-1}^{(n)}(\theta)-\int{g(x,\theta)\pi(dx)}}\right|\overset{P}{\to}0\quad(n\to\infty),\\
&\text{(ii)}\  &&\sup_{\theta\in\Theta}\left|\frac{1}{n}\sum_{i=1}^{n}{g_{i-1}^{(n)}(\theta)\boldsymbol{1}_{\{|\Delta X_i^n|\le Dh_n^{\rho}\}}-\int{g(x,\theta)\pi(dx)}}\right|\overset{P}{\to}0\quad(n\to\infty),\\
&\text{(iii)}\  &&\sup_{\theta\in\Theta}\left|\frac{1}{nh_n}\sum_{i=1}^{n}{g_{i-1}^{(n)}(\theta)\boldsymbol{1}_{\{|\Delta X_i^n|> Dh_n^{\rho}\}}-\lambda_0\int{g(x,\theta)\pi(dx)}}\right|\overset{P}{\to}0\quad(n\to\infty).
\end{alignat*}
\end{prop}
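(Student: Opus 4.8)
The plan is to prove (i) by replacing the Riemann sum with a time integral over $[0,T]$ and invoking the ergodicity {\bf [A2]}, and then to deduce (ii) and (iii) from (i) together with the jump-probability estimates of Proposition~\ref{prop2:jump}. Two preliminary observations come first. Since $|g^{(n)}(x,\theta)|\le C(1+|x|)^C$ uniformly, Fatou's lemma gives $|g(x,\theta)|\le C(1+|x|)^C$ for $\pi$-a.e.\ $x$, and $\int(1+|x|)^C\pi(dx)=\mathbb{E}_{\theta_0}[(1+|X_0|)^C]<\infty$ by {\bf [A2]}--{\bf [A3]}, so $g$ is $\pi$-integrable. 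Moreover the uniform bound on $\partial_\theta g^{(n)}$ makes $\{g^{(n)}(x,\cdot)\}_n$ equi-Lipschitz on the convex compact set $\Theta$; hence the limit $g(x,\cdot)$ is Lipschitz (in particular continuous), and for $\pi$-a.e.\ $x$ one has $\sup_{\theta\in\Theta}|g^{(n)}(x,\theta)-g(x,\theta)|\to0$.

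For (i), write $\frac1n\sum_{i=1}^n g_{i-1}^{(n)}(\theta)=\frac1T\sum_{i=1}^n\int_{t_{i-1}^n}^{t_i^n}g^{(n)}(X_{t_{i-1}^n},\theta)\,ds$ and split its difference from $\int g(x,\theta)\pi(dx)$ into (a) the Riemann error $\frac1T\sum_i\int_{t_{i-1}^n}^{t_i^n}\big(g^{(n)}(X_{t_{i-1}^n},\theta)-g^{(n)}(X_s,\theta)\big)\,ds$, (b) $\frac1T\int_0^T\big(g^{(n)}(X_s,\theta)-g(X_s,\theta)\big)\,ds$, and (c) $\frac1T\int_0^T g(X_s,\theta)\,ds-\int g(x,\theta)\pi(dx)$. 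Term (a) is controlled in $L^1$, uniformly in $\theta$: by the mean value theorem in $x$, the bound $|\partial_x g^{(n)}|\le C\varepsilon_n^{-4}(1+|x|)^C$, Cauchy--Schwarz, and Proposition~\ref{prop1:jump} (which yields $\mathbb{E}[|X_s-X_{t_{i-1}^n}|^2\mid\mathcal F_{i-1}^n]\le Ch_n(1+|X_{t_{i-1}^n}|)^C$), each summand is $O\big(\varepsilon_n^{-4}h_n^{3/2}(1+|X_{t_{i-1}^n}|)^{C}\big)$ in conditional expectation, so the full term is $O(\varepsilon_n^{-4}h_n^{1/2})=O\big((h_n\varepsilon_n^{-8})^{1/2}\big)\to0$ by {\bf [B1]}. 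Term (b) is bounded in expectation, uniformly in $\theta$, by $\int\sup_\theta|g^{(n)}(x,\theta)-g(x,\theta)|\pi(dx)$ (stationarity), which tends to $0$ by dominated convergence using the preliminary observations. Term (c) tends to $0$ uniformly in $\theta$ by {\bf [A2]}, the continuity of $g$ in $\theta$, compactness of $\Theta$, and $\frac1T\int_0^T(1+|X_s|)^C\,ds\overset{P}{\to}\int(1+|x|)^C\pi(dx)$ via a standard finite-cover argument. Markov's inequality then gives (i).

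For (ii), use $\boldsymbol{1}_{\{|\Delta X_i^n|\le Dh_n^\rho\}}=1-\boldsymbol{1}_{\{|\Delta X_i^n|>Dh_n^\rho\}}$: the first term gives (i), and for the second, Proposition~\ref{prop2:jump} yields $P(|\Delta X_i^n|>Dh_n^\rho\mid\mathcal F_{i-1}^n)\le Ch_n(1+|X_{t_{i-1}^n}|)^C$, so $\mathbb{E}\big[\frac1n\sum_i|g_{i-1}^{(n)}(\theta)|\boldsymbol{1}_{\{|\Delta X_i^n|>Dh_n^\rho\}}\big]\le Ch_n\to0$ uniformly in $\theta$. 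For (iii), decompose $\boldsymbol{1}_{\{|\Delta X_i^n|>Dh_n^\rho\}}=q_i^n+\big(\boldsymbol{1}_{\{|\Delta X_i^n|>Dh_n^\rho\}}-q_i^n\big)$, where by Proposition~\ref{prop2:jump} $q_i^n:=P(|\Delta X_i^n|>Dh_n^\rho\mid\mathcal F_{i-1}^n)=\lambda_0 h_n+R(\theta,h_n^{1+\rho},X_{t_{i-1}^n})$. The $q_i^n$-part equals $\lambda_0\cdot\frac1n\sum_i g_{i-1}^{(n)}(\theta)+\frac1{nh_n}\sum_i g_{i-1}^{(n)}(\theta)R(\theta,h_n^{1+\rho},X_{t_{i-1}^n})$, whose first summand converges to $\lambda_0\int g(x,\theta)\pi(dx)$ by (i) and whose second is of order $h_n^\rho$ and tends to $0$ uniformly in $\theta$. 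The remainder $M_n(\theta):=\frac1{nh_n}\sum_i g_{i-1}^{(n)}(\theta)\big(\boldsymbol{1}_{\{|\Delta X_i^n|>Dh_n^\rho\}}-q_i^n\big)$ is a martingale transform in $i$ (the indicator and $q_i^n$ being $\theta$-free), so $\mathbb{E}[M_n(\theta)^2]\le\frac1{(nh_n)^2}\sum_i\mathbb{E}\big[(g_{i-1}^{(n)}(\theta))^2q_i^n\big]\le\frac{C}{nh_n}\to0$; the identical estimates for $\partial_\theta M_n$, a Burkholder-type bound $\mathbb{E}|M_n(\theta)|^{\ell}\le C_\ell(nh_n)^{-\ell/2}$, and a Sobolev inequality on $\Theta\subset\mathbb{R}^{p+q}$ (with exponent $\ell>p+q$) then upgrade this to $\sup_\theta|M_n(\theta)|\overset{P}{\to}0$.

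The main obstacle is term (a) in the proof of (i): because $\partial_x g^{(n)}$ is only bounded by $C\varepsilon_n^{-4}(1+|x|)^C$, the crude estimate of the Riemann error diverges, and it is precisely the balance condition {\bf [B1]} (through $h_n\varepsilon_n^{-8}\to0$) that rescues it; one must also use the sharp $L^2$ increment bound of order $h_n^{1/2}$ from Proposition~\ref{prop1:jump} rather than an $O(h_n)$ heuristic, since the diffusion part dominates the increments. The uniform-in-$\theta$ control in (iii) is routine but needs the Sobolev/Kolmogorov argument, which is why the bounds on $\partial_\theta g^{(n)}$ are imposed.
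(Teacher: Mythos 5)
Your proposal is correct, and for part (i) it is essentially the paper's own argument: the same three-term split into the Riemann error, the replacement of $g^{(n)}$ by $g$, and the ergodic theorem, with the Riemann error controlled by the $\varepsilon_n^{-4}$-bound on $\partial_x g^{(n)}$, Schwarz's inequality and Proposition \ref{prop1:jump}, so that {\bf [B1]} enters exactly through $\sqrt{h_n}\,\varepsilon_n^{-4}=(h_n\varepsilon_n^{-8})^{1/2}\to0$; the treatment of the middle term by stationarity and dominated convergence is also identical. Where you diverge is in (ii) and (iii). For (ii) you bound the jump-filtered part directly in expectation using $P(|\Delta X_i^n|>Dh_n^{\rho}\mid\mathcal F_{i-1}^n)=O(h_n)(1+|X_{t_{i-1}^n}|)^C$, whereas the paper simply writes $\boldsymbol{1}_{\{\le\}}=1-h_n\cdot h_n^{-1}\boldsymbol{1}_{\{>\}}$ and deduces (ii) from (i) and (iii); both are immediate. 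For (iii) the paper establishes pointwise convergence via the conditional-expectation criterion of \citet{Genon-Catalot} (checking the two sums (a) and (b)), while you center by the conditional probabilities $q_i^n=\lambda_0h_n+R(\theta,h_n^{1+\rho},X_{t_{i-1}^n})$ and kill the martingale remainder in $L^2$; this is the same mechanism made explicit, so the content coincides. The genuinely different choice is the uniformity in $\theta$ for (iii): the paper proves tightness by bounding $\mathbb{E}[\sup_\theta|\partial_\theta s_n(\theta)|]$, which it delegates to the jump-time decomposition developed for Proposition \ref{prop7:jump}, whereas you invoke a Rosenthal/Burkholder moment bound $\mathbb{E}|M_n(\theta)|^{\ell}\le C_\ell(nh_n)^{-\ell/2}$ (note the crude square-function estimate is not enough; you do need the conditional-variance/Rosenthal form to get this rate) together with the same bound for $\partial_\theta M_n$ and a Sobolev embedding with $\ell>p+q$. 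Your route is more self-contained and avoids Proposition \ref{prop7:jump}'s machinery at the cost of the higher-moment bookkeeping; the paper's route recycles an argument it needs anyway. One small point worth making explicit in your write-up: the a.s.\ convergence $\sup_\theta|g^{(n)}(x,\cdot)-g(x,\cdot)|\to0$ requires combining the pointwise hypothesis on a countable dense subset of $\Theta$ with the equi-Lipschitz property coming from the bound on $\partial_\theta g^{(n)}$, since the exceptional null set in the hypothesis may depend on $\theta$.
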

\begin{rmk}\label{proofcompare2:jump}
The statements of Proposition \ref{prop4:jump} is similar to those of Proposition 3.3 of \citet{Shimizu-Yoshida_JP,Shimizu-Yoshida}. However, our balance conditions for $\varepsilon_n$ are milder than theirs.
\end{rmk}
\begin{rmk}
In the proof of Proposition \ref{prop4:jump}, the stationarity assumption in {\bf [A2]} can be relaxed as follows:
\begin{align*}
&g^{(n)}(x,\theta)\longrightarrow g(x,\theta)\quad \pi\text{-}a.s.\quad (n\to\infty),\\
&\sup_{s\ge 0}\mathbb{E}\left[|g^{(n)}(X_s,\theta)-g(X_s,\theta)|\right]\to 0.
\end{align*}
\end{rmk}
\begin{proof}
The proof is similar to that of Proposition 3.3 of \citet{Shimizu-Yoshida_JP,Shimizu-Yoshida}.
First, the uniform integrability of $g^{(n)}(x,\theta)$ leads to the $\pi$-integrability of $g(x,\theta)$. Next, let us prove that each convergence holds for any $\theta\in\Theta$.
We start with the proof of (i). 
For any $\varepsilon>0$, one has that 
\begin{align*}
&P\left(\left|\frac{1}{n}\sum_{i=1}^{n}{g_{i-1}^{(n)}(\theta)-\int{g(x,\theta)\pi(dx)}}\right|>\varepsilon\right)\\
&\le P\left(\left|\frac{1}{n}\sum_{i=1}^{n}{g_{i-1}^{(n)}(\theta)-\frac{1}{nh_n}\int_{0}^{nh_n}{g^{(n)}(X_s,\theta)ds}}\right|>\frac{\varepsilon}{3}\right)\\
&\quad+P\left(\left|\frac{1}{nh_n}\int_{0}^{nh_n}{g^{(n)}(X_s,\theta)ds-\frac{1}{nh_n}\int_{0}^{nh_n}{g(X_s,\theta)ds}}\right|>\frac{\varepsilon}{3}\right)\\
&\quad+P\left(\left|\frac{1}{nh_n}\int_{0}^{nh_n}{g(X_s,\theta)ds}-\int{g(x,\theta)\pi(dx)}\right|>\frac{\varepsilon}{3}\right).
\end{align*}
The third term on the right-hand side converges to 0 by the assumption of ergodicity.
Let us call the first and second terms $P_n^1$ and $P_n^2$, respectively.
Then,  we see from Taylor's theorem, Schwarz's inequality and Proposition \ref{prop1:jump} that  
\begin{align*}
P_n^1&\le\frac{3}{\varepsilon}\mathbb{E}\left[\left|\frac{1}{n}\sum_{i=1}^{n}{g_{i-1}^{(n)}(\theta)-\frac{1}{nh_n}\int_{0}^{nh_n}{g^{(n)}(X_s,\theta)ds}}\right|\right]\\
&\le\frac{3}{\varepsilon}\mathbb{E}\left[\frac{1}{nh_n}\sum_{i=1}^{n}\int_{t_{i-1}^n}^{t_i^n}\left|{g_{i-1}^{(n)}(\theta)-{g^{(n)}(X_s,\theta)}}\right|ds\right]\\
&=\frac{3}{nh_n\varepsilon}\sum_{i=1}^{n}\int_{t_{i-1}^n}^{t_i^n}\mathbb{E}\left[\left|{g_{i-1}^{(n)}(\theta)-{g^{(n)}(X_s,\theta)}}\right|\right]ds\\
&=\frac{3}{nh_n\varepsilon}\sum_{i=1}^{n}\int_{t_{i-1}^n}^{t_i^n}\mathbb{E}\left[\left|\int_{0}^{1}\partial_x{g^{(n)}(X_{t_{i-1}^n}+u(X_s-X_{t_{i-1}^n}),\theta)}du\right|\left|X_s-X_{t_{i-1}^n}\right|\right]ds\\
&\le\frac{3}{nh_n\varepsilon}\sum_{i=1}^{n}\int_{t_{i-1}^n}^{t_i^n}\mathbb{E}\left[\left|\int_{0}^{1}\partial_x{g^{(n)}(X_{t_{i-1}^n}+u(X_s-X_{t_{i-1}^n}),\theta)}du\right|^2\right]^\frac{1}{2}\mathbb{E}\left[\left|X_s-X_{t_{i-1}^n}\right|^2\right]^\frac{1}{2}ds\\
&\le \frac{C}{nh_n\varepsilon}\sum_{i=1}^{n}\int_{t_{i-1}^n}^{t_i^n}\mathbb{E}\left[\varepsilon_n^{-8}(|X_{t_{i-1}^n}|^2+|X_s-X_{t_{i-1}^n}|^2)\right]^\frac{1}{2}\mathbb{E}\left[\left|X_s-X_{t_{i-1}^n}\right|^2\right]^\frac{1}{2}ds\\
&\le\frac{C}{nh_n\varepsilon}\sum_{i=1}^{n}\int_{t_{i-1}^n}^{t_i^n}O(\sqrt{h_n}\varepsilon_n^{-4})ds\\
&=O(\sqrt{h_n}\varepsilon_n^{-4}).
\end{align*}
Since $h_n\varepsilon_n^{-8}\to 0$ under {\bf{[B1]}}, 
$P_n^1$ converges to 0.
For $P_n^2$,  it follows from stationarity and Lebesgue's convergence theorem that
\begin{align*}
P_n^2&\le\frac{3}{nh_n\varepsilon}\int_{0}^{nh_n}\mathbb{E}\left[\left|g^{(n)}(X_s,\theta)-g(X_s,\theta)\right|\right]ds\\
&=\frac{3}{nh_n\varepsilon}\int_{0}^{nh_n}\int\left|g^{(n)}(x,\theta)-g(x,\theta)\right|\pi(dx)ds\\
&=\frac{3}{\varepsilon}\int\left|g^{(n)}(x,\theta)-g(x,\theta)\right|\pi(dx)\\
&{\to}\ 0.
\end{align*}
For the pointwise convergence of (iii), by \citet{Genon-Catalot}, it is sufficient to show that
\begin{enumerate}
\item[(a)] $\displaystyle{\sum_{i=1}^{n}\mathbb{E}\left[\frac{1}{nh_n}g_{i-1}^{(n)}(\theta)\boldsymbol{1}_{\{|\Delta X_i^n|>Dh_n^\rho\}}\ |\mathcal{F}_{i-1}^n\right]\overset{P}{\longrightarrow}\lambda_0\int{g(x,\theta)\pi(dx)}}$,
\item[(b)]  $\displaystyle{\sum_{i=1}^{n}\mathbb{E}\left[\frac{1}{n^2h_n^2}\left(g_{i-1}^{(n)}(\theta)\right)^2\boldsymbol{1}_{\{|\Delta X_i^n|>Dh_n^\rho\}}\ |\mathcal{F}_{i-1}^n\right]\overset{P}{\longrightarrow} 0}$.
\end{enumerate}
{\bf Proof of (a).}  In an analogous manner to the proof of (i), we can calculate 
\begin{align*}
&P\left(\left|\sum_{i=1}^{n}\mathbb{E}\left[\frac{1}{nh_n}g_{i-1}^{(n)}(\theta)\boldsymbol{1}_{\{|\Delta X_i^n|>Dh_n^\rho\}}\ |\mathcal{F}_{i-1}^n\right]-\lambda_0\int g(x,\theta)\pi(dx)\right|>\varepsilon\right)\\
&\le P\left(\left|\sum_{i=1}^{n}\mathbb{E}\left[\frac{1}{nh_n}g_{i-1}^{(n)}(\theta)\boldsymbol{1}_{\{|\Delta X_i^n|>Dh_n^\rho\}}\ |\mathcal{F}_{i-1}^n\right]-\frac{\lambda_0}{nh_n}\int_{0}^{nh_n}{g^{(n)}(X_s,\theta)ds}\right|>\frac{\varepsilon}{3}\right)\\
&\quad+P\left(\left|\frac{\lambda_0}{nh_n}\int_{0}^{nh_n}{g^{(n)}(X_s,\theta)ds-\frac{\lambda_0}{nh_n}\int_{0}^{nh_n}{g(X_s,\theta)ds}}\right|>\frac{\varepsilon}{3}\right)\\
&\quad+P\left(\left|\frac{\lambda_0}{nh_n}\int_{0}^{nh_n}{g(X_s,\theta)ds}-\lambda_0\int{g(x,\theta)\pi(dx)}\right|>\frac{\varepsilon}{3}\right).
\end{align*}
By stationarity, Lebesgue's convergence theorem and ergodicity, the second and third  terms on the right-hand side converge to 0.
For the first term, it holds from Proposition \ref{prop2:jump} and the evaluation of $P_n^1$ that 
\begin{align*}
&P\left(\left|\sum_{i=1}^{n}\mathbb{E}\left[\frac{1}{nh_n}g_{i-1}^{(n)}(\theta)\boldsymbol{1}_{\{|\Delta X_i^n|>Dh_n^\rho\}}\ |\mathcal{F}_{i-1}^n\right]-\frac{\lambda_0}{nh_n}\int_{0}^{nh_n}{g^{(n)}(X_s,\theta)ds}\right|>\frac{\varepsilon}{3}\right)\\
&\le\frac{3}{\varepsilon}\mathbb{E}\left[\left|\sum_{i=1}^{n}\frac{1}{nh_n}g_{i-1}^{(n)}(\theta)P(|\Delta X_i^n|>Dh_n^\rho\ |\mathcal{F}_{i-1}^n)-\frac{\lambda_0}{nh_n}\int_{0}^{nh_n}{g^{(n)}(X_s,\theta)ds}\right|\right]\\
&\le\frac{3}{nh_n\varepsilon}\sum_{i=1}^n\int_{t_{i-1}^n}^{t_i^n}\mathbb{E}\left[\left|g_{i-1}^{(n)}(\theta)h_n^{-1}P(|\Delta X_i^n|>Dh_n^\rho\ |\mathcal{F}_{i-1}^n)-\lambda_0{g^{(n)}(X_s,\theta)}\right|\right]ds\\
&\le\frac{3}{nh_n\varepsilon}\sum_{i=1}^n\int_{t_{i-1}^n}^{t_i^n}\left\{\mathbb{E}\left[\left|g_{i-1}^{(n)}(\theta)h_n^{-1}P(|\Delta X_i^n|>Dh_n^\rho\ |\mathcal{F}_{i-1}^n)-\lambda_0{g_{i-1}^{(n)}(\theta)}\right|\right]\right.\\
&\qquad\qquad\quad\left.+\lambda_0\ \mathbb{E}\left[\left|{g_{i-1}^{(n)}(\theta)-{g^{(n)}(X_s,\theta)}}\right|\right]\right\}ds\\
&\le\frac{3}{n\varepsilon}\sum_{i=1}^{n}\mathbb{E}\left[\left(g_{i-1}^{(n)}(\theta)\right)^2\right]^{\frac{1}{2}}\mathbb{E}\left[\left(h_n^{-1}P(|\Delta X_i^n|>Dh_n^\rho\ |\mathcal{F}_{i-1}^n)-\lambda_0\right)^2\right]^{\frac{1}{2}}\\
&\quad+\lambda_0\cdot\frac{3}{nh_n\varepsilon}\sum_{i=1}^{n}\int_{t_{i-1}^n}^{t_i^n}\mathbb{E}\left[\left|{g_{i-1}^{(n)}(\theta)-{g^{(n)}(X_s,\theta)}}\right|\right]ds\\
&=O(h_n^\rho)+O(\sqrt{h_n}\varepsilon_n^{-4})\\
&\to 0.
\end{align*}
{\bf Proof of (b).} It follows from Schwarz's inequality that  
\begin{align*}
&P\left(\left|\frac{1}{n^2h_n^2}\sum_{i=1}^{n}\left(g_{i-1}^{(n)}(\theta)\right)^2P(|\Delta X_i^n|>Dh_n^\rho\ |\mathcal{F}_{i-1}^n)\right|>\varepsilon\right)\\
&\le\frac{1}{n^2h_n^2\varepsilon}\sum_{i=1}^{n}\mathbb{E}\left[\left(g_{i-1}^{(n)}(\theta)\right)^4\right]^\frac{1}{2}\mathbb{E}\left[P(|\Delta X_i^n|>Dh_n^\rho\ |\mathcal{F}_{i-1}^n)^2\right]^\frac{1}{2}\\
&=O\left(\frac{1}{nh_n}\right)\\
&\to 0.
\end{align*}
Hence, the pointwise convergence of (iii) holds.
We can easily deduce (ii) for each $\theta\in\Theta$ from (i) and (iii) since
\begin{align*}
\frac{1}{n}\sum_{i=1}^{n}g_{i-1}^{(n)}(\theta)\boldsymbol{1}_{\{|\Delta X_i^n|\le Dh_n^\rho\}}&=
\frac{1}{n}\sum_{i=1}^{n}g_{i-1}^{(n)}(\theta)-h_n\cdot\frac{1}{nh_n}\sum_{i=1}^{n}g_{i-1}^{(n)}(\theta)\boldsymbol{1}_{\{|\Delta X_i^n|> Dh_n^\rho\}}\\
&\overset{P}{\to}0.
\end{align*}
Finally, let us show the uniform convergence in $\theta$.
We only prove (i); the uniformly in (ii) can be shown similarly, and that in (iii) is proved by the same argument as the proof of more general Proposition \ref{prop7:jump}.  Hence, we omit the proof here.
Since
\begin{align*}
\sup_n\mathbb{E}\left[\sup_\theta\left|\partial_{{\theta}}\left( \frac{1}{n}\sum_{i=1}^{n}g_{i-1}^{(n)}(\theta)\right)\right|\right]&\le \sup_n\left(\frac{1}{n}\sum_{i=1}^{n}\mathbb{E}\left[\sup_\theta \left|\partial_{{\theta}}g_{i-1}^{(n)}(\theta)\right|\right]\right)\\
&\le C\sup_n\left(\frac{1}{n}\sum_{i=1}^{n}\mathbb{E}\left[(1+|X_{t_{i-1}^n}|)^C\right]\right)\\
&\le C \sup_{t\ge0} \mathbb{E}\left[1+|X_t|^C\right]\\
&<\infty,
\end{align*}
the uniform convergence for (i) holds.
\end{proof}

\begin{prop}{{(\textbf{\citet{Shimizu-Yoshida_JP,Shimizu-Yoshida}})}}\label{prop5:jump}
Assume {\bf{[A1]}}-{\bf{[A7]}} and $nh_n\to\infty$. 
 Suppose that  a function $g$ : $\mathbb{R}^d\times\Theta\to\mathbb{R}$
and its derivatives $\partial_\theta g$ and $\partial_x g$ are of polynomial growth uniformly in $\theta$:
\begin{align*}
|g(x,\theta)|,\ |\partial_\theta g(x,\theta)|,\  |\partial_x g(x,\theta)|\le C(1+|x|)^C\quad( ^\forall \theta\in\Theta).
\end{align*}
Then, for $k,l=1,2,\ldots,d$,
\begin{align*}
\sup_{\theta\in\Theta}\left|\frac{1}{nh_n}\sum_{i=1}^{n}{g_{i-1}(\theta)\bar{X}_{i,n}^{(k)}\bar{X}_{i,n}^{(l)}\boldsymbol{1}_{\{|\Delta X_i^n|\le Dh_n^{\rho}\}}-\lambda_0\int{g(x,\theta)S^{(k,l)}(x,\alpha_0)\pi(dx)}}\right|\overset{P}{\to}0\quad(n\to\infty).
\end{align*}
\end{prop}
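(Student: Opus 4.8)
The plan is to follow verbatim the two-step scheme used in the proof of Proposition \ref{prop4:jump}. Writing $\zeta_i^n(\theta):=\frac{1}{nh_n}g_{i-1}(\theta)\bar X_{i,n}^{(k)}\bar X_{i,n}^{(l)}\boldsymbol{1}_{\{|\Delta X_i^n|\le Dh_n^{\rho}\}}$ (with $\bar X_{i,n}=\bar X_{i,n}(\beta_0)$), I would first prove the pointwise-in-$\theta$ convergence $\sum_{i=1}^{n}\zeta_i^n(\theta)\overset{P}{\to}\int g(x,\theta)S^{(k,l)}(x,\alpha_0)\pi(dx)$ by the triangular-array law of large numbers of \citet{Genon-Catalot}; this reduces the claim to the two conditions (a) $\sum_{i=1}^{n}\mathbb{E}[\zeta_i^n(\theta)\mid\mathcal{F}_{i-1}^n]\overset{P}{\to}\int g(x,\theta)S^{(k,l)}(x,\alpha_0)\pi(dx)$ and (b) $\sum_{i=1}^{n}\mathbb{E}[\zeta_i^n(\theta)^2\mid\mathcal{F}_{i-1}^n]\overset{P}{\to}0$, exactly as in the derivation of Proposition \ref{prop4:jump}(iii). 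Then I would upgrade to uniform convergence in $\theta$ by the usual equicontinuity argument, i.e.\ by bounding $\mathbb{E}[\sup_{\theta}|\partial_\theta\sum_{i=1}^{n}\zeta_i^n(\theta)|]$ uniformly in $n$, as at the end of that proof.

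For (a), I would decompose $\boldsymbol{1}_{\{|\Delta X_i^n|\le Dh_n^{\rho}\}}=\boldsymbol{1}_{C_{i,0}^n(D,\rho)}+\boldsymbol{1}_{C_{i,1}^n(D,\rho)}+\boldsymbol{1}_{C_{i,2}^n(D,\rho)}$. On $C_{i,0}^n(D,\rho)$, equation \eqref{eq2:prop3:jump} of Proposition \ref{prop3:jump} gives $\mathbb{E}[\bar X_{i,n}^{(k)}\bar X_{i,n}^{(l)}\boldsymbol{1}_{C_{i,0}^n(D,\rho)}\mid\mathcal{F}_{i-1}^n]=h_nS_{i-1}^{(k,l)}(\alpha_0)+R(\theta,h_n^2,X_{t_{i-1}^n})$, so multiplying by $g_{i-1}(\theta)$, summing and dividing by $nh_n$ produces the leading term $\frac1n\sum_{i=1}^{n}g_{i-1}(\theta)S_{i-1}^{(k,l)}(\alpha_0)$, which converges to the asserted integral by Proposition \ref{prop4:jump}(i) applied to $g^{(n)}(x,\theta):=g(x,\theta)S^{(k,l)}(x,\alpha_0)$ — its polynomial growth, and that of $\partial_x g^{(n)}$ and $\partial_\theta g^{(n)}$, follows from the hypotheses on $g$ and from {\bf{[A4]}} — while the $R$-remainder is $O(h_n)$ in $L^1$. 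For the two jump-including pieces, on $\{|\Delta X_i^n|\le Dh_n^{\rho}\}$ one has, for large $n$, the bound $|\bar X_{i,n}|^2\boldsymbol{1}_{\{|\Delta X_i^n|\le Dh_n^{\rho}\}}\le Ch_n^{2\rho}(1+|X_{t_{i-1}^n}|)^C$; combining it with $P(C_{i,1}^n(D,\rho)\mid\mathcal{F}_{i-1}^n)=R(\theta,h_n^{1+\rho},X_{t_{i-1}^n})$ and $P(C_{i,2}^n(D,\rho)\mid\mathcal{F}_{i-1}^n)\le\lambda_0^2h_n^2$ from Proposition \ref{prop2:jump}, these two contributions are $O(h_n^{3\rho})$ and $O(h_n^{1+2\rho})$ in $L^1$, hence negligible.

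For (b), I would use the mixed estimate $(\bar X_{i,n}^{(k)}\bar X_{i,n}^{(l)})^2\boldsymbol{1}_{\{|\Delta X_i^n|\le Dh_n^{\rho}\}}\le Ch_n^{2\rho}(1+|X_{t_{i-1}^n}|)^C|\bar X_{i,n}|^2$ together with $\mathbb{E}[|\bar X_{i,n}|^2\mid\mathcal{F}_{i-1}^n]\le Ch_n(1+|X_{t_{i-1}^n}|)^C$ (Proposition \ref{prop1:jump}, equation \eqref{eq1:prop1:jump}); this gives $\mathbb{E}[\zeta_i^n(\theta)^2\mid\mathcal{F}_{i-1}^n]\le C(nh_n)^{-2}h_n^{1+2\rho}(1+|X_{t_{i-1}^n}|)^C$, so $\sum_{i}\mathbb{E}[\zeta_i^n(\theta)^2\mid\mathcal{F}_{i-1}^n]=O\big(h_n^{2\rho}/(nh_n)\big)$ in $L^1$, which vanishes since $nh_n\to\infty$. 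For the passage to uniformity, the bound $\mathbb{E}[\sup_{\theta}|\partial_\theta\sum_{i=1}^{n}\zeta_i^n(\theta)|]\le\frac{1}{nh_n}\sum_{i=1}^{n}\mathbb{E}[\sup_{\theta}|\partial_\theta g_{i-1}(\theta)|\,\mathbb{E}[|\bar X_{i,n}|^2\mid\mathcal{F}_{i-1}^n]]\le C\sup_{t\ge0}\mathbb{E}[(1+|X_t|)^C]<\infty$ holds by {\bf{[A3]}}, {\bf{[A4]}} and Proposition \ref{prop1:jump}, and combined with the pointwise convergence yields the stated uniform convergence, exactly as at the end of the proof of Proposition \ref{prop4:jump}.

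The hard part is the second-moment condition (b): the crude bound $(\bar X_{i,n}^{(k)}\bar X_{i,n}^{(l)})^2\boldsymbol{1}_{\{|\Delta X_i^n|\le Dh_n^{\rho}\}}\le Ch_n^{4\rho}(1+|X_{t_{i-1}^n}|)^C$ only yields $\sum_{i}\mathbb{E}[\zeta_i^n(\theta)^2\mid\mathcal{F}_{i-1}^n]=O\big(h_n^{4\rho}/(nh_n^2)\big)$, which is not controlled by $nh_n\to\infty$ when $\rho$ is small (e.g.\ $h_n=n^{-2/3}$, $\rho=1/8$); one must instead keep only one factor of the threshold bound and absorb the other into the conditional second moment of $\bar X_{i,n}$. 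Beyond this point every estimate is a routine consequence of Propositions \ref{prop1:jump}--\ref{prop4:jump}, which is why this proposition is provable in an analogous manner to \citet{Shimizu-Yoshida_JP,Shimizu-Yoshida} (cf.\ Remark \ref{proofskip1:jump}).
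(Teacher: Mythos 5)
Your proposal is correct, and it is essentially the argument the paper itself omits: Proposition \ref{prop5:jump} is imported from \citet{Shimizu-Yoshida_JP,Shimizu-Yoshida} (cf.\ Remark \ref{proofskip1:jump}), and your scheme — Genon-Catalot--Jacod conditions (a)--(b) for pointwise convergence, the decomposition over $C_{i,0}^n,C_{i,1}^n,C_{i,2}^n$ with \eqref{eq2:prop3:jump} and Proposition \ref{prop2:jump}, and the derivative/tightness bound for uniformity — is exactly the pattern of the proof of Proposition \ref{prop4:jump} given in the paper, including the key point you single out, namely using the mixed bound $h_n^{2\rho}(1+|X_{t_{i-1}^n}|)^C\,\mathbb{E}[|\bar X_{i,n}|^2\,|\mathcal{F}_{i-1}^n]$ rather than the crude $h_n^{4\rho}$ bound in condition (b). Two remarks. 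First, your limit is $\int g(x,\theta)S^{(k,l)}(x,\alpha_0)\pi(dx)$ \emph{without} the factor $\lambda_0$ that appears in the displayed statement; your computation (via \eqref{eq2:prop3:jump}, which yields the leading term $\frac1n\sum_i g_{i-1}S_{i-1}^{(k,l)}(\alpha_0)$) is the correct one, and it is the version actually used in the paper, e.g.\ in \eqref{a-cons:uni} where the limit is $\frac12\int\mathrm{tr}\left(S^{-1}(x,\alpha)S(x,\alpha_0)\right)\pi(dx)$ with no $\lambda_0$; so the printed $\lambda_0$ is a slip carried over from the jump-filter case of Proposition \ref{prop4:jump}-(iii), but you should state explicitly that this is what you are proving. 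Second, Proposition \ref{prop4:jump}-(i) is stated under {\bf [B1]}, which is not among the hypotheses of Proposition \ref{prop5:jump}; since your $g^{(n)}(x,\theta)=g(x,\theta)S^{(k,l)}(x,\alpha_0)$ does not depend on $n$ and has polynomial-growth $\partial_x$ and $\partial_\theta$ (by the hypotheses on $g$ and {\bf [A4]}), the $\varepsilon_n^{-4}$ allowance is vacuous and the proof of Proposition \ref{prop4:jump}-(i) goes through using only $h_n\to0$ and $nh_n\to\infty$; a one-line remark to this effect is needed to keep the stated assumptions intact. With these two clarifications the argument is complete.
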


\begin{prop}{{(\textbf{\citet{Shimizu-Yoshida_JP,Shimizu-Yoshida}})}}\label{prop6:jump}
Under the same assumptions as in Proposition \ref{prop5:jump}, for $k=1,2,\ldots,d$,
\begin{align*}
\sup_{\theta\in\Theta}\left|\frac{1}{nh_n}\sum_{i=1}^{n}{g_{i-1}(\theta)\bar{X}_{i,n}^{(k)}\boldsymbol{1}_{\{|\Delta X_i^n|\le Dh_n^{\rho}\}}}\right|\overset{P}{\to}0\quad(n\to\infty).
\end{align*}
\end{prop}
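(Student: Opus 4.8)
The plan is to split the truncation event according to the number of jumps in each interval and, using Proposition~\ref{prop2:jump}, reduce the statement to a centered martingale sum carried by the no--jump event $C_{i,0}^n=C_{i,0}^n(D,\rho)$ plus a remainder that is negligible in $L^1$, uniformly in $\theta$. Throughout, $\bar X_{i,n}$ abbreviates $\bar X_{i,n}(\beta_0)=\Delta X_i^n-h_nb_{i-1}(\beta_0)$, so it carries no $\theta$--dependence; this is what lets the conditional--mean cancellations below survive the supremum over $\theta$. Writing $\boldsymbol{1}_{\{|\Delta X_i^n|\le Dh_n^\rho\}}=\boldsymbol{1}_{C_{i,0}^n}+\boldsymbol{1}_{C_{i,1}^n}+\boldsymbol{1}_{C_{i,2}^n}$, I would set
\begin{align*}
\frac{1}{nh_n}\sum_{i=1}^{n}g_{i-1}(\theta)\bar X_{i,n}^{(k)}\boldsymbol{1}_{\{|\Delta X_i^n|\le Dh_n^\rho\}}&=A_n(\theta)+B_n(\theta),\\
A_n(\theta)&:=\frac{1}{nh_n}\sum_{i=1}^{n}\Bigl(g_{i-1}(\theta)\bar X_{i,n}^{(k)}\boldsymbol{1}_{C_{i,0}^n}-\mathbb{E}[g_{i-1}(\theta)\bar X_{i,n}^{(k)}\boldsymbol{1}_{C_{i,0}^n}\mid\mathcal{F}_{i-1}^n]\Bigr),\\
B_n(\theta)&:=\frac{1}{nh_n}\sum_{i=1}^{n}g_{i-1}(\theta)\,\mathbb{E}[\bar X_{i,n}^{(k)}\boldsymbol{1}_{C_{i,0}^n}\mid\mathcal{F}_{i-1}^n]+\frac{1}{nh_n}\sum_{i=1}^{n}g_{i-1}(\theta)\bar X_{i,n}^{(k)}(\boldsymbol{1}_{C_{i,1}^n}+\boldsymbol{1}_{C_{i,2}^n}).
\end{align*}

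The remainder $B_n$ is handled by crude $L^1$ bounds. On $\{|\Delta X_i^n|\le Dh_n^\rho\}$ one has $|\bar X_{i,n}^{(k)}|\le Dh_n^\rho+h_n|b_{i-1}^{(k)}(\beta_0)|\le Ch_n^\rho(1+|X_{t_{i-1}^n}|)^C$ for large $n$, since $\rho<\frac{1}{2}<1$ and $b$ has polynomial growth by {\bf[A4]}. Combining this with the polynomial growth of $g$, {\bf[A3]}, the bounds $P(C_{i,1}^n\mid\mathcal{F}_{i-1}^n)=R(\theta,h_n^{1+\rho},X_{t_{i-1}^n})$ and $P(C_{i,2}^n\mid\mathcal{F}_{i-1}^n)\le\lambda_0^2h_n^2$ of Proposition~\ref{prop2:jump}, and $\mathbb{E}[\bar X_{i,n}^{(k)}\boldsymbol{1}_{C_{i,0}^n}\mid\mathcal{F}_{i-1}^n]=R(\theta,h_n^2,X_{t_{i-1}^n})$ from \eqref{eq1:prop3:jump} (which, being $\theta$--free here, keeps its order $h_n^2$ after $\sup_\theta$), the three pieces of $B_n$ have $L^1$ norm, after the $(nh_n)^{-1}$ normalization, of order $O(h_n)$, $O(h_n^{2\rho})$, and $O(h_n^{1+\rho})$, respectively; as these bounds are uniform in $\theta$, $\sup_{\theta}|B_n(\theta)|\overset{P}{\to}0$ by Markov's inequality. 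For the martingale term, $A_n(\theta)=(nh_n)^{-1}\sum_i\xi_i(\theta)$ with $\xi_i(\theta):=g_{i-1}(\theta)\bar X_{i,n}^{(k)}\boldsymbol{1}_{C_{i,0}^n}-\mathbb{E}[\,\cdot\mid\mathcal{F}_{i-1}^n]$ a martingale--difference array; by orthogonality and \eqref{eq2:prop3:jump} (so that $\mathbb{E}[(\bar X_{i,n}^{(k)})^2\boldsymbol{1}_{C_{i,0}^n}\mid\mathcal{F}_{i-1}^n]=h_nS_{i-1}^{(k,k)}(\alpha_0)+R(\theta,h_n^2,X_{t_{i-1}^n})=O(h_n)(1+|X_{t_{i-1}^n}|)^C$ by {\bf[A4]}) one gets $\mathbb{E}[A_n(\theta)^2]=(nh_n)^{-2}\sum_i\mathbb{E}[\xi_i(\theta)^2]=O((nh_n)^{-1})\to0$ since $nh_n\to\infty$, which gives $A_n(\theta)\overset{P}{\to}0$ for each fixed $\theta$, hence the pointwise version of the claim.

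I expect the genuine obstacle to be upgrading this to uniformity in $\theta\in\Theta$: the tightness--of--the--gradient device used for the uniform part of Proposition~\ref{prop4:jump}(i) is unavailable, because $\bar X_{i,n}^{(k)}\boldsymbol{1}_{C_{i,0}^n}$ is only of size $O(h_n^{1/2})$ in $L^2$ while the normalization is $(nh_n)^{-1}$, so the natural estimate of $\mathbb{E}[\sup_\theta|\partial_\theta A_n(\theta)|]$ is merely $O(h_n^{-1/2})$, which diverges. Instead I would exploit the martingale structure quantitatively. Both $A_n(\theta)$ and $\partial_\theta A_n(\theta)$ are $(nh_n)^{-1}$--normalized martingale--difference sums whose increments are bounded on $C_{i,0}^n$ by $Ch_n^\rho(1+|X_{t_{i-1}^n}|)^C$, so by the Burkholder--Davis--Gundy inequality together with \eqref{eq2:prop3:jump}, {\bf[A3]}, and {\bf[A4]}, for every even integer $2m$,
\[
\sup_{\theta\in\Theta}\Bigl(\mathbb{E}[|A_n(\theta)|^{2m}]+\mathbb{E}[|\partial_\theta A_n(\theta)|^{2m}]\Bigr)=O\bigl((nh_n)^{-m}+h_n^{2m\rho-1}\bigr),
\]
the first term coming from the predictable quadratic variation and the second from the sum of $2m$--th moments of the increments (using $n\ge h_n^{-1}$ for large $n$, a consequence of $nh_n\to\infty$). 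Choosing $m>1/(2\rho)$ makes the right side $o(1)$, and choosing in addition $2m>p+q$, the Sobolev embedding $W^{1,2m}(\Theta)\hookrightarrow C(\Theta)$ on the compact convex set $\Theta$ yields
\[
\mathbb{E}\Bigl[\sup_{\theta\in\Theta}|A_n(\theta)|^{2m}\Bigr]\le C\int_{\Theta}\Bigl(\mathbb{E}[|A_n(\theta)|^{2m}]+\mathbb{E}[|\partial_\theta A_n(\theta)|^{2m}]\Bigr)\,d\theta\to0,
\]
so $\sup_\theta|A_n(\theta)|\overset{P}{\to}0$. Combined with $\sup_\theta|B_n(\theta)|\overset{P}{\to}0$ this proves the proposition. (This uniform control of martingale--type sums is exactly what the more general Proposition~\ref{prop7:jump} supplies, so in the final write--up one may invoke that proposition directly in place of the Sobolev argument.)
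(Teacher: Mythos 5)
Your argument is essentially sound, but note first that the paper itself contains no proof of Proposition \ref{prop6:jump}: it is imported from \citet{Shimizu-Yoshida_JP,Shimizu-Yoshida}, and Remark \ref{proofskip1:jump} merely records that the argument there carries over under the present assumptions, so there is nothing in-paper to compare line by line. Judged on its own, your proof works. The split of $\boldsymbol{1}_{\{|\Delta X_i^n|\le Dh_n^{\rho}\}}$ by jump count, with the remainder orders $O(h_n)$, $O(h_n^{2\rho})$, $O(h_n^{1+\rho})$ read off from Propositions \ref{prop2:jump} and \ref{prop3:jump}, matches the estimates this paper uses elsewhere (e.g.\ in the proofs of \eqref{MCLT1-jump} and \eqref{MCLT3-jump}); your diagnosis that the usual device ``pointwise convergence plus $\sup_n\mathbb{E}[\sup_\theta|\partial_\theta(\cdot)|]<\infty$'' fails here is accurate, since the natural bound on that expectation is $O(h_n^{-1/2})$ (or $O(h_n^{\rho-1})$ with the a.s.\ bound), which diverges; and restoring uniformity through the $2m$-th moment bound $O((nh_n)^{-m}+h_n^{2m\rho-1})$ plus the embedding $W^{1,2m}(\Theta)\hookrightarrow C(\Theta)$ with $2m>p+q$ and $2m\rho>1$ is a standard and valid route in this literature. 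Two caveats. First, the two-term moment bound you state is the Burkholder--Rosenthal inequality for martingale difference arrays (predictable bracket plus sum of $2m$-th moments), not plain BDG: with BDG and the crude estimate $\bigl(\sum_i\xi_i^2\bigr)^m\le n^{m-1}\sum_i|\xi_i|^{2m}$ you would only obtain $O\bigl((nh_n)^{-m}h_n^{m(2\rho-1)}\bigr)$, which need not vanish under the sole assumption $nh_n\to\infty$ when $\rho<\tfrac12$; so cite the predictable-bracket version explicitly. Second, your closing parenthesis is wrong: Proposition \ref{prop7:jump} concerns sums over the jump filter $\{|\Delta X_i^n|>Dh_n^{\rho}\}$ without the factor $\bar{X}_{i,n}^{(k)}$, and its uniformity step is exactly the derivative/tightness device you correctly ruled out here, so it cannot be invoked in place of your Sobolev (or a chaining) argument. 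Neither caveat affects the validity of the proof once the correct inequality is named.
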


\begin{prop}\label{prop7:jump}
Assume {\bf{[A1]}}-{\bf{[A3]}}, {\bf{[A5]}}-{\bf{[A7]}} and {\bf{[B1]}}. Suppose, $g_n(\alpha,y,x):\Theta\times E\times\mathbb{R}^d\to\mathbb{R}$ satisfies that 
\begin{align*}
&|\partial_y\partial_\theta^m g_n(\theta,y,x)|\le C\cdot\varepsilon_n^{-k-1}(1+|y|)^C(1+|x|)^C\quad(m=0,1),\\
&\int_{B}^{}{\sup_{\theta\in\Theta}|\partial_\theta g_n(\theta,y,x)|\Psi_{\beta_0}(y,x)dy}\le C(1+|x|)^C,
\end{align*}
where $k$ is chosen from $\{1,2,3\}$, and at least one of the following two conditions holds true for $m=0,1$.
\begin{enumerate}
\item[{\bf[P1] :}] $|\partial_\theta^m g_n(\theta,y,x)|\le C(1+|y|)^C(1+|x|)^C$. 
\item[{\bf[P2] :}] $\rho\in B_1(k), \  
\ |\partial_\theta^m g_n(\theta,y,x)|\leq\begin{cases}    
C\cdot \varepsilon_n^{-k}(1+|x|)^C&\text{if}\ k =1,2, \\
C\cdot \varepsilon_n^{-(k+m-1)}(1+|x|)^C&\text{if}\ k =3.\\
\end{cases}$
\end{enumerate}
Moreover, suppose that there exist $\displaystyle G_n(\theta,x)=\int_{B}{g_n(\theta,y,x)\Psi_{\beta_0}(y,x)dy}$ and $g(\theta,y,x): \Theta\times E\times\mathbb{R}^d\to\mathbb{R}$, for all $(\theta,x)\in\Theta\times \mathbb{R}^d$, such that
\begin{align*}
&G_n(\theta,x)\longrightarrow\int_{E}^{}{g(\theta,y,x)\Psi_{\beta_0}(y,x)dy}\quad \pi\text{-}a.s.\quad((\theta,x)\in\Theta\times \mathbb{R}^d),\\
&|G_n(\theta,x)|^4\le C(1+|x|)^C,\quad
|\partial_xG_n(\theta,x)|\le C\cdot\varepsilon_n^{-k-1}(1+|x|)^C.
\end{align*}
Then, 
\begin{align*}
\sup_{\theta\in\Theta}\left|\frac{1}{nh_n}\sum_{i=1}^{n}{g_{n}(\theta,\Delta X_i^n,X_{t_{i-1}^n})\boldsymbol{1}_{\{|\Delta X_i^n|> Dh_n^{\rho}\}}}-\iint_{B}{g(\theta,y,x)\Psi_{\beta_0}(y,x)dy\pi(dx)}\right|\overset{P}{\to}0\quad(n\to\infty).
\end{align*}
\end{prop}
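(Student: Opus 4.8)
The plan is to mirror the scheme already used for Proposition \ref{prop4:jump}(iii): first prove the convergence for each fixed $\theta\in\Theta$ by means of the limit theorem for triangular arrays of \citet{Genon-Catalot}, and then upgrade to uniform convergence over $\Theta$ by a tightness argument based on a uniform-in-$n$ bound for the expected supremum of the $\theta$-derivative of the partial sums.

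\emph{Pointwise part.} Fix $\theta$ and set $\xi_i^n(\theta):=(nh_n)^{-1}g_n(\theta,\Delta X_i^n,X_{t_{i-1}^n})\boldsymbol{1}_{\{|\Delta X_i^n|>Dh_n^\rho\}}$. By \citet{Genon-Catalot} it suffices to check (a) $\sum_{i=1}^n\mathbb{E}[\xi_i^n(\theta)\mid\mathcal{F}_{i-1}^n]\overset{P}{\to}\iint_B g(\theta,y,x)\Psi_{\beta_0}(y,x)\,dy\,\pi(dx)$ and (b) $\sum_{i=1}^n\mathbb{E}[\xi_i^n(\theta)^2\mid\mathcal{F}_{i-1}^n]\overset{P}{\to}0$. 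For (a) I would split $\{|\Delta X_i^n|>Dh_n^\rho\}=D_{i,0}^n(D,\rho)\cup D_{i,1}^n(D,\rho)\cup D_{i,2}^n(D,\rho)$. By Proposition \ref{prop2:jump} the first and third events have conditional probabilities $R(\theta,h_n^p,X_{t_{i-1}^n})$ and at most $\lambda_0^2h_n^2$, so, combined with the pointwise bound on $g_n$ (polynomial under \textbf{[P1]}, of order at most $\varepsilon_n^{-(k\vee2)}$ under \textbf{[P2]}), they contribute $o_P(1)$ after summation, using $h_n\varepsilon_n^{-8}\to0$ from \textbf{[B1]}. On $D_{i,1}^n(D,\rho)$ there is exactly one Poisson jump on $(t_{i-1}^n,t_i^n]$, at $\tau_i^n$, with mark $\Delta Z_{\tau_i^n}$ of density $\lambda_0^{-1}f_{\beta_0}$, and $\Delta X_i^n=(X_{\tau_i^n-}-X_{t_{i-1}^n})+c(X_{\tau_i^n-},\Delta Z_{\tau_i^n},\beta_0)+(X_{t_i^n}-X_{\tau_i^n})$. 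I would then (i) replace $\Delta X_i^n$ by $c(X_{t_{i-1}^n},\Delta Z_{\tau_i^n},\beta_0)$ inside $g_n$, bounding the error by the estimate on $\partial_y g_n$ (order $\varepsilon_n^{-k-1}$) times the $O_P(\sqrt{h_n})$ displacement of $X$ over the interval, controlled by Proposition \ref{prop1:jump} and Lemma \ref{lem1:jump}; after summation this is $O(\sqrt{h_n}\,\varepsilon_n^{-k-1})=o(1)$ under \textbf{[B1]}, since $h_n\varepsilon_n^{-2k-2}\le h_n\varepsilon_n^{-8}\to0$; (ii) replace $\boldsymbol{1}_{\{|\Delta X_i^n|>Dh_n^\rho\}}$ by $\boldsymbol{1}_{\{J_i^n=1\}}$, the discrepancy occurring precisely on $C_{i,1}^n(D,\rho)$, whose conditional probability is $R(\theta,h_n^{1+\rho},X_{t_{i-1}^n})$ by Proposition \ref{prop2:jump}, hence contributing $O(h_n^\rho)$ under \textbf{[P1]} and $O(h_n^\rho\varepsilon_n^{-k})=o(1)$ under \textbf{[P2]} precisely because $\rho\in B_1(k)$; and (iii) change variables $y=c(X_{t_{i-1}^n},z,\beta_0)$ via \textbf{[A6]} to obtain $\mathbb{E}[g_n(\theta,\Delta X_i^n,X_{t_{i-1}^n})\boldsymbol{1}_{D_{i,1}^n(D,\rho)}\mid\mathcal{F}_{i-1}^n]=h_nG_n(\theta,X_{t_{i-1}^n})+(\text{negligible})$. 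Thus $\sum_i\mathbb{E}[\xi_i^n(\theta)\mid\mathcal{F}_{i-1}^n]=n^{-1}\sum_iG_n(\theta,X_{t_{i-1}^n})+o_P(1)$, and applying Proposition \ref{prop4:jump}(i) to $G_n$ — whose hypotheses are met because $|G_n|\le C(1+|x|)^C$, $|\partial_xG_n|\le C\varepsilon_n^{-k-1}(1+|x|)^C\le C\varepsilon_n^{-4}(1+|x|)^C$, $|\partial_\theta G_n|\le C(1+|x|)^C$ (from the assumed integrability of $\partial_\theta g_n$ against $\Psi_{\beta_0}$), and $G_n\to\int_Eg(\theta,y,\cdot)\Psi_{\beta_0}(y,\cdot)\,dy$ $\pi$-a.s. — yields the limit in (a). For (b), the same decomposition and change of variables give the conditional second moment $\le Ch_n(1+|X_{t_{i-1}^n}|)^C$ under \textbf{[P1]} (using $\int_B(1+|y|)^C\Psi_{\beta_0}(y,x)\,dy\le C(1+|x|)^C$, a consequence of \textbf{[A1]} and \textbf{[A5]}) and $\le C\varepsilon_n^{-4}h_n(1+|X_{t_{i-1}^n}|)^C$ under \textbf{[P2]}, so that $\sum_i\mathbb{E}[\xi_i^n(\theta)^2\mid\mathcal{F}_{i-1}^n]=O_P((nh_n)^{-1})$, respectively $O_P((nh_n\varepsilon_n^4)^{-1})$, which vanishes by \textbf{[B1]}.

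\emph{Uniformity in $\theta$.} As in the proof of Proposition \ref{prop4:jump}, it is enough to combine the pointwise convergence with tightness in $C(\Theta)$, which follows once $\sup_n\mathbb{E}[\sup_\theta|(nh_n)^{-1}\sum_i\partial_\theta g_n(\theta,\Delta X_i^n,X_{t_{i-1}^n})\boldsymbol{1}_{\{|\Delta X_i^n|>Dh_n^\rho\}}|]<\infty$. Using the three-region split again and the assumed bound $\int_B\sup_\theta|\partial_\theta g_n(\theta,y,x)|\Psi_{\beta_0}(y,x)\,dy\le C(1+|x|)^C$ on $D_{i,1}^n(D,\rho)$ (with the estimate on $\partial_y\partial_\theta g_n$ for the displacement error and $\rho\in B_1(k)$ for the indicator mismatch), together with Proposition \ref{prop2:jump} and \textbf{[B1]} on $D_{i,0}^n(D,\rho)$ and $D_{i,2}^n(D,\rho)$, one gets $\mathbb{E}[\sup_\theta|\partial_\theta g_n(\theta,\Delta X_i^n,X_{t_{i-1}^n})|\boldsymbol{1}_{\{|\Delta X_i^n|>Dh_n^\rho\}}\mid\mathcal{F}_{i-1}^n]\le Ch_n(1+|X_{t_{i-1}^n}|)^C$ up to negligible terms, so the required supremum is bounded by $C\sup_{t\ge0}\mathbb{E}[(1+|X_t|)^C]<\infty$ thanks to \textbf{[A3]}, which closes the argument.

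The main obstacle is the analysis on $D_{i,1}^n(D,\rho)$ in step (a): one must pass from the true increment $\Delta X_i^n$ and the true threshold indicator to the idealized jump mark $c(X_{t_{i-1}^n},\Delta Z_{\tau_i^n},\beta_0)$ inside $g_n$, even though $g_n$ and $\partial_y g_n$ may blow up like negative powers of $\varepsilon_n$ under \textbf{[P2]}. This is exactly where \textbf{[B1]} and the set $B_1(k)$ are consumed, and keeping track of which power of $\varepsilon_n$ is tolerable in each error term — displacement error, indicator mismatch, multi-jump remainder — is the delicate bookkeeping underlying the statement's hypotheses.
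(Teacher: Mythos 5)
Your overall route is essentially the paper's: split the error over $D_{i,0}^n(D,\rho)$, $D_{i,1}^n(D,\rho)$, $D_{i,2}^n(D,\rho)$, reduce the single-jump event to the idealized mark $c(X_{t_{i-1}^n},\Delta Z_{\tau_i^n},\beta_0)$, use the change of variables of {\bf [A6]} to turn the conditional expectation into $h_nG_n(\theta,X_{t_{i-1}^n})$, invoke Proposition \ref{prop4:jump}-(i) for $G_n$, and get uniformity from a uniform-in-$n$ bound on $\mathbb{E}[\sup_\theta|\partial_\theta s_n(\theta)|]$ using the assumed integrability of $\partial_\theta g_n$ against $\Psi_{\beta_0}$. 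The only structural difference is that you package the pointwise step through the triangular-array lemma of \citet{Genon-Catalot} (conditional means plus conditional second moments), whereas the paper works with the sums directly and compensates the Poisson random measure by $q^{\beta_0}$ (its term $I_4$), bounding the resulting martingale by its second moment; these are equivalent in content, and your condition (b), of order $O_P((nh_n\varepsilon_n^{4})^{-1})$, plays exactly the role of $I_4$.

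Two steps, however, are glossed in a way that needs repair. First, in your step (i) you bound the displacement error by the $\varepsilon_n^{-k-1}$ bound on $\partial_yg_n$ times an $O_P(\sqrt{h_n})$ displacement said to be ``controlled by Proposition \ref{prop1:jump} and Lemma \ref{lem1:jump}''. Neither result delivers what is needed: the second moment of $X_{\tau_i^n-}-X_{t_{i-1}^n}$ and $X_{t_i^n}-X_{\tau_i^n}$ must be taken on the one-jump event (equivalently, conditionally on $J_i^n=1$), and conditioning on the jump configuration changes the law of the path; Lemma \ref{lem1:jump} only provides tail probabilities for the pre-/post-jump oscillation, not an $L^2$ bound. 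This is exactly where the paper constructs the auxiliary process $\tilde X$ (jump time replaced by an independent uniform variable) and applies the Burkholder--Davis--Gundy inequality to obtain $\mathbb{E}\left[|X_{\tau_i^n-}-X_{t_{i-1}^n}|^2\mid J_i^n=1\right]=O(h_n)$; without some such argument your rate $O(\sqrt{h_n}\,\varepsilon_n^{-k-1})$ is unjustified. Second, in step (ii) under {\bf [P1]} the claimed $O(h_n^{\rho})$ for the indicator mismatch on $C_{i,1}^n(D,\rho)$ does not follow from Cauchy--Schwarz: since $g_n$ has polynomial growth in $y$, Schwarz with $P(C_{i,1}^n\mid\mathcal{F}_{i-1}^n)=R(\theta,h_n^{1+\rho},X_{t_{i-1}^n})$ only gives $O(h_n^{(\rho-1)/2})$, which diverges after dividing by $h_n$. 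One needs either the paper's truncation device (split on $\{|g_n|>u_n\}$ with $h_n^{\rho}u_n\to0$ and $h_nu_n^{p}\to\infty$, yielding $O(1/(h_nu_n^{p}))+O(h_n^{\rho}u_n)$) or a H\"older inequality with a sufficiently high exponent; the same remark applies to the corresponding term ($H_2$ in the paper) in your tightness argument. With these two repairs your proposal coincides with the paper's proof.
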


\begin{rmk}\label{proofcompare3:jump}
The statements of Proposition \ref{prop7:jump}  are similar to those of Proposition 3.6 of \citet{Shimizu-Yoshida_JP,Shimizu-Yoshida}. However, our balance conditions for $\varepsilon_n$ are milder than theirs. 
If the statements in this proposition  hold for $k=3$, then it is easy to show that  the statements for $k=1,2$ also hold. 
Thus, it is sufficient to prove the case of $k=3$. However, we show this proof for $k\in\{1,2,3\}$ since we utilize the proof of this proposition in the case of $k=1,2$ for proving Theorem \ref{thm2-2:jump}.
\end{rmk}
\begin{rmk}
 Remarks \ref{balance} and \ref{subset:thm2} show that Proposition \ref{prop7:jump} can be applied under the conditions of Theorem \ref{thm2-2:jump} and Corollary \ref{thm2-1:jump}.
\end{rmk}
\begin{rmk}
Under the additional assumptions {\bf {[A10]}}, {\bf {[A11]}} and {\bf {[A13]}}, the function
$g_n(\theta,y,x)=\partial_\beta^k\left(\log\Psi_\beta(y,x)\right)\varphi_n(x,y),\ (k=0,1,2)$ satisfies the conditions of Proposition \ref{prop7:jump}.
\end{rmk}
\begin{proof}
The proof is similar to that of Proposition 3.6 of \citet{Shimizu-Yoshida_JP,Shimizu-Yoshida}. We show the proof of the case of $k=3$ under {\bf [P2]}, lastly. Therefore, we start with the proof of the case of $k=1,2,3$ under {\bf [P1]} or 
 that of $k=1,2$ under {\bf [P2]}. 
Firstly, let us show the pointwise convergence.
For $p\in(0,\frac{1}{3})$, if we set $q=1+\frac{1}{p}$, then it holds from H\"{o}lder's inequality, Proposition \ref{prop2:jump} and $h_n\varepsilon_n^{-2k}\leq h_n\varepsilon_n^{-8}\to 0$ under {\bf[B1]} that
\begin{align}
&\sum_{j=0,2}P\left(\left|\frac{1}{nh_n}\sum_{i=1}^{n}g_n(\theta,\Delta X_i^n,X_{t_{i-1}^n})\boldsymbol{1}_{D_{i,j}^n(D,\rho)}\right|>\varepsilon\right)\nonumber\\
&\le \frac{1}{nh_n\varepsilon} \sum_{i=1}^{n}\sum_{j=0,2}\mathbb{E}\left[\left|g_n(\theta,\Delta X_i^n,X_{t_{i-1}^n})\boldsymbol{1}_{D_{i,j}^n(D,\rho)}\right|\right]\nonumber\\
&\le \frac{1}{nh_n\varepsilon} \sum_{i=1}^{n}\sum_{j=0,2}\mathbb{E}\left[\left|g_n(\theta,\Delta X_i^n,X_{t_{i-1}^n})\right|^q\right]^\frac{1}{q}P(D_{i,j}^n(D,\rho))^{\frac{1}{1+p}}\nonumber\\
&=\begin{cases}
O\left(h_n^\frac{1-p}{1+p}\right)& 
\text{(under {\bf [P1]})},\\
O\left(h_n^\frac{1-p}{1+p}\varepsilon_n^{-k}\right)=O\left(\sqrt{h_n}\varepsilon_n^{-k}\cdot h_n^\frac{1-3p}{2+2p}\right)& 
\text{(under {\bf [P2]}, $k=1,2$)}\label{p1}
\end{cases}\\
&=o(1).\nonumber
\end{align}
Hence we have the following decomposition:
\begin{align*}
&P\left(\left|\frac{1}{nh_n}\sum_{i=1}^{n}{g_{n}(\theta,\Delta X_i^n,X_{t_{i-1}^n})\boldsymbol{1}_{\{|\Delta X_i^n|> Dh_n^{\rho}\}}}-\iint_{B}{g(\theta,y,x)\Psi_{\beta_0}(y,x)dy\pi(dx)}\right|>\varepsilon\right)\\
&\le P\left(\left|\frac{1}{nh_n}\sum_{i=1}^{n}{g_{n}(\theta,\Delta X_i^n,X_{t_{i-1}^n})}\boldsymbol{1}_{D_{i,1}^n(D,\rho)}-\iint_{B}{g(\theta,y,x)\Psi_{\beta_0}(y,x)dy\pi(dx)}\right|>\varepsilon\right)\\
&\quad+\sum_{j=0,2}P\left(\left|\frac{1}{nh_n}\sum_{i=1}^{n}g_n(\theta,\Delta X_i^n,X_{t_{i-1}^n})\boldsymbol{1}_{D_{i,j}^n(D,\rho)}\right|>\varepsilon\right)\\
&\le \sum_{l=1}^{5}{I_l}+o(1),
\end{align*}
where 
\begin{small}
\begin{align*}
I_1&=P\left(\left|\frac{1}{nh_n}\sum_{i=1}^{n}{g_{n}(\theta,\Delta X_i^n,X_{t_{i-1}^n})}\boldsymbol{1}_{D_{i,1}^n(D,\rho)}-\frac{1}{nh_n}\sum_{i=1}^{n}{g_{n}(\theta,\Delta X_{\tau_i^n},X_{t_{i-1}^n})}\boldsymbol{1}_{D_{i,1}^n(D,\rho)}\right|>\frac{\varepsilon}{5}\right),\\
I_2&=P\left(\left|\frac{1}{nh_n}\sum_{i=1}^{n}{g_{n}(\theta,\Delta X_{\tau_i^n},X_{t_{i-1}^n})}\boldsymbol{1}_{D_{i,1}^n(D,\rho)}-\frac{1}{nh_n}\sum_{i=1}^{n}{g_{n}(\theta,\Delta X_{\tau_i^n},X_{t_{i-1}^n})}\boldsymbol{1}_{\{J_i^n=1\}}\right|>\frac{\varepsilon}{5}\right),\\
I_3&=P\left(\left|\frac{1}{nh_n}\sum_{i=1}^{n}{g_{n}(\theta,\Delta X_{\tau_i^n},X_{t_{i-1}^n})}\boldsymbol{1}_{\{J_i^n=1\}}-\frac{1}{nh_n}\sum_{i=1}^{n}\int_{t_{i-1}^n}^{t_i^n}\int{g_{n}(\theta,c_{i-1}(z,\beta_0),X_{t_{i-1}^n})}p(ds,dz)\right|>\frac{\varepsilon}{5}\right),\\
I_4&=P\left(\left|\frac{1}{nh_n}\sum_{i=1}^{n}\int_{t_{i-1}^n}^{t_i^n}\int{g_{n}(\theta,c_{i-1}(z,\beta_0),X_{t_{i-1}^n})}p(ds,dz)\right.\right.\\
&\qquad\qquad\qquad\qquad\qquad\qquad\qquad\quad\left.\left.-\frac{1}{nh_n}\sum_{i=1}^{n}\int_{t_{i-1}^n}^{t_i^n}\int{g_{n}(\theta,c_{i-1}(z,\beta_0),X_{t_{i-1}^n})}q^{\beta_0}(ds,dz)\right|>\frac{\varepsilon}{5}\right),\\
I_5&=P\left(\left|\frac{1}{nh_n}\sum_{i=1}^{n}\int_{t_{i-1}^n}^{t_i^n}\int{g_{n}(\theta,c_{i-1}(z,\beta_0),X_{t_{i-1}^n})}q^{\beta_0}(ds,dz)-\iint_{B}{g(\theta,y,x)\Psi_{\beta_0}(y,x)dy\pi(dx)}\right|>\frac{\varepsilon}{5}\right).
\end{align*}
\end{small}
Let us evaluate these terms. By Taylor's theorem and Schwarz's inequality,  one has 
\begin{align*}
I_1&\le \frac{5}{nh_n\varepsilon}\sum_{i=1}^{n}\mathbb{E}\left[\left|g_{n}(\theta,\Delta X_i^n,X_{t_{i-1}^n})-g_{n}(\theta,\Delta X_{\tau_i^n},X_{t_{i-1}^n})\right|\boldsymbol{1}_{D_{i,1}^n(D,\rho)}\right]\\
&\le \frac{5}{nh_n\varepsilon}\sum_{i=1}^{n}\mathbb{E}\left[\left|\int_{0}^{1}\partial_y g_n(\theta,\xi_i^n(\eta),X_{t_{i-1}^n})d\eta\right|\left|\Delta X_i^n-\Delta X_{\tau_i^n}\right|\boldsymbol{1}_{D_{i,1}^n(D,\rho)}\right]\\
&\qquad\qquad\qquad\qquad\quad\left(\xi_i^n:=\eta\Delta X_i^n+(1-\eta)\Delta X_{\tau_i^n}\right)\\
&\le \frac{5}{nh_n\varepsilon}\sum_{i=1}^{n}\mathbb{E}\left[\left|\int_{0}^{1}\partial_y g_n(\theta,\xi_i^n(\eta),X_{t_{i-1}^n})d\eta\right|\left(|X_i^n-X_{\tau_i^n}|+|X_{\tau_{i-}^n}-X_{t_{i-1}^n}|\right)\boldsymbol{1}_{\{J_i^n=1\}}\right]\\
&\le  \frac{C}{nh_n\varepsilon}\sum_{i=1}^{n}\mathbb{E}\left[\mathbb{E}\left[\left|\int_{0}^{1}\partial_y g_n(\theta,\xi_i^n(\eta),X_{t_{i-1}^n})d\eta\right|^2\ |J_i^n=1\right]^\frac{1}{2}\right.\\
&\qquad\qquad\qquad\quad\times\left.\mathbb{E}\left[|X_i^n-X_{\tau_i^n}|^2+|X_{\tau_{i-}^n}-X_{t_{i-1}^n}|^2\ |J_i^n=1\right]^\frac{1}{2}\boldsymbol{1}_{\{J_i^n=1\}}\right].
\end{align*}
  Let $\tilde{X}$ be the solution of the following stochastic differential equation under the set $\{J_i^n=1\}$:
 \begin{align*}
\tilde{X}_t-\tilde{X}_{t_{i-1}^n}=H_t+\int_{t_{i-1}^n}^{t}b(\tilde{X}_s)ds+\int_{t_{i-1}^n}^{t}a(\tilde{X}_s)dW_s,
\end{align*}
where $\tilde{X}_{t_{i-1}^n}=X_{t_{i-1}^n}$, $H_t=c(X_{u-},z)\boldsymbol{1}_{[u,t_i^n]}(t)$, $u$ is $[t_{i-1}^n,t_i^n]$-valued uniform random variable which is independent of $(W_t)_{t\ge0}$ and $J_i^n$, and $z$ is a random variable with density $F_{\beta_0}$ which is independent of $(W_t)_{t\ge0}$.
 We see from Burkholder-Davis-Gundy inequality that 
\begin{align*}
\mathbb{E}\left[|X_{\tau_{i-}^n}-X_{t_{i-1}^n}|^2\ |J_i^n=1\right]&=
\mathbb{E}\left[|\tilde{X}_{u-}-\tilde{X}_{t_{i-1}^n}|^2\ |J_i^n=1\right]\\
&=\mathbb{E}\left[|\tilde{X}_{u-}-\tilde{X}_{t_{i-1}^n}|^2\right]\\
&\le\mathbb{E}\left[\sup_{t\in[t_{i-1}^n,u-]}|\tilde{X}_{t}-\tilde{X}_{t_{i-1}^n}|^2\right] \\
&\le C\left(h_n^2\ \mathbb{E}\left[b(\tilde{X}_{t_{i-1}^n})\right]+\mathbb{E}\left[\int_{t_{i-1}^n}^{t_i^n}{a^2(\tilde{X}_s)}ds\right]\right)\\
&=O(h_n).
\end{align*}
In a similar way,  one has
\begin{align*}
\mathbb{E}\left[|X_{t_{i}^n}-X_{\tau_{i}^n}|^2\ |J_i^n=1\right]=O(h_n).
\end{align*}
Hence, 
\begin{align*}
I_1\le \frac{C}{nh_n\varepsilon}\sum_{i=1}^{n}{O\left(\sqrt{h_n}\varepsilon_n^{-k-1}\right)P(J_i^n=1)}=O\left(\sqrt{h_n}\varepsilon_n^{-k-1}\right)\to\ 0,
\end{align*}
since $h_n\varepsilon_n^{-2(k+1)}\leq h_n\varepsilon_n^{-8}\to 0$ under {\bf [B1]}.
For $I_2$, if the condition {\bf [P1]} holds, then there exist an integer $p>1$ and a sequence $u_n>0$ such that $h_n^\rho{u_n}\to0$ and $h_n{u_n}^p\to\infty$ as $n\to\infty$. For example, $u_n=h_n^{-\rho/2}$, $p=2\left(\left[\frac{1}{\rho}\right]+1\right)$.
Therefore,  we see that under {\bf [P1]}
\begin{align*}
I_2&\le \frac{5}{\varepsilon}\mathbb{E}\left[\left|\frac{1}{nh_n}\sum_{i=1}^{n}g_{n}(\theta,\Delta X_{\tau_i^n},X_{t_{i-1}^n})\right|\boldsymbol{1}_{C_{i,1}^n(D,\rho)}\right]\\
&\le\frac{5}{nh_n\varepsilon}\sum_{i=1}^{n}\mathbb{E}\left[\mathbb{E}\left[\left|g_{n}(\theta,\Delta X_{\tau_i^n},X_{t_{i-1}^n})\right|\boldsymbol{1}_{C_{i,1}^n(D,\rho)}\ |\mathcal{F}_{i-1}^n\right]\right]\\
&=\frac{5}{nh_n\varepsilon}\sum_{i=1}^{n}\mathbb{E}\left[\mathbb{E}\left[\left|g_{n}(\theta,\Delta X_{\tau_i^n},X_{t_{i-1}^n})\right|\boldsymbol{1}_{C_{i,1}^n(D,\rho)}\boldsymbol{1}_{\{|g_n(\theta,\Delta X_{\tau_{i}^n},X_{t_{i-1}^n})|>u_n\}}\ |\mathcal{F}_{i-1}^n\right]\right]\\
&\quad+\frac{5}{nh_n\varepsilon}\sum_{i=1}^{n}\mathbb{E}\left[\mathbb{E}\left[\left|g_{n}(\theta,\Delta X_{\tau_i^n},X_{t_{i-1}^n})\right|\boldsymbol{1}_{C_{i,1}^n(D,\rho)}\boldsymbol{1}_{\{|g_n(\theta,\Delta X_{\tau_{i}^n},X_{t_{i-1}^n})|\le u_n\}}\ |\mathcal{F}_{i-1}^n\right]\right]\\
&\le \frac{C}{nh_n}\sum_{i=1}^{n}u_n^{-p}\mathbb{E}\left[\left|g_{n}(\theta,\Delta X_{\tau_i^n},X_{t_{i-1}^n})\right|^{p+1}\right]+\frac{C}{nh_n}\sum_{i=1}^{n}u_nP(C_{i,1}^n(D,\rho))\\
&=O\left(\frac{1}{h_nu_n^p}\right)+O\left(h_n^\rho u_n\right)\\
&\to 0.
\end{align*}
On the other hand, since $\rho\in B_1(k)$,  it holds that under {\bf [P2]} with $k=1,2$, 
\begin{align}
I_2&\le \frac{5}{\varepsilon}\mathbb{E}\left[\left|\frac{1}{nh_n}\sum_{i=1}^{n}g_{n}(\theta,\Delta X_{\tau_i^n},X_{t_{i-1}^n})\right|\boldsymbol{1}_{C_{i,1}^n(D,\rho)}\right]\nonumber\\
&\le\frac{5}{nh_n\varepsilon}\sum_{i=1}^{n}\mathbb{E}\left[\left|g_{n}(\theta,\Delta X_{\tau_i^n},X_{t_{i-1}^n})\right|\boldsymbol{1}_{C_{i,1}^n(D,\rho)}\ |\mathcal{F}_{i-1}^n\right]\nonumber\\
&\le\frac{5\varepsilon_n^{-k}}{nh_n\varepsilon}\sum_{i=1}^{n}P(C_{i,1}^n(D,\rho))\nonumber\\
&=O\left(h_n^\rho\varepsilon_n^{-k}\right)\label{p2}\\
&\to 0.\nonumber
\end{align}
After all, $I_2$ tends to 0 for each conditions except for the case of $k=3$ under {\bf [P2]}.
We divide $I_3$ into the two terms:
\begin{small}
\begin{align*}
I_3&\le P\left(\left|\frac{1}{nh_n}\sum_{i=1}^{n}{g_{n}(\theta,\Delta X_{\tau_i^n},X_{t_{i-1}^n})}\boldsymbol{1}_{\{J_i^n=1\}}-\frac{1}{nh_n}\sum_{i=1}^{n}{g_{n}(\theta,c_{i-1}(\Delta Z_{\tau_i^n},\beta_0),X_{t_{i-1}^n})}\boldsymbol{1}_{\{J_i^n=1\}}\right|>\frac{\varepsilon}{10}\right)\\
&\quad+P\left(\left|\frac{1}{nh_n}\sum_{i=1}^{n}{g_{n}(\theta,c_{i-1}(\Delta Z_{\tau_i^n},\beta_0),X_{t_{i-1}^n})}\boldsymbol{1}_{\{J_i^n=1\}}\right.\right.\\
&\qquad\qquad\qquad\qquad\qquad\qquad\qquad\qquad\left.\left.-\frac{1}{nh_n}\sum_{i=1}^{n}\int_{t_{i-1}^n}^{t_i^n}\int{g_{n}(\theta,c_{i-1}(z,\beta_0),X_{t_{i-1}^n})}p(ds,dz)\right|>\frac{\varepsilon}{10}\right) \\
&=:I_3'+I_3'',
\end{align*}
\end{small}
where $I_3'=O\left(\sqrt{h_n}\varepsilon_n^{-k-1}\right)$ by the same argument as $I_1$. For $I_3''$, it holds from Schwarz's inequality and Proposition \ref{prop2:jump} that 
\begin{small}
\begin{align}
I_3''&\le\frac{10}{nh_n\varepsilon}\sum_{i=1}^{n}\mathbb{E}\left[\left|{g_{n}(\theta,c_{i-1}(\Delta Z_{\tau_i^n},\beta_0),X_{t_{i-1}^n})}\boldsymbol{1}_{\{J_i^n=1\}}-\int_{t_{i-1}^n}^{t_i^n}\int{g_{n}(\theta,c_{i-1}(z,\beta_0),X_{t_{i-1}^n})}p(ds,dz)\right|\right]\nonumber\\
&\le\frac{10}{nh_n\varepsilon}\sum_{i=1}^{n}\mathbb{E}\left[\left|\int_{t_{i-1}^n}^{t_i^n}\int{g_{n}(\theta,c_{i-1}(z,\beta_0),X_{t_{i-1}^n})}\boldsymbol{1}_{\{J_i^n\ge2\}}p(ds,dz)\right|\right]\nonumber\\
&\le\frac{10}{nh_n\varepsilon}\sum_{i=1}^{n}\mathbb{E}\left[\left|\int_{t_{i-1}^n}^{t_i^n}\int{g_{n}(\theta,c_{i-1}(z,\beta_0),X_{t_{i-1}^n})}p(ds,dz)\right|^2\right]^\frac{1}{2}P(J_i^n\ge2)^\frac{1}{2}\nonumber\\
&\le \frac{C}{n\varepsilon}\sum_{i=1}^{n}\mathbb{E}\left[\int_{t_{i-1}^n}^{t_i^n}\int{g_{n}^2(\theta,c_{i-1}(z,\beta_0),X_{t_{i-1}^n})}q^{\beta_0}(ds,dz)\right]^\frac{1}{2}\nonumber\\
&=\begin{cases}
O(\sqrt{h_n})& 
\text{(under {\bf [P1]}),}\\
O\left(\sqrt{h_n}\varepsilon_n^{-k}\right)& 
\text{(under {\bf [P2]}, $k=1,2$)}.\label{p3}
\end{cases}
\end{align}
\end{small}
Hence, $I_3=I_3'+I_3''=O\left(\sqrt{h_n}\varepsilon_n^{-k-1}\right)\to 0$.
Furthermore, 
\begin{align}
I_4&\le \frac{25}{\varepsilon^2}\mathbb{E}\left[\left|\frac{1}{nh_n}\sum_{i=1}^{n}\int_{t_{i-1}^n}^{t_i^n}\int{g_{n}(\theta,c_{i-1}(z,\beta_0),X_{t_{i-1}^n})}(p-q^{\beta_0})(ds,dz)\right|^2\right]\nonumber\\
&=\frac{25}{n^2h_n^2\varepsilon^2}\sum_{i=1}^{n}\mathbb{E}\left[\left|\int_{t_{i-1}^n}^{t_i^n}\int{g_{n}(\theta,c_{i-1}(z,\beta_0),X_{t_{i-1}^n})}(p-q^{\beta_0})(ds,dz)\right|^2\right]\nonumber\\
&\quad+\frac{50}{n^2h_n^2\varepsilon^2}\sum_{i<j}\mathbb{E}\left[\int_{t_{i-1}^n}^{t_i^n}\int{g_{n}(\theta,c_{i-1}(z,\beta_0),X_{t_{i-1}^n})}(p-q^{\beta_0})(ds,dz)\right.\nonumber\\
&\qquad\qquad\qquad\qquad\quad\times \left.\mathbb{E}\left[\int_{t_{j-1}^n}^{t_j^n}\int{g_{n}(\theta,c_{j-1}(z,\beta_0),X_{t_{j-1}^n})}(p-q^{\beta_0})(ds,dz)\ |\mathcal{F}_{j-1}^n\right]\right]\nonumber\\
&=\frac{25}{n^2h_n^2\varepsilon^2}\sum_{i=1}^{n}\mathbb{E}\left[\int_{t_{i-1}^n}^{t_i^n}\int{g_{n}^2(\theta,c_{i-1}(z,\beta_0),X_{t_{i-1}^n})}q^{\beta_0}(ds,dz)\right]\nonumber\\
&=\begin{cases}
O\left(\frac{1}{nh_n}\right)& 
\text{(under {\bf [P1]}),}\\
O\left(\frac{1}{nh_n\varepsilon_n^{2k}}\right)& 
\text{(under {\bf [P2]}, $k=1,2$)}\label{p4}
\end{cases}\\
&\to\ 0.\nonumber
\end{align}
On $I_5$, it is obvious that this converges to 0 from change of variables and Proposition \ref{prop4:jump}-(i).
Hence, the pointwise convergence holds.
Next, let us show the uniformly of convergence. We set
\begin{align*}
s_n(\theta)=\frac{1}{nh_n}\sum_{i=1}^{n}g_n(\theta,\Delta X_i^n,X_{t_{i-1}^n})\boldsymbol{1}_{\{|\Delta X_i^n|>Dh_n^\rho\}},
\end{align*}
and then it is sufficient to show the tightness of $\{s_n(\theta)\}$. 
 It follows from H\"{o}lder's inequality that  
\begin{align*}
\mathbb{E}\left[\sup_{\theta}|\partial_{{\theta}}s_n(\theta)|\right]&\le
\frac{1}{nh_n}\sum_{i=1}^{n}\sum_{j=0}^{2}\mathbb{E}\left[\sup_{\theta}|\partial_{{\theta}}g_n(\theta,\Delta X_i^n,X_{t_{i-1}^n})|\boldsymbol{1}_{D_{i,j}^n(D,\rho)}\right]\\
&=\frac{1}{nh_n}\sum_{i=1}^{n}\mathbb{E}\left[\sup_{\theta}|\partial_{{\theta}}g_n(\theta,\Delta X_i^n,X_{t_{i-1}^n})|\boldsymbol{1}_{D_{i,1}^n(D,\rho)}\right]\\
&\quad+\frac{1}{nh_n}\sum_{i=1}^{n}\sum_{j=0,2}\mathbb{E}\left[\sup_{\theta}|\partial_{{\theta}}g_n(\theta,\Delta X_i^n,X_{t_{i-1}^n})|\boldsymbol{1}_{D_{i,j}^n(D,\rho)}\right]\\
&=\frac{1}{nh_n}\sum_{i=1}^{n}\mathbb{E}\left[\sup_{\theta}|\partial_{{\theta}}g_n(\theta,\Delta X_i^n,X_{t_{i-1}^n})|\boldsymbol{1}_{D_{i,1}^n(D,\rho)}\right]+o(1).
\end{align*}
Since 
\begin{align*}
\int_{B}^{}{\sup_{\theta\in\Theta}|\partial_\theta g_n(\theta,y,x)|\Psi_{\beta_0}(y,x)dy}\le C(1+|x|)^C,
\end{align*}
if we show
\begin{small}
\begin{align*}
H:=\left|\mathbb{E}\left[\frac{1}{nh_n}\sum_{i=1}^{n}\sup_{\theta}|\partial_{{\theta}}g_n(\theta,\Delta X_i^n,X_{t_{i-1}^n})|\boldsymbol{1}_{D_{i,1}^n(D,\rho)}-\iint_B \sup_{\theta}|\partial_{{\theta}}g_n(\theta,y,x)|\Psi_{\beta_0}(y,x)dy\pi(dx)|\right]\right|=o(1),
\end{align*}
\end{small}
then it holds that $\mathbb{E}\left[\sup_{\theta}|\partial_{{\theta}}s_n(\theta)|\right]<\infty$, and we complete the proof of the tightness of $\{s_n(\theta)\}$. 
We calculate that $\displaystyle{H\le\sum_{l=1}^{5}{H_l}}$, where
\begin{footnotesize}
\begin{align*}
H_1&=\left|\mathbb{E}\left[\frac{1}{nh_n}\sum_{i=1}^{n}\sup_{\theta}|\partial_{{\theta}}g_n(\theta,\Delta X_i^n,X_{t_{i-1}^n})|\boldsymbol{1}_{D_{i,1}^n(D,\rho)}-\frac{1}{nh_n}\sum_{i=1}^{n}\sup_{\theta}|\partial_{{\theta}}g_n(\theta,\Delta X_{\tau_i^n},X_{t_{i-1}^n})|\boldsymbol{1}_{D_{i,1}^n(D,\rho)}|\right]\right|,\\
H_2&=\left|\mathbb{E}\left[\frac{1}{nh_n}\sum_{i=1}^{n}\sup_{\theta}|\partial_{{\theta}}g_n(\theta,\Delta X_{\tau_i^n},X_{t_{i-1}^n})|\boldsymbol{1}_{D_{i,1}^n(D,\rho)}-\frac{1}{nh_n}\sum_{i=1}^{n}\sup_{\theta}|\partial_{{\theta}}g_n(\theta,\Delta X_{\tau_i^n},X_{t_{i-1}^n})|\boldsymbol{1}_{\{J_i^n=1\}}|\right]\right|,\\
H_3&=\left|\mathbb{E}\left[\frac{1}{nh_n}\sum_{i=1}^{n}\sup_{\theta}|\partial_{{\theta}}g_n(\theta,\Delta X_{\tau_i^n},X_{t_{i-1}^n})|\boldsymbol{1}_{\{J_i^n=1\}}-\frac{1}{nh_n}\sum_{i=1}^{n}\int_{t_{i-1}^n}^{t_i^n}\int\sup_{\theta}|\partial_{{\theta}}g_n(\theta,c_{i-1}(z,\beta_0),X_{t_{i-1}^n})|p(ds,dz)\right]\right|,\\
H_4&=\left|\mathbb{E}\left[\frac{1}{nh_n}\sum_{i=1}^{n}\int_{t_{i-1}^n}^{t_i^n}\int\sup_{\theta}|\partial_{{\theta}}g_n(\theta,c_{i-1}(z,\beta_0),X_{t_{i-1}^n})|p(ds,dz)\right.\right.\\
&\qquad\qquad\qquad\qquad\qquad\qquad\qquad\qquad\left.\left.-\frac{1}{nh_n}\sum_{i=1}^{n}\int_{t_{i-1}^n}^{t_i^n}\int\sup_{\theta}|\partial_{{\theta}}g_n(\theta,c_{i-1}(z,\beta_0),X_{t_{i-1}^n})|q(ds,dz)\right]\right|,\\
H_5&=\left|\mathbb{E}\left[\frac{1}{nh_n}\sum_{i=1}^{n}\int_{t_{i-1}^n}^{t_i^n}\int\sup_{\theta}|\partial_{{\theta}}g_n(\theta,c_{i-1}(z,\beta_0),X_{t_{i-1}^n})|q(ds,dz)-\iint_B \sup_{\theta}|\partial_{{\theta}}g_n(\theta,y,x)|\Psi_{\beta_0}(y,x)dy\pi(dx)\right]\right|.
\end{align*}
\end{footnotesize}
By a similar argument to that for $I_l$ $(l=1,2,3)$, we have
\begin{align}
H_1&=O\left(\sqrt{h_n}\varepsilon_n^{-k-1}\right),\quad H_3=O\left(\sqrt{h_n}\varepsilon_n^{-k-1}\right)\label{p5},\\
H_2&=\begin{cases}
O\left(\frac{1}{h_nu_n^p}\right)+O\left(h_n^\rho u_n\right)& 
\text{(under {\bf[P1]}),}\label{p6}\\
O\left(h_n^\rho\varepsilon_n^{-k}\right)& 
\text{(under {\bf[P2]}).}
\end{cases}
\end{align}
Moreover, it is obvious that $H_4=0$ from martingale property, and that $H_5=0$ from changes of variables and stationarity.
Hence, $\sum_{l=1}^{5}{H_l}\to 0$, and we have
$\mathbb{E}\left[\sup_{\theta}|\partial_{{\theta}}s_n(\theta)|\right]<\infty$.
Finally, we evaluate the case of $k=3$ under {\bf [P2]}. In a similar way to the case of $k=1,2$ under {\bf [P2]}, we have the following modified evaluations:
\begin{align*}
\text{(\ref{p1})}&=O\left(h_n^{\frac{1-p}{1+p}}\varepsilon_n^{-(k-1)}\right)=O\left(\sqrt{h_n}\varepsilon_n^{-(k-1)}\cdot h_n^\frac{1-3p}{2+2p}\right)\to 0,\\
\text{(\ref{p2})}&=O\left(h_n^\rho\varepsilon_n^{-(k-1)}\right)\to 0,\\
\text{(\ref{p3})}&=O\left(\sqrt{h_n}\varepsilon_n^{-(k-1)}\right)\to 0,\\
\text{(\ref{p4})}&=O\left(\frac{1}{nh_n\varepsilon_n^{2(k-1)}}\right)\to 0,
\end{align*}
and the others are the same as the case of $k=1,2$ under {\bf [P2]}.
This completes the proof. 
\end{proof}


\subsubsection{Proof of Theorem \ref{thm1-2:jump}}\label{ada:consitencyproof}
\begin{proof}[{\bf Consistency for $\check{\alpha}_{n}$.} ]
We define the function $U_1(\alpha,\alpha_0)$ as follows:
\[U_1(\alpha,\alpha_0):=-\frac{1}{2}\int_{}^{}{\left\{\mathrm{tr}\left(S^{-1}(x,\alpha)S(x,\alpha_0)\right)+\log\det S(x,\alpha)\right\}}\pi(dx).\]
Since $\Delta X_i^n=\bar{X}_{i,n}(\beta_0)+h_n b_{i-1}(\beta_0)$, 
\begin{align*}
\frac{1}{n}l_n^{(1)}(\alpha)&=-\frac{1}{2nh_n}\sum_{i=1}^{n}\{\bar{X}_{i,n}(\beta_0)+h_n b_{i-1}(\beta_0)\}^\top S_{i-1}^{-1}(\alpha)\{\bar{X}_{i,n}(\beta_0)+h_n b_{i-1}(\beta_0)\}\boldsymbol{1}_{\{|\Delta X_i^n|\le D_1h_n^{\rho_1}\}}\\
&\quad -\frac{1}{2n}\sum_{i=1}^{n}\log\det S_{i-1}(\alpha)\boldsymbol{1}_{\{|\Delta X_i^n|\le D_1h_n^{\rho_1}\}}\\
&=-\frac{1}{2nh_n}\sum_{i=1}^{n}\bar{X}_{i,n}(\beta_0)^\top S_{i-1}^{-1}(\alpha)\bar{X}_{i,n}(\beta_0)\boldsymbol{1}_{\{|\Delta X_i^n|\le D_1h_n^{\rho_1}\}}\\
&\quad-h_n\cdot\frac{1}{nh_n}\sum_{i=1}^{n}b_{i-1}^\top(\beta_0)S_{i-1}^{-1}(\alpha)\bar{X}_{i,n}(\beta_0)\boldsymbol{1}_{\{|\Delta X_i^n|\le D_1h_n^{\rho_1}\}}\\
&\quad-h_n\cdot\frac{1}{2n}\sum_{i=1}^{n}b_{i-1}^\top(\beta_0)S_{i-1}^{-1}(\alpha)b_{i-1}(\beta_0)\boldsymbol{1}_{\{|\Delta X_i^n|\le D_1h_n^{\rho_1}\}}\\
&\quad -\frac{1}{2n}\sum_{i=1}^{n}\log\det S_{i-1}(\alpha)\boldsymbol{1}_{\{|\Delta X_i^n|\le D_1h_n^{\rho_1}\}}.
\end{align*}
Therefore, we see from Propositions \ref{prop4:jump}-(ii), \ref{prop5:jump} and \ref{prop6:jump} that
\begin{align}
&\sup_{\alpha\in\Theta_\alpha}\left|\frac{1}{n}l_n^{(1)}(\alpha)-U_1(\alpha,\alpha_0)\right|\notag\\
&\leq \sup_{\alpha\in\Theta_\alpha}\left|-\frac{1}{2nh_n}\sum_{i=1}^{n}\bar{X}_{i,n}(\beta_0)^\top S_{i-1}^{-1}(\alpha)\bar{X}_{i,n}(\beta_0)\boldsymbol{1}_{\{|\Delta X_i^n|\le D_1h_n^{\rho_1}\}}+\frac{1}{2}\int_{}^{}{\mathrm{tr}\left(S^{-1}(x,\alpha)S(x,\alpha_0)\right)}\pi(dx)\right|\notag\\
&\quad+\sup_{\alpha\in\Theta_\alpha}\left|-h_n\cdot\frac{1}{nh_n}\sum_{i=1}^{n}b_{i-1}^\top(\beta_0)S_{i-1}^{-1}(\alpha)\bar{X}_{i,n}(\beta_0)\boldsymbol{1}_{\{|\Delta X_i^n|\le D_1h_n^{\rho_1}\}}\right|\notag\\
&\quad+\sup_{\alpha\in\Theta_\alpha}\left|-h_n\cdot\frac{1}{2n}\sum_{i=1}^{n}b_{i-1}^\top(\beta_0)S_{i-1}^{-1}(\alpha)b_{i-1}(\beta_0)\boldsymbol{1}_{\{|\Delta X_i^n|\le D_1h_n^{\rho_1}\}}\right|\notag\\
&\quad+\sup_{\alpha\in\Theta_\alpha}\left|-\frac{1}{2n}\sum_{i=1}^{n}\log\det S_{i-1}(\alpha)\boldsymbol{1}_{\{|\Delta X_i^n|\le D_1h_n^{\rho_1}\}}+\frac{1}{2}\int_{}^{}{\log\det S(x,\alpha)}\pi(dx)\right|\notag\\
&\overset{P}{\to}0\label{a-cons:uni}.
\end{align}
By the assumption {\bf [A8]}, let $Z_x(y,\alpha)=\frac{1}{(\sqrt{2\pi})^d(\det S(x,\alpha))^\frac{1}{2}}\exp\left\{-\frac{1}{2}y^\top S^{-1}(x,\alpha)y\right\}, U(y)=\frac{Z_x(y,\alpha)}{Z_x(y,\alpha_0)}, Y\sim Z_x(y,\alpha_0)$. Then 
\begin{align*}
    \mathbb{E}[U(Y)]=\int U(y)Z_x(y,\alpha_0)dy=\int Z_x(y,\alpha)dy=1,
\end{align*}
and  it follows from Jensen's inequality that  
\begin{align*}
    -\log\mathbb{E}[U(Y)]\le \mathbb{E}[-\log U(Y)]
\end{align*}
with equality if and only if  the distribution of $U(Y)$ is degenerate,
that is, 
\begin{align*}
    U(Y)=1\ \mathrm{a.e.}\ &\Longleftrightarrow\ Z_x(y,\alpha)=Z_x(y,\alpha_0)\ \mathrm{a.e.}\\
    &\Longleftrightarrow\ S(x,\alpha)=S(x,\alpha_0).
\end{align*}
Therefore, we have
\begin{align*}
    0&\ge \mathbb{E}[\log U(Y)]\\
    &= \mathbb{E}[\log Z_x(Y,\alpha)]-\mathbb{E}[\log Z_x(Y,\alpha_0)]\\
    &=-\frac{1}{2}\log\det S(x,\alpha)-\frac{1}{2}\tr\left(S^{-1}(x,\alpha)S(x,\alpha_0)\right)\\
    &\quad+\frac{1}{2}\log\det S(x,\alpha_0)+\frac{1}{2}\tr\left(S^{-1}(x,\alpha_0)S(x,\alpha_0)\right).
\end{align*}
Hence,
\begin{align*}
    -\frac{1}{2}&\int{\left\{\mathrm{tr}\left(S^{-1}(x,\alpha)S(x,\alpha_0)\right)+\log\det S(x,\alpha)\right\}}\pi(dx)\\
    &\le
    -\frac{1}{2}\int{\left\{\mathrm{tr}\left(S^{-1}(x,\alpha_0)S(x,\alpha_0)\right)+\log\det S(x,\alpha_0)\right\}}\pi(dx)
\end{align*}
with equality if and only if
\begin{align*}
    S(x,\alpha)=S(x,\alpha_0)\ \mathrm{for\ a.s.\ all}\ x.
\end{align*}
Thus,  it follows from {\bf [A9]} that for all $\alpha\in\Theta_\alpha$,
\begin{align*}
U_1(\alpha,\alpha_0)\leq U_1(\alpha_0,\alpha_0)(=0)
\end{align*}
with equality if and only if $\alpha=\alpha_0$. Therefore,  it holds that for all $\varepsilon>0$,
\begin{align}\label{a-cons:iden}
\sup_{\alpha:|\alpha-\alpha_0|\geq\varepsilon}U_1(\alpha,\alpha_0)<U_1(\alpha_0,\alpha_0)(=0),
\end{align}
and  we see from the definition of $\check{\alpha}$ that for all $\varepsilon>0$,
\begin{align}\label{a-cons:compose}
P\left(\frac{1}{n}l_n^{(1)}(\check{\alpha})+\epsilon<\frac{1}{n}l_n^{(1)}(\alpha_0)\right)=0.
\end{align}
Hence, for all $\varepsilon>0$, there  exists $\delta>0$ such that
\[\sup_{\alpha:|\alpha-\alpha_0|\geq\varepsilon}U_1(\alpha,\alpha_0)<U_1(\alpha_0,\alpha_0)-\delta.\]
Thus,  it follows from (\ref{a-cons:uni}) and (\ref{a-cons:compose}) that
\begin{align*}
0\leq P\left(|\check{\alpha}_n-\alpha_0|\geq\varepsilon\right)&\leq P\left(U_1(\check{\alpha}_n,\alpha_0)<U_1(\alpha_0,\alpha_0)-\delta\right)\\
&\leq P\left(U_1(\check{\alpha}_n,\alpha_0)-\frac{1}{n}l_n^{(1)}(\check{\alpha})<-\frac{\delta}{3}\right)\\
&\quad+P\left(\frac{1}{n}l_n^{(1)}(\check{\alpha})-\frac{1}{n}l_n^{(1)}(\alpha_0)<-\frac{\delta}{3}\right)\\
&\quad+P\left(\frac{1}{n}l_n^{(1)}(\alpha_0)-U_1(\alpha_0,\alpha_0)<-\frac{\delta}{3}\right)\\
&\leq 2P\left(\sup_{\alpha\in\Theta_\alpha}\left|\frac{1}{n}l_n^{(1)}(\alpha)-U_1(\alpha,\alpha_0)\right|>\frac{\delta}{3}\right)\\
&\quad+P\left(\frac{1}{n}l_n^{(1)}(\check{\alpha})+\frac{\delta}{3}<\frac{1}{n}l_n^{(1)}(\alpha_0)\right)\\
&\to 0.
\end{align*}
This means that
\begin{align}\label{a-cons:result}
\check{\alpha}_n\overset{P}{\to}\alpha.
\end{align}

{\bf Consistency for $\check{\beta}_{n}$.} 
Let $\bar{U}_{\beta_0}^{(2)}(\alpha,\beta)$, $\tilde{U}_{\beta_0}^{(2)}(\beta)$ and $V_{\beta_0}(\alpha,\beta)$ be functions as follows:
\begin{align*}
&\bar{U}_{\beta_0}^{(2)}(\alpha,\beta):= -\frac{1}{2}\int{(b(x,\beta)-b(x,\beta_0))^\top S^{-1}(x,\alpha)(b(x,\beta)-b(x,\beta_0))}\pi(dx),\\
&\tilde{U}_{\beta_0}^{(2)}(\beta):=\iint_A{\left\{(\log\Psi_\beta (y,x))\Psi_{\beta_0} (y,x)-\Psi_\beta (y,x)\right\}}dy\pi(dx),\\
&V_{\beta_0}(\alpha,\beta):=\bar{U}_{\beta_0}^{(2)}(\alpha,\beta)+\tilde{U}_{\beta_0}^{(2)}(\beta)-\tilde{U}_{\beta_0}^{(2)}(\beta_0)
\end{align*}
It follows from Propositions \ref{prop4:jump}-(i) and \ref{prop7:jump} that 
\begin{align}\label{b-cons:uni_jump}
&\sup_{\beta\in\Theta_\beta}\left|\frac{1}{nh_n}\tilde{l}_n^{(2)}(\beta)-\tilde{U}_{\beta_0}^{(2)}(\beta)\right|\notag\\
&\le\sup_{\beta\in\Theta_\beta}\left|\frac{1}{nh_n}\sum_{i=1}^{n}\{\log \Psi_\beta(\Delta X_i^n,X_{t_{i-1}^n})\}\varphi_n(X_{t_{i-1}^n},\Delta X_i^n)\boldsymbol{1}_{\{|\Delta X_i^n|> D_2h_n^{\rho_2}\}}\right.\notag\\
&\qquad\qquad\qquad\qquad\qquad\qquad\qquad\qquad\qquad\qquad\quad\left.-\iint_A{(\log\Psi_\beta (y,x))\Psi_{\beta_0} (y,x)}dy\pi(dx)\right|\notag\\
&\quad+\sup_{\beta\in\Theta_\beta}\left|-\frac{1}{n}\sum_{i=1}^{n}{\int_{B}{\Psi_\beta(y,X_{t_{i-1}^n})\varphi_n(X_{t_{i-1}^n},y)dy}}+\iint_A{\Psi_\beta (y,x)}dy\pi(dx)\right|\notag\\
&\overset{P}{\to}0.
\end{align}
Moreover, we can calculate  
\begin{align*}
\frac{1}{nh_n}\bar{l}_n^{(2)}(\beta|\bar{\alpha})&=-\frac{1}{nh_n}\sum_{i=1}^{n}(b_{i-1}(\beta)-b_{i-1}(\beta_0))^\top S_{i-1}^{-1}(\bar{\alpha})\bar{X}_{i,n}(\beta_0)\boldsymbol{1}_{\{|\Delta X_i^n|\le D_3h_n^{\rho_3}\}}\\
&\quad+\frac{1}{2n}\sum_{i=1}^{n}(b_{i-1}(\beta)-b_{i-1}(\beta_0))^\top S_{i-1}^{-1}(\bar{\alpha})(b_{i-1}(\beta)-b_{i-1}(\beta_0))\boldsymbol{1}_{\{|\Delta X_i^n|\le D_3h_n^{\rho_3}\}}\\
&\quad-\frac{1}{2nh_n^2}\sum_{i=1}^n(\bar{X}_{i,n}(\beta_0))^\top S_{i-1}^{-1}(\bar{\alpha})\bar{X}_{i,n}(\beta_0)\boldsymbol{1}_{\{|\Delta X_i^n|\le D_3h_n^{\rho_3}\}}.
\end{align*}
Then, we see from Propositions \ref{prop4:jump}-(ii) and \ref{prop6:jump} that 
\begin{align}\label{b-cons:uni}
&\sup_{\theta\in\Theta}\left|\frac{1}{nh_n}\bar{l}_n^{(2)}(\beta|\alpha)-\frac{1}{nh_n}\bar{l}_n^{(2)}(\beta_0|\alpha)-\bar{U}_{\beta_0}^{(2)}(\alpha,\beta)\right|\notag\\
&\leq\sup_{\theta\in\Theta}\left|\frac{1}{nh_n}\sum_{i=1}^{n}(b_{i-1}(\beta)-b_{i-1}(\beta_0))^\top S_{i-1}^{-1}(\bar{\alpha})\bar{X}_{i,n}(\beta_0)\boldsymbol{1}_{\{|\Delta X_i^n|\le D_3h_n^{\rho_3}\}}\right|\notag\\
&\quad+\sup_{\theta\in\Theta}\left|-\frac{1}{2n}\sum_{i=1}^{n}(b_{i-1}(\beta)-b_{i-1}(\beta_0))^\top S_{i-1}^{-1}(\bar{\alpha})(b_{i-1}(\beta)-b_{i-1}(\beta_0))\boldsymbol{1}_{\{|\Delta X_i^n|\le D_3h_n^{\rho_3}\}}-\bar{U}_{\beta_0}^{(2)}(\alpha,\beta)\right|\notag\\
&\overset{P}{\to}0.
\end{align}
By the assumption {\bf [A8]}, it follows that
\begin{align}\label{b-cons:iden1}
\bar{U}_{\beta_0}^{(2)}(\alpha_0,\beta)= -\frac{1}{2}\int{(b(x,\beta)-b(x,\beta_0))^\top S^{-1}(x,\alpha_0)(b(x,\beta)-b(x,\beta_0))}\pi(dx)\leq 0
\end{align}
with equality if and only if $b(x,\beta)=b(x,\beta_0)\ x$-a.s.. On the other hand, 
 it holds that $\Psi_{\beta_0} (y,x)>0$ on the set $A$, and that for all $x>0$,
\[1+\log x-x\leq 0\]
with equality if and only if $x=1$. Therefore, we have
\begin{align}\label{b-cons:iden2}
&\tilde{U}_{\beta_0}^{(2)}(\beta)-\tilde{U}_{\beta_0}^{(2)}(\beta_0)\notag\\
&=\iint_A{\left\{(\log\Psi_\beta (y,x))\Psi_{\beta_0} (y,x)-\Psi_\beta (y,x)\right\}}dy\pi(dx)-\iint_A{\left\{(\log\Psi_{\beta_0} (y,x))\Psi_{\beta_0} (y,x)-\Psi_{\beta_0} (y,x)\right\}}dy\pi(dx)\notag\\
&=\iint_A{\Psi_{\beta_0} (y,x)\left\{1+\log\frac{\Psi_\beta (y,x)}{\Psi_{\beta_0} (y,x)}-\frac{\Psi_\beta (y,x)}{\Psi_{\beta_0} (y,x)}\right\}}dy\pi(dx)\notag\\
&\leq 0
\end{align}
with equality if and only if $\Psi_\beta (y,x)=\Psi_{\beta_0} (y,x)\ (x,y)$-a.s..
Hence, it follows from (\ref{b-cons:iden1}), (\ref{b-cons:iden2}) and {\bf [A9]} that for all $\beta\in\Theta_\beta$, 
\[V_{\beta_0}(\alpha_0,\beta)\leq V_{\beta_0}(\alpha_0,\beta_0)(=0)\]
with equality if and only if $\beta=\beta_0$ That is, for all $\varepsilon>0$,
\begin{align}\label{b-cons:idenV}
\sup_{\beta:|\beta-\beta_0|\geq \varepsilon}V_{\beta_0}(\alpha_0,\beta)<V_{\beta_0}(\alpha_0,\beta_0)(=0).
\end{align}
Moreover,  it follows from the definition of $\check{\beta}_n$ that for all $\varepsilon>0$,
\begin{align}\label{b-cons:compose}
P\left(\frac{1}{nh_n}l_n^{(2)}(\check{\beta}_n|\check{\alpha}_n)+\varepsilon<\frac{1}{nh_n}l_n^{(2)}(\beta_0|\check{\alpha}_n)\right)=0,
\end{align}
and  we see from the assumption {\bf [A4]} that
\begin{align*}
&|\bar{U}_{\beta_0}^{(2)}(\check{\alpha}_n,\check{\beta}_n)-\bar{U}_{\beta_0}^{(2)}(\alpha_0,\check{\beta}_n)|\\
&=\frac{1}{2}\left|\int{(b(x,\check{\beta}_n)-b(x,\beta_0))^\top \left(S^{-1}(x,\check{\alpha}_n)-S^{-1}(x,\alpha_0)\right)(b(x,\check{\beta}_n)-b(x,\beta_0))}\pi(dx)\right|\\
&\leq \frac{1}{2}\sum_{k_1,k_2=1}^d\int\left|b^{(k_1)}(x,\check{\beta}_n)-b^{(k_1)}(x,\beta_0)\right|\cdot\left|b^{(k_2)}(x,\check{\beta}_n)-b^{(k_2)}(x,\beta_0)\right|\cdot\left|S^{{-1}^{(k_1,k_2)}}(x,\check{\alpha}_n)-S^{{-1}^{(k_1,k_2)}}(x,\alpha_0)\right|\pi(dx)\\
&\leq C\sum_{k_1,k_2=1}^d\int(1+|x|)^c\left|S^{{-1}^{(k_1,k_2)}}(x,\check{\alpha}_n)-S^{{-1}^{(k_1,k_2)}}(x,\alpha_0)\right|\pi(dx).
\end{align*}
By (\ref{a-cons:result}) and the continuity of the right-hand side, we have  
\begin{align}\label{b-cons:tech}
|\bar{U}_{\beta_0}^{(2)}(\check{\alpha}_n,\check{\beta}_n)-\bar{U}_{\beta_0}^{(2)}(\alpha_0,\check{\beta}_n)|\overset{P}{\to}0.
\end{align}
By (\ref{b-cons:idenV}), for all $\varepsilon>0$, there  exists $\delta>0$ such that
\[\sup_{\beta:|\beta-\beta_0|\geq \varepsilon}V_{\beta_0}(\alpha_0,\beta)< -\delta.\]
Hence, it follows from (\ref{b-cons:uni_jump}), (\ref{b-cons:uni}), (\ref{b-cons:compose}) and (\ref{b-cons:tech}) that 
\begin{align*}
0\leq P\left(|\check{\beta}_n-\beta_0|\geq\varepsilon\right)&\leq P\left(V_{\beta_0}(\alpha_0,\check{\beta}_n)< -\delta\right)\\
&= P\left(\bar{U}_{\beta_0}^{(2)}(\alpha_0,\check{\beta}_n)+\tilde{U}_{\beta_0}^{(2)}(\check{\beta}_n)-\tilde{U}_{\beta_0}^{(2)}(\beta_0)<-\delta\right)\\
&\leq P\left(\frac{1}{nh_n}\left[l_n^{(2)}(\check{\beta}_n|\check{\alpha}_n)-l_n^{(2)}(\beta_0|\check{\alpha}_n)\right]-\bar{U}_{\beta_0}^{(2)}(\alpha_0,\check{\beta}_n)-\tilde{U}_{\beta_0}^{(2)}(\check{\beta}_n)+\tilde{U}_{\beta_0}^{(2)}(\beta_0)>\frac{\delta}{2}\right)\\
&\quad+P\left(-\frac{1}{nh_n}\left[l_n^{(2)}(\check{\beta}_n|\check{\alpha}_n)-l_n^{(2)}(\beta_0|\check{\alpha}_n)\right]>\frac{\delta}{2}\right)\\
&\leq P\left(\frac{1}{nh_n}\left[\bar{l}_n^{(2)}(\check{\beta}_n|\check{\alpha}_n)-\bar{l}_n^{(2)}(\beta_0|\check{\alpha}_n)\right]-\bar{U}_{\beta_0}^{(2)}(\alpha_0,\check{\beta}_n)>\frac{\delta}{6}\right)\\
&\quad+P\left(\frac{1}{nh_n}\tilde{l}_n^{(2)}(\check{\beta}_n)-\tilde{U}_{\beta_0}^{(2)}(\check{\beta}_n)>\frac{\delta}{6}\right)\\
&\quad+P\left(-\frac{1}{nh_n}\tilde{l}_n^{(2)}(\beta_0)+\tilde{U}_{\beta_0}^{(2)}(\beta_0)>\frac{\delta}{6}\right)\\
&\quad+P\left(\frac{1}{nh_n}l_n^{(2)}(\check{\beta}_n|\check{\alpha}_n)+\frac{\delta}{2}<\frac{1}{nh_n}l_n^{(2)}(\beta_0|\check{\alpha}_n)\right)\\
&\leq P\left(\frac{1}{nh_n}\left[\bar{l}_n^{(2)}(\check{\beta}_n|\check{\alpha}_n)-\bar{l}_n^{(2)}(\beta_0|\check{\alpha}_n)\right]-\bar{U}_{\beta_0}^{(2)}(\check{\alpha}_n,\check{\beta}_n)>\frac{\delta}{12}\right)\\
&\quad+P\left(\bar{U}_{\beta_0}^{(2)}(\check{\alpha}_n,\check{\beta}_n)-\bar{U}_{\beta_0}^{(2)}(\alpha_0,\check{\beta}_n)>\frac{\delta}{12}\right)\\
&\quad+2P\left(\sup_{\beta\in\Theta_\beta}\left|\frac{1}{nh_n}\tilde{l}_n^{(2)}(\beta)-\tilde{U}_{\beta_0}^{(2)}(\beta)\right|>\frac{\delta}{6}\right)\\
&\quad+P\left(\frac{1}{nh_n}l_n^{(2)}(\check{\beta}_n|\check{\alpha}_n)+\frac{\delta}{2}<\frac{1}{nh_n}l_n^{(2)}(\beta_0|\check{\alpha}_n)\right)\\
&\leq P\left(\sup_{\theta\in\Theta}\left|\frac{1}{nh_n}\left[\bar{l}_n^{(2)}(\beta|\alpha)-\bar{l}_n^{(2)}(\beta_0|\alpha)\right]-\bar{U}_{\beta_0}^{(2)}(\alpha,\beta)\right|>\frac{\delta}{12}\right)\\
&\quad+P\left(\left|\bar{U}_{\beta_0}^{(2)}(\check{\alpha}_n,\check{\beta}_n)-\bar{U}_{\beta_0}^{(2)}(\alpha_0,\check{\beta}_n)\right|>\frac{\delta}{12}\right)\\
&\quad+2P\left(\sup_{\beta\in\Theta_\beta}\left|\frac{1}{nh_n}\tilde{l}_n^{(2)}(\beta)-\tilde{U}_{\beta_0}^{(2)}(\beta)\right|>\frac{\delta}{6}\right)\\
&\quad+P\left(\frac{1}{nh_n}l_n^{(2)}(\check{\beta}_n|\check{\alpha}_n)+\frac{\delta}{2}<\frac{1}{nh_n}l_n^{(2)}(\beta_0|\check{\alpha}_n)\right)\\
&\to 0.
\end{align*}
This means that
\begin{align}\label{b-cons:result}
    \check{\beta}_n\overset{P}{\to}\beta_0.
\end{align}
\end{proof}


\subsubsection{Proof of Theorem \ref{thm2-2:jump}}
\begin{proof}[\bf Proof.]
Let us define some symbols. For $1\le m_1\le p$, 
\begin{align*}
&\partial_{{\alpha}_{m_1}}l_n^{(1)}(\alpha)=\sum_{i=1}^{n}{\xi_i^{m_1}(\alpha)},\\
&\xi_i^{m_1}(\alpha):=-\frac{1}{2}\left\{h_n^{-1}(\Delta X_i^n)^\top\partial_{{\alpha_{m_1}}}S_{i-1}^{-1}(\alpha)\Delta X_i^n+\partial_{{\alpha}_{m_1}}\log\det S_{i-1}(\alpha)\right\}\boldsymbol{1}_{\{|\Delta X_i^n|\le D_1h_n^{\rho_1}\}},
\end{align*}
and for $1\le m_2\le q$,
\begin{align*}
&\partial_{{\beta}_{m_2}}l_n^{(2)}(\beta|\bar{\alpha})=\partial_{{\beta}_{m_2}}\bar{l}_n^{(2)}(\beta|\bar{\alpha})+\partial_{{\beta}_{m_2}}\tilde{l}_n^{(2)}(\beta)=\sum_{i=1}^{n}{(\eta_{i,1}^{m_2}(\beta|\bar{\alpha})+\eta_{i,2}^{m_2}(\beta))},\\
&\eta_{i,1}^{m_1}(\beta|\bar{\alpha}):=(\partial_{{\beta_{m_2}}}b_{i-1}(\beta))^\top S_{i-1}^{-1}(\bar{\alpha})\bar{X}_{i,n}(\beta)\boldsymbol{1}_{\{|\Delta X_i^n|\le D_3h_n^{\rho_3}\}},\\
&\eta_{i,2}^{m_1}(\beta):=\left\{\partial_{{\beta_{m_2}}}\log\Psi_{\beta}(\Delta X_i^n,X_{t_{i-1}^n})\right\}\varphi_n(X_{t_{i-1}^n},\Delta X_i^n)\boldsymbol{1}_{\{|\Delta X_i^n|> D_2h_n^{\rho_2}\}}\\
&\quad-h_n\int_{B}{\partial_{{\beta_{m_2}}}\Psi_{\beta}(y,X_{t_{i-1}^n})\varphi_n(X_{t_{i-1}^n},y)}dy.
\end{align*}
Moreover, we can calculate that for $1\le m_1,m_1'\le p$,
\begin{align*}
\partial_{\alpha_{m_1}\alpha_{m_1'}}^2 l_n^{(1)}(\alpha)=-\frac{1}{2}\sum_{i=1}^{n}\left\{h_n^{-1}(\Delta X_i^n)^\top\partial_{{\alpha_{m_1}}\alpha_{m_1'}}^2S_{i-1}^{-1}(\alpha)\Delta X_i^n+\partial_{{\alpha_{m_1}}\alpha_{m_1'}}^2\log\det S_{i-1}(\alpha)\right\}\boldsymbol{1}_{\{|\Delta X_i^n|\le D_1h_n^{\rho_1}\}},
\end{align*}
that for $1\le m_2,m_2'\le q$,
\begin{align*}
&\partial_{\beta_{m_2}\beta_{m_2'}}^2 l_n^{(2)}(\beta|\bar{\alpha})\\
&=\sum_{i=1}^{n}\left\{(\partial_{\beta_{m_2}\beta_{m_2'}}^2 b_{i-1}(\beta))^\top S_{i-1}^{-1}(\bar{\alpha})\bar{X}_{i,n}(\beta)-h_n(\partial_{\beta_{m_2}} b_{i-1}(\beta))^\top S_{i-1}^{-1}(\bar{\alpha})\partial_{\beta_{m_2'}} b_{i-1}(\beta)\right\}\boldsymbol{1}_{\{|\Delta X_i^n|\le D_3h_n^{\rho_3}\}}\\
&\quad+\sum_{i=1}^{n}{}\left\{\partial_{{\beta_{m_2}}{\beta_{m_2'}}}^2\log\Psi_\beta(\Delta X_i^n,X_{t_{i-1}^n})\right\}\varphi_n(X_{t_{i-1}^n},\Delta X_i^n)\boldsymbol{1}_{\{|\Delta X_i^n|> D_2h_n^{\rho_2}\}}\\
&\quad-h_n\sum_{i=1}^{n}{}\int_{B}{\partial_{{\beta_{m_2}}{\beta_{m_2'}}}^2\Psi_\beta(y,X_{t_{i-1}^n})\varphi_n(X_{t_{i-1}^n},y)}dy
\end{align*}
and that for $1\le m_1\le p$, $1\le m_2\le q$,
\begin{align*}
\partial_{\alpha_{m_1}\beta_{m_2}}^2 l_n^{(2)}(\beta|\bar{\alpha})&=\sum_{i=1}^{n}{(\partial_{{\beta_{m_2}}}b_{i-1}(\beta))^\top\partial_{{\alpha_{m_1}}}S_{i-1}^{-1}(\bar{\alpha})\bar{X}_{i,n}(\beta)\boldsymbol{1}_{\{|\Delta X_i^n|\le D_3h_n^{\rho_3}\}}}.
\end{align*}
 Let $\varepsilon_0$ be a positive constant such that $\{\alpha\in\Theta_\alpha\ ;\ |\alpha-\alpha_0|<\varepsilon_0\}\subset\mathrm{Int}(\Theta_\alpha)$ and $\{\beta\in\Theta_\beta\ ;\ |\beta-\beta_0|<\varepsilon_0\}\subset\mathrm{Int}(\Theta_\beta)$. Then it follows from consistency of $\check{\alpha}_n$ and $\check{\beta}_n$ that there exists a real valued sequence $\varepsilon_n<\varepsilon_0$ such that $P(A_n\cap B_n)\to 1$, where $A_{n}:=\{\omega\in\Omega\ |\ |\check{\alpha}_n(\omega)-\alpha_0|<\varepsilon_n\}$ and $B_n:=\{\omega\in\Omega\ |\ |\check{\beta}_n(\omega)-\beta_0|<\varepsilon_n\}$. In particular, we have $\check{\alpha}_n\in\mathrm{Int}(\Theta_\alpha),\check{\beta}_n\in\mathrm{Int}(\Theta_\beta)$ on the set $A_n\cap B_n$. Therefore, since the functions $l_n^{(1)}(\alpha)$ and $l_n^{(2)}(\beta|\alpha)$ are maximized on the interior of $\Theta$, 
 one has $\partial_\alpha l_n^{(1)}(\check{\alpha}_n)=0,\partial_\beta l_n^{(2)}(\check{\beta}_n|\check{\alpha}_n)=0$.
Hence, By using Taylor's theorem, we have the following equations on the set $A_n\cap B_n$:
\begin{align*}
-\frac{1}{\sqrt{n}}\partial_{{\alpha}}l_n^{(1)}(\alpha_0)&=\left(\int_{0}^{1}{\frac{1}{n}\partial_{{\alpha}}^2 l_n^{(1)}(\alpha_0+u(\check{\alpha}_{n}-\alpha_0))du}\right)\sqrt{n}(\check{\alpha}_{n}-\alpha_0),\\
-\frac{1}{\sqrt{nh_n}}\partial_{{\beta}}l_n^{(2)}(\beta_0|\check{\alpha}_{n})&=\left(\int_{0}^{1}{\frac{1}{nh_n}\partial_{{\beta}}^2 l_n^{(2)}(\beta_0+u(\check{\beta}_{n}-\beta_0)|\check{\alpha}_{n})du}\right)\sqrt{nh_n}(\check{\beta}_{n}-\beta_0),\\
\frac{1}{\sqrt{nh_n}}\partial_{{\beta}}l_n^{(2)}(\beta_0|\check{\alpha}_{n})&=\frac{1}{\sqrt{nh_n}}\partial_{{\beta}}l_n^{(2)}(\beta_0|\alpha_0)+
\left(\int_{0}^{1}{\frac{1}{n\sqrt{h_n}}\partial_{{\alpha\beta}}^2 l_n^{(2)}(\beta_0|\alpha_0+u(\check{\alpha}_{n}-\alpha_0))du}\right)\sqrt{n}(\check{\alpha}_{n}-\alpha_0).
\end{align*}
Therefore, we obtain $L_n=C_nS_n\ (\omega\in A_n\cap B_n)$, where
\begin{align*}
S_n&:=\begin{pmatrix}
\sqrt{n}(\check{\alpha}_{n}-\alpha_0)\\
\sqrt{nh_n}(\check{\beta}_{n}-\beta_0)
\end{pmatrix},\quad
L_n:=\begin{pmatrix}
-\frac{1}{\sqrt{n}}\partial_{{\alpha}}l_n^{(1)}(\alpha_0)\\
-\frac{1}{\sqrt{nh_n}}\partial_{{\beta}} l_n^{(2)}(\beta_0|\alpha_0)
\end{pmatrix},\\
C_n&:=\begin{pmatrix}
\int_{0}^{1}{\frac{1}{n}\partial_{{\alpha}}^2 l_n^{(1)}(\alpha_0+u(\check{\alpha}_{n}-\alpha_0))du}&0\\
\int_{0}^{1}{\frac{1}{n\sqrt{h_n}}\partial_{{\alpha\beta}}^2 l_n^{(2)}(\beta_0|\alpha_0+u(\check{\alpha}_{n}-\alpha_0))du}&
\int_{0}^{1}{\frac{1}{nh_n}\partial_{{\beta}}^2 l_n^{(2)}(\beta_0+u(\check{\beta}_{n}-\beta_0)|\check{\alpha}_{n})du}
\end{pmatrix}.
\end{align*}
Thus, it is sufficient to show $S_n\overset{d}{\to}N_{p+q}(0,I(\theta_0)^{-1})$ that
\begin{align}
&\sup_{u\in[0,1]}\left|\frac{1}{n}\partial_{{\alpha}}^2 l_n^{(1)}(\alpha_0+u(\check{\alpha}_{n}-\alpha_0))+I_a(\alpha_0)\right|\overset{P}{\to}0\label{thm2:goal1-jump}%
,\\
&\sup_{u\in[0,1]}\left|\frac{1}{nh_n}\partial_{{\beta}}^2 l_n^{(2)}(\beta_0+u(\check{\beta}_{n}-\beta_0)|\check{\alpha}_{n})+I_{b,c}(\theta_0)\right|\overset{P}{\to}0\label{thm2:goal2-jump}%
,\\
&\sup_{u\in[0,1]}\left|\frac{1}{n\sqrt{h_n}}\partial_{{\alpha\beta}}^2 l_n^{(2)}(\beta_0|\alpha_0+u(\check{\alpha}_{n}-\alpha_0))\right|\overset{P}{\to}0\label{thm2:goal3-jump}%
,\\
&L_n\overset{d}{\to}N_{p+q}(0, I(\theta_0))\label{thm2:goal4-jump}%
.
\end{align}
{\bf Proof of \eqref{thm2:goal1-jump}.}
For $1\le m_1,m_1'\le p$, it follows from 
 Propositions \ref{prop4:jump}-(ii), \ref{prop5:jump} and \ref{prop6:jump} 
that 
\begin{align}
&\sup_{\alpha\in\Theta_\alpha}\left|\frac{1}{n}\partial_{\alpha_{m_1}\alpha_{m_1'}}^2 l_n^{(1)}(\alpha)+I_a^{(m_1,m_1')}(\alpha)\right|\notag\\
&=\sup_{\alpha\in\Theta_\alpha}\left|-\frac{1}{2n}\sum_{i=1}^{n}\left\{h_n^{-1}(\Delta X_i^n)^\top\partial_{{\alpha_{m_1}}\alpha_{m_1'}}^2S_{i-1}^{-1}(\alpha)\Delta X_i^n+\partial_{{\alpha_{m_1}}\alpha_{m_1'}}^2\log\det S_{i-1}(\alpha)\right\}\boldsymbol{1}_{\{|\Delta X_i^n|\le D_1h_n^{\rho_1}\}}+I_a^{(m_1,m_1')}(\alpha)\right|\notag\\
&\leq\frac{1}{2}\sup_{\alpha\in\Theta_\alpha}\left|\frac{1}{nh_n}\sum_{i=1}^{n}\bar{X}_{i,n}(\beta_0)^\top \partial_{{\alpha_{m_1}}\alpha_{m_1'}}^2S_{i-1}^{-1}(\alpha) \bar{X}_{i,n}(\beta_0)\boldsymbol{1}_{\{|\Delta X_i^n|\le D_1h_n^{\rho_1}\}}-\int{\mathrm{tr}\left\{\left( \partial_{{\alpha_{m_1}}\alpha_{m_1'}}^2S^{-1}(x,\alpha)\right)S(x,\alpha_0)\right\}\pi(dx)}\right|\notag\\
&\quad +h_n\cdot\sup_{\alpha\in\Theta_\alpha}\left|\frac{1}{nh_n}\sum_{i=1}^{n}b_{i-1}(\beta_0)^\top\partial_{{\alpha_{m_1}}\alpha_{m_1'}}^2S_{i-1}^{-1}(\alpha) \bar{X}_{i,n}(\beta_0)\boldsymbol{1}_{\{|\Delta X_i^n|\le D_1h_n^{\rho_1}\}}\right|\notag\\
&\quad +h_n\cdot\sup_{\alpha\in\Theta_\alpha}\left|\frac{1}{2n}\sum_{i=1}^{n}b_{i-1}(\beta_0)^\top\partial_{{\alpha_{m_1}}\alpha_{m_1'}}^2S_{i-1}^{-1}(\alpha)b_{i-1}(\beta_0)\boldsymbol{1}_{\{|\Delta X_i^n|\le D_1h_n^{\rho_1}\}}\right|\notag\\
&\quad+\frac{1}{2}\sup_{\alpha\in\Theta_\alpha}\left|\frac{1}{n}\sum_{i=1}^{n}\partial_{{\alpha_{m_1}}\alpha_{m_1'}}^2\log\det S_{i-1}(\alpha)\boldsymbol{1}_{\{|\Delta X_i^n|\le D_1h_n^{\rho_1}\}}-\int{\partial_{{\alpha_{m_1}}\alpha_{m_1'}}^2\log\det S(x,\alpha)\pi(dx)}\right|\notag\\
&\overset{P}{\to}0\notag.
\end{align}
Therefore,  one has
\begin{align}\label{a-2der}
\sup_{\alpha\in\Theta_\alpha}\left|\frac{1}{n}\partial_{\alpha}^2 l_n^{(1)}(\alpha)+I_a(\alpha)\right|\overset{P}{\to}0.
\end{align}
Note that for all $u\in[0,1]$, $\alpha_0+u(\check{\alpha}_{n}-\alpha_0)\in\{\alpha\in\Theta_\alpha\ |\ |\alpha-\alpha_0|<\varepsilon_n\}$ on the set $A_n$, then we have from \eqref{a-2der}, 
continuity of $I_a(\alpha)$ and consistency of $\check{\alpha}_n$ that for all $\varepsilon>0$, 
\begin{align*}
&P\left(\sup_{u\in[0,1]}\left|\frac{1}{n}\partial_{{\alpha}}^2 l_n^{(1)}(\alpha_0+u(\check{\alpha}_{n}-\alpha_0))+I_a(\alpha_0)\right|>\varepsilon\right)\\
\quad&\le  P\left(\sup_{\alpha\in\Theta_\alpha}\left|\frac{1}{n}\partial_{{\alpha}}^2 l_n^{(1)}(\alpha)+I_a(\alpha)\right|>\frac{\varepsilon}{2}\right)+
P\left(\sup_{u\in[0,1]}\left|I_a(\alpha_0+u(\check{\alpha}_{n}-\alpha_0))-I_a(\alpha_0)\right|>\frac{\varepsilon}{2}\right)\\
\quad&\le  P\left(\sup_{\alpha\in\Theta_\alpha}\left|\frac{1}{n}\partial_{{\alpha}}^2 l_n^{(1)}(\alpha)+I_a(\alpha)\right|>\frac{\varepsilon}{2}\right)+
P\left(\sup_{\alpha:|\alpha-\alpha_0|<\varepsilon_n}\left|I_a(\alpha)-I_a(\alpha_0)\right|>\frac{\varepsilon}{2}\right)+P\left(A_n^c\right)\\
&\to 0\quad(n\to\infty).
\end{align*}
This implies \eqref{thm2:goal1-jump}.

{\bf Proof of \eqref{thm2:goal2-jump}.}
Since $\bar{X}_{i,n}(\beta)=h_n(b_{i-1}(\beta_0)-b_{i-1}(\beta))+\bar{X}_{i,n}(\beta_0)$, 
 we see from Propositions \ref{prop4:jump}-(i), (ii), \ref{prop6:jump} and \ref{prop7:jump} that for $1\le m_2,m_2'\le q$,  
\begin{align*}
&\sup_{(\bar{\alpha},\beta)\in\Theta}\left|\frac{1}{nh_n}\partial_{\beta_{m_2}\beta_{m_2'}}^2 l_n^{(2)}(\beta|\bar{\alpha})+I_{b,c}^{(m_2,m_2')}(\bar{\alpha},\beta)\right|\\
&=\sup_{(\bar{\alpha},\beta)\in\Theta}\left|\frac{1}{nh_n}\sum_{i=1}^{n}(\partial_{\beta_{m_2}\beta_{m_2'}}^2 b_{i-1}(\beta))^\top S_{i-1}^{-1}(\bar{\alpha})\bar{X}_{i,n}(\beta)\boldsymbol{1}_{\{|\Delta X_i^n|\le D_3h_n^{\rho_3}\}}\right.\\
&\qquad\quad-\frac{1}{n}\sum_{i=1}^n(\partial_{\beta_{m_2}} b_{i-1}(\beta))^\top S_{i-1}^{-1}(\bar{\alpha})\partial_{\beta_{m_2'}} b_{i-1}(\beta)\boldsymbol{1}_{\{|\Delta X_i^n|\le D_3h_n^{\rho_3}\}}\\
&\qquad\quad+\frac{1}{nh_n}\sum_{i=1}^{n}{}\left\{\partial_{{\beta_{m_2}}{\beta_{m_2'}}}^2\log\Psi_\beta(\Delta X_i^n,X_{t_{i-1}^n})\right\}\varphi_n(X_{t_{i-1}^n},\Delta X_i^n)\boldsymbol{1}_{\{|\Delta X_i^n|> D_2h_n^{\rho_2}\}}\\
&\qquad\quad\left.-\frac{1}{n}\sum_{i=1}^{n}{}\int_{B}{\partial_{{\beta_{m_2}}{\beta_{m_2'}}}^2\Psi_\beta(y,X_{t_{i-1}^n})\varphi_n(X_{t_{i-1}^n},y)}dy+I_{b,c}^{(m_2,m_2')}(\bar{\alpha},\beta)\right|\\
&\leq \sup_{(\bar{\alpha},\beta)\in\Theta}\left|\frac{1}{n}\sum_{i=1}^{n}(\partial_{\beta_{m_2}\beta_{m_2'}}^2 b_{i-1}(\beta))^\top S_{i-1}^{-1}(\bar{\alpha})(b_{i-1}(\beta_0)-b_{i-1}(\beta))\boldsymbol{1}_{\{|\Delta X_i^n|\le D_3h_n^{\rho_3}\}}\right.\\
&\hspace{20mm}\left.+\int{(\partial^2_{\beta_{m_2}\beta_{m_2'}} b(x,\beta))^\top S^{-1}(x,\bar{\alpha})(b(x,\beta)-b(x,\beta_0))}\pi(dx)\right|\\
&\quad+\sup_{(\bar{\alpha},\beta)\in\Theta}\left|\frac{1}{nh_n}\sum_{i=1}^{n}(\partial_{\beta_{m_2}\beta_{m_2'}}^2 b_{i-1}(\beta))^\top S_{i-1}^{-1}(\bar{\alpha})\bar{X}_{i,n}(\beta_0)\boldsymbol{1}_{\{|\Delta X_i^n|\le D_3h_n^{\rho_3}\}}\right|\\
&\quad+\sup_{(\bar{\alpha},\beta)\in\Theta}\left|-\frac{1}{n}\sum_{i=1}^n(\partial_{\beta_{m_2}} b_{i-1}(\beta))^\top S_{i-1}^{-1}(\bar{\alpha})\partial_{\beta_{m_2'}} b_{i-1}(\beta)\boldsymbol{1}_{\{|\Delta X_i^n|\le D_3h_n^{\rho_3}\}}\right.\\
&\hspace{20mm}+\left.\int{(\partial_{\beta_{m_2}} b(x,\beta))^\top S^{-1}(x,\bar{\alpha})\partial_{\beta_{m_2'}} b(x,\beta)}\pi(dx)\right|\\
&\quad+\sup_{(\bar{\alpha},\beta)\in\Theta}\left|\frac{1}{nh_n}\sum_{i=1}^{n}{}\left\{\partial_{{\beta_{m_2}}{\beta_{m_2'}}}^2\log\Psi_\beta(\Delta X_i^n,X_{t_{i-1}^n})\right\}\varphi_n(X_{t_{i-1}^n},\Delta X_i^n)\boldsymbol{1}_{\{|\Delta X_i^n|> D_2h_n^{\rho_2}\}}\right.\\
&\hspace{20mm}\left.-\iint_A{\left\{\partial_{{\beta_{m_2}}{\beta_{m_2'}}}^2\log\Psi_\beta(y,x)\right\}\Psi_{\beta_0}(y,x)}dy\pi(dx)\right|\\
&\quad+\sup_{(\bar{\alpha},\beta)\in\Theta}\left|-\frac{1}{n}\sum_{i=1}^{n}{}\int_{B}{\partial_{{\beta_{m_2}}{\beta_{m_2'}}}^2\Psi_\beta(y,X_{t_{i-1}^n})\varphi_n(X_{t_{i-1}^n},y)}dy+\iint_B{\partial_{{\beta_{m_2}}{\beta_{m_2'}}}^2\Psi_\beta(y,x)}dy\pi(dx)\right|\\
&\overset{P}{\to}0.
\end{align*}
Therefore, 
\begin{align}\label{b-2der}
\sup_{(\bar{\alpha},\beta)\in\Theta}\left|\frac{1}{nh_n}\partial_{\beta}^2 l_n^{(2)}(\beta|\bar{\alpha})+I_{b,c}(\bar{\alpha},\beta)\right|\overset{P}{\to}0.
\end{align}
Since, on the set $A_n\cap B_n$, for all $u\in[0,1]$, $\beta_0+u(\check{\beta}_{n}-\beta_0)\in\{\beta\in\Theta_\beta\ |\ |\beta-\beta_0|<\varepsilon_n\}$ and $|\check{\alpha}_n-\alpha_0|<\varepsilon_n$, we have $(\check{\alpha}_n,\beta_0+u(\check{\beta}_{n}-\beta_0))\in\{\theta\in\Theta\ |\ |\theta-\theta_0|<2\varepsilon_n\}$.
Hence, by using \eqref{b-2der}, 
continuity of $I_{b,c}(\theta)$, consistency of $\check{\theta}_n$, for all $\varepsilon>0$, 
\begin{align*}
&P\left(\sup_{u\in[0,1]}\left|\frac{1}{nh_n}\partial_{{\beta}}^2 l_n^{(2)}(\beta_0+u(\check{\beta}_{n}-\beta_0)|\check{\alpha}_{n})+I_{b,c}(\theta_0)\right|>\varepsilon\right)\\
\quad&\le  P\left(\sup_{(\bar{\alpha},\beta)\in\Theta}\left|\frac{1}{nh_n}\partial_{{\beta}}^2 l_n^{(2)}(\beta|\bar{\alpha})+I_{b,c}(\bar{\alpha},\beta)\right|>\frac{\varepsilon}{2}\right)+
P\left(\sup_{u\in[0,1]}\left|I_{b,c}(\check{\alpha}_n,\beta_0+u(\check{\beta}_{n}-\beta_0))-I_{b,c}(\theta_0)\right|>\frac{\varepsilon}{2}\right)\\
&\leq P\left(\sup_{(\bar{\alpha},\beta)\in\Theta}\left|\frac{1}{nh_n}\partial_{{\beta}}^2 l_n^{(2)}(\beta|\bar{\alpha})+I_{b,c}(\bar{\alpha},\beta)\right|>\frac{\varepsilon}{2}\right)+
P\left(\sup_{\theta:|\theta-\theta_0|<2\varepsilon_n}\left|I_{b,c}(\theta)-I_{b,c}(\theta_0)\right|>\frac{\varepsilon}{2}\right)+P(A_n^c\cup B_n^c)\\
&\to 0\quad(n\to\infty).
\end{align*}
This implies \eqref{thm2:goal2-jump}.

{\bf Proof of \eqref{thm2:goal3-jump}.}
As $\bar{X}_{i,n}(\beta)=h_n(b_{i-1}(\beta_0)-b_{i-1}(\beta))+\bar{X}_{i,n}(\beta_0)$, 
it follows from  Propositions \ref{prop6:jump} and \ref{prop4:jump}-(ii) that
for $1\le m_1\le p$, $1\le m_2\le q$,
\begin{align*}
&\sup_{(\bar{\alpha},\beta)\in\Theta}\left|\frac{1}{n\sqrt{h_n}}\partial_{\alpha_{m_1}\beta_{m_2}}^2 l_n^{(2)}(\beta|\bar{\alpha})\right|\\
&\leq\sqrt{h_n}\cdot\sup_{(\bar{\alpha},\beta)\in\Theta}\left| \frac{1}{nh_n}\sum_{i=1}^{n}{(\partial_{{\beta_{m_2}}}b_{i-1}(\beta))^\top\partial_{{\alpha_{m_1}}}S_{i-1}^{-1}(\bar{\alpha})\bar{X}_{i,n}(\beta_0)\boldsymbol{1}_{\{|\Delta X_i^n|\le D_3h_n^{\rho_3}\}}}\right|\\
&\quad+\sqrt{h_n}\cdot\sup_{(\bar{\alpha},\beta)\in\Theta}\left| \frac{1}{n}\sum_{i=1}^{n}{(\partial_{{\beta_{m_2}}}b_{i-1}(\beta))^\top\partial_{{\alpha_{m_1}}}S_{i-1}^{-1}(\bar{\alpha})(b_{i-1}(\beta_0)-b_{i-1}(\beta))\boldsymbol{1}_{\{|\Delta X_i^n|\le D_3h_n^{\rho_3}\}}}\right|\\
&\overset{P}{\to}0.
\end{align*}
Hence, 
\begin{align}\label{ab-2der}
\sup_{(\bar{\alpha},\beta)\in\Theta}\left|\frac{1}{n\sqrt{h_n}}\partial_{\alpha\beta}^2 l_n^{(2)}(\beta|\bar{\alpha})\right|&\overset{P}{\to}0.
\end{align}
Thus  one has
\begin{align*}
&P\left(\sup_{u\in[0,1]}\left|\frac{1}{n\sqrt{h_n}}\partial_{{\alpha\beta}}^2 l_n^{(2)}(\beta_0|\alpha_0+u(\check{\alpha}_{n}-\alpha_0))\right|>\varepsilon\right)\le
P\left(\sup_{(\bar{\alpha},\beta)\in\Theta}\left|\frac{1}{n\sqrt{h_n}}\partial_{{\alpha\beta}}^2 l_n^{(2)}(\beta|\bar{\alpha})\right|>\varepsilon\right)\overset{P}{\to}0.
\end{align*}
This implies \eqref{thm2:goal3-jump}.

{\bf Proof of \eqref{thm2:goal4-jump}.}
From \citet{Hall-Heyde}, the following types of convergence are sufficient for \eqref{thm2:goal4-jump}: for $1\le m_1,m_1'\le p_1$, $1\le m_2,m_2'\le p_2$ and some $\nu_1,\nu_2>0$,
\begin{align}
&\sum_{i=1}^{n}{\mathbb{E}\left[\frac{1}{\sqrt{n}}\xi_i^{m_1}(\alpha_0)\ |\mathcal{F}_{i-1}^n\right]}\overset{P}{\to}0\label{MCLT1-jump},\\
&\sum_{i=1}^{n}{\mathbb{E}\left[\frac{1}{\sqrt{nh_n}}\left(\eta_{i,1}^{m_2}(\beta_0|\alpha_0)+\eta_{i,2}^{m_2}(\beta_0)\right)\ |\mathcal{F}_{i-1}^n\right]}\overset{P}{\to}0\label{MCLT2-jump},\\
&\sum_{i=1}^{n}{\mathbb{E}\left[\frac{1}{n}\xi_i^{m_1}(\alpha_0)\xi_i^{m_1'}(\alpha_0)\ |\mathcal{F}_{i-1}^n\right]}\overset{P}{\to}I_a^{m_1,m_1'}(\alpha_0)\label{MCLT3-jump},\\
&\sum_{i=1}^{n}{\mathbb{E}\left[\frac{1}{\sqrt{n}}\xi_i^{m_1}(\alpha_0)\ |\mathcal{F}_{i-1}^n\right]\mathbb{E}\left[\frac{1}{\sqrt{n}}\xi_i^{m_1'}(\alpha_0)\ |\mathcal{F}_{i-1}^n\right]}\overset{P}{\to}0\label{MCLT4-jump},\\
&\sum_{i=1}^{n}{\mathbb{E}\left[\frac{1}{nh_n}\left(\eta_{i,1}^{m_2}(\beta_0|\alpha_0)+\eta_{i,2}^{m_2}(\beta_0)\right)\left(\eta_{i,1}^{m_2'}(\beta_0|\alpha_0)+\eta_{i,2}^{m_2'}(\beta_0)\right)\ |\mathcal{F}_{i-1}^n\right]}\overset{P}{\to}I_{b,c}^{m_2,m_2'}(\theta_0)\label{MCLT5-jump},\\
&\sum_{i=1}^{n}{\mathbb{E}\left[\frac{1}{\sqrt{nh_n}}\left(\eta_{i,1}^{m_2}(\beta_0|\alpha_0)+\eta_{i,2}^{m_2}(\beta_0)\right)\ |\mathcal{F}_{i-1}^n\right]\mathbb{E}\left[\frac{1}{\sqrt{nh_n}}\left(\eta_{i,1}^{m_2'}(\beta_0|\alpha_0)+\eta_{i,2}^{m_2'}(\beta_0)\right)\ |\mathcal{F}_{i-1}^n\right]}\overset{P}{\to}0\label{MCLT6-jump},\\
&\sum_{i=1}^{n}{\mathbb{E}\left[\frac{1}{n\sqrt{h_n}}\xi_i^{m_1}(\alpha_0)\left(\eta_{i,1}^{m_2}(\beta_0|\alpha_0)+\eta_{i,2}^{m_2}(\beta_0)\right)\ |\mathcal{F}_{i-1}^n\right]}\overset{P}{\to}0\label{MCLT7-jump},\\
&\sum_{i=1}^{n}{\mathbb{E}\left[\frac{1}{\sqrt{n}}\xi_i^{m_1}(\alpha_0)\ |\mathcal{F}_{i-1}^n\right]\mathbb{E}\left[\frac{1}{\sqrt{nh_n}}\left(\eta_{i,1}^{m_2}(\beta_0|\alpha_0)+\eta_{i,2}^{m_2}(\beta_0)\right)\ |\mathcal{F}_{i-1}^n\right]}\overset{P}{\to}0\label{MCLT8-jump},\\
&\sum_{i=1}^{n}{\mathbb{E}\left[\left|\frac{1}{\sqrt{n}}\xi_i^{m_1}(\alpha_0)\right|^{2+\nu_1}|\mathcal{F}_{i-1}^n\right]}\overset{P}{\to}0\label{MCLT9-jump},\\
&\sum_{i=1}^{n}{\mathbb{E}\left[\left|\frac{1}{\sqrt{nh_n}}\left(\eta_{i,1}^{m_2}(\beta_0|\alpha_0)+\eta_{i,2}^{m_2}(\beta_0)\right)\right|^{2+\nu_2}|\mathcal{F}_{i-1}^n\right]}\overset{P}{\to}0\label{MCLT10-jump}.
\end{align}

{\bf Proof of \eqref{MCLT1-jump}.}
Since for $k=1,2$ and $j=0,1,2$,
\begin{align}\label{eq:barX}
|\bar{X}_{i,n}(\beta_0)|^k\boldsymbol{1}_{C_{i,j}^n(D,\rho)}\le C\left(|\Delta X_i^n|^k+h_n^k|b_{i-1}(\beta_0)|^k\right)\boldsymbol{1}_{C_{i,j}^n(D,\rho)}=R(\theta,h_n^{k\rho},X_{t_{i-1}^n})\boldsymbol{1}_{C_{i,j}^n(D,\rho)},
\end{align} 
it follows from 
$\partial_{\alpha_{m_1}}\log\det S_{i-1}(\alpha_0)=-\tr\left(\partial_{\alpha_{m_1}}S_{i-1}^{-1}(\alpha_0)S_{i-1}(\alpha_0)\right)=-\sum_{k,l=1}^d\partial_{\alpha_{m_1}}S_{i-1}^{-1^{(k,l)}}(\alpha_0)S_{i-1}^{(l,k)}(\alpha_0)$, $\rho_1\geq\frac{1+\delta}{6}$ under either conditions [$\textbf{C}_2${\bf 1}] or [$\textbf{C}_2${\bf 2}], 
 Propositions \ref{prop2:jump} and \ref{prop3:jump}, Lemma \ref{lem1:jump} that  
\begin{align*}
&\left|\sum_{i=1}^{n}{\mathbb{E}\left[\frac{1}{\sqrt{n}}\xi_i^{m_1}(\alpha_0)\ |\mathcal{F}_{i-1}^n\right]}\right|\\
&\le\frac{1}{2\sqrt{n}}\sum_{i=1}^{n}{\left|\mathbb{E}\left[\left\{h_n^{-1}(\Delta X_i^n)^\top\partial_{{\alpha_{m_1}}}S_{i-1}^{-1}(\alpha_0)\Delta X_i^n+\partial_{{\alpha}_{m_1}}\log\det S_{i-1}(\alpha_0)\right\}\boldsymbol{1}_{\{|\Delta X_i^n|\le D_1h_n^{\rho_1}\}}\  |\mathcal{F}_{i-1}^n \right]\right|}\\
&\le\frac{1}{2\sqrt{n}}\sum_{i=1}^{n}{\left|\mathbb{E}\left[\left\{h_n^{-1}\bar{X}_{i,n}(\beta_0)^\top\partial_{{\alpha_{m_1}}}S_{i-1}^{-1}(\alpha_0)\bar{X}_{i,n}(\beta_0)+\partial_{{\alpha}_{m_1}}\log\det S_{i-1}(\alpha_0)\right\}\boldsymbol{1}_{\{|\Delta X_i^n|\le D_1h_n^{\rho_1}\}}\  |\mathcal{F}_{i-1}^n \right]\right|}\\
&\quad+\frac{1}{\sqrt{n}}\sum_{i=1}^{n}{\left|\mathbb{E}\left[\left\{b_{i-1}(\beta_0)^\top\partial_{{\alpha_{m_1}}}S_{i-1}^{-1}(\alpha_0)\bar{X}_{i,n}(\beta_0)\right\}\boldsymbol{1}_{\{|\Delta X_i^n|\le D_1h_n^{\rho_1}\}}\  |\mathcal{F}_{i-1}^n \right]\right|}\\
&\quad+\frac{h_n}{2\sqrt{n}}\sum_{i=1}^{n}{\left|b_{i-1}(\beta_0)^\top\partial_{{\alpha_{m_1}}}S_{i-1}^{-1}(\alpha_0)b_{i-1}(\beta_0)\right| P\left(|\Delta X_i^n|\le D_1h_n^{\rho_1}\ |\mathcal{F}_{i-1}^n\right)}\\
&\le\frac{1}{2\sqrt{n}}\sum_{i=1}^{n}{\left|\sum_{k_1,k_2=1}^{d}h_n^{-1}\partial_{{\alpha_{m_1}}}S_{i-1}^{{-1}^{(k_1,k_2)}}(\alpha_0)\mathbb{E}\left[\bar{X}_{i,n}^{(k_1)}(\beta_0)\bar{X}_{i,n}^{(k_2)}(\beta_0)\boldsymbol{1}_{C_{i,0}^n(D_1,\rho_1)} \  |\mathcal{F}_{i-1}^n\right]\right.}\\
&\qquad{\left. +\partial_{{\alpha}_{m_1}}\log\det S_{i-1}(\alpha_0)P\left(|\Delta X_i^n|\le D_1h_n^{\rho_1}\ |\mathcal{F}_{i-1}^n\right)\right|}\\
&\quad+\frac{1}{2\sqrt{n}}\sum_{i=1}^{n}{\left|\sum_{k_1,k_2=1}^{d}\sum_{j=1}^{2}{}h_n^{-1}\partial_{{\alpha_{m_1}}}S_{i-1}^{{-1}^{(k_1,k_2)}}(\alpha_0)\mathbb{E}\left[\bar{X}_{i,n}^{(k_1)}(\beta_0)\bar{X}_{i,n}^{(k_2)}(\beta_0)\boldsymbol{1}_{C_{i,j}^n(D_1,\rho_1)} \  |\mathcal{F}_{i-1}^n\right]\right|}\\
&\quad+\frac{1}{\sqrt{n}}\sum_{i=1}^{n}{\left|\sum_{k_1,k_2=1}^{d}b_{i-1}^{(k_1)}(\beta_0)\partial_{{\alpha_{m_1}}}S_{i-1}^{{-1}^{(k_1,k_2)}}(\alpha_0)\mathbb{E}\left[\bar{X}_{i,n}^{(k_2)}(\beta_0)\boldsymbol{1}_{C_{i,0}^n(D_1,\rho_1)}\  |\mathcal{F}_{i-1}^n \right]\right|}\\
&\quad+\frac{1}{\sqrt{n}}\sum_{i=1}^{n}{\left|\sum_{k_1,k_2=1}^{d}\sum_{j=1}^{2}b_{i-1}^{(k_1)}(\beta_0)\partial_{{\alpha_{m_1}}}S_{i-1}^{{-1}^{(k_1,k_2)}}(\alpha_0)\mathbb{E}\left[\bar{X}_{i,n}^{(k_2)}(\beta_0)\boldsymbol{1}_{C_{i,j}^n(D_1,\rho_1)}\  |\mathcal{F}_{i-1}^n \right]\right|}\\
&\quad+\frac{h_n}{2\sqrt{n}}\sum_{i=1}^{n}{\left|b_{i-1}(\beta_0)^\top\partial_{{\alpha_{m_1}}}S_{i-1}^{-1}(\alpha_0)b_{i-1}(\beta_0)\right| P\left(|\Delta X_i^n|\le D_1h_n^{\rho_1}\ |\mathcal{F}_{i-1}^n\right)}\\
&=\frac{1}{2\sqrt{n}}\sum_{i=1}^{n}{R(\theta,h_n,X_{t_{i-1}^n})}+\frac{1}{2\sqrt{n}}\sum_{i=1}^{n}{R(\theta,h_n^{3\rho_1},X_{t_{i-1}^n})}+\frac{1}{\sqrt{n}}\sum_{i=1}^{n}{R(\theta,h_n^{2},X_{t_{i-1}^n})}\\
&\quad+\frac{1}{\sqrt{n}}\sum_{i=1}^{n}{R(\theta,h_n^{1+2\rho_1},X_{t_{i-1}^n})}+\frac{h_n}{2\sqrt{n}}\sum_{i=1}^{n}{R(\theta,1,X_{t_{i-1}^n})}\\
&=O_P\left(\sqrt{nh_n^2}\right)+O_P\left(\sqrt{nh_n^{6\rho_1}}\right)\\
&\le O_P\left(\sqrt{nh_n^{1+\delta}}\right)\\
&\overset{P}{\to} 0.
\end{align*}
This implies \eqref{MCLT1-jump}.

{\bf Proof of \eqref{MCLT2-jump}.}
 We see from \eqref{eq:barX}, Proposition \ref{prop2:jump}, Proposition \ref{prop3:jump}, H\"{o}lder's inequality that 
\begin{align*}
&\left|\sum_{i=1}^{n}{\mathbb{E}\left[\frac{1}{\sqrt{nh_n}}\left(\eta_{i,1}^{m_2}(\beta_0|\alpha_0)+\eta_{i,2}^{m_2}(\beta_0)\right)\ |\mathcal{F}_{i-1}^n\right]}\right|\\
&\le\frac{1}{\sqrt{nh_n}}\sum_{i=1}^{n}{\left|\sum_{k_1,k_2=1}^{d}{\partial_{{\beta_{m_2}}}b_{i-1}^{(k_1)}(\beta_0)S_{i-1}^{{-1}^{(k_1,k_2)}}(\alpha_0)\mathbb{E}\left[\bar{X}_{i,n}^{(k_2)}(\beta_0)\boldsymbol{1}_{C_{i,0}^n(D_3,\rho_3)}\ |\mathcal{F}_{i-1}^n\right]}\right|}\\
&\quad+\frac{1}{\sqrt{nh_n}}\sum_{i=1}^{n}{\sum_{k_1,k_2=1}^{d}{\left|\partial_{{\beta_{m_2}}}b_{i-1}^{(k_1)}(\beta_0)\right|\cdot\left|S_{i-1}^{{-1}^{(k_1,k_2)}}(\alpha_0)\right|\sum_{j=1}^{2}{}\mathbb{E}\left[|\bar{X}_{i,n}^{(k_2)}(\beta_0)|\boldsymbol{1}_{C_{i,j}^n(D_3,\rho_3)}\ |\mathcal{F}_{i-1}^n\right]}}\\
&\quad+\left|\frac{1}{\sqrt{nh_n}}\sum_{i=1}^{n}\mathbb{E}\left[\left(\partial_{\beta_{m_2}}\log \Psi_{\beta_0}(\Delta X_i^n,X_{t_{i-1}^n})\right)\varphi_n(X_{t_{i-1}^n},\Delta X_i^n)\boldsymbol{1}_{D_{i,1}^n(D_2,\rho_2)}\right.\right.\\
&\left. \left.\qquad\qquad\qquad\quad-h_n\int_{B}\partial_{{\beta_{m_2}}}\Psi_{\beta_0}(y,X_{t_{i-1}^n})\varphi_n(X_{t_{i-1}^n},y)dy\ |\mathcal{F}_{i-1}^n\right] \right|\\
&\quad+\frac{1}{\sqrt{nh_n}}\sum_{i=1}^{n}\sum_{j=0,2}\mathbb{E}\left[\left|\partial_{\beta_{m_2}}\log \Psi_{\beta_0}(\Delta X_i^n,X_{t_{i-1}^n})\right|\varphi_n(X_{t_{i-1}^n},\Delta X_i^n)\boldsymbol{1}_{D_{i,j}^n(D_2,\rho_2)}\ |\mathcal{F}_{i-1}^n\right]\\
&\leq \frac{1}{n}\sum_{i=1}^{n}{R(\theta,\sqrt{nh_n^3},X_{t_{i-1}^n})}+\frac{1}{n}\sum_{i=1}^{n}{R\left(\theta,\sqrt{nh_n^{1+4\rho_3}},X_{t_{i-1}^n}\right)}\\
&\quad+\left|\frac{1}{\sqrt{nh_n}}\sum_{i=1}^{n}\mathbb{E}\left[\left(\partial_{\beta_{m_2}}\log \Psi_{\beta_0}(\Delta X_i^n,X_{t_{i-1}^n})\right)\varphi_n(X_{t_{i-1}^n},\Delta X_i^n)\boldsymbol{1}_{D_{i,1}^n(D_2,\rho_2)}\right.\right.\\
&\left. \left.\qquad\qquad\qquad\quad-h_n\int_{B}\partial_{{\beta_{m_2}}}\Psi_{\beta_0}(y,X_{t_{i-1}^n})\varphi_n(X_{t_{i-1}^n},y)dy\ |\mathcal{F}_{i-1}^n\right] \right|\\
&\quad+\frac{1}{\sqrt{nh_n}}\sum_{i=1}^{n}\mathbb{E}\left[\left|\partial_{\beta_{m_2}}\log \Psi_{\beta_0}(\Delta X_i^n,X_{t_{i-1}^n})\right|^4\varphi_n(X_{t_{i-1}^n},\Delta X_i^n)^4\ |\mathcal{F}_{i-1}^n\right]^{\frac{1}{4}}\sum_{j=0,2}P\left(D_{i,j}^n(D_2,\rho_2)|\mathcal{F}_{i-1}\right)^{\frac{3}{4}}.
\end{align*}
We evaluate the third term on the right-hand side in an analogous manner to the proof of the uniform convergence in Proposition \ref{prop7:jump}. 
 Let $g_n(\beta,y,x)=\log\Psi_{\beta}(y,x)\varphi_n(x,y)$. We can calculate 
\begin{footnotesize}
\begin{align*}
&\left|\frac{1}{\sqrt{nh_n}}\sum_{i=1}^{n}\mathbb{E}\left[\left(\partial_{\beta_{m_2}}\log \Psi_{\beta_0}(\Delta X_i^n,X_{t_{i-1}^n})\right)\varphi_n(X_{t_{i-1}^n},\Delta X_i^n)\boldsymbol{1}_{D_{i,1}^n(D_2,\rho_2)}\right.\right.\\
&\left. \left.\qquad\qquad\qquad\quad-h_n\int_{B}\partial_{{\beta_{m_2}}}\Psi_{\beta_0}(y,X_{t_{i-1}^n})\varphi_n(X_{t_{i-1}^n},y)dy\ |\mathcal{F}_{i-1}^n\right] \right|\\
&\leq\sqrt{nh_n}\left\{\left|\mathbb{E}\left[\frac{1}{nh_n}\sum_{i=1}^{n}\partial_{{\beta_{m_2}}}g_n(\beta_0,\Delta X_i^n,X_{t_{i-1}^n})\boldsymbol{1}_{D_{i,1}^n(D_2,\rho_2)}-\frac{1}{nh_n}\sum_{i=1}^{n}\partial_{{\beta_{m_2}}}g_n(\beta_0,\Delta X_{\tau_i^n},X_{t_{i-1}^n})\boldsymbol{1}_{D_{i,1}^n(D_2,\rho_2)}|\mathcal{F}_{i-1}^n\right]\right|\right.\\
&\qquad\qquad+\left|\mathbb{E}\left[\frac{1}{nh_n}\sum_{i=1}^{n}\partial_{{\beta_{m_2}}}g_n(\beta_0,\Delta X_{\tau_i^n},X_{t_{i-1}^n})\boldsymbol{1}_{D_{i,1}^n(D_2,\rho_2)}-\frac{1}{nh_n}\sum_{i=1}^{n}\partial_{{\beta_{m_2}}}g_n(\beta_0,\Delta X_{\tau_i^n},X_{t_{i-1}^n})\boldsymbol{1}_{\{J_i^n=1\}}|\mathcal{F}_{i-1}^n\right]\right|\\
&\qquad\qquad+\left|\mathbb{E}\left[\frac{1}{nh_n}\sum_{i=1}^{n}\partial_{{\beta_{m_2}}}g_n(\beta_0,\Delta X_{\tau_i^n},X_{t_{i-1}^n})\boldsymbol{1}_{\{J_i^n=1\}}-\frac{1}{nh_n}\sum_{i=1}^{n}\int_{t_{i-1}^n}^{t_i^n}\int_E\partial_{{\beta_{m_2}}}g_n(\beta_0,c_{i-1}(z,\beta_0),X_{t_{i-1}^n})p(ds,dz)|\mathcal{F}_{i-1}^n\right]\right|\\
&\qquad\qquad+\left|\mathbb{E}\left[\frac{1}{nh_n}\sum_{i=1}^{n}\int_{t_{i-1}^n}^{t_i^n}\int_E\partial_{{\beta_{m_2}}}g_n(\beta_0,c_{i-1}(z,\beta_0),X_{t_{i-1}^n})p(ds,dz)\right.\right.\\
&\qquad\qquad\qquad\qquad\left.\left.-\frac{1}{nh_n}\sum_{i=1}^{n}\int_{t_{i-1}^n}^{t_i^n}\int_E\partial_{{\beta_{m_2}}}g_n(\beta_0,c_{i-1}(z,\beta_0),X_{t_{i-1}^n})q^{\beta_0}(ds,dz)|\mathcal{F}_{i-1}^n\right]\right|\\
&\qquad\qquad+\left.\left|\mathbb{E}\left[\frac{1}{nh_n}\sum_{i=1}^{n}\int_{t_{i-1}^n}^{t_i^n}\int_E\partial_{{\beta_{m_2}}}g_n(\beta_0,c_{i-1}(z,\beta_0),X_{t_{i-1}^n})q^{\beta_0}(ds,dz)-\frac{1}{n}\sum_{i=1}^n\int_{B}\partial_{{\beta_{m_2}}}\Psi_{\beta_0}(y,X_{t_{i-1}^n})\varphi_n(X_{t_{i-1}^n},y)dy\ |\mathcal{F}_{i-1}^n\right]\right|\right\}\\
&\leq\frac{1}{\sqrt{nh_n}}\sum_{i=1}^{n}\mathbb{E}\left[|\partial_{{\beta_{m_2}}}g_n(\beta_0,\Delta X_i^n,X_{t_{i-1}^n})-\partial_{{\beta_{m_2}}}g_n(\beta_0,\Delta X_{\tau_i^n},X_{t_{i-1}^n})|\boldsymbol{1}_{D_{i,1}^n(D_2,\rho_2)}|\mathcal{F}_{i-1}^n\right]\\
&\quad+\frac{1}{\sqrt{nh_n}}\sum_{i=1}^{n}\mathbb{E}\left[|\partial_{{\beta_{m_2}}}g_n(\beta_0,\Delta X_{\tau_i^n},X_{t_{i-1}^n})|\boldsymbol{1}_{C_{i,1}^n(D_2,\rho_2)}|\mathcal{F}_{i-1}^n\right]\\
&\quad+\frac{1}{\sqrt{nh_n}}\sum_{i=1}^{n}\left|\mathbb{E}\left[\partial_{{\beta_{m_2}}}g_n(\beta_0,\Delta X_{\tau_i^n},X_{t_{i-1}^n})\boldsymbol{1}_{\{J_i^n=1\}}-\int_{t_{i-1}^n}^{t_i^n}\int_E\partial_{{\beta_{m_2}}}g_n(\beta_0,c_{i-1}(z,\beta_0),X_{t_{i-1}^n})p(ds,dz)|\mathcal{F}_{i-1}^n\right]\right|\\
&\quad+\frac{1}{\sqrt{nh_n}}\sum_{i=1}^{n}\left|\mathbb{E}\left[\int_{t_{i-1}^n}^{t_i^n}\int_E\partial_{{\beta_{m_2}}}g_n(\beta_0,c_{i-1}(z,\beta_0),X_{t_{i-1}^n})(p-q^{\beta_0})(ds,dz)|\mathcal{F}_{i-1}^n\right]\right|\\
&\quad+\frac{1}{\sqrt{nh_n}}\sum_{i=1}^{n}\left|\mathbb{E}\left[\int_{t_{i-1}^n}^{t_i^n}\int_E\partial_{{\beta_{m_2}}}g_n(\beta_0,c_{i-1}(z,\beta_0),X_{t_{i-1}^n})q^{\beta_0}(ds,dz)-h_n\int_{B}\partial_{{\beta_{m_2}}}\Psi_{\beta_0}(y,X_{t_{i-1}^n})\varphi_n(X_{t_{i-1}^n},y)dy\ |\mathcal{F}_{i-1}^n\right]\right|\\
&=:\sum_{i=1}^5H_n^i.
\end{align*}
\end{footnotesize}
It is obvious from martingale property that $H_n^4=0$. Since it follows from change of variables that
\begin{align*}
&\int_{t_{i-1}^n}^{t_i^n}\int_E\partial_{{\beta_{m_2}}}g_n(\beta_0,c_{i-1}(z,\beta_0),X_{t_{i-1}^n})q^{\beta_0}(ds,dz)\\
&=\int_{t_{i-1}^n}^{t_i^n}\int_E\partial_{{\beta_{m_2}}}\log\Psi_{\beta_0}(c_{i-1}(z,\beta_0),X_{t_{i-1}^n})\varphi_n(X_{t_{i-1}^n},c_{i-1}(z,\beta_0))f_{\beta_0}(z)dzds\\
&=\int_{t_{i-1}^n}^{t_i^n}ds\int_B\partial_{{\beta_{m_2}}}\log\Psi_{\beta_0}(y,X_{t_{i-1}^n})\varphi_n(X_{t_{i-1}^n},y)\Psi_{\beta_0}(y,X_{t_{i-1}^n})dy\\
&=h_n\int_B\frac{\partial_{{\beta_{m_2}}}\Psi_{\beta_0}(y,X_{t_{i-1}^n})}{\Psi_{\beta_0}(y,X_{t_{i-1}^n})}\varphi_n(X_{t_{i-1}^n},y)\Psi_{\beta_0}(y,X_{t_{i-1}^n})dy\\
&=h_n\int_B\partial_{\beta_{m_2}}\Psi_{\beta_0}(y,X_{t_{i-1}^n})\varphi_n(X_{t_{i-1}^n},y)dy.
\end{align*}
Hence, we have $H_n^5=0$.
For $H_n^1,H_n^2,H_n^3$, in a similar manner to the evaluations of $I_1,I_2$ and $I_3$ in the proof of Proposition \ref{prop7:jump} with $k=1$,  one has 
\begin{align*}
H_n^1&=O_p\left(\sqrt{nh_n\varepsilon_n^{-4}}\right),\ H_n^3=O_p\left(nh_n^2\varepsilon_n^{-4}\right)+O_p\left(\sqrt{nh_n^2\varepsilon_n^{-2}}\right), \\
H_n^2&=\begin{cases}
O_p\left(\sqrt{nh_n}/(h_nu_n^p)\right)+O_p\left(\sqrt{nh_n^{1+2\rho_2}u_n^2}\right)&(\text{under}\ 
[\textbf{C}_2\textbf{1}]),\\
O_p\left(\sqrt{nh_n^{1+2\rho_2}\varepsilon_n^{-2}}\right)&(\text{under}\ [\textbf{C}_2\textbf{2}]),
\end{cases}
\end{align*}
 where a real valued sequence $u_n$ and an integer $p\ge 2$ satisfy
$nh_n^{1+2\rho_2}u_n^2\to 0$ and $h_nu_n^p/\sqrt{nh_n}\to\infty$.
For example, we can take $u_n=h_n^{(\delta-2\rho_2)/2}$ and choose sufficient large $p$ such that $h_nu_n^p/\sqrt{nh_n}\to\infty$.
Hence, since $\rho_2\in B_2$ under the condition [$\textbf{C}_2${\bf 2}], it follows from $\sum_{i=1}^5H_n^i\overset{P}{\to}0$ that the third term is bounded by $o_p(1)$. For the fourth term, since by using Proposition \ref{prop2:jump},
\begin{align*}
\sum_{j=0,2}P\left(D_{i,j}^n(D_2,\rho_2)|\mathcal{F}_{i-1}\right)^{\frac{3}{4}}=R(\theta,h_n^\frac{3}{2},X_{t_{i-1}^n}),
\end{align*}
and  
\begin{align*}
\frac{1}{\sqrt{nh_n}}\sum_{i=1}^{n}&\mathbb{E}\left[\left|\partial_{\beta_{m_2}}\log \Psi_{\beta_0}(\Delta X_i^n,X_{t_{i-1}^n})\right|^4\varphi_n(X_{t_{i-1}^n},\Delta X_i^n)^4\ |\mathcal{F}_{i-1}^n\right]^{\frac{1}{4}}\\
&\leq\begin{cases}\frac{1}
{\sqrt{nh_n}}\sum_{i=1}^{n}R(\theta,1,X_{t_{i-1}^n})&(\text{under} [\textbf{C}_2\textbf{1}]), \\
\frac{1}{\sqrt{nh_n}}\sum_{i=1}^{n}R(\theta,\varepsilon_n^{-1},X_{t_{i-1}^n})&(\text{under}\ [\textbf{C}_2\textbf{2}]),
\end{cases}
\end{align*}
then we have
\begin{align*}
\frac{1}{\sqrt{nh_n}}\sum_{i=1}^{n}&\mathbb{E}\left[\left|\partial_{\beta_{m_2}}\log \Psi_{\beta_0}(\Delta X_i^n,X_{t_{i-1}^n})\right|^4\varphi_n(X_{t_{i-1}^n},\Delta X_i^n)^4\ |\mathcal{F}_{i-1}^n\right]^{\frac{1}{4}}\sum_{j=0,2}P\left(D_{i,j}^n(D_2,\rho_2)|\mathcal{F}_{i-1}\right)^{\frac{3}{4}}\\
&\leq\begin{cases}
\frac{1}{n}\sum_{i=1}^{n}{R\left(\theta,\sqrt{nh_n^2},X_{t_{i-1}^n}\right)}&(\text{under}\ [\textbf{C}_2\textbf{1}])\\
\frac{1}{n}\sum_{i=1}^{n}R\left(\theta,\sqrt{nh_n^2\varepsilon_n^{-2}},X_{t_{i-1}^n}\right)&(\text{under} [\textbf{C}_2\textbf{2}])
\end{cases}\\
&=O_p\left(\sqrt{nh_n^2\varepsilon_n^{-2}}\right).
\end{align*}
Therefore, it holds from $\rho_3\geq\frac{\delta}{4}$ that 
\begin{align*}
&\left|\sum_{i=1}^{n}{\mathbb{E}\left[\frac{1}{\sqrt{nh_n}}\left(\eta_{i,1}^{m_2}(\beta_0|\alpha_0)+\eta_{i,2}^{m_2}(\beta_0)\right)\ |\mathcal{F}_{i-1}^n\right]}\right|\\
&\leq \frac{1}{n}\sum_{i=1}^{n}{R(\theta,\sqrt{nh_n^3},X_{t_{i-1}^n})}+\frac{1}{n}\sum_{i=1}^{n}{R\left(\theta,\sqrt{nh_n^{1+4\rho_3}},X_{t_{i-1}^n}\right)}\\
&\quad+\left|\frac{1}{\sqrt{nh_n}}\sum_{i=1}^{n}\mathbb{E}\left[\left(\partial_{\beta_{m_2}}\log \Psi_{\beta_0}(\Delta X_i^n,X_{t_{i-1}^n})\right)\varphi_n(X_{t_{i-1}^n},\Delta X_i^n)\boldsymbol{1}_{D_{i,1}^n(D_2,\rho_2)}\right.\right.\\
&\left. \left.\qquad\qquad\qquad\quad-h_n\int_{B}\partial_{{\beta_{m_2}}}\Psi_{\beta_0}(y,X_{t_{i-1}^n})\varphi_n(X_{t_{i-1}^n},y)dy\ |\mathcal{F}_{i-1}^n\right] \right|\\
&\quad+\frac{1}{\sqrt{nh_n}}\sum_{i=1}^{n}\mathbb{E}\left[\left|\partial_{\beta_{m_2}}\log \Psi_{\beta_0}(\Delta X_i^n,X_{t_{i-1}^n})\right|^4\varphi_n(X_{t_{i-1}^n},\Delta X_i^n)^4\ |\mathcal{F}_{i-1}^n\right]^{\frac{1}{4}}\sum_{j=0,2}P\left(D_{i,j}^n(D_2,\rho_2)|\mathcal{F}_{i-1}\right)^{\frac{3}{4}}\\
&\leq O_p\left(\sqrt{nh_n^3}\right)+O_p\left(\sqrt{nh_n^{1+\delta}}\right)+o_p(1)+O_p\left(\sqrt{nh_n^2\varepsilon_n^{-2}}\right)\\
\overset{P}{\to}0
\end{align*}
under either condition [$\textbf{C}_2${\bf 1}] or [$\textbf{C}_2${\bf 2}].
This implies \eqref{MCLT2-jump}.

{\bf Proof of \eqref{MCLT3-jump}.}
Since $\Delta X_{i}^n=\bar{X}_{i,n}(\beta_0)+h_nb_{i-1}(\beta_0)$, we can calculate  
\begin{align*}
&\sum_{i=1}^{n}{\mathbb{E}\left[\frac{1}{n}\xi_i^{m_1}(\alpha_0)\xi_i^{m_1'}(\alpha_0)\ |\mathcal{F}_{i-1}^n\right]}\\
&=\frac{1}{4n}\sum_{i=1}^{n}\mathbb{E}\left[\left\{h_n^{-1}(\Delta X_i^n)^\top\partial_{{\alpha_{m_1}}}S_{i-1}^{-1}(\alpha_0)\Delta X_i^n+\partial_{{\alpha}_{m_1}}\log\det S_{i-1}(\alpha_0)\right\}\right.\\
&\qquad\qquad\qquad \times\left.\left\{h_n^{-1}(\Delta X_i^n)^\top\partial_{{\alpha_{m_1'}}}S_{i-1}^{-1}(\alpha_0)\Delta X_i^n+\partial_{{\alpha}_{m_1'}}\log\det S_{i-1}(\alpha_0)\right\}\boldsymbol{1}_{\{|\Delta X_i^n|\le D_1h_n^{\rho_1}\}}\ |\mathcal{F}_{i-1}^n\right]\\
&=\frac{1}{4n}\sum_{i=1}^{n}\mathbb{E}\left[\left\{h_n^{-1}(\bar{X}_{i,n}(\beta_0))^\top\partial_{{\alpha_{m_1}}}S_{i-1}^{-1}(\alpha_0)\bar{X}_{i,n}(\beta_0)+
2b_{i-1}^\top(\beta_0)\partial_{{\alpha_{m_1}}}S_{i-1}^{-1}(\alpha_0)\bar{X}_{i,n}(\beta_0)\right.\right.\\
&\qquad\qquad\qquad\quad +h_nb_{i-1}^\top(\beta_0)\partial_{{\alpha_{m_1}}}S_{i-1}^{-1}(\alpha_0)b_{i-1}^\top(\beta_0)+\partial_{{\alpha}_{m_1}}\log\det S_{i-1}(\alpha_0)\left.\right\}\\
&\qquad\qquad\qquad \times\left.\left\{h_n^{-1}(\bar{X}_{i,n}(\beta_0))^\top\partial_{{\alpha_{m_1'}}}S_{i-1}^{-1}(\alpha_0)\bar{X}_{i,n}(\beta_0)+
2b_{i-1}^\top(\beta_0)\partial_{{\alpha_{m_1'}}}S_{i-1}^{-1}(\alpha_0)\bar{X}_{i,n}(\beta_0)\right.\right.\\
&\qquad\qquad\qquad\quad\left.\left. +h_nb_{i-1}^\top(\beta_0)\partial_{{\alpha_{m_1'}}}S_{i-1}^{-1}(\alpha_0)b_{i-1}(\beta_0)+\partial_{{\alpha}_{m_1'}}\log\det S_{i-1}(\alpha_0)\right\}\boldsymbol{1}_{\{|\Delta X_i^n|\le D_1h_n^{\rho_1}\}}\ |\mathcal{F}_{i-1}^n\right]\\
&=\frac{1}{4nh_n^2}\sum_{i=1}^{n}\sum_{k_1,k_2=1}^{d}\sum_{k_3,k_4=1}^{d}\partial_{{\alpha_{m_1}}}S_{i-1}^{{-1}^ {(k_1,k_2)}}\partial_{{\alpha_{m_1'}}}S_{i-1}^{{-1}^ {(k_3,k_4)}}\mathbb{E}\left[\bar{X}_{i,n}^{(k_1)}\bar{X}_{i,n}^{(k_2)}\bar{X}_{i,n}^{(k_3)}\bar{X}_{i,n}^{(k_4)}\boldsymbol{1}_{\{|\Delta X_i^n|\le D_1h_n^{\rho_1}\}} \ |\mathcal{F}_{i-1}^n\right]\\
&\quad+\frac{1}{nh_n}\sum_{i=1}^{n}\sum_{k_1,k_2=1}^{d}\sum_{k_3,k_4=1}^{d}\partial_{{\alpha_{m_1}}}S_{i-1}^{{-1}^ {(k_1,k_2)}}\partial_{{\alpha_{m_1'}}}S_{i-1}^{{-1}^ {(k_3,k_4)}}b_{i-1}^{(k_3)}\mathbb{E}\left[\bar{X}_{i,n}^{(k_1)}\bar{X}_{i,n}^{(k_2)}\bar{X}_{i,n}^{(k_3)}\boldsymbol{1}_{\{|\Delta X_i^n|\le D_1h_n^{\rho_1}\}} \ |\mathcal{F}_{i-1}^n\right]\\
&\quad+\frac{1}{4nh_n}\sum_{i=1}^{n}\partial_{{\alpha}_{m_1'}}\log\det S_{i-1}\sum_{k_1,k_2=1}^{d}\partial_{{\alpha_{m_1}}}S_{i-1}^{{-1}^ {(k_1,k_2)}}\mathbb{E}\left[\bar{X}_{i,n}^{(k_1)}\bar{X}_{i,n}^{(k_2)}\boldsymbol{1}_{\{|\Delta X_i^n|\le D_1h_n^{\rho_1}\}} \ |\mathcal{F}_{i-1}^n\right]\\
&\quad+\frac{1}{4nh_n}\sum_{i=1}^{n}\partial_{{\alpha}_{m_1}}\log\det S_{i-1}\sum_{k_1,k_2=1}^{d}\partial_{{\alpha_{m_1'}}}S_{i-1}^{{-1}^ {(k_1,k_2)}}\mathbb{E}\left[\bar{X}_{i,n}^{(k_1)}\bar{X}_{i,n}^{(k_2)}\boldsymbol{1}_{\{|\Delta X_i^n|\le D_1h_n^{\rho_1}\}} \ |\mathcal{F}_{i-1}^n\right]\\
&\quad+\frac{1}{n}\sum_{i=1}^nR(\theta,1,X_{t_{i-1}^n})\sum_{k_1,k_2=1}^d\mathbb{E}\left[\bar{X}_{i,n}^{(k_1)}\bar{X}_{i,n}^{(k_2)}\boldsymbol{1}_{\{|\Delta X_i^n|\le D_1h_n^{\rho_1}\}} \ |\mathcal{F}_{i-1}^n\right]\\
&\quad+(1+h_n)\frac{1}{n}\sum_{i=1}^{n}\sum_{k_1=1}^{d}R(\theta,1,X_{t_{i-1}^n})\mathbb{E}\left[\bar{X}_{i,n}^{(k_1)}\boldsymbol{1}_{\{|\Delta X_i^n|\le D_1h_n^{\rho_1}\}} \ |\mathcal{F}_{i-1}^n\right]\\
&\quad+h_n(1+h_n)\frac{1}{n}\sum_{i=1}^{n}R(\theta,1,X_{t_{i-1}^n})P(|\Delta X_i^n|\le D_1h_n^{\rho_1}|\mathcal{F}_{i-1}^n)\\
&\quad+\frac{1}{4n}\sum_{i=1}^{n}\left(\partial_{{\alpha}_{m_1}}\log\det S_{i-1}\right)\left(\partial_{{\alpha}_{m_1'}}\log\det S_{i-1}\right)P(|\Delta X_i^n|\le D_1h_n^{\rho_1}\ |\mathcal{F}_{i-1}^n).
\end{align*}
Since it holds from Proposition \ref{prop2:jump} that for $p=1,2,3,4$ and $k_l=1,\ldots,d$,\quad$(l=1,\ldots,p)$,
\begin{align}
&\mathbb{E}\left[\prod_{l=1}^p \bar{X}_{i,n}^{(k_l)}\boldsymbol{1}_{\{|\Delta X_i^n|\le D_1h_n^{\rho_1}\}}\ |\mathcal{F}_{i-1}^n\right]\notag\\
&=\mathbb{E}\left[\prod_{l=1}^p \bar{X}_{i,n}^{(k_l)}\boldsymbol{1}_{C_{i,0}^n(D_1,\rho_1)}\ |\mathcal{F}_{i-1}^n\right]+\sum_{j=1}^{2}{}\mathbb{E}\left[\prod_{l=1}^p \bar{X}_{i,n}^{(k_l)}\boldsymbol{1}_{C_{i,j}^n(D_1,\rho_1)}\ |\mathcal{F}_{i-1}^n\right]\notag\\
&=\mathbb{E}\left[\prod_{l=1}^p \bar{X}_{i,n}^{(k_l)}\boldsymbol{1}_{C_{i,0}^n(D_1,\rho_1)}\ |\mathcal{F}_{i-1}^n\right]+R\left(\theta,h_n^{1+(p+1)\rho_1},X_{t_{i-1}^n}\right),\label{devide:jump}
\end{align}
 it follows from Proposition \ref{prop3:jump} that for $k_l=1,\ldots,d$,\quad$(l=1,\ldots,p)$,
\begin{align}
&\mathbb{E}\left[\bar{X}_{i,n}^{(k_1)}\bar{X}_{i,n}^{(k_2)}\bar{X}_{i,n}^{(k_3)}\bar{X}_{i,n}^{(k_4)}\boldsymbol{1}_{\{|\Delta X_i^n|\le D_1h_n^{\rho_1}\}} \ |\mathcal{F}_{i-1}^n\right]\nonumber\\
&\qquad=h_n^2\left(S_{i-1}^{(k_1,k_2)}S_{i-1}^{(k_3,k_4)}+S_{i-1}^{(k_1,k_3)}S_{i-1}^{(k_2,k_4)}+S_{i-1}^{(k_1,k_4)}S_{i-1}^{(k_2,k_3)}\right)(\alpha_0)+R(\theta,h_n^{1+5\rho_1},X_{t_{i-1}^n})+R(\theta,h_n^3,X_{t_{i-1}^n}), \label{under:rep4}\\
&\mathbb{E}\left[\bar{X}_{i,n}^{(k_1)}\bar{X}_{i,n}^{(k_2)}\bar{X}_{i,n}^{(k_3)}\boldsymbol{1}_{\{|\Delta X_i^n|\le D_1h_n^{\rho_1}\}} \ |\mathcal{F}_{i-1}^n\right]= R(\theta,h_n^{2},X_{t_{i-1}^n})+R(\theta,h_n^{1+4\rho_1},X_{t_{i-1}^n}), \label{under:rep3}\\
&\mathbb{E}\left[\bar{X}_{i,n}^{(k_1)}\bar{X}_{i,n}^{(k_2)}\boldsymbol{1}_{\{|\Delta X_i^n|\le D_1h_n^{\rho_1}\}} \ |\mathcal{F}_{i-1}^n\right]= h_nS_{i-1}^{(k_1,k_2)}(\alpha_0)+R(\theta,h_n^{2},X_{t_{i-1}^n})+R(\theta,h_n^{1+3\rho_1},X_{t_{i-1}^n}), \label{under:rep2}\\
&\mathbb{E}\left[\bar{X}_{i,n}^{(k_1)}\boldsymbol{1}_{\{|\Delta X_i^n|\le D_1h_n^{\rho_1}\}} \ |\mathcal{F}_{i-1}^n\right]= R(\theta,h_n^{2},X_{t_{i-1}^n})+R(\theta,h_n^{1+2\rho_1},X_{t_{i-1}^n}).\label{under:rep1}
\end{align}
In particular, the right-hand side of \eqref{under:rep3} and \eqref{under:rep1}, and the second and third term on the right-hand side of \eqref{under:rep2} can be expressed by $R(\theta,h_n^{1+2\rho_1},X_{t_{i-1}^n})$ since $0<\rho_1<\frac{1}{2}$.
Therefore, it follows from Proposition \ref{prop4:jump}-(i), \eqref{under:rep4}, \eqref{under:rep3}, \eqref{under:rep2}, \eqref{under:rep1} and $\frac{1}{5}<\rho_1$ that 
\begin{align*}
&\sum_{i=1}^{n}{\mathbb{E}\left[\frac{1}{n}\xi_i^{m_1}(\alpha_0)\xi_i^{m_1'}(\alpha_0)\ |\mathcal{F}_{i-1}^n\right]}\\
&=\frac{1}{4n}\sum_{i=1}^{n}\sum_{k_1,k_2=1}^{d}\sum_{k_3,k_4=1}^{d}\partial_{{\alpha_{m_1}}}S_{i-1}^{{-1}^ {(k_1,k_2)}}\partial_{{\alpha_{m_1'}}}S_{i-1}^{{-1}^ {(k_3,k_4)}}(S_{i-1}^{(k_1,k_2)}S_{i-1}^{(k_3,k_4)}+S_{i-1}^{(k_1,k_3)}S_{i-1}^{(k_2,k_4)}+S_{i-1}^{(k_1,k_4)}S_{i-1}^{(k_2,k_3)})\\
&\quad+\frac{1}{4n}\sum_{i=1}^{n}\partial_{{\alpha}_{m_1'}}\log\det S_{i-1}\sum_{k_1,k_2=1}^{d}\left(\partial_{{\alpha_{m_1}}}S_{i-1}^{{-1}^ {(k_1,k_2)}}\right)S_{i-1}^{(k_1,k_2)}\\
&\quad+\frac{1}{4n}\sum_{i=1}^{n}\partial_{{\alpha}_{m_1}}\log\det S_{i-1}\sum_{k_1,k_2=1}^{d}\left(\partial_{{\alpha_{m_1'}}}S_{i-1}^{{-1}^ {(k_1,k_2)}}\right)S_{i-1}^{(k_1,k_2)}\\
&\quad+\frac{1}{4n}\sum_{i=1}^{n}\left(\partial_{{\alpha}_{m_1}}\log\det S_{i-1}\right)\left(\partial_{{\alpha}_{m_1'}}\log\det S_{i-1}\right)\\
&\quad+O_P(h_n)+O_P\left(h_n^{5\rho_1-1}\right)+O_P\left(h_n^{2\rho_1}\right)+O_P\left(h_n^{1+2\rho_1}\right)\\
&\overset{P}{\longrightarrow}\frac{1}{4}\sum_{k_1,k_2=1}^{d}\sum_{k_3,k_4=1}^{d}\int \partial_{{\alpha_{m_1}}}S^{{-1}^ {(k_1,k_2)}}(x,\alpha_0)\partial_{{\alpha_{m_1'}}}S^{{-1}^ {(k_3,k_4)}}(x,\alpha_0)\\
&\qquad\qquad\quad\times(S^{(k_1,k_2)}S^{(k_3,k_4)}+S^{(k_1,k_3)}S^{(k_2,k_4)}+S^{(k_1,k_4)}S^{(k_2,k_3)})(x,\alpha_0)\pi(dx)\\
&\quad+\frac{1}{4}\int \partial_{{\alpha}_{m_1'}}\log\det S(x,\alpha_0)\sum_{k_1,k_2=1}^{d}\left(\partial_{{\alpha_{m_1}}}S^{{-1}^ {(k_1,k_2)}}(x,\alpha_0)\right)S^{(k_1,k_2)}(x,\alpha_0)\pi(dx)\\
&\quad+\frac{1}{4}\int \partial_{{\alpha}_{m_1}}\log\det S(x,\alpha_0)\sum_{k_1,k_2=1}^{d}\left(\partial_{{\alpha_{m_1'}}}S^{{-1}^ {(k_1,k_2)}}(x,\alpha_0)\right)S^{(k_1,k_2)}(x,\alpha_0)\pi(dx)\\
&\quad+\frac{1}{4}\int \left(\partial_{{\alpha}_{m_1'}}\log\det S(x,\alpha_0)\right)\left(\partial_{{\alpha}_{m_1}}\log\det S(x,\alpha_0)\right)\pi(dx)\\
&=\frac{1}{2}\int \mathrm{tr}\left\{\left(\partial_{{\alpha}_{m_1}}S^{-1}\right)S\left(\partial_{{\alpha}_{m_1'}}S^{-1}\right)S\right\}(x,\alpha_0)\pi(dx)\\
&=I_{a}^{(m_1,m_1')}(\alpha_0).
\end{align*}
This implies \eqref{MCLT3-jump}.

{\bf Proof of \eqref{MCLT4-jump}.}
From the proof of \eqref{MCLT1-jump}, it is easy to show that 
\begin{align*}
&\left|\sum_{i=1}^{n}{\mathbb{E}\left[\frac{1}{\sqrt{n}}\xi_i^{m_1}(\alpha_0)\ |\mathcal{F}_{i-1}^n\right]\mathbb{E}\left[\frac{1}{\sqrt{n}}\xi_i^{m_1'}(\alpha_0)\ |\mathcal{F}_{i-1}^n\right]}\right|\\
\le&\ \left|\frac{1}{n}\sum_{i=1}^{n}R\left(\theta,\sqrt{nh_n^{1+\delta}},X_{t_{i-1}^n}\right)R\left(\theta,\sqrt{nh_n^{1+\delta}},X_{t_{i-1}^n}\right)\right|\\
=&\ O_p\left(nh_n^{1+\delta}\right)\\
\overset{P}{\to}&\ 0.
\end{align*}
{\bf Proof of \eqref{MCLT5-jump}.}
By using \eqref{under:rep2}, \eqref{under:rep1}, we can calculate
\begin{align*}
&\sum_{i=1}^{n}{\mathbb{E}\left[\frac{1}{nh_n}\left(\eta_{i,1}^{m_2}(\beta_0|\alpha_0)+\eta_{i,2}^{m_2}(\beta_0)\right)\left(\eta_{i,1}^{m_2'}(\beta_0|\alpha_0)+\eta_{i,2}^{m_2'}(\beta_0)\right)\ |\mathcal{F}_{i-1}^n\right]}\\
&=\frac{1}{nh_n}\sum_{i=1}^{n}\mathbb{E}\left[\left\{(\partial_{{\beta_{m_2}}}b_{i-1}(\beta_0))^\top S_{i-1}^{-1}(\alpha_0)\bar{X}_{i,n}(\beta_0)\boldsymbol{1}_{\{|\Delta X_i^n|\le D_3h_n^{\rho_3}\}}\right.\right.\\
&\quad\left.\qquad\qquad\qquad+\left(\partial_{{\beta_{m_2}}}\log\Psi_{\beta_0}(\Delta X_i^n,X_{t_{i-1}^n})\right)\varphi_n(X_{t_{i-1}^n},\Delta X_i^n)\boldsymbol{1}_{\{|\Delta X_i^n|> D_2h_n^{\rho_2}\}}\right.\\
&\quad\left.\left.\qquad\qquad\qquad-h_n\int_{B}{\partial_{{\beta_{m_2}}}\Psi_{\beta_0}(y,X_{t_{i-1}^n})\varphi_n(X_{t_{i-1}^n},y)}dy\right\}\right.\\
&\quad\left.\qquad\qquad\quad\times\left\{(\partial_{{\beta_{m_2'}}}b_{i-1}(\beta_0))^\top S_{i-1}^{-1}(\alpha_0)\bar{X}_{i,n}(\beta_0)\boldsymbol{1}_{\{|\Delta X_i^n|\le D_3h_n^{\rho_3}\}}\right.\right.\\
&\quad\left.\qquad\qquad\qquad+\left(\partial_{{\beta_{m_2'}}}\log\Psi_{\beta_0}(\Delta X_i^n,X_{t_{i-1}^n})\right)\varphi_n(X_{t_{i-1}^n},\Delta X_i^n)\boldsymbol{1}_{\{|\Delta X_i^n|> D_2h_n^{\rho_2}\}}\right.\\
&\quad\left.\left.\qquad\qquad\qquad-h_n\int_{B}{\partial_{{\beta_{m_2'}}}\Psi_{\beta_0}(y,X_{t_{i-1}^n})\varphi_n(X_{t_{i-1}^n},y)}dy\right\}\ |\mathcal{F}_{i-1}^n\right]\\
&=\frac{1}{nh_n}\sum_{i=1}^{n}\sum_{k_1,k_2=1}^{d}\sum_{k_3,k_4=1}^{d}\partial_{{\beta_{m_2}}}b_{i-1}^{(k_1)} S_{i-1}^{{-1}^{(k_1,k_2)}}\partial_{{\beta_{m_2'}}}b_{i-1}^{(k_3)} S_{i-1}^{{-1}^{(k_3,k_4)}}\mathbb{E}\left[\bar{X}_{i,n}^{(k_2)}\bar{X}_{i,n}^{(k_4)}\boldsymbol{1}_{\{|\Delta X_i^n|\le D_3h_n^{\rho_3}\}} \ |\mathcal{F}_{i-1}^n\right]\\
&\quad+\frac{1}{nh_n}\sum_{i=1}^{n}\mathbb{E}\left[\partial_{{\beta_{m_2}}}\log\Psi_{\beta_0}(\Delta X_i^n,X_{t_{i-1}^n})\partial_{{\beta_{m_2'}}}\log\Psi_{\beta_0}(\Delta X_i^n,X_{t_{i-1}^n})\varphi_n^2(X_{t_{i-1}^n},\Delta X_i^n)\boldsymbol{1}_{\{|\Delta X_i^n|> D_2h_n^{\rho_2}\}}\ |\mathcal{F}_{i-1}^n\right]\\
&\quad+\frac{h_n}{n}\sum_{i=1}^{n}\left(\int_{B}{\partial_{{\beta_{m_2}}}\Psi_{\beta_0}(y,X_{t_{i-1}^n})\varphi_n(X_{t_{i-1}^n},y)}dy\right)\left(\int_{B}{\partial_{{\beta_{m_2'}}}\Psi_{\beta_0}(y,X_{t_{i-1}^n})\varphi_n(X_{t_{i-1}^n},y)}dy\right)\\
&\quad+\frac{1}{nh_n}\sum_{i=1}^{n}\sum_{k_1,k_2=1}^{d}\partial_{{\beta_{m_2}}}b_{i-1}^{(k_1)} S_{i-1}^{{-1}^{(k_1,k_2)}}\\
&\qquad\times\mathbb{E}\left[\bar{X}_{i,n}^{(k_2)}\partial_{{\beta_{m_2'}}}\log\Psi_{\beta_0}(\Delta X_i^n,X_{t_{i-1}^n})\varphi_n(X_{t_{i-1}^n},\Delta X_i^n)\boldsymbol{1}_{\{|\Delta X_i^n|\le D_3h_n^{\rho_3}\}}\boldsymbol{1}_{\{|\Delta X_i^n|> D_2h_n^{\rho_2}\}} \ |\mathcal{F}_{i-1}^n\right]\\
&\quad+\frac{1}{nh_n}\sum_{i=1}^{n}\sum_{k_1,k_2=1}^{d}\partial_{{\beta_{m_2'}}}b_{i-1}^{(k_1)} S_{i-1}^{{-1}^{(k_1,k_2)}}\\
&\qquad\times\mathbb{E}\left[\bar{X}_{i,n}^{(k_2)}\partial_{{\beta_{m_2}}}\log\Psi_{\beta_0}(\Delta X_i^n,X_{t_{i-1}^n})\varphi_n(X_{t_{i-1}^n},\Delta X_i^n)\boldsymbol{1}_{\{|\Delta X_i^n|\le D_3h_n^{\rho_3}\}}\boldsymbol{1}_{\{|\Delta X_i^n|> D_2h_n^{\rho_2}\}} \ |\mathcal{F}_{i-1}^n\right]\\
&\quad+\frac{1}{n}\sum_{i=1}^{n}\sum_{k_1=1}^{d}R(\theta,1,X_{t_{i-1}^n})\mathbb{E}\left[\bar{X}_{i,n}^{(k_1)}\boldsymbol{1}_{\{|\Delta X_i^n|\le D_3h_n^{\rho_3}\}} |\mathcal{F}_{i-1}^n\right]\\
&\quad+\frac{1}{n}\sum_{i=1}^{n}R(\theta,1,X_{t_{i-1}^n})\mathbb{E}\left[\partial_{{\beta_{m_2}}}\log\Psi_{\beta_0}(\Delta X_i^n,X_{t_{i-1}^n})\varphi_n(X_{t_{i-1}^n},\Delta X_i^n)\boldsymbol{1}_{\{|\Delta X_i^n|> D_2h_n^{\rho_2}\}} |\mathcal{F}_{i-1}^n\right]\\
&\quad+\frac{1}{n}\sum_{i=1}^{n}R(\theta,1,X_{t_{i-1}^n})\mathbb{E}\left[\partial_{{\beta_{m_2'}}}\log\Psi_{\beta_0}(\Delta X_i^n,X_{t_{i-1}^n})\varphi_n(X_{t_{i-1}^n},\Delta X_i^n)\boldsymbol{1}_{\{|\Delta X_i^n|> D_2h_n^{\rho_2}\}} |\mathcal{F}_{i-1}^n\right]\\
&=\frac{1}{n}\sum_{i=1}^{n}\sum_{k_1,k_2=1}^{d}\sum_{k_3,k_4=1}^{d}\partial_{{\beta_{m_2}}}b_{i-1}^{(k_1)} \partial_{{\beta_{m_2'}}}b_{i-1}^{(k_3)} S_{i-1}^{{-1}^{(k_1,k_2)}} S_{i-1}^{{-1}^{(k_3,k_4)}} S_{i-1}^{{-1}^{(k_2,k_4)}}\\
&\quad+\frac{1}{nh_n}\sum_{i=1}^{n}\mathbb{E}\left[\partial_{{\beta_{m_2}}}\log\Psi_{\beta_0}(\Delta X_i^n,X_{t_{i-1}^n})\partial_{{\beta_{m_2'}}}\log\Psi_{\beta_0}(\Delta X_i^n,X_{t_{i-1}^n})\varphi_n^2(X_{t_{i-1}^n},\Delta X_i^n)\boldsymbol{1}_{D_{i,1}^n(D_2,\rho_2)}\ |\mathcal{F}_{i-1}^n\right]\\
&\quad-\frac{1}{n}\sum_{i=1}^n\int_A\frac{\partial_{\beta_{m_2}}\Psi_{\beta_0}\partial_{\beta_{m_2'}}\Psi_{\beta_0}}{\Psi_{\beta_0}}(y,X_{t_{i-1}^n})\varphi_n^2(X_{t_{i-1}^n},y)dy\\
&\quad+\frac{1}{n}\sum_{i=1}^n\int_A\frac{\partial_{\beta_{m_2}}\Psi_{\beta_0}\partial_{\beta_{m_2'}}\Psi_{\beta_0}}{\Psi_{\beta_0}}(y,X_{t_{i-1}^n})\varphi_n^2(X_{t_{i-1}^n},y)dy\\
&\quad+\frac{1}{nh_n}\sum_{i=1}^{n}\sum_{j=0,2}\mathbb{E}\left[\partial_{{\beta_{m_2}}}\log\Psi_{\beta_0}(\Delta X_i^n,X_{t_{i-1}^n})\partial_{{\beta_{m_2'}}}\log\Psi_{\beta_0}(\Delta X_i^n,X_{t_{i-1}^n})\varphi_n^2(X_{t_{i-1}^n},\Delta X_i^n)\boldsymbol{1}_{D_{i,j}^n(D_2,\rho_2)}\ |\mathcal{F}_{i-1}^n\right]\\
&\quad+\frac{1}{nh_n}\sum_{i=1}^{n}\sum_{k_1,k_2=1}^{d}\partial_{{\beta_{m_2}}}b_{i-1}^{(k_1)} S_{i-1}^{{-1}^{(k_1,k_2)}}\\
&\qquad\times\mathbb{E}\left[\bar{X}_{i,n}^{(k_2)}\partial_{{\beta_{m_2'}}}\log\Psi_{\beta_0}(\Delta X_i^n,X_{t_{i-1}^n})\varphi_n(X_{t_{i-1}^n},\Delta X_i^n)\boldsymbol{1}_{\{|\Delta X_i^n|\le D_3h_n^{\rho_3}\}}\boldsymbol{1}_{\{|\Delta X_i^n|> D_2h_n^{\rho_2}\}} \ |\mathcal{F}_{i-1}^n\right]\\
&\quad+\frac{1}{nh_n}\sum_{i=1}^{n}\sum_{k_1,k_2=1}^{d}\partial_{{\beta_{m_2'}}}b_{i-1}^{(k_1)} S_{i-1}^{{-1}^{(k_1,k_2)}}\\
&\qquad\times\mathbb{E}\left[\bar{X}_{i,n}^{(k_2)}\partial_{{\beta_{m_2}}}\log\Psi_{\beta_0}(\Delta X_i^n,X_{t_{i-1}^n})\varphi_n(X_{t_{i-1}^n},\Delta X_i^n)\boldsymbol{1}_{\{|\Delta X_i^n|\le D_3h_n^{\rho_3}\}}\boldsymbol{1}_{\{|\Delta X_i^n|> D_2h_n^{\rho_2}\}} \ |\mathcal{F}_{i-1}^n\right]\\
&\quad+\frac{1}{n}\sum_{i=1}^{n}R(\theta,1,X_{t_{i-1}^n})\mathbb{E}\left[\partial_{{\beta_{m_2}}}\log\Psi_{\beta_0}(\Delta X_i^n,X_{t_{i-1}^n})\varphi_n(X_{t_{i-1}^n},\Delta X_i^n)\boldsymbol{1}_{\{|\Delta X_i^n|> D_2h_n^{\rho_2}\}} |\mathcal{F}_{i-1}^n\right]\\
&\quad+\frac{1}{n}\sum_{i=1}^{n}R(\theta,1,X_{t_{i-1}^n})\mathbb{E}\left[\partial_{{\beta_{m_2'}}}\log\Psi_{\beta_0}(\Delta X_i^n,X_{t_{i-1}^n})\varphi_n(X_{t_{i-1}^n},\Delta X_i^n)\boldsymbol{1}_{\{|\Delta X_i^n|> D_2h_n^{\rho_2}\}} |\mathcal{F}_{i-1}^n\right]\\
&\quad+O_p\left(h_n^{1+2\rho_3}\right)+O_p\left(h_n^{2\rho_3}\right)+O_p\left(h_n\right),
\end{align*}
where the third and fourth terms on the right-hand side are adjusted by adding and subtracting the same term. By using Proposition \ref{prop4:jump}-(i), the first and fourth  terms on the right-hand side converge to $I^{(m_2,m_2')}_b(\beta_0)$ and $I^{(m_2,m_2')}_c(\beta_0)$ in probability, respectively. We evaluate the second and third  terms in an analogous manner to Proposition \ref{prop7:jump}. 
 Let $g_n(\beta,y,x)=\partial_{{\beta_{m_2}}}\log\Psi_{\beta}(y,x)\partial_{{\beta_{m_2'}}}\log\Psi_{\beta}(y,x)\varphi_n^2(x,y)$.
We can calculate
\begin{footnotesize}    
\begin{align*}
&\left|\frac{1}{{nh_n}}\sum_{i=1}^{n}\mathbb{E}\left[\partial_{{\beta_{m_2}}}\log\Psi_{\beta_0}(\Delta X_i^n,X_{t_{i-1}^n})\partial_{{\beta_{m_2'}}}\log\Psi_{\beta_0}(\Delta X_i^n,X_{t_{i-1}^n})\varphi_n^2(X_{t_{i-1}^n},\Delta X_i^n)\boldsymbol{1}_{D_{i,1}^n(D_2,\rho_2)}\right.\right.\\
&\left. \left.\qquad\qquad\qquad\quad-h_n\int_{A}\frac{\partial_{{\beta_{m_2}}}\Psi_{\beta_0}\partial_{{\beta_{m_2'}}}\Psi_{\beta_0}}{\Psi_{\beta_0}}(y,X_{t_{i-1}^n})\varphi_n^2(X_{t_{i-1}^n},y)dy\ |\mathcal{F}_{i-1}^n\right] \right|\\
&\leq\left|\mathbb{E}\left[\frac{1}{nh_n}\sum_{i=1}^{n}g_n(\beta_0,\Delta X_i^n,X_{t_{i-1}^n})\boldsymbol{1}_{D_{i,1}^n(D_2,\rho_2)}-\frac{1}{nh_n}\sum_{i=1}^{n}g_n(\beta_0,\Delta X_{\tau_i^n},X_{t_{i-1}^n})\boldsymbol{1}_{D_{i,1}^n(D_2,\rho_2)}|\mathcal{F}_{i-1}^n\right]\right|\\
&\qquad\qquad+\left|\mathbb{E}\left[\frac{1}{nh_n}\sum_{i=1}^{n}g_n(\beta_0,\Delta X_{\tau_i^n},X_{t_{i-1}^n})\boldsymbol{1}_{D_{i,1}^n(D_2,\rho_2)}-\frac{1}{nh_n}\sum_{i=1}^{n}g_n(\beta_0,\Delta X_{\tau_i^n},X_{t_{i-1}^n})\boldsymbol{1}_{\{J_i^n=1\}}|\mathcal{F}_{i-1}^n\right]\right|\\
&\qquad\qquad+\left|\mathbb{E}\left[\frac{1}{nh_n}\sum_{i=1}^{n}g_n(\beta_0,\Delta X_{\tau_i^n},X_{t_{i-1}^n})\boldsymbol{1}_{\{J_i^n=1\}}-\frac{1}{nh_n}\sum_{i=1}^{n}\int_{t_{i-1}^n}^{t_i^n}\int_E g_n(\beta_0,c_{i-1}(z,\beta_0),X_{t_{i-1}^n})p(ds,dz)|\mathcal{F}_{i-1}^n\right]\right|\\
&\qquad\qquad+\left|\mathbb{E}\left[\frac{1}{nh_n}\sum_{i=1}^{n}\int_{t_{i-1}^n}^{t_i^n}\int_E g_n(\beta_0,c_{i-1}(z,\beta_0),X_{t_{i-1}^n})p(ds,dz)\right.\right.\\
&\qquad\qquad\qquad\qquad\left.\left.-\frac{1}{nh_n}\sum_{i=1}^{n}\int_{t_{i-1}^n}^{t_i^n}\int_E g_n(\beta_0,c_{i-1}(z,\beta_0),X_{t_{i-1}^n})q^{\beta_0}(ds,dz)|\mathcal{F}_{i-1}^n\right]\right|\\
&\qquad\qquad+\left|\mathbb{E}\left[\frac{1}{nh_n}\sum_{i=1}^{n}\int_{t_{i-1}^n}^{t_i^n}\int_E g_n(\beta_0,c_{i-1}(z,\beta_0),X_{t_{i-1}^n})q^{\beta_0}(ds,dz)-\frac{1}{n}\sum_{i=1}^n\int_{A}\frac{\partial_{{\beta_{m_2}}}\Psi_{\beta_0}\partial_{{\beta_{m_2'}}}\Psi_{\beta_0}}{\Psi_{\beta_0}}(y,X_{t_{i-1}^n})\varphi_n^2(X_{t_{i-1}^n},y)dy\ |\mathcal{F}_{i-1}^n\right]\right|\\
&\leq\frac{1}{nh_n}\sum_{i=1}^{n}\mathbb{E}\left[|g_n(\beta_0,\Delta X_i^n,X_{t_{i-1}^n})-g_n(\beta_0,\Delta X_{\tau_i^n},X_{t_{i-1}^n})|\boldsymbol{1}_{D_{i,1}^n(D_2,\rho_2)}|\mathcal{F}_{i-1}^n\right]\\
&\quad+\frac{1}{nh_n}\sum_{i=1}^{n}\mathbb{E}\left[|g_n(\beta_0,\Delta X_{\tau_i^n},X_{t_{i-1}^n})|\boldsymbol{1}_{C_{i,1}^n(D_2,\rho_2)}|\mathcal{F}_{i-1}^n\right]\\
&\quad+\frac{1}{nh_n}\sum_{i=1}^{n}\left|\mathbb{E}\left[g_n(\beta_0,\Delta X_{\tau_i^n},X_{t_{i-1}^n})\boldsymbol{1}_{\{J_i^n=1\}}-\int_{t_{i-1}^n}^{t_i^n}\int_E g_n(\beta_0,c_{i-1}(z,\beta_0),X_{t_{i-1}^n})p(ds,dz)|\mathcal{F}_{i-1}^n\right]\right|\\
&\quad+\frac{1}{nh_n}\sum_{i=1}^{n}\left|\mathbb{E}\left[\int_{t_{i-1}^n}^{t_i^n}\int_E g_n(\beta_0,c_{i-1}(z,\beta_0),X_{t_{i-1}^n})(p-q^{\beta_0})(ds,dz)|\mathcal{F}_{i-1}^n\right]\right|\\
&\quad+\frac{1}{nh_n}\sum_{i=1}^{n}\left|\mathbb{E}\left[\int_{t_{i-1}^n}^{t_i^n}\int_E g_n(\beta_0,c_{i-1}(z,\beta_0),X_{t_{i-1}^n})q^{\beta_0}(ds,dz)-h_n\int_{A}\frac{\partial_{{\beta_{m_2}}}\Psi_{\beta_0}\partial_{{\beta_{m_2'}}}\Psi_{\beta_0}}{\Psi_{\beta_0}}(y,X_{t_{i-1}^n})\varphi_n^2(X_{t_{i-1}^n},y)dy\ |\mathcal{F}_{i-1}^n\right]\right|\\
&=:\sum_{i=1}^5H_n^i.
\end{align*}
\end{footnotesize}
It is obvious from martingale property that $H_n^4=0$.  Using change of variables, we have
\begin{align*}
&\int_{t_{i-1}^n}^{t_i^n}\int_E g_n(\beta_0,c_{i-1}(z,\beta_0),X_{t_{i-1}^n})q^{\beta_0}(ds,dz)\\
&=\int_{t_{i-1}^n}^{t_i^n}\int_E\partial_{{\beta_{m_2}}}\log\Psi_{\beta_0}(c_{i-1}(z,\beta_0),X_{t_{i-1}^n})\partial_{{\beta_{m_2'}}}\log\Psi_{\beta_0}(c_{i-1}(z,\beta_0),X_{t_{i-1}^n})\varphi_n^2(X_{t_{i-1}^n},c_{i-1}(z,\beta_0))f_{\beta_0}(z)dzds\\
&=\int_{t_{i-1}^n}^{t_i^n}ds\int_A\frac{\partial_{{\beta_{m_2}}}\Psi_{\beta_0}\partial_{{\beta_{m_2'}}}\Psi_{\beta_0}}{\Psi_{\beta_0}^2}(y,X_{t_{i-1}^n})\varphi_n^2(X_{t_{i-1}^n},y)\Psi_{\beta_0}(y,X_{t_{i-1}^n})dy\\
&=h_n\int_A\frac{\partial_{{\beta_{m_2}}}\Psi_{\beta_0}\partial_{{\beta_{m_2'}}}\Psi_{\beta_0}}{\Psi_{\beta_0}}(y,X_{t_{i-1}^n})\varphi_n^2(X_{t_{i-1}^n},y)dy.
\end{align*}
This implies $H_n^5=0$. 
For $H_n^1,H_n^2,H_n^3$, in a similar way to the evaluation $I_1,I_2$ and $I_3$ in the proof of Proposition \ref{prop7:jump} with $k=2$, 
 one has 
\begin{align*}
H_n^1&=O_p\left(\sqrt{h_n\varepsilon_n^{-6}}\right),\ H_n^3=O_p\left(\sqrt{h_n\varepsilon_n^{-6}}\right), \\
H_n^2&=\begin{cases}
O_p\left(\frac{1}{h_nu_n^p}\right)+O_p\left(h_n^{\rho_2}u_n\right)&(\text{under}\ [\textbf{C}_2\textbf{1}]), \\
O_p\left(h_n^{\rho_2}\varepsilon_n^{-2}\right)&(\text{under}\ [\textbf{C}_2\textbf{2}]), 
\end{cases}
\end{align*}
where a real valued sequence $u_n$ and an integer $p\ge 2$ satisfy $h_n^{\rho_2}u_n\to 0$ and $h_nu_n^p\to\infty$, respectively.
Therefore, since $\rho_2\in B_2$ under the condition [$\textbf{C}_2${\bf 2}], one has from $\sum_{i=1}^5H_n^i\overset{P}{\to}0$ that the sum of the second and third term is bounded by
$o_p(1)$. For the fifth term, it follows from H\"{o}lder's inequality and Proposition \ref{prop2:jump} that 
\begin{align*}
&\left|\frac{1}{nh_n}\sum_{i=1}^{n}\sum_{j=0,2}\mathbb{E}\left[\partial_{{\beta_{m_2}}}\log\Psi_{\beta_0}(\Delta X_i^n,X_{t_{i-1}^n})\partial_{{\beta_{m_2'}}}\log\Psi_{\beta_0}(\Delta X_i^n,X_{t_{i-1}^n})\varphi_n^2(X_{t_{i-1}^n},\Delta X_i^n)\boldsymbol{1}_{D_{i,j}^n(D_2,\rho_2)}\ |\mathcal{F}_{i-1}^n\right]\right|\\
\le\ &\frac{1}{nh_n}\sum_{i=1}^{n}\left(\mathbb{E}\left[\left(\partial_{{\beta_{m_2}}}\log\Psi_{\beta_0}(\Delta X_i^n,X_{t_{i-1}^n})\right)^4\left(\partial_{{\beta_{m_2'}}}\log\Psi_{\beta_0}(\Delta X_i^n,X_{t_{i-1}^n})\right)^4\varphi_n^8(X_{t_{i-1}^n},\Delta X_i^n)\ |\mathcal{F}_{i-1}^n\right]\right)^{\frac{1}{4}}\\
&\qquad\quad\times\left(\sum_{j=0,2}P\left({D_{i,j}^n(D_2,\rho_2)}\ |\mathcal{F}_{i-1}^n\right)\right)^{\frac{3}{4}}\\
=\ &\frac{1}{nh_n}\sum_{i=1}^{n}R(\theta,\varepsilon_n^{-2},X_{t_{i-1}^n})\times R(\theta,h_n^{\frac{3}{2}},X_{t_{i-1}^n})\\
=\ &O_p\left(\sqrt{h_n\varepsilon_n^{-4}}\right).
\end{align*}
Therefore, the fifth term converges to 0 in probability.
Since $|\bar{X}_{i,n}(\beta_0)|\boldsymbol{1}_{\left\{|\Delta X_i^n|\le D_3h_n^{\rho_3}\right\}}=R(\theta,h_n^{\rho_3},X_{t_{i-1}^n})\boldsymbol{1}_{\left\{|\Delta X_i^n|\le D_3h_n^{\rho_3}\right\}}$, for the sixth term, 
 we see from 
Propositions \ref{prop1:jump} and \ref{prop2:jump} that
\begin{align*}
&\left|\frac{1}{nh_n}\sum_{i=1}^{n}\sum_{k_1,k_2=1}^{d}\partial_{{\beta_{m_2}}}b_{i-1}^{(k_1)} S_{i-1}^{{-1}^{(k_1,k_2)}}\right.\\
&\quad\left.\times\ \mathbb{E}\left[\bar{X}_{i,n}^{(k_2)}\partial_{{\beta_{m_2'}}}\log\Psi_{\beta_0}(\Delta X_i^n,X_{t_{i-1}^n})\varphi_n(X_{t_{i-1}^n},\Delta X_i^n)\boldsymbol{1}_{\{|\Delta X_i^n|\le D_3h_n^{\rho_3}\}}\boldsymbol{1}_{\{|\Delta X_i^n|> D_2h_n^{\rho_2}\}} \ |\mathcal{F}_{i-1}^n\right]\right|\\
&\le\frac{1}{nh_n}\sum_{i=1}^{n}\sum_{k_1,k_2=1}^{d}R(\theta,h_n^{\rho_3},X_{t_{i-1}^n})\\
&\quad\times\ \mathbb{E}\left[|\partial_{{\beta_{m_2'}}}\log\Psi_{\beta_0}(\Delta X_i^n,X_{t_{i-1}^n})|\varphi_n(X_{t_{i-1}^n},\Delta X_i^n)\boldsymbol{1}_{\{|\Delta X_i^n|\le D_3h_n^{\rho_3}\}}\boldsymbol{1}_{\{|\Delta X_i^n|> D_2h_n^{\rho_2}\}} \ |\mathcal{F}_{i-1}^n\right]\\
&\le\begin{cases}
\displaystyle
\frac{1}{nh_n}\sum_{i=1}^{n}\sum_{k_1,k_2=1}^{d}R(\theta,h_n^{\rho_3},X_{t_{i-1}^n}) \mathbb{E}\left[(1+|\Delta X_i^n|)^C\boldsymbol{1}_{\{|\Delta X_i^n|\le D_3h_n^{\rho_3}\}}\boldsymbol{1}_{\{|\Delta X_i^n|> D_2h_n^{\rho_2}\}} \ |\mathcal{F}_{i-1}^n\right]   &(\text{under}\ [\textbf{C}_2\textbf{1}])\\
\displaystyle
\frac{1}{nh_n}\sum_{i=1}^{n}\sum_{k_1,k_2=1}^{d}R(\theta,h_n^{\rho_3}\varepsilon_n^{-1},X_{t_{i-1}^n}) P\left(\{|\Delta X_i^n|\le D_3h_n^{\rho_3}\}\cap\{|\Delta X_i^n|> D_2h_n^{\rho_2}\}\ |\mathcal{F}_{i-1}^n\right)    &(\text{under}\ [\textbf{C}_2\textbf{2}])
\end{cases}\\
&\le \begin{cases}
\displaystyle
\frac{1}{nh_n}\sum_{i=1}^{n}\sum_{k_1,k_2=1}^{d}R(\theta,h_n^{\rho_3},X_{t_{i-1}^n})\\
\displaystyle
\quad\times\left\{P\left(\{|\Delta X_i^n|\le D_3h_n^{\rho_3}\}\cap\{|\Delta X_i^n|> D_2h_n^{\rho_2}\}\ |\mathcal{F}_{i-1}^n\right)+\mathbb{E}\left[|\Delta X_i^n|^C\ |\mathcal{F}_{i-1}^n\right]\right\}   &(\text{under}\ [\textbf{C}_2\textbf{1}])\\
\displaystyle
\frac{1}{nh_n}\sum_{i=1}^{n}\sum_{k_1,k_2=1}^{d}R(\theta,h_n^{\rho_3}\varepsilon_n^{-1},X_{t_{i-1}^n}) P\left(\{|\Delta X_i^n|\le D_3h_n^{\rho_3}\}\cap\{|\Delta X_i^n|> D_2h_n^{\rho_2}\}\ |\mathcal{F}_{i-1}^n\right)    &(\text{under}\ [\textbf{C}_2\textbf{2}])
\end{cases}\\
&\le\begin{cases}
\displaystyle
\frac{1}{nh_n}\sum_{i=1}^{n}\sum_{k_1,k_2=1}^{d}R(\theta,h_n^{1+\rho_3},X_{t_{i-1}^n})   &(\text{under}\ [\textbf{C}_2\textbf{1}])\\
\displaystyle
\frac{1}{nh_n}\sum_{i=1}^{n}\sum_{k_1,k_2=1}^{d}R(\theta,h_n^{1+2\rho_3}\varepsilon_n^{-1},X_{t_{i-1}^n})&(\text{under}\ [\textbf{C}_2\textbf{2}])
\end{cases}\\
&=\begin{cases}
\displaystyle
O_p\left(h_n^{\rho_3}\right)  &(\text{under}\ [\textbf{C}_2\textbf{1}]),\\
\displaystyle
O_p\left(h_n^{2\rho_3}\varepsilon_n^{-1}\right)&(\text{under}\ 
[\textbf{C}_2\textbf{2}]).
\end{cases}
\end{align*}
 Since $\rho_3\ge\frac{1}{16}$ under the condition $[\textbf{C}_2\textbf{2}]$, the sixth term converges to 0 in probability. The evaluation of the seventh term is the same.
For  the eighth term, it follows from  Propositions \ref{prop1:jump} and \ref{prop2:jump} that 
\begin{align*}
&\left|\frac{1}{n}\sum_{i=1}^{n}R(\theta,1,X_{t_{i-1}^n})\mathbb{E}\left[\partial_{{\beta_{m_2'}}}\log\Psi_{\beta_0}(\Delta X_i^n,X_{t_{i-1}^n})\varphi_n(X_{t_{i-1}^n},\Delta X_i^n)\boldsymbol{1}_{\{|\Delta X_i^n|> D_2h_n^{\rho_2}\}} |\mathcal{F}_{i-1}^n\right]\right|\\
&\le\begin{cases}
\displaystyle
\frac{1}{n}\sum_{i=1}^{n}R(\theta,1,X_{t_{i-1}^n}) \mathbb{E}\left[(1+|\Delta X_i^n|)^C\boldsymbol{1}_{\{|\Delta X_i^n|> D_2h_n^{\rho_2}\}} \ |\mathcal{F}_{i-1}^n\right]   &(\text{under}\ [\textbf{C}_2\textbf{1}])\\
\displaystyle
\frac{1}{n}\sum_{i=1}^{n}R(\theta,\varepsilon_n^{-1},X_{t_{i-1}^n}) P\left(|\Delta X_i^n|> D_2h_n^{\rho_2}\ |\mathcal{F}_{i-1}^n\right)    &(\text{under}\ [\textbf{C}_2\textbf{2}])
\end{cases}\\
&\le\begin{cases}
\displaystyle
\frac{1}{n}\sum_{i=1}^{n}R(\theta,1,X_{t_{i-1}^n})\left\{P\left(|\Delta X_i^n|> D_2h_n^{\rho_2}\ |\mathcal{F}_{i-1}^n\right)+ \mathbb{E}\left[|\Delta X_i^n|^C\ |\mathcal{F}_{i-1}^n\right]\right\}   &(\text{under}\ [\textbf{C}_2\textbf{1}])\\
\displaystyle
\frac{1}{n}\sum_{i=1}^{n}R(\theta,h_n\varepsilon_n^{-1},X_{t_{i-1}^n})     &(\text{under}\ [\textbf{C}_2\textbf{2}])
\end{cases}\\
&\le\begin{cases}
\displaystyle
O_p(h_n)&(\text{under}\ [\textbf{C}_2\textbf{1}]),\\
\displaystyle
O_p\left(h_n\varepsilon_n^{-1}\right)     &(\text{under}\ [\textbf{C}_2\textbf{2}]).
\end{cases}
\end{align*}
Hence, the eighth term converges to 0 in probability.
The evaluation of the ninth term is the same. 
Thus, we have
\begin{align*}
\sum_{i=1}^{n}{\mathbb{E}\left[\frac{1}{nh_n}\left(\eta_{i,1}^{m_2}(\beta_0|\alpha_0)+\eta_{i,2}^{m_2}(\beta_0)\right)\left(\eta_{i,1}^{m_2'}(\beta_0|\alpha_0)+\eta_{i,2}^{m_2'}(\beta_0)\right)\ |\mathcal{F}_{i-1}^n\right]} \overset{P}{\to}I^{(m_2,m_2')}_b(\beta_0)+I^{(m_2,m_2')}_c(\beta_0)=I^{(m_2,m_2')}_{b,c}(\beta_0).
\end{align*}
This implies \eqref{MCLT5-jump}.

{\bf Proof of \eqref{MCLT6-jump}.}
From the proof of \eqref{MCLT2-jump},  one has 
\begin{align*}
&\left|\sum_{i=1}^{n}{\mathbb{E}\left[\frac{1}{\sqrt{nh_n}}\left(\eta_{i,1}^{m_2}+\eta_{i,2}^{m_2}\right)\ |\mathcal{F}_{i-1}^n\right]\mathbb{E}\left[\frac{1}{\sqrt{nh_n}}\left(\eta_{i,1}^{m_2'}+\eta_{i,2}^{m_2'}\right)\ |\mathcal{F}_{i-1}^n\right]}\right|\\
&=\sum_{i=1}^{n}\left\{R\left(\theta,\frac{h_n\varepsilon_n^{-2}}{\sqrt{n}},X_{t_{i-1}^n}\right)+R\left(\theta,\sqrt{\frac{h_n^{1+4\rho_1}}{n}},X_{t_{i-1}^n}\right)\right\}^2+o_P(1)\\
&=\frac{1}{n}\sum_{i=1}^{n}\left\{R\left(\theta,h_n^2\varepsilon_n^{-4},X_{t_{i-1}^n}\right)+R\left(\theta,{h_n^{1+4\rho_1}},X_{t_{i-1}^n}\right)\right\}+o_P(1)\\
&\overset{P}{\to}0.
\end{align*}

{\bf Proof of \eqref{MCLT7-jump}.}
 Since $\Delta X_i^n=\bar{X}_{i,n}(\beta_0)+h_nb_{i-1}(\beta_0)$ 
and $\boldsymbol{1}_{\{|\Delta X_i^n|\le D_3h_n^{\rho_3}\}}\le 1$, 
it follows from \eqref{under:rep3}, \eqref{under:rep2} and \eqref{under:rep1} that we can calculate
\begin{align*}
&\left|\sum_{i=1}^{n}{\mathbb{E}\left[\frac{1}{n\sqrt{h_n}}\xi_i^{m_1}(\alpha_0)\left(\eta_{i,1}^{m_2}(\beta_0|\alpha_0)+\eta_{i,2}^{m_2}(\beta_0)\right)\ |\mathcal{F}_{i-1}^n\right]}\right|\\
&\le\frac{1}{2nh_n\sqrt{h_n}}\sum_{i=1}^{n}\left|\mathbb{E}\left[(\Delta X_i^n)^\top\left(\partial_{{\alpha_{m_1}}}S_{i-1}^{-1}\right)\Delta X_i^n(\partial_{{\beta_{m_2}}}b_{i-1})^\top S_{i-1}^{-1}\bar{X}_{i,n}\boldsymbol{1}_{\{|\Delta X_i^n|\le D_1h_n^{\rho_1}\}}\boldsymbol{1}_{\{|\Delta X_i^n|\le D_3h_n^{\rho_3}\}}\ |\mathcal{F}_{i-1}^n\right]\right|\\
&\quad+\frac{1}{2n\sqrt{h_n}}\sum_{i=1}^{n}\left|\partial_{{\alpha}_{m_1}}\log\det S_{i-1}\right|\left|\mathbb{E}\left[(\partial_{{\beta_{m_2}}}b_{i-1})^\top S_{i-1}^{-1}\bar{X}_{i,n}\boldsymbol{1}_{\{|\Delta X_i^n|\le D_1h_n^{\rho_1}\}}\boldsymbol{1}_{\{|\Delta X_i^n|\le D_3h_n^{\rho_3}\}}\ |\mathcal{F}_{i-1}^n\right]\right|\\
&\quad+\frac{1}{2nh_n\sqrt{h_n}}\sum_{i=1}^{n}\left|\mathbb{E}\left[(\Delta X_i^n)^\top\left(\partial_{{\alpha_{m_1}}}S_{i-1}^{-1}\right)\Delta X_i^n\left(\partial_{{\beta_{m_2}}}\log\Psi_{\beta_0}(\Delta X_i^n,X_{t_{i-1}^n})\right)\varphi_n(X_{t_{i-1}^n},\Delta X_i^n)\right.\right.\\
&\qquad\qquad\qquad\qquad\qquad\left.\left.\times\ \boldsymbol{1}_{\{|\Delta X_i^n|\le D_1h_n^{\rho_1}\}}\boldsymbol{1}_{\{|\Delta X_i^n|> D_2h_n^{\rho_2}\}}\ |\mathcal{F}_{i-1}^n\right]\right|\\
&\quad+\frac{1}{2n\sqrt{h_n}}\sum_{i=1}^{n}\left|\partial_{{\alpha}_{m_1}}\log\det S_{i-1}\right|\mathbb{E}\left[\left|\left(\partial_{{\beta_{m_2}}}\log\Psi_{\beta_0}(\Delta X_i^n,X_{t_{i-1}^n})\right)\varphi_n(X_{t_{i-1}^n},\Delta X_i^n)\right|\right.\\
&\qquad\qquad\qquad\qquad\qquad\left.\times\ \boldsymbol{1}_{\{|\Delta X_i^n|\le D_1h_n^{\rho_1}\}}\boldsymbol{1}_{\{|\Delta X_i^n|> D_2h_n^{\rho_2}\}}\ |\mathcal{F}_{i-1}^n\right]\\
&\quad+\frac{1}{2n\sqrt{h_n}}\sum_{i=1}^{n}\left|\int_{B}{\partial_{{\beta_{m_2}}}\Psi_{\beta_0}(y,X_{t_{i-1}^n})\varphi_n(X_{t_{i-1}^n},y)}dy\right|\\
&\qquad\qquad\qquad\qquad\times \left|\mathbb{E}\left[(\Delta X_i^n)^\top\partial_{{\alpha_{m_1}}}S_{i-1}^{-1}(\alpha_0)\Delta X_i^n\boldsymbol{1}_{\{|\Delta X_i^n|\le D_1h_n^{\rho_1}\}}\ |\mathcal{F}_{i-1}^n\right]\right|\\
&\quad+\frac{\sqrt{h_n}}{2n}\sum_{i=1}^{n}\left|\partial_{{\alpha}_{m_1}}\log\det S_{i-1}\right|\left|\int_{B}{\partial_{{\beta_{m_2}}}\Psi_{\beta_0}(y,X_{t_{i-1}^n})\varphi_n(X_{t_{i-1}^n},y)}dy\right|P(|\Delta X_i^n|\le D_1h_n^{\rho_1}\ |\mathcal{F}_{i-1}^n)\\
&\le \frac{1}{2nh_n\sqrt{h_n}}\sum_{i=1}^{n}\sum_{k_1,k_2=1}^{d}\sum_{k_3,k_4=1}^{d} \left|{\partial_{\alpha_{m_1}}S_{i-1}^{-1}}^{(k_1,k_2)}\right|\left|\partial_{\beta_{m_2}}b_{i-1}^{(k_3)}\right|\left|S_{i-1}^{(k_3,k_4)}\right|\left|\mathbb{E}\left[\bar{X}_{i,n}^{(k_1)}\bar{X}_{i,n}^{(k_2)}\bar{X}_{i,n}^{(k_4)}\boldsymbol{1}_{\{|\Delta X_i^n|\le D_1h_n^{\rho_1}\}}\ |\mathcal{F}_{i-1}^n\right]\right|\\
&\quad+ \frac{1}{n\sqrt{h_n}}\sum_{i=1}^{n}\sum_{k_1,k_2=1}^{d}\sum_{k_3,k_4=1}^{d} \left|b_{i-1}^{(k_1)}\right|\left|{\partial_{\alpha_{m_1}}S_{i-1}^{-1}}^{(k_1,k_2)}\right|\left|\partial_{\beta_{m_2}}b_{i-1}^{(k_3)}\right|\left|S_{i-1}^{(k_3,k_4)}\right|\left|\mathbb{E}\left[\bar{X}_{i,n}^{(k_2)}\bar{X}_{i,n}^{(k_4)}\boldsymbol{1}_{\{|\Delta X_i^n|\le D_1h_n^{\rho_1}\}}\ |\mathcal{F}_{i-1}^n\right]\right|\\
&\quad+ \frac{\sqrt{h_n}}{2n}\sum_{i=1}^{n}\sum_{k_1,k_2=1}^{d}\sum_{k_3,k_4=1}^{d} \left|b_{i-1}^{(k_1)}\right|\left|{\partial_{\alpha_{m_1}}S_{i-1}^{-1}}^{(k_1,k_2)}\right|\left|b_{i-1}^{(k_2)}\right|\left|\partial_{\beta_{m_2}}b_{i-1}^{(k_3)}\right|\left|S_{i-1}^{(k_3,k_4)}\right|\left|\mathbb{E}\left[\bar{X}_{i,n}^{(k_4)}\boldsymbol{1}_{\{|\Delta X_i^n|\le D_1h_n^{\rho_1}\}}\ |\mathcal{F}_{i-1}^n\right]\right|\\
&\quad+ \frac{1}{2n\sqrt{h_n}}\sum_{i=1}^{n}\left|\partial_{\alpha_{m_1}}\log\det S_{i-1}\right|\sum_{k_1,k_2=1}^{d}\left|\partial_{\beta_{m_2}}b_{i-1}^{(k_1)}\right|\left|S_{i-1}^{(k_1,k_2)}\right|\left|\mathbb{E}\left[\bar{X}_{i,n}^{(k_2)}\boldsymbol{1}_{\{|\Delta X_i^n|\le D_1h_n^{\rho_1}\}}\ |\mathcal{F}_{i-1}^n\right]\right|\\
&\quad+ \frac{1}{2nh_n\sqrt{h_n}}\sum_{i=1}^{n}\sum_{k_1,k_2=1}^{d}\left|{\partial_{\alpha_{m_1}}S_{i-1}^{-1}}^{(k_1,k_2)}\right|  \\
&\qquad\quad\times\mathbb{E}\left[\left|{\Delta X_{i}^n}^{k_1}\right|\left|{\Delta X_{i}^n}^{k_2}\right|\left|\partial_{{\beta_{m_2}}}\log\Psi_{\beta_0}(\Delta X_i^n,X_{t_{i-1}^n})\right|\varphi_n(X_{t_{i-1}^n},\Delta X_i^n)\boldsymbol{1}_{\{|\Delta X_i^n|\le D_1h_n^{\rho_1}\}}\boldsymbol{1}_{\{|\Delta X_i^n|> D_2h_n^{\rho_2}\}}\ |\mathcal{F}_{i-1}^n\right]\\
&\quad+ \frac{1}{2n\sqrt{h_n}}\sum_{i=1}^{n}\left|\partial_{\alpha_{m_1}}\log\det S_{i-1}\right|\\
&\qquad\quad\times\mathbb{E}\left[\left|\partial_{{\beta_{m_2}}}\log\Psi_{\beta_0}(\Delta X_i^n,X_{t_{i-1}^n})\right|\varphi_n(X_{t_{i-1}^n},\Delta X_i^n)\boldsymbol{1}_{\{|\Delta X_i^n|\le D_1h_n^{\rho_1}\}}\boldsymbol{1}_{\{|\Delta X_i^n|> D_2h_n^{\rho_2}\}}\ |\mathcal{F}_{i-1}^n\right]\\
&\quad+ \frac{1}{2n\sqrt{h_n}}\sum_{i=1}^{n}\left|\int_{B}{\partial_{{\beta_{m_2}}}\Psi_{\beta_0}(y,X_{t_{i-1}^n})\varphi_n(X_{t_{i-1}^n},y)}dy\right|\\
&\qquad\quad\times\sum_{k_1,k_2=1}^{d} \left|{\partial_{\alpha_{m_1}}S_{i-1}^{-1}}^{(k_1,k_2)}\right|\left|\mathbb{E}\left[\bar{X}_{i,n}^{(k_1)}\bar{X}_{i,n}^{(k_2)}\boldsymbol{1}_{\{|\Delta X_i^n|\le D_1h_n^{\rho_1}\}}\ |\mathcal{F}_{i-1}^n\right]\right|\\
&\quad+ \frac{\sqrt{h_n}}{n}\sum_{i=1}^{n}\left|\int_{B}{\partial_{{\beta_{m_2}}}\Psi_{\beta_0}(y,X_{t_{i-1}^n})\varphi_n(X_{t_{i-1}^n},y)}dy\right|\\
&\qquad\quad\times\sum_{k_1,k_2=1}^{d} \left|b_{i-1}^{(k_1)}\right|\left|{\partial_{\alpha_{m_1}}S_{i-1}^{-1}}^{(k_1,k_2)}\right|\left|\mathbb{E}\left[\bar{X}_{i,n}^{(k_2)}\boldsymbol{1}_{\{|\Delta X_i^n|\le D_1h_n^{\rho_1}\}}\ |\mathcal{F}_{i-1}^n\right]\right|\\
&\quad+ \frac{h_n\sqrt{h_n}}{2n}\sum_{i=1}^{n}\left|\int_{B}{\partial_{{\beta_{m_2}}}\Psi_{\beta_0}(y,X_{t_{i-1}^n})\varphi_n(X_{t_{i-1}^n},y)}dy\right|\\
&\qquad\quad\times\sum_{k_1,k_2=1}^{d} \left|b_{i-1}^{(k_1)}\right|\left|{\partial_{\alpha_{m_1}}S_{i-1}^{-1}}^{(k_1,k_2)}\right|\left|b_{i-1}^{(k_2)}\right|P\left(|\Delta X_i^n|\le D_1h_n^{\rho_1}\ |\mathcal{F}_{i-1}^n\right)\\
&\quad+ \frac{\sqrt{h_n}}{2n}\sum_{i=1}^{n}\left|\partial_{\alpha_{m_1}}\log\det S_{i-1}\right|\left|\int_{B}{\partial_{{\beta_{m_2}}}\Psi_{\beta_0}(y,X_{t_{i-1}^n})\varphi_n(X_{t_{i-1}^n},y)}dy\right|P\left(|\Delta X_i^n|\le D_1h_n^{\rho_1}\ |\mathcal{F}_{i-1}^n\right)\\
&\le \frac{1}{n}\sum_{i=1}^n R(\theta,\sqrt{h_n},X_{t_{i-1}^n})+ \frac{1}{n}\sum_{i=1}^n R(\theta,h_n^{\frac{3}{2}+2\rho_1},X_{t_{i-1}^n})+ \frac{1}{n}\sum_{i=1}^n R(\theta,h_n^{\frac{3}{2}},X_{t_{i-1}^n})+ \frac{1}{n}\sum_{i=1}^n R(\theta,h_n^{\frac{1}{2}+2\rho_1},X_{t_{i-1}^n})\\
&\quad+  \frac{1}{n}\sum_{i=1}^n R(\theta,h_n^{2\rho_1-\frac{3}{2}},X_{t_{i-1}^n})\mathbb{E}\left[\left|\partial_{{\beta_{m_2}}}\log\Psi_{\beta_0}(\Delta X_i^n,X_{t_{i-1}^n})\right|\varphi_n(X_{t_{i-1}^n},\Delta X_i^n)\boldsymbol{1}_{\{|\Delta X_i^n|\le D_1h_n^{\rho_1}\}}\boldsymbol{1}_{\{|\Delta X_i^n|> D_2h_n^{\rho_2}\}}\ |\mathcal{F}_{i-1}^n\right]\\
&\quad+  \frac{1}{n}\sum_{i=1}^n R(\theta,h_n^{-\frac{1}{2}},X_{t_{i-1}^n})\mathbb{E}\left[\left|\partial_{{\beta_{m_2}}}\log\Psi_{\beta_0}(\Delta X_i^n,X_{t_{i-1}^n})\right|\varphi_n(X_{t_{i-1}^n},\Delta X_i^n)\boldsymbol{1}_{\{|\Delta X_i^n|\le D_1h_n^{\rho_1}\}}\boldsymbol{1}_{\{|\Delta X_i^n|> D_2h_n^{\rho_2}\}}\ |\mathcal{F}_{i-1}^n\right].
\end{align*}
It is obvious that the first through fourth terms on the right-hand side converge to 0 in probability. 
We evaluate the fifth and sixth  terms. If we take $p=11+11\rho_1$ and $q=1+\frac{1}{10+11\rho_1}$, 
it follows from  Propositions \ref{prop1:jump} and \ref{prop2:jump} and H\"{o}lder's inequality that
\begin{align*}
&\mathbb{E}\left[\left|\partial_{{\beta_{m_2}}}\log\Psi_{\beta_0}(\Delta X_i^n,X_{t_{i-1}^n})\right|\varphi_n(X_{t_{i-1}^n},\Delta X_i^n)\boldsymbol{1}_{\{|\Delta X_i^n|\le D_1h_n^{\rho_1}\}}\boldsymbol{1}_{\{|\Delta X_i^n|> D_2h_n^{\rho_2}\}}\ |\mathcal{F}_{i-1}^n\right]\\
&\le\begin{cases}
\displaystyle
C(1+|X_{t_{i-1}^n}|)^C \left(\mathbb{E}\left[(1+|\Delta X_i^n|)^C\right]\right)^{\frac{1}{p}}P\left(\{|\Delta X_i^n|\le D_1h_n^{\rho_1}\}\cap\{|\Delta X_i^n|> D_2h_n^{\rho_2}\}\ |\mathcal{F}_{i-1}^n\right)^\frac{1}{q}   &(\text{under}\ [\textbf{C}_2\textbf{1}])\\
\displaystyle
C\varepsilon_n^{-1}(1+|X_{t_{i-1}^n}|)^C P\left(\{|\Delta X_i^n|\le D_1h_n^{\rho_1}\}\cap\{|\Delta X_i^n|> D_2h_n^{\rho_2}\}\ |\mathcal{F}_{i-1}^n\right)    &(\text{under}\ [\textbf{C}_2\textbf{2}])
\end{cases}\\
&\le\begin{cases}
Ch_n^{1+\rho_1-\frac{1}{11}}(1+|X_{t_{i-1}^n}|)^C   &(\text{under}\ [\textbf{C}_2\textbf{1}]),\\
\displaystyle
Ch_n^{1+\rho_1}\varepsilon_n^{-1}(1+|X_{t_{i-1}^n}|)^C     &(\text{under}\ [\textbf{C}_2\textbf{2}]).
\end{cases}
\end{align*}
Therefore, 
\begin{align*}
&\mathbb{E}\left[\left|\partial_{{\beta_{m_2}}}\log\Psi_{\beta_0}(\Delta X_i^n,X_{t_{i-1}^n})\right|\varphi_n(X_{t_{i-1}^n},\Delta X_i^n)\boldsymbol{1}_{\{|\Delta X_i^n|\le D_1h_n^{\rho_1}\}}\boldsymbol{1}_{\{|\Delta X_i^n|> D_2h_n^{\rho_2}\}}\ |\mathcal{F}_{i-1}^n\right]\\
&=\begin{cases}
R(\theta,h_n^{1+\rho_1-\frac{1}{11}},X_{t_{i-1}^n})  &(\text{under}\ [\textbf{C}_2\textbf{1}]),\\
\displaystyle
R(\theta,h_n^{1+\rho_1}\varepsilon_n^{-1},X_{t_{i-1}^n})     &(\text{under}\ [\textbf{C}_2\textbf{2}]).
\end{cases}
\end{align*}
Hence, for the fifth term, we have
\begin{align*}
&\frac{1}{n}\sum_{i=1}^n R(\theta,h_n^{2\rho_1-\frac{3}{2}},X_{t_{i-1}^n})\mathbb{E}\left[\left|\partial_{{\beta_{m_2}}}\log\Psi_{\beta_0}(\Delta X_i^n,X_{t_{i-1}^n})\right|\varphi_n(X_{t_{i-1}^n},\Delta X_i^n)\boldsymbol{1}_{\{|\Delta X_i^n|\le D_1h_n^{\rho_1}\}}\boldsymbol{1}_{\{|\Delta X_i^n|> D_2h_n^{\rho_2}\}}\ |\mathcal{F}_{i-1}^n\right]\\
&=\begin{cases}
\displaystyle
\frac{1}{n}\sum_{i=1}^nR(\theta,h_n^{3\rho_1-\frac{1}{2}-\frac{1}{11}},X_{t_{i-1}^n})  &(\text{under}\ [\textbf{C}_2\textbf{1}]),\\
\displaystyle
\frac{1}{n}\sum_{i=1}^nR(\theta,h_n^{3\rho_1-\frac{1}{2}}\varepsilon_n^{-1},X_{t_{i-1}^n})     &(\text{under}\ [\textbf{C}_2\textbf{2}]).
\end{cases}
\end{align*}
Since $\rho_1>\frac{1}{5}$ under the condition $[\textbf{C}_2\textbf{1}]$, 
 one has $h_n^{3\rho_1-\frac{1}{2}-\frac{1}{11}}<h_n^{\frac{3}{5}-\frac{1}{2}-\frac{1}{11}}=h_n^{\frac{1}{10}-\frac{1}{11}}\to 0$. Moreover, since $h_n^{3\rho_1-\frac{1}{2}}\varepsilon_n^{-1}\to 0$ under the condition $[\textbf{C}_2\textbf{2}]$, the fifth term converges to 0 in probability. In a similar way, the sixth term converges to 0 in probability. This implies \eqref{MCLT7-jump}.

{\bf Proof of \eqref{MCLT8-jump}.}
From the proof of \eqref{MCLT1-jump}, \eqref{MCLT2-jump}, it is easy to show that 
\begin{align*}
&\left|\sum_{i=1}^{n}{\mathbb{E}\left[\frac{1}{\sqrt{n}}\xi_i^{m_1}(\alpha_0)\ |\mathcal{F}_{i-1}^n\right]\mathbb{E}\left[\frac{1}{\sqrt{nh_n}}\left(\eta_{i,1}^{m_2}(\beta_0|\alpha_0)+\eta_{i,2}^{m_2}(\beta_0)\right)\ |\mathcal{F}_{i-1}^n\right]}\right|\\
&=\sum_{i=1}^{n}\left\{R\left(\theta,\frac{h_n}{\sqrt{n}},X_{t_{i-1}^n}\right)+R\left(\theta,\frac{h_n^{3\rho_1}}{\sqrt{n}},X_{t_{i-1}^n}\right)\right\}\\
&\qquad\qquad\times\left\{R\left(\theta,\frac{h_n\varepsilon_n^{-2}}{\sqrt{n}},X_{t_{i-1}^n}\right)+R\left(\theta,\sqrt{\frac{h_n^{1+4\rho_1}}{n}},X_{t_{i-1}^n}\right)\right\}+o_P(1)\\
&\overset{P}{\to}0.
\end{align*}

{\bf Proof of \eqref{MCLT9-jump}.}
Note that  since $\frac{1}{5}<\rho_1<\frac{1}{2}$, we obtain $\frac{5\rho_1-1}{1-2\rho_1}>0$. We show the proof for $\frac{5\rho_1-1}{1-2\rho_1}\ge\nu_1>0$.
First, let us remark that following moment estimate holds: For $p\ge1$, $t_{i-1}^n\le t\le t_i^n$,
\begin{align}\label{moment-eval:jump}
\mathbb{E}\left[|X_t-X_{t_{i-1}}^n|^p\boldsymbol{1}_{C_{i,0}^n(D,\rho)}\ |\mathcal{F}_{i-1}^n\right]= R\left(\theta,h_n^{\frac{p}{2}},X_{t_{i-1}^n}\right).
\end{align}
We can prove this estimate by the  same argument as Lemma 6 of \citet{Kessler}.
Since $C_{i,j}^n(D_1,\rho_1)\subset \{|\Delta X_i^n|\le D_1h_n^{\rho_1}\},\ (j=1,2)$, it holds from \eqref{moment-eval:jump} and Proposition \ref{prop2:jump} that 
\begin{align*}
&\mathbb{E}\left[\left|{\Delta X_i^n}\right|^{2(2+\nu_1)}\boldsymbol{1}_{\{|\Delta X_i^n|\le D_1h_n^{\rho_1}\}}\ |\mathcal{F}_{i-1}^n\right]\nonumber\\
&\quad \le \mathbb{E}\left[\left|{\Delta X_i^n}\right|^{2(2+\nu_1)}\boldsymbol{1}_{C_{i,0}^n(D_1,\rho_1)}\ |\mathcal{F}_{i-1}^n\right]+\sum_{j=1,2}\mathbb{E}\left[\left|{\Delta X_i^n}\right|^{2(2+\nu_1)}\boldsymbol{1}_{C_{i,j}^n(D_1,\rho_1)}\ |\mathcal{F}_{i-1}^n\right]\nonumber\\
&\quad \le R(\theta,h_n^{2+\nu_1},X_{t_{i-1}^n})+Ch_n^{2(2+\nu_1)\rho_1}\sum_{j=1,2}P(C_{i,j}^n(D_1,\rho_1)\|\mathcal{F}_{i-1}^n)\nonumber\\
&\quad = R(\theta,h_n^{2+\nu_1},X_{t_{i-1}^n})+ R(\theta,h_n^{1+\rho_1+2(2+\nu_1)\rho_1},X_{t_{i-1}^n}).
\end{align*}
Therefore,
\begin{align*}
&\sum_{i=1}^{n}{\mathbb{E}\left[\left|\frac{1}{\sqrt{n}}\xi_i^{m_1}(\alpha_0)\right|^{2+\nu_1}|\mathcal{F}_{i-1}^n\right]}\\
&\le\frac{C_{\nu_1}}{n^{1+\frac{\nu_1}{2}}h_n^{2+\nu_1}}\sum_{i=1}^{n}\mathbb{E}\left[\left|(\Delta X_i^n)^\top\left(\partial_{{\alpha_{m_1}}}S_{i-1}^{-1}\right)\Delta X_i^n\right|^{2+\nu_1}\boldsymbol{1}_{\{|\Delta X_i^n|\le D_1h_n^{\rho_1}\}}\ |\mathcal{F}_{i-1}^n\right]\\
&\quad+\frac{C_{\nu_1}}{n^{1+\frac{\nu_1}{2}}}\sum_{i=1}^{n}\left|\partial_{{\alpha}_{m_1}}\log\det S_{i-1}\right|^{2+\nu_1}P(|\Delta X_i^n|\le D_1h_n^{\rho_1}\ |\mathcal{F}_{i-1}^n)\\
&\le\frac{C_{\nu_1}}{n^{1+\frac{\nu_1}{2}}h_n^{2+\nu_1}}\sum_{i=1}^{n}\sum_{k_1,k_2=1}^{d}R(\theta,1,X_{t_{i-1}^n})\mathbb{E}\left[\left|{\Delta X_i^n}^{(k_1)}\right|^{2+\nu_1}\left|{\Delta X_i^n}^{(k_2)}\right|^{2+\nu_1}\boldsymbol{1}_{\{|\Delta X_i^n|\le D_1h_n^{\rho_1}\}}\ |\mathcal{F}_{i-1}^n\right]\\
&\quad+\frac{C_{\nu_1}}{n^{1+\frac{\nu_1}{2}}}\sum_{i=1}^{n}\sum_{k_1=1}^{d}R(\theta,1,X_{t_{i-1}^n})\\
&\le\frac{C_{\nu_1}}{n^{1+\frac{\nu_1}{2}}h_n^{2+\nu_1}}\sum_{i=1}^{n}\sum_{k_1,k_2=1}^{d}R(\theta,1,X_{t_{i-1}^n})\mathbb{E}\left[\left|{\Delta X_i^n}\right|^{2(2+\nu_1)}\boldsymbol{1}_{\{|\Delta X_i^n|\le D_1h_n^{\rho_1}\}}\ |\mathcal{F}_{i-1}^n\right]+\frac{C_{\nu_1}}{n^{1+\frac{\nu_1}{2}}}\sum_{i=1}^{n}R(\theta,1,X_{t_{i-1}^n})\\
&\le\frac{C_{\nu_1}}{n^{1+\frac{\nu_1}{2}}h_n^{2+\nu_1}}\sum_{i=1}^{n}\left\{R(\theta,h_n^{2+\nu_1},X_{t_{i-1}^n})+ R(\theta,h_n^{1+\rho_1+2(2+\nu_1)\rho_1},X_{t_{i-1}^n})\right\}+O_P\left({n^{-\frac{\nu_1}{2}}}\right)\\
&=O_P\left(n^{-\frac{\nu_1}{2}}{h_n^\mu}\right)+O_P\left({n^{-\frac{\nu_1}{2}}}\right),
\end{align*}
 where $\mu:=(5+2\nu_1)\rho_1-1-\nu_1=(5+2\nu_1)\left(\rho_1-\frac{1+\nu_1}{5+2\nu_1}\right)$. Hence, if we take $nu_1$ which satisfies $\frac{5\rho_1-1}{1-2\rho_1}\ge\nu_1>0$, then $\mu\ge 0$, and \eqref{MCLT9-jump} holds.

{\bf Proof of \eqref{MCLT10-jump}.}
We show the proof for $\nu_2> \left(\frac{4-2\delta}{\delta}\vee 2\right)>0$. it follows from
 Propositions \ref{prop1:jump} and \ref{prop2:jump} that
\begin{align*}
&\mathbb{E}\left[\left|\partial_{{\beta_{m_2'}}}\log\Psi_{\beta_0}(\Delta X_i^n,X_{t_{i-1}^n})\right|^{2+\nu_2}\varphi_n(X_{t_{i-1}^n},\Delta X_i^n)^{2+\nu_2}\boldsymbol{1}_{\{|\Delta X_i^n|> D_2h_n^{\rho_2}\}} |\mathcal{F}_{i-1}^n\right]\\
&\le\begin{cases}
\displaystyle
C(1+|X_{t_{i-1}^n}|)^C \mathbb{E}\left[(1+|\Delta X_i^n|)^C\boldsymbol{1}_{\{|\Delta X_i^n|> D_2h_n^{\rho_2}\}} \ |\mathcal{F}_{i-1}^n\right]   &(\text{under}\ [\textbf{C}_2\textbf{1}])\\
\displaystyle
C\varepsilon_n^{-(2+\nu_2)}(1+|X_{t_{i-1}^n}|)^C P\left(|\Delta X_i^n|> D_2h_n^{\rho_2}\ |\mathcal{F}_{i-1}^n\right)    &(\text{under}\ [\textbf{C}_2\textbf{2}])
\end{cases}\\
&\le\begin{cases}
\displaystyle
C(1+|X_{t_{i-1}^n}|)^C \left\{\mathbb{E}\left[|\Delta X_i^n|^C \ |\mathcal{F}_{i-1}^n\right]+P\left(|\Delta X_i^n|> D_2h_n^{\rho_2}\ |\mathcal{F}_{i-1}^n\right)\right\}   &(\text{under}\ [\textbf{C}_2\textbf{1}])\\
\displaystyle
Ch_n\varepsilon_n^{-(2+\nu_2)}(1+|X_{t_{i-1}^n}|)^C     &(\text{under}\ [\textbf{C}_2\textbf{2}])
\end{cases}\\
&\le\begin{cases}
\displaystyle
Ch_n(1+|X_{t_{i-1}^n}|)^C   &(\text{under}\ [\textbf{C}_2\textbf{1}])\\
\displaystyle
Ch_n\varepsilon_n^{-(2+\nu_2)}(1+|X_{t_{i-1}^n}|)^C     &(\text{under}\ [\textbf{C}_2\textbf{2}])
\end{cases}\\
&\le Ch_n\varepsilon_n^{-(2+\nu_2)}(1+|X_{t_{i-1}^n}|)^C.
\end{align*}
Therefore, we have 
\begin{align}\label{eq:lindberg-eval2-1}
&\mathbb{E}\left[\left|\partial_{{\beta_{m_2'}}}\log\Psi_{\beta_0}(\Delta X_i^n,X_{t_{i-1}^n})\right|^{2+\nu_2}\varphi_n(X_{t_{i-1}^n},\Delta X_i^n)^{2+\nu_2}\boldsymbol{1}_{\{|\Delta X_i^n|> D_2h_n^{\rho_2}\}} |\mathcal{F}_{i-1}^n\right]\nonumber\\
&\quad = R(\theta,h_n\varepsilon_n^{-(2+\nu_2)},X_{t_{i-1}^n}).
\end{align} 
Moreover,  one has
\begin{align}
\mathbb{E}\left[|\bar{X}_{i,n}(\beta_0)|^{2+\nu_2}\boldsymbol{1}_{\{|\Delta X_i^n|\le D_3h_n^{\rho_3}\}}\ |\mathcal{F}_{i-1}^n\right]&\le C\mathbb{E}\left[\left(|\Delta X_i^n|^{2+\nu_2}+h_n^{2+\nu_2}|b_{i-1}(\beta_0)|^{2+\nu_2}\right)\boldsymbol{1}_{\{|\Delta X_i^n|\le D_3h_n^{\rho_3}\}}\ |\mathcal{F}_{i-1}^n\right]\nonumber\\
&\le R(\theta,h_n^{(2+\nu_2)\rho_3},X_{t_{i-1}^n})P(|\Delta X_i^n|\le D_3h_n^{\rho_3}|\mathcal{F}_{i-1}^n)\nonumber\\
&\le R(\theta,h_n^{(2+\nu_2)\rho_3},X_{t_{i-1}^n})\label{eq:lindberg-eval2-2}
\end{align}
By using \eqref{eq:lindberg-eval2-1}, \eqref{eq:lindberg-eval2-2}, we obtain 
\begin{align*}
&\sum_{i=1}^{n}{\mathbb{E}\left[\left|\frac{1}{\sqrt{nh_n}}\left(\eta_{i,1}^{m_2}(\beta_0|\alpha_0)+\eta_{i,2}^{m_2}(\beta_0)\right)\right|^{2+\nu_2}|\mathcal{F}_{i-1}^n\right]}\\
&\le \frac{1}{{(nh_n)}^{\frac{\nu_2}{2}}}\left\{\frac{1}{nh_n}\sum_{i=1}^n\sum_{k_1,k_2=1}^d\left|\partial_{\beta_{m_2}}b_{i-1}^{(k_1)}(\beta_0)\right|^{2+\nu_2}|{S_{i-1}^{-1}(\alpha_0)}^{(k_1,k_2)}|^{2+\nu_2}\mathbb{E}\left[|\bar{X}_{i,n}^{(k_2)}(\beta_0)|^{2+\nu_2}\boldsymbol{1}_{\{|\Delta X_i^n|\le D_3h_n^{\rho_3}\}}\ |\mathcal{F}_{i-1}^n\right|\right.\\
&\qquad\qquad + \frac{1}{nh_n}\sum_{i=1}^n \mathbb{E}\left[\left|\partial_{{\beta_{m_2'}}}\log\Psi_{\beta_0}(\Delta X_i^n,X_{t_{i-1}^n})\right|^{2+\nu_2}\varphi_n(X_{t_{i-1}^n},\Delta X_i^n)^{2+\nu_2}\boldsymbol{1}_{\{|\Delta X_i^n|> D_2h_n^{\rho_2}\}} |\mathcal{F}_{i-1}^n\right]\\
&\qquad\qquad \left.+ h_n^{1+\nu_2}\frac{1}{n}\sum_{i=1}^n\left|\int_B\partial_{\beta_{m_2}}\Psi_{\beta_0}(y,X_{t_{i-1}}^n)\varphi_n(X_{t_{i-1}}^n,y)dy\right|\right\}\\
&\le \frac{1}{{(nh_n)}^{\frac{\nu_2}{2}}}\left\{\frac{1}{nh_n}\sum_{i=1}^nR(\theta,h_n^{(2+\nu_2)\rho_3},X_{t_{i-1}^n})+\frac{1}{n}\sum_{i=1}^nR(\theta,\varepsilon_n^{-(2+\nu_2)},X_{t_{i-1}^n})+\frac{1}{n}\sum_{i=1}^nR(\theta,h_n^{1+\nu_2},X_{t_{i-1}^n})\right\}\\
&\le O_p\left((nh_n)^{-\frac{\nu_2}{2}}h_n^{(2+\nu_2)\rho_3-1}\right)+O_p\left((nh_n)^{-\frac{\nu_2}{2}}\varepsilon_n^{-(2+\nu_2)}\right)+O_p\left((nh_n)^{-\frac{\nu_2}{2}}h_n^{1+\nu_2}\right).
\end{align*}
It is obvious from $\nu_2> \left(\frac{4-2\delta}{\delta}\vee 2\right)$ that the third term on the right-hand side converges to 0 in probability. Since $\rho_3\ge\frac{\delta}{4}$, we have $(2+\nu_2)\rho_3-1>\left(2+\frac{4-2\delta}{\delta}\right)\frac{\delta}{4}-1=0$. Hence, the first term converges to 0 in probability. As $nh_n\varepsilon_n^4\to\infty$ under the condition [\textbf{B}\textbf{2}], it follows that 
\begin{align*}
(nh_n)^{-\frac{\nu_2}{2}}\varepsilon_n^{-(2+\nu_2)}&=\left(\frac{1}{nh_n\varepsilon_n^4}\right)^\frac{\nu_2}{2}\cdot \varepsilon_n^{\nu_2-2}.
\end{align*}
Therefore,  we see from $\nu_2> 2$ that the second term converges to 0 in probability. This implies \eqref{MCLT10-jump}, and  completes the proof of \eqref{thm2:goal4-jump}.

Next, let $J_n=\{\omega\in\Omega\ |\ C_n(\omega)\ \text{is non-singular}\}$ and 
\begin{align*}
    \tilde{C}_n(\omega):=\begin{cases}
        C_n(\omega)&(\omega\in A_n\cap B_n\cap J_n), \\
        -I(\theta_0)&(\omega\notin A_n\cap B_n\cap J_n).
    \end{cases}
\end{align*}
 Note that $\tilde{C}_n$ is non-singular. Moreover,  it follows from $L_n=C_nS_n\ (\omega\in A_n\cap B_n)$ that 
\begin{align}\label{thm2:asyN:eq}
S_n=\tilde{C}_n^{-1}L_n\ (\omega\in A_n\cap B_n\cap J_n).
\end{align}
By using \eqref{thm2:goal1-jump},\eqref{thm2:goal2-jump} and \eqref{thm2:goal3-jump}, we obtain $C_n\overset{P}{\to}-I(\theta_0)$, and since non-singularity of $I(\theta_0)$, we have $P(J_n)\to 1$. Hence, for $\varepsilon>0$, it follows that
\begin{align*}
P\left(|\tilde{C}_n+I(\theta_0)|>\varepsilon\right)&\leq P\left(|C_n+I(\theta_0)|>\varepsilon\right)+P(A_n^c\cup B_n^c\cup J_n^c)\\
&\leq P\left(|C_n+I(\theta_0)|>\varepsilon\right)+P(A_n^c)+ P(B_n^c)+P(J_n^c)\\
&\to 0.
\end{align*}
This implies $\tilde{C}_n$ converges to $-I(\theta_0)$ in probability. 
Therefore, 
 by the continuous mapping theorem, one has
\begin{align}\label{thm2:asyN:inv}
\tilde{C}_n^{-1}\overset{P}{\to}-I(\theta_0)^{-1}.
\end{align}
Finally, it follows from \eqref{thm2:goal4-jump}, \eqref{thm2:asyN:eq}, \eqref{thm2:asyN:inv}, Slutsky's theorem and Portmanteau's lemma that for any closed set $F\subset\mathbb{R}^{p+q}$,
\begin{align*}
\limsup_{n\to\infty}P\left(S_n\in F\right)&\leq \limsup_{n\to\infty}P\left(\{\omega\in\Omega\ |\ S_n(\omega)\in F\}\cap A_n\cap B_n\cap J_n\right)+\limsup_{n\to\infty}P(A_n^c\cup B_n^c\cup J_n^c)\\
&=\limsup_{n\to\infty}P\left(\{\omega\in\Omega\ |\ \tilde{C}_n^{-1}L_n(\omega)\in F\}\cap A_n\cap B_n\cap J_n\right)\\
&\leq\limsup_{n\to\infty}P\left(\tilde{C}_n^{-1}L_n\in F\right)\\
&\leq P\left(N(0,I(\theta_0)^{-1})\in F\right).
\end{align*}
This implies $S_n\overset{d}{\to}N_{p+q}(0,I(\theta_0)^{-1})$ and completes the proof of Theorem \ref{thm2-2:jump}. 
\end{proof}

\subsubsection{Proof of Corollary \ref{thm1-1:jump}}
\begin{proof}
We prove this by applying the proof of Theorem \ref{thm1-2:jump}.

{\bf Consistency of $\hat{\alpha}_{n}$.}

 We take $D_1$ and $\rho_1$ in $l_n^{(1)}(\alpha)$ in the same way as in $\bar{l}_n(\theta)$. 
Since $\bar{X}_{i,n}(\beta)=\Delta X_{i}^n-h_nb_{i-1}(\beta)$, we have 
\begin{align}
&\sup_{\theta\in\Theta}\left|\bar{l}_n(\theta)-l_n^{(1)}(\alpha)\right|\nonumber\\
&= \sup_{\theta\in\Theta}\left|\frac{-1}{2h_n}\sum_{i=1}^n\left\{\left(\bar{X}_{i,n}(\beta)\right)^\top S_{i-1}^{-1}(\alpha)\bar{X}_{i,n}(\beta)-\left(\Delta X_i^n\right)^\top S_{i-1}^{-1}(\alpha)\Delta X_i^n\right\}\boldsymbol{1}_{\{|\Delta X_i^n|\le D_1h_n^{\rho_1}\}}\right|\nonumber\\
&\le \sup_{\theta\in\Theta}\left|\sum_{i=1}^n\sum_{k_1,k_2=1}^d b_{i-1}^{(k_1)}(\beta){S_{i-1}^{-1}}^{(k_1,k_2)}(\alpha){\Delta X_i^n}^{(k_2)}\boldsymbol{1}_{\{|\Delta X_i^n|\le D_1h_n^{\rho_1}\}}\right.\nonumber\\
&\qquad\qquad\qquad\qquad\qquad\qquad \left.-\frac{h_n}{2}\sum_{i=1}^n\left(b_{i-1}(\beta)\right)^\top S_{i-1}^{-1}(\alpha)b_{i-1}(\beta)\boldsymbol{1}_{\{|\Delta X_i^n|\le D_1h_n^{\rho_1}\}}\right|\nonumber\\
&\le \frac{1}{n}\sum_{i=1}^n\sum_{k_1,k_2=1}^d R(\theta,1,X_{t_{i-1}^n})|\Delta X_i^n|\boldsymbol{1}_{\{|\Delta X_i^n|\le D_1h_n^{\rho_1}\}}+\frac{h_n}{2n}\sum_{i=1}^n R(\theta,1,X_{t_{i-1}^n})\boldsymbol{1}_{\{|\Delta X_i^n|\le D_1h_n^{\rho_1}\}}\nonumber\\
&\le \frac{1}{n}\sum_{i=1}^n R(\theta,h_n^{\rho_1},X_{t_{i-1}^n})+\frac{1}{n}\sum_{i=1}^n R(\theta,h_n,X_{t_{i-1}^n})\nonumber\\
&\overset{P}{\to} 0.\label{a-cons:col1:diff}
\end{align}
By using \eqref{a-cons:col1:diff} and \eqref{a-cons:uni}, 
\begin{align}\label{a-cons:col1:uni}
\sup_{\theta\in\Theta}\left|\frac{1}{n}\bar{l}_n(\theta)-U_1(\alpha,\alpha_0)\right|&\le \sup_{\theta\in\Theta}\left|\frac{1}{n}\bar{l}_n(\theta)-\frac{1}{n}l_n^{(1)}(\alpha)\right|+\sup_{\theta\in\Theta}\left|\frac{1}{n}l_n^{(1)}(\theta)-U_1(\alpha,\alpha_0)\right|\nonumber\\
&\overset{P}{\to}0.
\end{align}
Moreover, by the definition of $\hat{\theta}_n$, it follows that for all $\varepsilon>0$, 
\begin{align}\label{a-cons:col1:compose}
P\left(\frac{1}{n}\bar{l}_n(\hat{\theta}_n)+\varepsilon<\frac{1}{n}\bar{l}_n(\alpha_0,\hat{\beta}_n)\right)= 0.
\end{align}
Hence, 
 in an analogous manner to the proof of consistency of $\check{\alpha}_n$,
we see from \eqref{a-cons:iden}, \eqref{a-cons:col1:uni} and \eqref{a-cons:col1:compose} that
\begin{align}\label{a-cons:col1:result}
\hat{\alpha}_n\overset{P}{\to}\alpha_0.
\end{align}

{\bf Consistency of $\hat{\beta}_{n}$.} 

 For $D_3,\rho_3,D_2$ and $\rho_2$ in $l_n^{(2)}(\beta|\bar{\alpha})$ 
and $D_1,\rho_1,D_2$ and $\rho_2$ in $l_n(\theta)$,  
we set $D_3=D_1,\ \rho_3=\rho_1,\ D_2=D_2,\ \rho_2=\rho_2$. Then, it holds that
\begin{align}\label{b-cons:col1:eq1}
\tilde{l}_n(\beta)&= \tilde{l}_n^{(2)}(\beta)
\end{align}
and that
\begin{align}\label{b-cons:col1:eq2}
\frac{1}{nh_n}\left(\bar{l}_n(\alpha,\beta)-\bar{l}_n(\alpha,\beta_0)\right)&= 
\frac{1}{nh_n}\left(\bar{l}_n^{(2)}(\beta|\alpha)-\bar{l}_n^{(2)}(\beta_0|\alpha)\right).
\end{align}
By using \eqref{b-cons:uni_jump}, \eqref{b-cons:uni}, \eqref{b-cons:col1:eq1} and \eqref{b-cons:col1:eq2}, we have 
\begin{align}\label{b-cons:col1:uni_jump}
\sup_{\beta\in\Theta_{\beta}}\left|\frac{1}{nh_n}\tilde{l}_n(\beta)-\tilde{U}_{\beta_0}^{(2)}(\beta)\right|&\overset{P}{\to} 0
\end{align}
and
\begin{align}\label{b-cons:col1:uni}
\sup_{\theta\in\Theta}\left|\frac{1}{nh_n}\bar{l}_n(\alpha,\beta)-\frac{1}{nh_n}\bar{l}_n(\alpha,\beta_0)-\bar{U}_{\beta_0}^{(2)}(\alpha,\beta)\right|&\overset{P}{\to} 0.
\end{align}
Moreover, by the definition of $\hat{\beta}_n$, it follows that for all $\varepsilon>0$, 
\begin{align}\label{b-cons:col1:compose}
P\left(\frac{1}{nh_n}l_n^{(2)}(\hat{\beta}_n|\hat{\alpha}_n)+\varepsilon<\frac{1}{nh_n}l_n^{(2)}(\beta_0|\hat{\alpha}_n)\right)=0.
\end{align}
Furthermore, by using \eqref{a-cons:col1:result}, in an analogous manner to \eqref{b-cons:tech},  one has
\begin{align}\label{b-cons:col1:tech}
|\bar{U}_{\beta_0}^{(2)}(\hat{\alpha}_n,\hat{\beta}_n)-\bar{U}_{\beta_0}^{(2)}(\alpha_0,\hat{\beta}_n)|\overset{P}{\to} 0.
\end{align}
Therefore,  in a similar way to the proof of consistency of $\check{\beta}_n$, 
it holds from  \eqref{b-cons:idenV}, \eqref{b-cons:col1:uni_jump}, \eqref{b-cons:col1:uni}, \eqref{b-cons:col1:compose} and \eqref{b-cons:col1:tech} that
\begin{align}\label{b-cons:col1:result}
\hat{\beta}_n\overset{P}{\to}\beta_0.
\end{align}
\end{proof}
\subsubsection{Proof of corollary \ref{thm2-1:jump}}
\begin{proof}    
We prove this by applying the proof of Theorem \ref{thm2-2:jump}. 
Let us define the function $l_n(\theta)$ as follows:
\begin{align*}
&l_n(\theta):=\bar{l}_n(\theta)+\tilde{l}_n(\beta),\\
&\bar{l}_n(\theta):=-\frac{1}{2}\sum_{i=1}^{n}\left\{h_n^{-1}(\bar{X}_{i,n}(\beta))^\top S_{i-1}^{-1}(\alpha)\bar{X}_{i,n}(\beta)+\log\det S_{i-1}(\alpha)\right\}\boldsymbol{1}_{\{|\Delta X_i^n|\le D_1h_n^{\rho_1}\}},\\
&\tilde{l}_n(\beta):=\sum_{i=1}^{n}\left(\log\Psi_\beta(\Delta X_i^n,X_{t_{i-1}^n})\right)\varphi_n(X_{t_{i-1}^n},\Delta X_i^n)\boldsymbol{1}_{\{|\Delta X_i^n|> D_2h_n^{\rho_2}\}}\notag\\
&\qquad\qquad\qquad-h_n\sum_{i=1}^{n}\int_{B}\Psi_\beta(y,X_{t_{i-1}^n})\varphi_n(X_{t_{i-1}^n},y)dy.
\end{align*}
By applying the thresholds $D_1,\rho_1,D_2$ and $\rho_2$ contained in $l_n(\theta)$, we further define $l_n^{(1)}(\alpha)$ and $l_n^{(2)}(\beta|\bar{\alpha})$ as follows:
\begin{align*}
&l_n^{(1)}(\alpha):=-\frac{1}{2}\sum_{i=1}^{n}\left\{h_n^{-1}(\Delta X_i^n)^\top S_{i-1}^{-1}(\alpha)\Delta X_i^n+\log\det S_{i-1}(\alpha)\right\}\boldsymbol{1}_{\{|\Delta X_i^n|\le D_1h_n^{\rho_1}\}},\\
&l_n^{(2)}(\beta|\bar{\alpha}):=\bar{l}_n^{(2)}(\beta|\bar{\alpha})+\tilde{l}_n^{(2)}(\beta),\notag\\
&\bar{l}_n^{(2)}(\beta|\bar{\alpha}):=-\frac{1}{2h_n}\sum_{i=1}^{n}(\bar{X}_{i,n}(\beta))^\top S_{i-1}^{-1}(\bar{\alpha})\bar{X}_{i,n}(\beta)\boldsymbol{1}_{\{|\Delta X_i^n|\le D_1h_n^{\rho_1}\}},\\
&\tilde{l}_n^{(2)}(\beta):=\sum_{i=1}^{n}\left(\log\Psi_\beta(\Delta X_i^n,X_{t_{i-1}^n})\right)\varphi_n(X_{t_{i-1}^n},\Delta X_i^n)\boldsymbol{1}_{\{|\Delta X_i^n|> D_2h_n^{\rho_2}\}}\notag\\
&\qquad\qquad\qquad-h_n\sum_{i=1}^{n}\int_{B}\Psi_\beta(y,X_{t_{i-1}^n})\varphi_n(X_{t_{i-1}^n},y)dy.
\end{align*}
Then we can calculate for $1\le m_1, m_1'\le p$,
\begin{align*}
\partial_{\alpha_{m_1}}l_n(\theta)&= \partial_{\alpha_{m_1}}l_n^{(1)}(\alpha)\\
&\quad -\frac{1}{2h_n}\sum_{i=1}^n\left\{\left(\bar{X}_{i,n}(\beta)\right)^\top\partial_{\alpha_{m_1}}S_{i-1}^{-1}(\alpha)\bar{X}_{i,n}(\beta)-\left(\Delta X_i^n\right)^\top\partial_{\alpha_{m_1}}S_{i-1}^{-1}(\alpha)\Delta X_i^n\right\}\boldsymbol{1}_{\{|\Delta X_i^n|\le D_1h_n^{\rho_1}\}},\\
\partial_{\alpha_{m_1}\alpha_{m_1'}}^2 l_n(\theta)&= \partial_{\alpha_{m_1}\alpha_{m_1'}}^2 l_n^{(1)}(\alpha)\\
&\quad -\frac{1}{2h_n}\sum_{i=1}^n\left\{\left(\bar{X}_{i,n}(\beta)\right)^\top\partial_{\alpha_{m_1}\alpha_{m_1'}}^2 S_{i-1}^{-1}(\alpha)\bar{X}_{i,n}(\beta)-\left(\Delta X_i^n\right)^\top\partial_{\alpha_{m_1}\alpha_{m_1'}}^2 S_{i-1}^{-1}(\alpha)\Delta X_i^n\right\}\boldsymbol{1}_{\{|\Delta X_i^n|\le D_1h_n^{\rho_1}\}},
\end{align*}
for $1\le m_2,m_2'$,
\begin{align*}
\partial_{\beta_{m_2}}l_n(\theta)&= \partial_{\beta_{m_2}}l_n^{(2)}(\beta|\alpha),\\
\partial_{\beta_{m_2}\beta_{m_2'}}^2 l_n(\theta)&= \partial_{\beta_{m_2}\beta_{m_2'}}^2 l_n^{(2)}(\beta|\alpha),
\end{align*}
and for $1\le m_1\le p, 1\le m_2\le q$,
\begin{align*}
\partial_{\alpha_{m_1}\beta_{m_2}}^2 l_n(\theta)&= \partial_{\alpha_{m_1}\beta_{m_2}}^2 l_n^{(2)}(\beta|\alpha).
\end{align*}

Let  $\varepsilon_0$ be a positive constant such that $\{\theta\in\Theta\ ;\ |\theta-\theta_0|<\varepsilon_0\}\subset\mathrm{Int}(\Theta)$. Then it follows from consistency of $\hat{\theta}_n$ that there exists an real valued sequence $\varepsilon_n<\varepsilon_0$ such that $P(\hat{A}_n)\to 1$, where $\hat{A}_{n}:=\{\omega\in\Omega\ |\ |\hat{\theta}_n(\omega)-\theta_0|<\varepsilon_n\}$. In particular, we have $\hat{\theta}_n\in\mathrm{Int}(\Theta)$ on the set $\hat{A}_n$.
Hence, By using Taylor's theorem, we have the following equations on the set $\hat{A}_n$:
\begin{align*}
-\begin{pmatrix}
\partial_{{\alpha}} l_n(\theta_0)\\
\partial_{{\beta}} l_n(\theta_0)
\end{pmatrix}&=\left(\int_{0}^{1}{\partial_{{\theta}}^2 l_n(\theta_0+u(\hat{\theta}_{n}-\theta_0))du}\right)\begin{pmatrix}
\hat{\alpha}_n-\alpha_0\\
\hat{\beta}_n-\beta_0
\end{pmatrix},
\end{align*}
where 
\begin{align*}
\partial_{{\theta}}^2 l_n(\theta)&= \begin{pmatrix}
\partial_{{\alpha}}^2 l_n(\theta)&\partial_{{\alpha\beta}}^2 l_n(\theta)\\
\partial_{{\beta\alpha}}^2 l_n(\theta)&\partial_{{\beta}}^2 l_n(\theta)
\end{pmatrix}.
\end{align*}
Therefore, we obtain $\hat{L}_n=\hat{C}_n\hat{S}_n\ (\omega\in \hat{A}_n)$, where
\begin{align*}
\hat{S}_n&:=\begin{pmatrix}
\sqrt{n}(\hat{\alpha}_{n}-\alpha_0)\\
\sqrt{nh_n}(\hat{\beta}_{n}-\beta_0)
\end{pmatrix},\quad
\hat{L}_n:=\begin{pmatrix}
-\frac{1}{\sqrt{n}}\partial_{{\alpha}}l_n(\theta_0)\\
-\frac{1}{\sqrt{nh_n}}\partial_{{\beta}} l_n(\theta_0)
\end{pmatrix},\\
\hat{C}_n&:=\begin{pmatrix}
\int_{0}^{1}{\frac{1}{n}\partial_{{\alpha}}^2 l_n(\theta_0+u(\hat{\theta}_{n}-\theta_0))du}&\int_{0}^{1}{\frac{1}{n\sqrt{h_n}}\partial_{{\alpha\beta}}^2 l_n(\theta_0+u(\hat{\theta}_{n}-\theta_0))du}\\
\int_{0}^{1}{\frac{1}{n\sqrt{h_n}}\partial_{{\beta\alpha}}^2 l_n(\theta_0+u(\hat{\theta}_{n}-\theta_0))du}&
\int_{0}^{1}{\frac{1}{nh_n}\partial_{{\beta}}^2 l_n(\theta_0+u(\hat{\theta}_{n}-\theta_0))du}
\end{pmatrix}.
\end{align*}
Thus, it is sufficient to show $\hat{S}_n\overset{d}{\to}N_{p+q}(0,I(\theta_0)^{-1})$ that 
\begin{align}
&\sup_{u\in[0,1]}\left|\frac{1}{n}\partial_{{\alpha}}^2 l_n(\theta_0+u(\hat{\theta}_{n}-\theta_0))+I_a(\alpha_0)\right|\overset{P}{\to}0\label{col2:goal1-jump}%
,\\
&\sup_{u\in[0,1]}\left|\frac{1}{nh_n}\partial_{{\beta}}^2 l_n(\theta_0+u(\hat{\theta}_{n}-\theta_0))+I_{b,c}(\theta_0)\right|\overset{P}{\to}0\label{col2:goal2-jump}%
,\\
&\sup_{u\in[0,1]}\left|\frac{1}{n\sqrt{h_n}}\partial_{{\alpha\beta}}^2 l_n(\theta_0+u(\hat{\theta}_{n}-\theta_0))\right|\overset{P}{\to}0\label{col2:goal3-jump}%
,\\
&\hat{L}_n\overset{d}{\to}N_{p+q}(0, I(\theta_0))\label{col2:goal4-jump}%
.
\end{align}

{\bf Proof of \eqref{col2:goal1-jump}.}
For $1\le m_1,m_1'\le p$, we can calculate
\begin{align*}
\partial_{\alpha_{m_1}\alpha_{m_1'}}^2 l_n(\theta)&= \partial_{\alpha_{m_1}\alpha_{m_1'}}^2 l_n^{(1)}(\alpha)\\
&\quad -\frac{1}{2h_n}\sum_{i=1}^n\left\{\left(\bar{X}_{i,n}(\beta)\right)^\top\partial_{\alpha_{m_1}\alpha_{m_1'}}^2 S_{i-1}^{-1}(\alpha)\bar{X}_{i,n}(\beta)\right.\\
&\qquad\qquad\qquad\left.-\left(\Delta X_i^n\right)^\top\partial_{\alpha_{m_1}\alpha_{m_1'}}^2 S_{i-1}^{-1}(\alpha)\Delta X_i^n\right\}\boldsymbol{1}_{\{|\Delta X_i^n|\le D_1h_n^{\rho_1}\}}.
\end{align*} 
Since $\bar{X}_{i,n}(\beta)=\Delta X_{i,n}-h_nb_{i-1}(\beta)$, by using \eqref{a-2der}, we have  
\begin{align}
&\sup_{\theta\in\Theta}\left|\frac{1}{n}\partial_{\alpha_{m_1}\alpha_{m_1'}}^2 l_n(\alpha)+I_a^{(m_1,m_1')}(\alpha)\right|\notag\\
&\le \sup_{\alpha\in\Theta_\alpha}\left|\frac{1}{n}\partial_{\alpha_{m_1}\alpha_{m_1'}}^2 l_n^{(1)}(\alpha)+I_a^{(m_1,m_1')}(\alpha)\right|\notag\\
&\quad +\frac{1}{n}\sum_{i=1}^n\sum_{k_1,k_2=1}^d\sup_{\theta\in\Theta}|b_{i-1}^{(k_1)}|\sup_{\theta\in\Theta}|\partial_{\alpha_{m_1}\alpha_{m_1'}}^2{S_{i-1}^{-1}}^{(k_1,k_2)}||{\Delta X_{i}^n}^{(k_2)}|\boldsymbol{1}_{\{|\Delta X_i^n|\le D_1h_n^{\rho_1}\}}\notag\\
&\quad +\frac{h_n}{2n}\sum_{i=1}^n\sum_{k_1,k_2=1}^d\sup_{\theta\in\Theta}|b_{i-1}^{(k_1)}|\sup_{\theta\in\Theta}|\partial_{\alpha_{m_1}\alpha_{m_1'}}^2{S_{i-1}^{-1}}^{(k_1,k_2)}|\sup_{\theta\in\Theta}|b_{i-1}^{(k_2)}|\boldsymbol{1}_{\{|\Delta X_i^n|\le D_1h_n^{\rho_1}\}}\notag\\
&\le \sup_{\alpha\in\Theta_\alpha}\left|\frac{1}{n}\partial_{\alpha_{m_1}\alpha_{m_1'}}^2 l_n^{(1)}(\alpha)+I_a^{(m_1,m_1')}(\alpha)\right|+\frac{1}{n}\sum_{i=1}^nR(\theta,h_n^{\rho_1},X_{t_{i-1}^n})+\frac{1}{n}\sum_{i=1}^nR(\theta,h_n,X_{t_{i-1}^n})\notag\\
&\overset{P}{\to} 0.\notag
\end{align}
Therefore, 
\begin{align}\label{a-2der:col}
\sup_{\theta\in\Theta}\left|\frac{1}{n}\partial_{\alpha}^2 l_n(\theta)+I_a(\alpha)\right|\overset{P}{\to}0.
\end{align}
Since for all $u\in[0,1]$, $\alpha_0+u(\hat{\alpha}_{n}-\alpha_0)\in\{\alpha\in\Theta_\alpha\ |\ |\alpha-\alpha_0|<\varepsilon_n\}$ on the set $\hat{A}_n$, it holds from \eqref{a-2der:col}, continuity of $I_a(\alpha)$ and consistency of $\hat{\alpha}_n$ that for all $\varepsilon>0$, 
\begin{align*}
&P\left(\sup_{u\in[0,1]}\left|\frac{1}{n}\partial_{{\alpha}}^2 l_n(\theta_0+u(\hat{\theta}_{n}-\theta_0))+I_a(\alpha_0)\right|>\varepsilon\right)\\
\quad&\le  P\left(\sup_{\theta\in\Theta}\left|\frac{1}{n}\partial_{{\alpha}}^2 l_n(\theta)+I_a(\alpha)\right|>\frac{\varepsilon}{2}\right)+
P\left(\sup_{u\in[0,1]}\left|I_a(\alpha_0+u(\hat{\alpha}_{n}-\alpha_0))-I_a(\alpha_0)\right|>\frac{\varepsilon}{2}\right)\\
\quad&\le  P\left(\sup_{\theta\in\Theta}\left|\frac{1}{n}\partial_{{\alpha}}^2 l_n(\theta)+I_a(\alpha)\right|>\frac{\varepsilon}{2}\right)+
P\left(\sup_{\alpha:|\alpha-\alpha_0|<\varepsilon_n}\left|I_a(\alpha)-I_a(\alpha_0)\right|>\frac{\varepsilon}{2}\right)+P(\hat{A}_n^c)\\
&\to 0\quad(n\to\infty).
\end{align*}
This implies \eqref{col2:goal1-jump}.

{\bf Proof of \eqref{col2:goal2-jump}.}
 Since $\partial_{\beta}^2 l_n(\theta)= \partial_{\beta}^2 l_n^{(2)}(\beta|\alpha)$, we see from \eqref{b-2der} that 
\begin{align}\label{b-2der:col}
\sup_{\theta\in\Theta}\left|\frac{1}{nh_n}\partial_{\beta}^2l_n(\theta)+I_{b,c}(\theta)\right|\overset{P}{\to}0.
\end{align}
Since for all $u\in[0,1]$, $\theta_0+u(\hat{\theta}_{n}-\theta_0)\in\{\theta\in\Theta\ |\ |\theta-\theta_0|<\varepsilon_n\}$ on the set $\hat{A}_n$, it follows from \eqref{b-2der:col}, continuity of 
$I_{b,c}(\theta)$ and consistency of $\hat{\theta}_n$ that for all $\varepsilon>0$, 
\begin{align*}
&P\left(\sup_{u\in[0,1]}\left|\frac{1}{nh_n}\partial_{{\beta}}^2 l_n(\theta_0+u(\hat{\theta}_{n}-\theta_0))+I_{b,c}(\theta_0)\right|>\varepsilon\right)\\
\quad&\le  P\left(\sup_{\theta\in\Theta}\left|\frac{1}{nh_n}\partial_{{\beta}}^2 l_n(\theta)+I_{b,c}(\theta)\right|>\frac{\varepsilon}{2}\right)+
P\left(\sup_{u\in[0,1]}\left|I_{b,c}(\theta_0+u(\hat{\theta}_{n}-\theta_0))-I_{b,c}(\theta_0)\right|>\frac{\varepsilon}{2}\right)\\
&\leq P\left(\sup_{\theta\in\Theta}\left|\frac{1}{nh_n}\partial_{{\beta}}^2 l_n(\theta)+I_{b,c}(\theta)\right|>\frac{\varepsilon}{2}\right)+
P\left(\sup_{\theta:|\theta-\theta_0|<\varepsilon_n}\left|I_{b,c}(\theta)-I_{b,c}(\theta_0)\right|>\frac{\varepsilon}{2}\right)+P(\hat{A}_n^c)\\
&\to 0\quad(n\to\infty).
\end{align*}
This implies \eqref{col2:goal2-jump}.

{\bf Proof of \eqref{col2:goal3-jump}.}
Since $\partial_{\alpha\beta}^2 l_n(\theta)= \partial_{\alpha\beta}^2 l_n^{(2)}(\beta|\alpha)$,
 it holds from \eqref{ab-2der} that
\begin{align}\label{ab-2der:col}
\sup_{\theta\in\Theta}\left|\frac{1}{n\sqrt{h_n}}\partial_{\alpha\beta}^2 l_n(\theta)\right|&\overset{P}{\to}0.
\end{align}
Therefore,  one has
\begin{align*}
&P\left(\sup_{u\in[0,1]}\left|\frac{1}{n\sqrt{h_n}}\partial_{{\alpha\beta}}^2 l_n(\theta_0+u(\hat{\theta}_{n}-\theta_0))\right|>\varepsilon\right)\le
P\left(\sup_{\theta\in\Theta}\left|\frac{1}{n\sqrt{h_n}}\partial_{{\alpha\beta}}^2 l_n(\theta)\right|>\varepsilon\right)\overset{P}{\to}0.
\end{align*}
This implies \eqref{col2:goal3-jump}.

{\bf Proof of \eqref{col2:goal4-jump}.}
For $1\le m_1\le p$, we can calculate  
\begin{align*}
\partial_{\alpha_{m_1}}l_n(\theta)&= \partial_{\alpha_{m_1}}l_n^{(1)}(\alpha)\\
&\quad -\frac{1}{2h_n}\sum_{i=1}^n\left\{\left(\bar{X}_{i,n}(\beta)\right)^\top\partial_{\alpha_{m_1}}S_{i-1}^{-1}(\alpha)\bar{X}_{i,n}(\beta)\right.\\
&\qquad\qquad\qquad\left.-\left(\Delta X_i^n\right)^\top\partial_{\alpha_{m_1}}S_{i-1}^{-1}(\alpha)\Delta X_i^n\right\}\boldsymbol{1}_{\{|\Delta X_i^n|\le D_1h_n^{\rho_1}\}},
\end{align*}
and for $1\le m_2\le q$, 
$\partial_{\beta_{m_2}}l_n(\theta)=\partial_{\beta_{m_2}}l_n^{(2)}(\beta|\alpha)$. 
Let
\begin{align*}
\psi_i^{m_1}(\theta_0)&:=(b_{i-1}(\beta_0))^\top\partial_{\alpha_{m_1}}S_{i-1}^{-1}(\alpha_0)\bar{X}_{i,n}(\beta_0)\boldsymbol{1}_{\{|\Delta X_i^n|\le D_1h_n^{\rho_1}\}}\\
&\qquad\qquad+\frac{h_n}{2}(b_{i-1}(\beta_0))^\top\partial_{\alpha_{m_1}}S_{i-1}^{-1}(\alpha_0)b_{i-1}(\beta_0)\boldsymbol{1}_{\{|\Delta X_i^n|\le D_1h_n^{\rho_1}\}},
\end{align*}
then, using the same notations in the proof of Theorem \ref{thm2-2:jump}, we can express 
\begin{align*}
\partial_{\alpha_{m_1}}l_n(\theta_0)&= \sum_{i=1}^n\left(\xi_{i}^{m_1}(\alpha_0)+\psi_i^{m_1}(\theta_0)\right)\\
\partial_{\beta_{m_2}}l_n(\theta_0)&=\sum_{i=1}^n\left(\eta_{i,1}^{m_2}(\beta_0|\alpha_0)+\eta_{i,2}^{m_2}(\beta_0)\right).
\end{align*}
From  \citet{Hall-Heyde}, the following types of convergence are sufficient for \eqref{col2:goal4-jump}:
for $1\le m_1,m_1'\le p_1$, $1\le m_2,m_2'\le p_2$ and some $\nu_1,\nu_2>0$,
\begin{align}
&\sum_{i=1}^{n}{\mathbb{E}\left[\frac{1}{\sqrt{n}}\left(\xi_i^{m_1}(\alpha_0)+\psi_i^{m_1}(\theta_0)\right)\ |\mathcal{F}_{i-1}^n\right]}\overset{P}{\to}0\label{MCLT1-jump:col},\\
&\sum_{i=1}^{n}{\mathbb{E}\left[\frac{1}{\sqrt{nh_n}}\left(\eta_{i,1}^{m_2}(\beta_0|\alpha_0)+\eta_{i,2}^{m_2}(\beta_0)\right)\ |\mathcal{F}_{i-1}^n\right]}\overset{P}{\to}0\label{MCLT2-jump:col},\\
&\sum_{i=1}^{n}{\mathbb{E}\left[\frac{1}{n}\left(\xi_i^{m_1}(\alpha_0)+\psi_i^{m_1}(\theta_0)\right)\left(\xi_i^{m_1'}(\alpha_0)+\psi_i^{m_1'}(\theta_0)\right)\ |\mathcal{F}_{i-1}^n\right]}\overset{P}{\to}I_a^{m_1,m_1'}(\alpha_0)\label{MCLT3-jump:col},\\
&\sum_{i=1}^{n}{\mathbb{E}\left[\frac{1}{\sqrt{n}}\left(\xi_i^{m_1}(\alpha_0)+\psi_i^{m_1}(\theta_0)\right)\ |\mathcal{F}_{i-1}^n\right]\mathbb{E}\left[\frac{1}{\sqrt{n}}\left(\xi_i^{m_1'}(\alpha_0)+\psi_i^{m_1'}(\theta_0)\right)\ |\mathcal{F}_{i-1}^n\right]}\overset{P}{\to}0\label{MCLT4-jump:col},\\
&\sum_{i=1}^{n}{\mathbb{E}\left[\frac{1}{nh_n}\left(\eta_{i,1}^{m_2}(\beta_0|\alpha_0)+\eta_{i,2}^{m_2}(\beta_0)\right)\left(\eta_{i,1}^{m_2'}(\beta_0|\alpha_0)+\eta_{i,2}^{m_2'}(\beta_0)\right)\ |\mathcal{F}_{i-1}^n\right]}\overset{P}{\to}I_{b,c}^{m_2,m_2'}(\theta_0)\label{MCLT5-jump:col},\\
&\sum_{i=1}^{n}{\mathbb{E}\left[\frac{1}{\sqrt{nh_n}}\left(\eta_{i,1}^{m_2}(\beta_0|\alpha_0)+\eta_{i,2}^{m_2}(\beta_0)\right)\ |\mathcal{F}_{i-1}^n\right]\mathbb{E}\left[\frac{1}{\sqrt{nh_n}}\left(\eta_{i,1}^{m_2'}(\beta_0|\alpha_0)+\eta_{i,2}^{m_2'}(\beta_0)\right)\ |\mathcal{F}_{i-1}^n\right]}\overset{P}{\to}0\label{MCLT6-jump:col},\\
&\sum_{i=1}^{n}{\mathbb{E}\left[\frac{1}{n\sqrt{h_n}}\left(\xi_i^{m_1}(\alpha_0)+\psi_i^{m_1}(\theta_0)\right)\left(\eta_{i,1}^{m_2}(\beta_0|\alpha_0)+\eta_{i,2}^{m_2}(\beta_0)\right)\ |\mathcal{F}_{i-1}^n\right]}\overset{P}{\to}0\label{MCLT7-jump:col},\\
&\sum_{i=1}^{n}{\mathbb{E}\left[\frac{1}{\sqrt{n}}\left(\xi_i^{m_1}(\alpha_0)+\psi_i^{m_1}(\theta_0)\right)\ |\mathcal{F}_{i-1}^n\right]\mathbb{E}\left[\frac{1}{\sqrt{nh_n}}\left(\eta_{i,1}^{m_2}(\beta_0|\alpha_0)+\eta_{i,2}^{m_2}(\beta_0)\right)\ |\mathcal{F}_{i-1}^n\right]}\overset{P}{\to}0\label{MCLT8-jump:col},\\
&\sum_{i=1}^{n}{\mathbb{E}\left[\left|\frac{1}{\sqrt{n}}\left(\xi_i^{m_1}(\alpha_0)+\psi_i^{m_1}(\theta_0)\right)\right|^{2+\nu_1}|\mathcal{F}_{i-1}^n\right]}\overset{P}{\to}0\label{MCLT9-jump:col},\\
&\sum_{i=1}^{n}{\mathbb{E}\left[\left|\frac{1}{\sqrt{nh_n}}\left(\eta_{i,1}^{m_2}(\beta_0|\alpha_0)+\eta_{i,2}^{m_2}(\beta_0)\right)\right|^{2+\nu_2}|\mathcal{F}_{i-1}^n\right]}\overset{P}{\to}0\label{MCLT10-jump:col}.
\end{align}
By an analogous argument to \eqref{MCLT2-jump}, \eqref{MCLT5-jump}, \eqref{MCLT6-jump} and \eqref{MCLT10-jump}, it is obvious that
\eqref{MCLT2-jump:col}, \eqref{MCLT5-jump:col}, \eqref{MCLT6-jump:col} and \eqref{MCLT10-jump:col} hold. 

{\bf Proof of \eqref{MCLT1-jump:col}.}
From \eqref{MCLT1-jump}, it is sufficient to show that 
\begin{align*}
\sum_{i=1}^{n}{\mathbb{E}\left[\frac{1}{\sqrt{n}}\psi_i^{m_1}(\theta_0)\ |\mathcal{F}_{i-1}^n\right]}\overset{P}{\to}0.
\end{align*}
By using \eqref{under:rep1}, we have
\begin{align*}
\left|\sum_{i=1}^{n}{\mathbb{E}\left[\frac{1}{\sqrt{n}}\psi_i^{m_1}(\theta_0)\ |\mathcal{F}_{i-1}^n\right]}\right|&= \left|\frac{1}{\sqrt{n}}\sum_{i=1}^{n}\mathbb{E}\left[(b_{i-1})^\top\partial_{\alpha_{m_1}}S_{i-1}^{-1}\bar{X}_{i,n}\boldsymbol{1}_{\{|\Delta X_i^n|\le D_1h_n^{\rho_1}\}}\ |\mathcal{F}_{i-1}^n\right]\right.\\
&\quad +\left. \frac{h_n}{2\sqrt{n}}\sum_{i=1}^{n}\mathbb{E}\left[(b_{i-1})^\top\partial_{\alpha_{m_1}}S_{i-1}^{-1}b_{i-1}\boldsymbol{1}_{\{|\Delta X_i^n|\le D_1h_n^{\rho_1}\}}\ |\mathcal{F}_{i-1}^n\right]\right|\\
&\le \frac{1}{\sqrt{n}}\sum_{i=1}^n\sum_{k_1,k_2=1}^d|b_{i-1}^{(k_1)}||{\partial_{\alpha_{m_1}}S_{i-1}^{-1}}^{(k_1,k_2)}|\left|\mathbb{E}\left[{\bar{X}_{i,n}}^{(k_2)}\boldsymbol{1}_{\{|\Delta X_i^n|\le D_1h_n^{\rho_1}\}}\ |\mathcal{F}_{i-1}^n\right]\right|\\
&\quad +\frac{h_n}{2\sqrt{n}}\sum_{i=1}^{n}|(b_{i-1})^\top\partial_{\alpha_{m_1}}S_{i-1}^{-1}b_{i-1}|P\left(|\Delta X_i^n|\le D_1 h_n^{\rho_1}\ |\mathcal{F}_{i-1}^n\right)\\
&\le \frac{1}{n}\sum_{i=1}^n R(\theta,\sqrt{n}h_n^{1+2\rho_1},X_{t_{i-1}^n})+\frac{1}{n}\sum_{i=1}^n R(\theta,\sqrt{nh_n^2},X_{t_{i-1}^n})\\
&\overset{P}{\to} 0.
\end{align*}
This implies \eqref{MCLT1-jump:col}. In a similar way, we obtain
\begin{align}\label{MCLT1-jump:col:eq1}
{\mathbb{E}\left[\psi_i^{m_1}(\theta_0)\ |\mathcal{F}_{i-1}^n\right]}&= R\left(\theta,h_n^{1+2\rho_1},X_{t_{i-1}^n}\right)+R\left(\theta,h_n,X_{t_{i-1}^n}\right)\notag\\
&= R\left(\theta,h_n,X_{t_{i-1}^n}\right).
\end{align}

{\bf Proof of \eqref{MCLT3-jump:col}.}
By simple computation, 
\begin{align*}
&\sum_{i=1}^{n}{\mathbb{E}\left[\frac{1}{n}\left(\xi_i^{m_1}(\alpha_0)+\psi_i^{m_1}(\theta_0)\right)\left(\xi_i^{m_1'}(\alpha_0)+\psi_i^{m_1'}(\theta_0)\right)\ |\mathcal{F}_{i-1}^n\right]}\\
&= \sum_{i=1}^{n}{\mathbb{E}\left[\frac{1}{n}\xi_i^{m_1}(\alpha_0)\xi_i^{m_1'}(\alpha_0)\ |\mathcal{F}_{i-1}^n\right]}+\sum_{i=1}^{n}{\mathbb{E}\left[\frac{1}{n}\xi_i^{m_1}(\alpha_0)\psi_i^{m_1'}(\theta_0)\ |\mathcal{F}_{i-1}^n\right]}\\
&\quad+\sum_{i=1}^{n}{\mathbb{E}\left[\frac{1}{n}\xi_i^{m_1'}(\alpha_0)\psi_i^{m_1}(\theta_0)\ |\mathcal{F}_{i-1}^n\right]}+\sum_{i=1}^{n}{\mathbb{E}\left[\frac{1}{n}\psi_i^{m_1}(\theta_0)\psi_i^{m_1'}(\theta_0)\ |\mathcal{F}_{i-1}^n\right]}.
\end{align*}
By using \eqref{MCLT3-jump}, the first term on the right-hand side converges to $I_a^{m_1,m_1'}(\alpha_0)$ in probability.
Since by the definition of $\psi_i^{m_1}(\theta_0)$, $\psi_i^{m_1}(\theta_0)\boldsymbol{1}_{\{|\Delta X_i^n|\le D_1h_n^{\rho_1}\}}=\psi_i^{m_1}(\theta_0)$,  it follows from \eqref{MCLT1-jump:col:eq1} that 
\begin{align*}
&\sum_{i=1}^{n}{\mathbb{E}\left[\frac{1}{n}\xi_i^{m_1}(\alpha_0)\psi_i^{m_1'}(\theta_0)\ |\mathcal{F}_{i-1}^n\right]}\\
&= \left|\frac{1}{n}\sum_{i=1}^{n}{\mathbb{E}\left[-\frac{1}{2}\left\{h_n^{-1}(\Delta X_i^n)^\top\partial_{{\alpha_{m_1}}}S_{i-1}^{-1}(\alpha_0)\Delta X_i^n+\partial_{{\alpha}_{m_1}}\log\det S_{i-1}(\alpha_0)\right\}\boldsymbol{1}_{\{|\Delta X_i^n|\le D_1h_n^{\rho_1}\}}\psi_i^{m_1'}(\theta_0)\ |\mathcal{F}_{i-1}^n\right]}\right|\\
&\le \frac{1}{2nh_n}\sum_{i=1}^{n}\left|{\mathbb{E}\left[(\Delta X_i^n)^\top\partial_{{\alpha_{m_1}}}S_{i-1}^{-1}(\alpha_0)\Delta X_i^n\boldsymbol{1}_{\{|\Delta X_i^n|\le D_1h_n^{\rho_1}\}}\psi_i^{m_1'}(\theta_0)\ |\mathcal{F}_{i-1}^n\right]}\right|\\
&\quad +\frac{1}{2n}\sum_{i=1}^{n}\left|\partial_{{\alpha}_{m_1}}\log\det S_{i-1}(\alpha_0)\right|\left|\mathbb{E}\left[\psi_i^{m_1'}(\theta_0)\boldsymbol{1}_{\{|\Delta X_i^n|\le D_1h_n^{\rho_1}\}}\ |\mathcal{F}_{i-1}^n\right]\right|\\
&\le \frac{\left(D_1h_n^{\rho_1}\right)^2}{2nh_n}\sum_{i=1}^n\sum_{k_1,k_2=1}^d\left|\partial_{\alpha_{m_1}}{S_{i-1}^{-1}}^{(k_1,k_2)}\right|\left|\mathbb{E}\left[\psi_i^{m_1}(\theta_0)\ |\mathcal{F}_{i-1}^n\right]\right|+\frac{1}{2n}\sum_{i=1}^{n}\left|\partial_{{\alpha}_{m_1}}\log\det S_{i-1}(\alpha_0)\right|\left|\mathbb{E}\left[\psi_i^{m_1'}(\theta_0)\ |\mathcal{F}_{i-1}^n\right]\right|\\
&\le \frac{1}{n}\sum_{i=1}^n R(\theta,h_n^{2\rho_1},X_{t_{i-1}^n})+\frac{1}{n}\sum_{i=1}^n R(\theta,h_n,X_{t_{i-1}^n})\\
&\overset{P}{\to} 0.
\end{align*}
This implies the second term converges to 0 in probability.
The third term is the same. 
For the fourth term,  we see from \eqref{under:rep2} and \eqref{under:rep1} that
\begin{align*}
&\left|\sum_{i=1}^{n}{\mathbb{E}\left[\frac{1}{n}\psi_i^{m_1}(\theta_0)\psi_i^{m_1'}(\theta_0)\ |\mathcal{F}_{i-1}^n\right]}\right|\\
&\le \frac{1}{n}\sum_{i=1}^n\left|\mathbb{E}\left[(b_{i-1})^\top\partial_{{\alpha_{m_1}}}S_{i-1}^{-1}\bar{X}_{i,n}\cdot (b_{i-1})^\top\partial_{{\alpha_{m_1'}}}S_{i-1}^{-1}\bar{X}_{i,n}\boldsymbol{1}_{\{|\Delta X_i^n|\le D_1h_n^{\rho_1}\}}\ |\mathcal{F}_{i-1}^n\right]\right.\\
&\quad +\mathbb{E}\left[(b_{i-1})^\top\partial_{{\alpha_{m_1}}}S_{i-1}^{-1}\bar{X}_{i,n}\cdot \frac{h_n}{2}(b_{i-1})^\top\partial_{{\alpha_{m_1'}}}S_{i-1}^{-1}b_{i-1}\boldsymbol{1}_{\{|\Delta X_i^n|\le D_1h_n^{\rho_1}\}}\ |\mathcal{F}_{i-1}^n\right]\\
&\quad +\mathbb{E}\left[\frac{h_n}{2}(b_{i-1})^\top\partial_{{\alpha_{m_1}}}S_{i-1}^{-1}b_{i-1}\cdot (b_{i-1})^\top\partial_{{\alpha_{m_1'}}}S_{i-1}^{-1}\bar{X}_{i,n}\boldsymbol{1}_{\{|\Delta X_i^n|\le D_1h_n^{\rho_1}\}}\ |\mathcal{F}_{i-1}^n\right]\\
&\quad +\left.\mathbb{E}\left[\frac{h_n}{2}(b_{i-1})^\top\partial_{{\alpha_{m_1}}}S_{i-1}^{-1}b_{i-1}\cdot \frac{h_n}{2}(b_{i-1})^\top\partial_{{\alpha_{m_1'}}}S_{i-1}^{-1}b_{i-1}\boldsymbol{1}_{\{|\Delta X_i^n|\le D_1h_n^{\rho_1}\}}\ |\mathcal{F}_{i-1}^n\right]\right|\\
&\le \frac{1}{n}\sum_{i=1}^n\sum_{k_1,k_2=1}^d\sum_{k_3,k_4=1}^d R(\theta,1,X_{t_{i-1}}^n)\left|\mathbb{E}\left[\bar{X}_{i,n}^{(k_2)}\bar{X}_{i,n}^{(k_4)}\boldsymbol{1}_{\{|\Delta X_i^n|\le D_1h_n^{\rho_1}\}} \ |\mathcal{F}_{i-1}^n\right]\right|\\
&\quad +\frac{h_n}{2n}\sum_{i=1}^n\sum_{k_1,k_2=1}^d\sum_{k_3,k_4=1}^d R(\theta,1,X_{t_{i-1}}^n)\left|\mathbb{E}\left[\bar{X}_{i,n}^{(k_2)}\boldsymbol{1}_{\{|\Delta X_i^n|\le D_1h_n^{\rho_1}\}} \ |\mathcal{F}_{i-1}^n\right]\right|\\
&\quad +\frac{h_n}{2n}\sum_{i=1}^n\sum_{k_1,k_2=1}^d\sum_{k_3,k_4=1}^d R(\theta,1,X_{t_{i-1}}^n)\left|\mathbb{E}\left[\bar{X}_{i,n}^{(k_4)}\boldsymbol{1}_{\{|\Delta X_i^n|\le D_1h_n^{\rho_1}\}} \ |\mathcal{F}_{i-1}^n\right]\right|\\
&\quad +\frac{h_n^2}{4n}\sum_{i=1}^n\sum_{k_1,k_2=1}^d\sum_{k_3,k_4=1}^d R(\theta,1,X_{t_{i-1}}^n)P(|\Delta X_i^n|\le D_1h_n^{\rho_1}\ |\mathcal{F}_{i-1}^n)\\
&\le \frac{1}{n}\sum_{i=1}^n R(\theta,h_n,X_{t_{i-1}^n})+\frac{1}{n}\sum_{i=1}^n R(\theta,h_n^{2+2\rho_1},X_{t_{i-1}^n})+\frac{1}{n}\sum_{i=1}^n R(\theta,h_n^{2+2\rho_1},X_{t_{i-1}^n})++\frac{1}{n}\sum_{i=1}^n R(\theta,h_n^{2},X_{t_{i-1}^n})\\
&\overset{P}{\to} 0.
\end{align*}
This implies the fourth term converges to 0 in probability, and we obtain \eqref{MCLT3-jump:col}.

{\bf Proof of \eqref{MCLT4-jump:col}.}
From \eqref{MCLT4-jump}, \eqref{MCLT1-jump:col:eq1} and the proof of \eqref{MCLT1-jump}, it is easy to show that
\begin{align*}
&\sum_{i=1}^{n}{\mathbb{E}\left[\frac{1}{\sqrt{n}}\left(\xi_i^{m_1}(\alpha_0)+\psi_i^{m_1}(\theta_0)\right)\ |\mathcal{F}_{i-1}^n\right]\mathbb{E}\left[\frac{1}{\sqrt{n}}\left(\xi_i^{m_1'}(\alpha_0)+\psi_i^{m_1'}(\theta_0)\right)\ |\mathcal{F}_{i-1}^n\right]}\\
&=\sum_{i=1}^{n}{\mathbb{E}\left[\frac{1}{\sqrt{n}}\xi_i^{m_1}(\alpha_0)\ |\mathcal{F}_{i-1}^n\right]\mathbb{E}\left[\frac{1}{\sqrt{n}}\xi_i^{m_1'}(\alpha_0)\ |\mathcal{F}_{i-1}^n\right]}\\
&\quad +\sum_{i=1}^{n}{\mathbb{E}\left[\frac{1}{\sqrt{n}}\xi_i^{m_1}(\alpha_0)\ |\mathcal{F}_{i-1}^n\right]\mathbb{E}\left[\frac{1}{\sqrt{n}}\psi_i^{m_1'}(\alpha_0)\ |\mathcal{F}_{i-1}^n\right]}\\
&\quad +\sum_{i=1}^{n}{\mathbb{E}\left[\frac{1}{\sqrt{n}}\psi_i^{m_1}(\alpha_0)\ |\mathcal{F}_{i-1}^n\right]\mathbb{E}\left[\frac{1}{\sqrt{n}}\xi_i^{m_1'}(\alpha_0)\ |\mathcal{F}_{i-1}^n\right]}\\
&\quad +\sum_{i=1}^{n}{\mathbb{E}\left[\frac{1}{\sqrt{n}}\psi_i^{m_1}(\alpha_0)\ |\mathcal{F}_{i-1}^n\right]\mathbb{E}\left[\frac{1}{\sqrt{n}}\psi_i^{m_1'}(\alpha_0)\ |\mathcal{F}_{i-1}^n\right]}\\
&= o_p(1)+\frac{1}{n}\sum_{i=1}^n R\left(\theta,\sqrt{nh_n^{3+\delta}},X_{t_{i-1}^n}\right)+\frac{1}{n}\sum_{i=1}^n R\left(\theta,\sqrt{nh_n^{3+\delta}},X_{t_{i-1}^n}\right)+\frac{1}{n}\sum_{i=1}^n R(\theta,h_n^2,X_{t_{i-1}^n})\\
&\overset{P}{\to} 0.
\end{align*}

{\bf Proof of \eqref{MCLT7-jump:col}.}
By simple computation, 
\begin{align*}
&\sum_{i=1}^{n}{\mathbb{E}\left[\frac{1}{n\sqrt{h_n}}\left(\xi_i^{m_1}(\alpha_0)+\psi_i^{m_1}(\theta_0)\right)\left(\eta_{i,1}^{m_2}(\beta_0|\alpha_0)+\eta_{i,2}^{m_2}(\beta_0)\right)\ |\mathcal{F}_{i-1}^n\right]}\\
&= \sum_{i=1}^{n}{\mathbb{E}\left[\frac{1}{n\sqrt{h_n}}\xi_i^{m_1}(\alpha_0)\left(\eta_{i,1}^{m_2}(\beta_0|\alpha_0)+\eta_{i,2}^{m_2}(\beta_0)\right)\ |\mathcal{F}_{i-1}^n\right]}\\
&\quad +\sum_{i=1}^{n}{\mathbb{E}\left[\frac{1}{n\sqrt{h_n}}\psi_i^{m_1}(\theta_0)\left(\eta_{i,1}^{m_2}(\beta_0|\alpha_0)+\eta_{i,2}^{m_2}(\beta_0)\right)\ |\mathcal{F}_{i-1}^n\right]},
\end{align*}
and using \eqref{MCLT7-jump}, the right-hand side converges to 0 in probability. Therefore, it is sufficient to show that 
\begin{align*}
\sum_{i=1}^{n}{\mathbb{E}\left[\frac{1}{n\sqrt{h_n}}\psi_i^{m_1}(\theta_0)\left(\eta_{i,1}^{m_2}(\beta_0|\alpha_0)+\eta_{i,2}^{m_2}(\beta_0)\right)\ |\mathcal{F}_{i-1}^n\right]}\overset{P}{\to}0.
\end{align*}
First,
\begin{align*}
\left|\sum_{i=1}^{n}{\mathbb{E}\left[\frac{1}{n\sqrt{h_n}}\psi_i^{m_1}(\theta_0)\left(\eta_{i,1}^{m_2}(\beta_0|\alpha_0)+\eta_{i,2}^{m_2}(\beta_0)\right)\ |\mathcal{F}_{i-1}^n\right]}\right|\le \sum_{i=1}^3 I_i^n,
\end{align*}
where 
\begin{align*}
I_1^n&:= \frac{1}{n\sqrt{h_n}}\sum_{i=1}^n\left|\mathbb{E}\left[\psi_i^{m_1}(\theta_0)(\partial_{\beta_{m_2}}b_{i-1})^\top S_{i-1}^{-1}\bar{X}_{i,n}\boldsymbol{1}_{\left\{|\Delta X_i^n|\le D_1h_n^{\rho_1}\right\}}\ |\mathcal{F}_{i-1}^n\right]\right|,\\
I_2^n&:= \frac{1}{n\sqrt{h_n}}\sum_{i=1}^n\left|\mathbb{E}\left[\psi_i^{m_1}(\theta_0)\left(\partial_{\beta_{m_2}}\log \Psi_{\beta_0}(\Delta X_i^n,X_{t_{i-1}^n})\right)\varphi_n(X_{t_{i-1}^n},\Delta X_i^n)
\boldsymbol{1}_{\left\{|\Delta X_i^n|> D_2h_n^{\rho_2}\right\}}\ |\mathcal{F}_{i-1}^n\right]\right|,\\
I_3^n&:= \frac{\sqrt{h_n}}{n}\sum_{i=1}^n\int_{B}\left|\partial_{{\beta_{m_2}}}\Psi_{\beta_0}(y,X_{t_{i-1}^n})\right|\varphi_n(X_{t_{i-1}^n},y)dy\left|\mathbb{E}\left[\psi_i^{m_1}(\theta_0)\ |\mathcal{F}_{i-1}^n\right]\right|.
\end{align*}
For $I_1^n$,  it holds from \eqref{under:rep1}, \eqref{under:rep2} that
\begin{align*}
I_1^n&=\frac{1}{n\sqrt{h_n}}\sum_{i=1}^n\left|\mathbb{E}\left[\psi_i^{m_1}(\theta_0)(\partial_{\beta_{m_2}}b_{i-1})^\top S_{i-1}^{-1}\bar{X}_{i,n}\boldsymbol{1}_{\left\{|\Delta X_i^n|\le D_1h_n^{\rho_1}\right\}}\ |\mathcal{F}_{i-1}^n\right]\right|\\
&\le \frac{1}{n\sqrt{h_n}}\sum_{i=1}^n\left|\mathbb{E}\left[(b_{i-1})^\top\partial_{\alpha_{m_1}}S_{i-1}^{-1}\bar{X}_{i,n}(\partial_{\beta_{m_2}}b_{i-1})^\top S_{i-1}^{-1}\bar{X}_{i,n}\boldsymbol{1}_{\left\{|\Delta X_i^n|\le D_1h_n^{\rho_1}\right\}}\ |\mathcal{F}_{i-1}^n\right]\right|\\
&\quad+\frac{\sqrt{h_n}}{2n}\sum_{i=1}^n\left|\mathbb{E}\left[(b_{i-1})^\top\partial_{\alpha_{m_1}}S_{i-1}^{-1}b_{i-1}(\partial_{\beta_{m_2}}b_{i-1})^\top S_{i-1}^{-1}\bar{X}_{i,n}\boldsymbol{1}_{\left\{|\Delta X_i^n|\le D_1h_n^{\rho_1}\right\}}\ |\mathcal{F}_{i-1}^n\right]\right|\\
&\le \frac{1}{n\sqrt{h_n}}\sum_{i=1}^n\sum_{k_1,k_2}^d\sum_{k_3,k_4}^d\left|b_{i-1}^{(k_1)}\right|\left|\partial_{\alpha_{m_1}}{S_{i-1}^{-1}}^{(k_1,k_2)}\right|\left|\partial_{\beta_{m_2}}b_{i-1}^{(k_3)}\right|\left|{S_{i-1}^{-1}}^{(k_3,k_4)}\right| \\
&\qquad\qquad\qquad\times\left|\mathbb{E}\left[\bar{X}_{i,n}^{(k_2)}\bar{X}_{i,n}^{(k_4)}\boldsymbol{1}_{\left\{|\Delta X_i^n|\le D_1h_n^{\rho_1}\right\}}\ |\mathcal{F}_{i-1}^n\right]\right|\\
&\quad+ \frac{\sqrt{h_n}}{2n}\sum_{i=1}^n\sum_{k_1,k_2}^d\sum_{k_3,k_4}^d\left|b_{i-1}^{(k_1)}\right|\left|\partial_{\alpha_{m_1}}{S_{i-1}^{-1}}^{(k_1,k_2)}\right|\left|b_{i-1}^{(k_2)}\right|\left|\partial_{\beta_{m_2}}b_{i-1}^{(k_3)}\right|\left|{S_{i-1}^{-1}}^{(k_3,k_4)}\right|\\ &\qquad\qquad\qquad\qquad\times\left|\mathbb{E}\left[\bar{X}_{i,n}^{(k_4)}\boldsymbol{1}_{\left\{|\Delta X_i^n|\le D_1h_n^{\rho_1}\right\}}\ |\mathcal{F}_{i-1}^n\right]\right|\\
&\le \frac{1}{n\sqrt{h_n}}\sum_{i=1}^n R(\theta,h_n,X_{t_{i-1}^n})+\frac{\sqrt{h_n}}{2n}\sum_{i=1}^n R(\theta,h_n^{1+2\rho_1},X_{t_{i-1}^n})\\
&\le O_p\left(\sqrt{h_n}\right)+O_p\left(h_n^{\frac{3}{2}+2\rho_1}\right).
\end{align*}
For $I_2^n$, we can calculate
\begin{align*}
I_2^n&=\frac{1}{n\sqrt{h_n}}\sum_{i=1}^n\left|\mathbb{E}\left[\psi_i^{m_1}(\theta_0)\left(\partial_{\beta_{m_2}}\log \Psi_{\beta_0}(\Delta X_i^n,X_{t_{i-1}^n})\right)\varphi_n(X_{t_{i-1}^n},\Delta X_i^n)
\boldsymbol{1}_{\left\{|\Delta X_i^n|> D_2h_n^{\rho_2}\right\}}\ |\mathcal{F}_{i-1}^n\right]\right|\\
&\le \frac{1}{n\sqrt{h_n}}\sum_{i=1}^n\left|\mathbb{E}\left[(b_{i-1})^\top\partial_{\alpha_{m_1}}S_{i-1}^{-1}\bar{X}_{i,n}\left(\partial_{\beta_{m_2}}\log \Psi_{\beta_0}(\Delta X_i^n,X_{t_{i-1}^n})\right)\varphi_n(X_{t_{i-1}^n},\Delta X_i^n)\right.\right.\\
&\qquad\qquad\qquad\times\left.\left.\boldsymbol{1}_{\left\{|\Delta X_i^n|\le D_1h_n^{\rho_1}\right\}}\boldsymbol{1}_{\left\{|\Delta X_i^n|> D_2h_n^{\rho_2}\right\}}\ |\mathcal{F}_{i-1}^n\right]\right|\\
&\quad+\frac{\sqrt{h_n}}{2n}\sum_{i=1}^n\left|\mathbb{E}\left[(b_{i-1})^\top\partial_{\alpha_{m_1}}S_{i-1}^{-1}b_{i-1}\left(\partial_{\beta_{m_2}}\log \Psi_{\beta_0}(\Delta X_i^n,X_{t_{i-1}^n})\right)\varphi_n(X_{t_{i-1}^n},\Delta X_i^n)\right.\right.\\
&\qquad\qquad\qquad\times\left.\left.\boldsymbol{1}_{\left\{|\Delta X_i^n|\le D_1h_n^{\rho_1}\right\}}\boldsymbol{1}_{\left\{|\Delta X_i^n|> D_2h_n^{\rho_2}\right\}}\ |\mathcal{F}_{i-1}^n\right]\right|\\
&\le \frac{1}{n\sqrt{h_n}}\sum_{i=1}^n\sum_{k_1,k_2=1}^d\left|b_{i-1}^{(k_1)}\right|\left|\partial_{\alpha_{m_1}}{S_{i-1}^{-1}}^{(k_1,k_2)}\right|
\mathbb{E}\left[\left|\bar{X}_{i,n}^{(k_2)}\right|\left|\partial_{\beta_{m_2}}\log \Psi_{\beta_0}(\Delta X_i^n,X_{t_{i-1}^n})\right|\varphi_n(X_{t_{i-1}^n},\Delta X_i^n)\right.\\
&\qquad\qquad\qquad\times\left.\boldsymbol{1}_{\left\{|\Delta X_i^n|\le D_1h_n^{\rho_1}\right\}}\boldsymbol{1}_{\left\{|\Delta X_i^n|> D_2h_n^{\rho_2}\right\}}\ |\mathcal{F}_{i-1}^n\right]\\
&\quad+\frac{\sqrt{h_n}}{2n}\sum_{i=1}^n\sum_{k_1,k_2=1}^d\left|b_{i-1}^{(k_1)}\right|\left|\partial_{\alpha_{m_1}}{S_{i-1}^{-1}}^{(k_1,k_2)}\right|\left|b_{i-1}^{(k_2)}\right|\mathbb{E}\left[\left|\partial_{\beta_{m_2}}\log \Psi_{\beta_0}(\Delta X_i^n,X_{t_{i-1}^n})\right|\varphi_n(X_{t_{i-1}^n},\Delta X_i^n)\right.\\
&\qquad\qquad\qquad\times\left.\boldsymbol{1}_{\left\{|\Delta X_i^n|\le D_1h_n^{\rho_1}\right\}}\boldsymbol{1}_{\left\{|\Delta X_i^n|> D_2h_n^{\rho_2}\right\}}\ |\mathcal{F}_{i-1}^n\right].
\end{align*}
For the first term on the right-hand side, by noticing that
$\left|\bar{X}_{i,n}(\beta_0)\right|\boldsymbol{1}_{\left\{|\Delta X_i^n|\le D_1h_n^{\rho_1}\right\}}=R(\theta,h_n^{\rho_1},X_{t_{i-1}^n})\boldsymbol{1}_{\left\{|\Delta X_i^n|\le D_1h_n^{\rho_1}\right\}}$ and using Proposition \ref{prop2:jump}, it follows that
\begin{align*}
&\frac{1}{n\sqrt{h_n}}\sum_{i=1}^n\sum_{k_1,k_2=1}^d\left|b_{i-1}^{(k_1)}\right|\left|\partial_{\alpha_{m_1}}{S_{i-1}^{-1}}^{(k_1,k_2)}\right|
\mathbb{E}\left[\left|\bar{X}_{i,n}^{(k_2)}\right|\left|\partial_{\beta_{m_2}}\log \Psi_{\beta_0}(\Delta X_i^n,X_{t_{i-1}^n})\right|\varphi_n(X_{t_{i-1}^n},\Delta X_i^n)\right.\\
&\qquad\qquad\qquad\times\left.\boldsymbol{1}_{\left\{|\Delta X_i^n|\le D_1h_n^{\rho_1}\right\}}\boldsymbol{1}_{\left\{|\Delta X_i^n|> D_2h_n^{\rho_2}\right\}}\ |\mathcal{F}_{i-1}^n\right]\\
&\quad\le\begin{cases}
\displaystyle
\frac{1}{n\sqrt{h_n}}\sum_{i=1}^{n}\sum_{k_1,k_2=1}^d R(\theta,1,X_{t_{i-1}^n}) \\
\qquad\qquad\qquad\times\mathbb{E}\left[\left|\bar{X}_{i,n}^{k_2}\right|(1+|\Delta X_i^n|)^C\boldsymbol{1}_{\{|\Delta X_i^n|\le D_1h_n^{\rho_1}\}}\boldsymbol{1}_{\{|\Delta X_i^n|> D_2h_n^{\rho_2}\}} \ |\mathcal{F}_{i-1}^n\right]   &(\text{under}\ [\textbf{C}_2\textbf{1}])\\
\displaystyle
\frac{1}{n\sqrt{h_n}}\sum_{i=1}^{n}\sum_{k_1,k_2=1}^d R(\theta,\varepsilon_n^{-1},X_{t_{i-1}^n}) \mathbb{E}\left[\left|\bar{X}_{i,n}^{k_2}\right|\boldsymbol{1}_{\{|\Delta X_i^n|\le D_1h_n^{\rho_1}\}}\boldsymbol{1}_{\{|\Delta X_i^n|> D_2h_n^{\rho_2}\}} \ |\mathcal{F}_{i-1}^n\right]    &(\text{under}\ [\textbf{C}_2\textbf{2}])
\end{cases}\\
&\quad\le\begin{cases}
\displaystyle
\frac{1}{n\sqrt{h_n}}\sum_{i=1}^{n}\sum_{k_1,k_2=1}^d R(\theta,h_n^{\rho_1},X_{t_{i-1}^n}) \\
\qquad\qquad\qquad\times\mathbb{E}\left[(1+D_1h_n^{\rho_1})^C\boldsymbol{1}_{\{|\Delta X_i^n|\le D_1h_n^{\rho_1}\}}\boldsymbol{1}_{\{|\Delta X_i^n|> D_2h_n^{\rho_2}\}} \ |\mathcal{F}_{i-1}^n\right]   &(\text{under}\ [\textbf{C}_2\textbf{1}])\\
\displaystyle
\frac{1}{n\sqrt{h_n}}\sum_{i=1}^{n}\sum_{k_1,k_2=1}^d R(\theta,h_n^{\rho_1}\varepsilon_n^{-1},X_{t_{i-1}^n})\\
\qquad\qquad\qquad\times P\left(\{|\Delta X_i^n|\le D_1h_n^{\rho_1}\}\cap\{|\Delta X_i^n|> D_2h_n^{\rho_2}\}\ |\mathcal{F}_{i-1}^n\right)    &(\text{under}\ [\textbf{C}_2\textbf{2}])
\end{cases}\\
&\quad\le\begin{cases}
\displaystyle
\frac{1}{n\sqrt{h_n}}\sum_{i=1}^{n}\sum_{k_1,k_2=1}^d R(\theta,h_n^{\rho_1},X_{t_{i-1}^n}) \\
\qquad\qquad\qquad\times P\left(\{|\Delta X_i^n|\le D_1h_n^{\rho_1}\}\cap\{|\Delta X_i^n|> D_2h_n^{\rho_2}\}\ |\mathcal{F}_{i-1}^n\right)   &(\text{under}\ [\textbf{C}_2\textbf{1}])\\
\displaystyle
\frac{1}{n\sqrt{h_n}}\sum_{i=1}^{n}\sum_{k_1,k_2=1}^d R(\theta,h_n^{\rho_1}\varepsilon_n^{-1},X_{t_{i-1}^n})\\
\qquad\qquad\qquad\times P\left(\{|\Delta X_i^n|\le D_1h_n^{\rho_1}\}\cap\{|\Delta X_i^n|> D_2h_n^{\rho_2}\}\ |\mathcal{F}_{i-1}^n\right)    &(\text{under}\ [\textbf{C}_2\textbf{2}])
\end{cases}\\
&\quad\le\begin{cases}
\displaystyle
\frac{1}{n\sqrt{h_n}}\sum_{i=1}^{n}\sum_{k_1,k_2=1}^d R\left(\theta,h_n^{1+2\rho_1},X_{t_{i-1}^n}\right) & (\text{under}\ [\textbf{C}_2\textbf{1}])\\
\displaystyle
\frac{1}{n\sqrt{h_n}}\sum_{i=1}^{n}\sum_{k_1,k_2=1}^d R\left(\theta,h_n^{1+2\rho_1}\varepsilon_n^{-1},X_{t_{i-1}^n}\right)  & (\text{under}\ [\textbf{C}_2\textbf{2}])
\end{cases}\\
&\quad\le\begin{cases}
O_p\left(h_n^{\frac{1}{2}+2\rho_1}\right)& (\text{under}\ [\textbf{C}_2\textbf{1}]),\\
O_p\left(h_n^{\frac{1}{2}+2\rho_1}\varepsilon_n^{-1}\right)& (\text{under}\ [\textbf{C}_2\textbf{2}]).
\end{cases}
\end{align*}
For the second term,  we see from \ref{prop2:jump} that 
\begin{align*}
&\frac{\sqrt{h_n}}{2n}\sum_{i=1}^n\sum_{k_1,k_2=1}^d\left|b_{i-1}^{(k_1)}\right|\left|\partial_{\alpha_{m_1}}{S_{i-1}^{-1}}^{(k_1,k_2)}\right|\left|b_{i-1}^{(k_2)}\right|\mathbb{E}\left[\left|\partial_{\beta_{m_2}}\log \Psi_{\beta_0}(\Delta X_i^n,X_{t_{i-1}^n})\right|\varphi_n(X_{t_{i-1}^n},\Delta X_i^n)\right.\\
&\qquad\qquad\qquad\qquad\times\left.\boldsymbol{1}_{\left\{|\Delta X_i^n|\le D_1h_n^{\rho_1}\right\}}\boldsymbol{1}_{\left\{|\Delta X_i^n|> D_2h_n^{\rho_2}\right\}}\ |\mathcal{F}_{i-1}^n\right]\\
&\quad\le\begin{cases}
\displaystyle
\frac{\sqrt{h_n}}{2n}\sum_{i=1}^{n}\sum_{k_1,k_2=1}^d R(\theta,1,X_{t_{i-1}^n}) \\
\qquad\qquad\qquad\times\mathbb{E}\left[(1+|\Delta X_i^n|)^C\boldsymbol{1}_{\{|\Delta X_i^n|\le D_1h_n^{\rho_1}\}}\boldsymbol{1}_{\{|\Delta X_i^n|> D_2h_n^{\rho_2}\}} \ |\mathcal{F}_{i-1}^n\right]   &(\text{under}\ [\textbf{C}_2\textbf{1}])\\
\displaystyle
\frac{\sqrt{h_n}}{2n}\sum_{i=1}^{n}\sum_{k_1,k_2=1}^d R(\theta,\varepsilon_n^{-1},X_{t_{i-1}^n}) P\left(\{|\Delta X_i^n|\le D_1h_n^{\rho_1}\}\cap\{|\Delta X_i^n|> D_2h_n^{\rho_2}\}\ |\mathcal{F}_{i-1}^n\right)    &(\text{under}\ [\textbf{C}_2\textbf{2}])
\end{cases}\\
&\quad\le\begin{cases}
\displaystyle
\frac{\sqrt{h_n}}{2n}\sum_{i=1}^{n}\sum_{k_1,k_2=1}^d R(\theta,1,X_{t_{i-1}^n}) \\
\qquad\qquad\qquad\times\mathbb{E}\left[(1+D_1h_n^{\rho_1})^C\boldsymbol{1}_{\{|\Delta X_i^n|\le D_1h_n^{\rho_1}\}}\boldsymbol{1}_{\{|\Delta X_i^n|> D_2h_n^{\rho_2}\}} \ |\mathcal{F}_{i-1}^n\right]   &(\text{under}\ [\textbf{C}_2\textbf{1}])\\
\displaystyle
\frac{\sqrt{h_n}}{2n}\sum_{i=1}^{n}\sum_{k_1,k_2=1}^d R(\theta,\varepsilon_n^{-1},X_{t_{i-1}^n}) P\left(\{|\Delta X_i^n|\le D_1h_n^{\rho_1}\}\cap\{|\Delta X_i^n|> D_2h_n^{\rho_2}\}\ |\mathcal{F}_{i-1}^n\right)    &(\text{under}\ [\textbf{C}_2\textbf{2}])
\end{cases}\\
&\quad\le\begin{cases}
\displaystyle
\frac{\sqrt{h_n}}{2n}\sum_{i=1}^{n}\sum_{k_1,k_2=1}^d R(\theta,1,X_{t_{i-1}^n})P\left(\{|\Delta X_i^n|\le D_1h_n^{\rho_1}\}\cap\{|\Delta X_i^n|> D_2h_n^{\rho_2}\}\ |\mathcal{F}_{i-1}^n\right)   &(\text{under}\ [\textbf{C}_2\textbf{1}])\\
\displaystyle
\frac{\sqrt{h_n}}{2n}\sum_{i=1}^{n}\sum_{k_1,k_2=1}^d R(\theta,\varepsilon_n^{-1},X_{t_{i-1}^n}) P\left(\{|\Delta X_i^n|\le D_1h_n^{\rho_1}\}\cap\{|\Delta X_i^n|> D_2h_n^{\rho_2}\}\ |\mathcal{F}_{i-1}^n\right)    &(\text{under}\ [\textbf{C}_2\textbf{2}])
\end{cases}\\
&\quad\le\begin{cases}
\displaystyle
\frac{\sqrt{h_n}}{2n}\sum_{i=1}^{n}\sum_{k_1,k_2=1}^d R(\theta,h_n^{1+\rho_1},X_{t_{i-1}^n})   &(\text{under}\ [\textbf{C}_2\textbf{1}])\\
\displaystyle
\frac{\sqrt{h_n}}{2n}\sum_{i=1}^{n}\sum_{k_1,k_2=1}^d R(\theta,h_n^{1+\rho_1}\varepsilon_n^{-1},X_{t_{i-1}^n})     &(\text{under}\ [\textbf{C}_2\textbf{2}])
\end{cases}\\
&\quad\le\begin{cases}
\displaystyle
O_p\left(h_n^{\frac{3}{2}+\rho_1}\right)   &(\text{under}\ [\textbf{C}_2\textbf{1}]),\\
\displaystyle
O_p\left(h_n^{\frac{3}{2}+\rho_1}\varepsilon_n^{-1}\right)    &(\text{under}\ [\textbf{C}_2\textbf{2}]).
\end{cases}
\end{align*}
For $I_3^n$, it is obvious from \eqref{MCLT1-jump:col:eq1} that 
\begin{align*}
I_3^n&=\frac{\sqrt{h_n}}{n}\sum_{i=1}^n\int_{B}\left|\partial_{{\beta_{m_2}}}\Psi_{\beta_0}(y,X_{t_{i-1}^n})\right|\varphi_n(X_{t_{i-1}^n},y)dy\left|\mathbb{E}\left[\psi_i^{m_1}(\theta_0)\ |\mathcal{F}_{i-1}^n\right]\right|\\
&\le \frac{1}{n}\sum_{i=1}^nR(\theta,h_n^{\frac{3}{2}},X_{t_{i-1}^n})\\
&\le O_p\left(h_n^{\frac{3}{2}}\right).
\end{align*}
Finally, 
\begin{align*}
&\sum_{i=1}^{n}{\mathbb{E}\left[\frac{1}{n\sqrt{h_n}}\psi_i^{m_1}(\theta_0)\left(\eta_{i,1}^{m_2}(\beta_0|\alpha_0)+\eta_{i,2}^{m_2}(\beta_0)\right)\ |\mathcal{F}_{i-1}^n\right]}\le \sum_{i=1}^3 I_i^n\\
&\quad\le\begin{cases}
\displaystyle
O_p\left(\sqrt{h_n}\right)+O_p\left(h_n^{\frac{3}{2}+2\rho_1}\right)+O_p\left(h_n^{\frac{1}{2}+2\rho_1}\right)+O_p\left(h_n^{\frac{3}{2}+\rho_1}\right)+O_p\left(h_n^{\frac{3}{2}}\right)   &(\text{under}\ [\textbf{C}_2\textbf{1}])\\
\displaystyle
O_p\left(\sqrt{h_n}\right)+O_p\left(h_n^{\frac{3}{2}+2\rho_1}\right)+O_p\left(h_n^{\frac{1}{2}+2\rho_1}\varepsilon_n^{-1}\right)+O_p\left(h_n^{\frac{3}{2}+\rho_1}\varepsilon_n^{-1}\right)+O_p\left(h_n^{\frac{3}{2}}\right)
    &(\text{under}\ [\textbf{C}_2\textbf{2}])
\end{cases}\\
&\quad\overset{P}{\to}0.
\end{align*}
This implies \eqref{MCLT7-jump:col}.

{\bf Proof of \eqref{MCLT8-jump:col}.}
From \eqref{MCLT8-jump}, \eqref{MCLT1-jump:col:eq1} and the proof of \eqref{MCLT2-jump}, it is easy to show that
\begin{align*}
&\sum_{i=1}^{n}{\mathbb{E}\left[\frac{1}{\sqrt{n}}\left(\xi_i^{m_1}(\alpha_0)+\psi_i^{m_1}(\theta_0)\right)\ |\mathcal{F}_{i-1}^n\right]\mathbb{E}\left[\frac{1}{\sqrt{nh_n}}\left(\eta_{i,1}^{m_2}(\beta_0|\alpha_0)+\eta_{i,2}^{m_2}(\beta_0)\right)\ |\mathcal{F}_{i-1}^n\right]}\\
&= \sum_{i=1}^{n}{\mathbb{E}\left[\frac{1}{\sqrt{n}}\xi_i^{m_1}(\alpha_0)\ |\mathcal{F}_{i-1}^n\right]\mathbb{E}\left[\frac{1}{\sqrt{nh_n}}\left(\eta_{i,1}^{m_2}(\beta_0|\alpha_0)+\eta_{i,2}^{m_2}(\beta_0)\right)\ |\mathcal{F}_{i-1}^n\right]}\\
&\quad +\sum_{i=1}^{n}{\mathbb{E}\left[\frac{1}{\sqrt{n}}\psi_i^{m_1}(\theta_0)\ |\mathcal{F}_{i-1}^n\right]\mathbb{E}\left[\frac{1}{\sqrt{nh_n}}\left(\eta_{i,1}^{m_2}(\beta_0|\alpha_0)+\eta_{i,2}^{m_2}(\beta_0)\right)\ |\mathcal{F}_{i-1}^n\right]}\\
&= o_p(1)+\sum_{i=1}^n R\left(\theta,\frac{h_n}{\sqrt{n}},X_{t_{i-1}^n}\right)\left\{R\left(\theta,\frac{h_n\varepsilon_n^{-2}}{\sqrt{n}},X_{t_{i-1}^n}\right)+R\left(\theta,\sqrt{\frac{h_n^{1+4\rho_1}}{n}},X_{t_{i-1}^n}\right)\right\}+o_P(1)\\
&\overset{P}{\to} 0.
\end{align*}

{\bf Proof of \eqref{MCLT9-jump:col}.}
Let us note that
\begin{align*}
&\sum_{i=1}^{n}{\mathbb{E}\left[\left|\frac{1}{\sqrt{n}}\left(\xi_i^{m_1}(\alpha_0)+\psi_i^{m_1}(\theta_0)\right)\right|^{2+\nu_1}|\mathcal{F}_{i-1}^n\right]}\\
&\le C_{\nu_1}\sum_{i=1}^{n}{\mathbb{E}\left[\left|\frac{1}{\sqrt{n}}\xi_i^{m_1}(\alpha_0)\right|^{2+\nu_1}|\mathcal{F}_{i-1}^n\right]}+C_{\nu_1}\sum_{i=1}^{n}{\mathbb{E}\left[\left|\frac{1}{\sqrt{n}}\psi_i^{m_1}(\theta_0)\right|^{2+\nu_1}|\mathcal{F}_{i-1}^n\right]}.
\end{align*}
By using \eqref{MCLT9-jump}, the first term on the right-hand side converges to 0 in probability if $\frac{5\rho_1-1}{1-2\rho_1}\ge \nu_1>0$. Therefore, it is sufficient to show that for $\frac{5\rho_1-1}{1-2\rho_1}\ge \nu_1'>0$, 
\begin{align*}
\sum_{i=1}^{n}{\mathbb{E}\left[\left|\frac{1}{\sqrt{n}}\psi_i^{m_1}(\theta_0)\right|^{2+\nu_1'}|\mathcal{F}_{i-1}^n\right]}\overset{P}{\to}0.
\end{align*}
Since $|\bar{X}_{i,n}(\beta_0)|^{2+\nu_1'}\boldsymbol{1}_{\left\{|\Delta X_i^n|\le D_1h_n^{\rho_1}\right\}}=R(\theta,h_n^{(2+\nu_1')\rho_1},X_{t_{i-1}^n})\boldsymbol{1}_{\left\{|\Delta X_i^n|\le D_1h_n^{\rho_1}\right\}}$, it follows that
\begin{align*}
&\sum_{i=1}^{n}{\mathbb{E}\left[\left|\frac{1}{\sqrt{n}}\psi_i^{m_1}(\theta_0)\right|^{2+\nu_1'}|\mathcal{F}_{i-1}^n\right]}\\
&\le n^{-\frac{\nu_1'}{2}}\cdot\frac{1}{n}\sum_{i=1}^n\mathbb{E}\left[\left|(b_{i-1})^\top\partial_{\alpha_{m_1}}{S_{i-1}^{-1}}\bar{X}_{i,n}\right|^{2+\nu_1'}\boldsymbol{1}_{\{|\Delta X_i^n|\le D_1h_n^{\rho_1}\}} \ |\mathcal{F}_{i-1}^n\right]\\
&\quad +n^{-\frac{\nu_1'}{2}}\cdot\frac{h_n^{2+\nu_1'}}{2^{2+\nu_1'}n}\sum_{i=1}^n\left|(b_{i-1})^\top\partial_{\alpha_{m_1}}{S_{i-1}^{-1}}b_{i-1}\right|^{2+\nu_1'}P\left(|\Delta X_i^n|\le D_1h_n^{\rho_1}\} \ |\mathcal{F}_{i-1}^n\right)\\
&\le n^{-\frac{\nu_1'}{2}}\cdot\frac{1}{n}\sum_{i=1}^n\sum_{k_1,k_2=1}^d R(\theta,1,X_{t_{i-1}^n})\mathbb{E}\left[\left|\bar{X}_{i,n}^{(k_2)}\right|^{2+\nu_1'}\boldsymbol{1}_{\{|\Delta X_i^n|\le D_1h_n^{\rho_1}\}} \ |\mathcal{F}_{i-1}^n\right]\\
&\quad + \frac{1}{n}\sum_{i=1}^n R(\theta,n^{-\frac{\nu_1'}{2}}h_n^{2+\nu_1'},X_{t_{i-1}^n})\\
&\le \frac{1}{n}\sum_{i=1}^n\sum_{k_1,k_2=1}^d R(\theta,n^{-\frac{\nu_1'}{2}}h_n^{(2+\nu_1')\rho_1},X_{t_{i-1}^n})P(|\Delta X_i^n|\le D_1h_n^{\rho_1}\ |\mathcal{F}_{i-1}^n)\\
&\quad + \frac{1}{n}\sum_{i=1}^n R(\theta,n^{-\frac{\nu_1'}{2}}h_n^{2+\nu_1'},X_{t_{i-1}^n})\\
&\le \frac{1}{n}\sum_{i=1}^n\sum_{k_1,k_2=1}^d R(\theta,n^{-\frac{\nu_1'}{2}}h_n^{(2+\nu_1')\rho_1},X_{t_{i-1}^n}) + \frac{1}{n}\sum_{i=1}^n R(\theta,n^{-\frac{\nu_1'}{2}}h_n^{2+\nu_1'},X_{t_{i-1}^n}).
\end{align*}
Since for $\nu_1'>0$,
\begin{align*}
\sum_{i=1}^{n}{\mathbb{E}\left[\left|\frac{1}{\sqrt{n}}\psi_i^{m_1}(\theta_0)\right|^{2+\nu_1'}|\mathcal{F}_{i-1}^n\right]}\overset{P}{\to}0,
\end{align*}
 one has \eqref{MCLT9-jump:col} for $\frac{5\rho_1-1}{1-2\rho_1}\ge \nu_1>0$. This ends the proof of \eqref{col2:goal4-jump}.

In a similar way to the proof of Theorem \ref{thm2-2:jump}, it holds from \eqref{col2:goal1-jump}, \eqref{col2:goal2-jump}, \eqref{col2:goal3-jump}, \eqref{col2:goal4-jump} that $\hat{S}_n\overset{d}{\to}N_{p+q}(0,I(\theta_0)^{-1})$, and this completes the proof.
\end{proof}
\subsection{Proof of Chapter \ref{sec:test}}
Let  $D_n$  be a $(p+q)\times(p+q)$-matrix defined as
\begin{align*}
D_n:=\begin{pmatrix}
    \sqrt{n}E_p&O\\
    O&\sqrt{nh_n}E_q
\end{pmatrix},
\end{align*}
where $E_k$ denotes the $k\times k$ identity matrix.
Partition $I_a(\alpha;\alpha_0)$ and $I_{b,c}(\theta;\theta_0)$ into  four matrices as follows:
\begin{align*}
&I_a(\alpha;\alpha_0)=\begin{pmatrix}
    I_{a,1}(\alpha;\alpha_0)&I_{a,2}(\alpha;\alpha_0)\\
    I_{a,2}(\alpha;\alpha_0)^\top&I_{a,3}(\alpha;\alpha_0)
\end{pmatrix}, & I_{b,c}(\theta;\theta_0)=\begin{pmatrix}
    I_{b,c,1}(\theta;\theta_0)&I_{b,c,2}(\theta;\theta_0)\\
    I_{b,c,2}(\theta;\theta_0)^\top&I_{b,c,3}(\theta;\theta_0)
\end{pmatrix},
\end{align*}
where $I_{a,1}(\alpha;\alpha_0), I_{a,2}(\alpha;\alpha_0)$ and $I_{a,3}(\alpha;\alpha_0)$ are the $k\times k$, the $k\times(p-k)$ and the $(p-k)\times(p-k)$ matrices, respectively, and $I_{b,c,1}(\theta;\theta_0), I_{b,c,2}(\theta;\theta_0)$ and $I_{b,c,3}(\theta;\theta_0)$ are the $l\times l$, the $l\times(q-l)$ and the $(q-l)\times(q-l)$ matrices, respectively.
Moreover, let $H$ be a $(p+q)\times(p+q)$-matrix as follows:
\begin{align*}
H=\begin{pmatrix}
    O&O&O&O\\
    O&I_{a,3}(\theta_0;\theta_0)^{-1}&O&O\\
    O&O&O&O\\
    O&O&O&I_{b,c,3}(\theta_0;\theta_0)^{-1}
\end{pmatrix}
\end{align*}
\subsubsection{Preliminaries}

\ \  If a random variable $Y$ has a $p$-dimensional normal distribution with mean $0$ and covariance matrix $I$,
we write $Y \sim N_{p}(0,I)$. 

\begin{lemma}\label{lemma:H0test}
Assume {\bf{[A1]}}-{\bf{[A12]}}, {\bf{[B2]}}, {\bf{[B3]}}, {[{\bf T1}]} and either {[$\textbf{D}_2 \textbf{1}$]} or {[$\textbf{D}_2 \textbf{2}$]} of Corollary \ref{thm2-1:jump}. 
Then, under $H_0$, $D_n^{\frac{1}{2}}(\hat{\theta}_n-\hat{\theta}^*_n)\overset{d}{\to}\left(I(\theta_0;\theta_0)^{-1}-H\right)Y$,
where 
$Y \sim N_{p+q}(0,I(\theta_0;\theta_0))$.
\end{lemma}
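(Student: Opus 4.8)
The plan is to produce a joint first-order stochastic expansion of the unconstrained estimator $\hat\theta_n$ and the constrained estimator $\hat\theta_n^*$ about the common point $\theta_0$, which under $H_0$ lies in $\mathrm{Int}(\Theta_0)$, and then to subtract them. For $\hat\theta_n$ this is essentially contained in the proof of Corollary \ref{thm2-1:jump}: on the event $\hat A_n=\{|\hat\theta_n-\theta_0|<\varepsilon_n\}$, which has probability tending to $1$ by Corollary \ref{thm1-1:jump}, $\hat\theta_n$ is an interior maximizer of $l_n$ over $\Theta$, so $\partial_\theta l_n(\hat\theta_n)=0$, and Taylor's theorem gives $\hat L_n=\hat C_n\,D_n(\hat\theta_n-\theta_0)$ with $\hat L_n:=-D_n^{-1}\partial_\theta l_n(\theta_0)$ and $\hat C_n$ the averaged normalized Hessian. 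By \eqref{col2:goal1-jump}--\eqref{col2:goal3-jump}, $\hat C_n\overset{P}{\to}-I(\theta_0;\theta_0)$, and by \eqref{col2:goal4-jump}, $\hat L_n\overset{d}{\to}Y\sim N_{p+q}(0,I(\theta_0;\theta_0))$. Since $\hat L_n=O_p(1)$ and $\hat C_n$ is invertible in the limit, the regularization device already used in that proof yields $D_n(\hat\theta_n-\theta_0)=-I(\theta_0;\theta_0)^{-1}\hat L_n+o_p(1)$.

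For $\hat\theta_n^*$ I would first observe that the consistency argument of Corollary \ref{thm1-1:jump} carries over verbatim with $\Theta,\Theta_\alpha,\Theta_\beta$ replaced by $\Theta_0,\Theta_{\alpha_0},\Theta_{\beta_0}$: the limiting contrast functions $U_1$ and $V_{\beta_0}$ are unchanged, and the identifiability inequalities \eqref{a-cons:iden} and \eqref{b-cons:idenV}, restricted to these smaller compact convex sets, still single out $\theta_0$ (which lies in their relative interiors under $H_0$); hence $\hat\theta_n^*\overset{P}{\to}\theta_0$ and $\hat\theta_n^*\in\mathrm{Int}(\Theta_0)$ on an event of probability $\to1$. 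Writing ``$\mathrm f$'' for the free coordinates (components $k+1,\dots,p$ of $\alpha$ and $l+1,\dots,q$ of $\beta$) and noting that $\hat\theta_n^*-\theta_0$ vanishes in the remaining fixed coordinates, the first-order conditions $\partial_{\theta_{\mathrm f}}l_n(\hat\theta_n^*)=0$, expanded about $\theta_0$, involve only the \emph{free--free} block of the Hessian; after normalization its $\alpha$--$\beta$ cross block is $o_p(1)$ by the restriction of \eqref{col2:goal3-jump}, while by the restrictions of \eqref{col2:goal1-jump} and \eqref{col2:goal2-jump} its diagonal blocks converge to $-I_{a,3}(\theta_0;\theta_0)$ and $-I_{b,c,3}(\theta_0;\theta_0)$. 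These are invertible, being principal submatrices of the positive definite matrix $I(\theta_0;\theta_0)$ (positive definiteness is read off the Gram-type expressions \eqref{I_a:jump}--\eqref{I_c:jump} together with {\bf [A12]}). Solving and re-embedding into $\mathbb{R}^{p+q}$ with zeros in the fixed coordinates gives precisely $D_n(\hat\theta_n^*-\theta_0)=-H\hat L_n+o_p(1)$, with $H$ the matrix introduced before the lemma.

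Subtracting the two expansions gives $D_n(\hat\theta_n-\hat\theta_n^*)=-\bigl(I(\theta_0;\theta_0)^{-1}-H\bigr)\hat L_n+o_p(1)$, and since $\hat L_n\overset{d}{\to}Y\sim N_{p+q}(0,I(\theta_0;\theta_0))$ with $Y$ symmetric, Slutsky's lemma and the continuous mapping theorem give $D_n(\hat\theta_n-\hat\theta_n^*)\overset{d}{\to}-\bigl(I(\theta_0;\theta_0)^{-1}-H\bigr)Y\overset{d}{=}\bigl(I(\theta_0;\theta_0)^{-1}-H\bigr)Y$, which is the assertion. The main obstacle is the constrained-side bookkeeping of the second paragraph: one must re-run the Taylor/Hessian computation of Corollary \ref{thm2-1:jump} on the affine slice $\Theta_0$ while being careful that the second-order object governing $\hat\theta_n^*$ is the free--free block of the Hessian, whose limit is $\mathrm{diag}\bigl(I_{a,3}(\theta_0;\theta_0),I_{b,c,3}(\theta_0;\theta_0)\bigr)$, so that its inverse is exactly $H$ and \emph{not} the corresponding block of $I(\theta_0;\theta_0)^{-1}$; the asymptotic $\alpha$--$\beta$ orthogonality furnished by \eqref{col2:goal3-jump} is what makes the two information blocks decouple cleanly. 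No probabilistic estimates beyond those already established in the excerpt are required.
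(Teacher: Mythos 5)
Your proposal is correct and follows essentially the same route as the paper: both estimators are expanded around $\theta_0$ (valid under $H_0$ since $\theta_0\in\mathrm{Int}(\Theta_0)$), the unconstrained expansion gives $I(\theta_0;\theta_0)^{-1}$ times the normalized score, the constrained one gives $H$ times the same score, and subtraction plus the CLT \eqref{col2:goal4-jump}, Slutsky and continuous mapping yield $(I(\theta_0;\theta_0)^{-1}-H)Y$. The only (inessential) difference is on the constrained side: you solve the free-coordinate first-order conditions using the free--free block of the Hessian (whose limit $\mathrm{diag}(I_{a,3},I_{b,c,3})$ is invertible by positive definiteness of $I(\theta_0;\theta_0)$) and re-embed, whereas the paper reaches the same representation \eqref{lemma2:eq6} by multiplying the full-dimensional Taylor expansion by $H$ and using $HD_n^{-\frac{1}{2}}\partial_\theta l_n(\hat{\theta}_n^*)=0$ together with $HI(\theta_0;\theta_0)v=v$ for vectors $v$ vanishing in the constrained coordinates, after securing consistency and tightness of $\hat{\theta}_n^*$ through the reduced-dimension application of Corollaries \ref{thm1-1:jump} and \ref{thm2-1:jump}.
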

\begin{proof}
We discuss  $\hat{\theta}_n$ and  $\hat{\theta}_n^*$ under $H_0$. 
By using  Corollaries \ref{thm1-1:jump} and \ref{thm2-1:jump}, 
\begin{align}\label{lemma2:theta-pconv}
\hat{\theta}_n\overset{P}{\to}\theta_0,
\end{align}
\begin{align}\label{lemma2:theta-dconv}
D_n^{\frac{1}{2}}(\hat{\theta}_n-\theta_0)=O_p(1).
\end{align}
Next, we show that $\hat{\theta}_n^*\overset{P}{\to}\theta_0$. 
We define $\bar{\hat{\theta}}_n^*$ and $\bar{\theta}_0$ as follows:
\begin{align*}
\bar{\hat{\theta}}_n^* &= ({\hat{\alpha}_n}^{*(k+1)},\cdots,{\hat{\alpha}_n}^{*(p)},{\hat{\beta}_n}^{*(l+1)},\cdots,{\hat{\beta}_n}^{*(q)})^\top,\\
\bar{\theta}_0&= (\alpha_0^{(k+1)},\cdots,\alpha_0^{(p)},\beta_0^{(l+1)},\cdots,\beta_0^{(q)})^\top,
\end{align*}
where
\begin{align*}
\hat{\theta}_n^* &= (0,\cdots,0,{\hat{\alpha}_n}^{*(k+1)},\cdots,{\hat{\alpha}_n}^{*(p)},0,\cdots,0,{\hat{\beta}_n}^{*(l+1)},\cdots,{\hat{\beta}_n}^{*(q)})^\top,\\
\theta_0&= (0,\cdots,0,\alpha_0^{(k+1)},\cdots,\alpha_0^{(p)},0,\cdots,0,\beta_0^{(l+1)},\cdots,\beta_0^{(q)})^\top.
\end{align*}
Let $\bar{\Theta}_0:=\{\bar{\theta}\in\mathbb{R}^{(p+q)-(k+l)}\ |\ \exists\theta\in\Theta_0,\ \bar{\theta}= (\theta^{(k+1)},\cdots,\theta^{(p)},\theta^{(l+1)},\cdots,\theta^{(q)})^\top\}$ and define $U_n(\bar{\theta})$ as follows:
\begin{align*}
U_n\left((\alpha^{(k+1)},\cdots,\alpha^{(p)},\beta^{(l+1)},\cdots,\beta^{(q)})^\top\right):=l_n\left((0,\cdots,0,\alpha^{(k+1)},\cdots,\alpha^{(p)},0,\cdots,0,\beta^{(l+1)},\cdots,\beta^{(q)})^\top\right).
\end{align*}  
Then $U_n(\bar{\theta})$ can be regarded as a quasi-log likelihood function in $(p+q)-(k+l)$ dimensions. By the definition of $\bar{\hat{\theta}}_n^*, U_n(\bar{\theta})$, we have $\argsup_{\bar{\theta}\in\bar{\Theta}_0} U_n(\bar{\theta})=\bar{\hat{\theta}}_n^*$.
Therefore, by using Corollary \ref{thm1-1:jump}, it holds that $\bar{\hat{\theta}}_n^*\overset{P}{\to}\bar{\theta}_0$. This implies 
\begin{align*}
\hat{\alpha}_n^{*(i)}-\alpha_0^{(i)}\overset{P}{\to}0\ \ \ \ (k+1\le i\le p),\\
\hat{\beta}_n^{*(j)}-\beta_0^{(j)}\overset{P}{\to}0\ \ \ \ (l+1\le j\le q).
\end{align*}
Hence, we obtain
\begin{align}\label{lemma2:theta*-pconv}
\hat{\theta}_n^*\overset{P}{\to}\theta_0
\end{align}
since
\begin{align*}
\hat{\theta}_n^*-\theta_0=(0,\cdots,0,\hat{\alpha}_n^{*(k+1)}-\alpha_0^{(k+1)},\cdots,\hat{\alpha}_n^{*(p)}-\alpha_0^{(p)},0,\cdots,0,\hat{\beta}_n^{*(l+1)}-\beta_0^{(l+1)},\cdots,\hat{\beta}_n^{*(q)}-\beta_0^{(q)})^\top.
\end{align*}
Let $\bar{I}(\bar{\theta};\theta_0)$ be a $(p+q-k-l)\times(p+q-k-l)$-matrix as follows:
\begin{align*}
\bar{I}(\bar{\theta};\theta_0)=\begin{pmatrix}
    \bar{I}_{a,3}(\bar{\alpha};\alpha_0)&O\\
    O&\bar{I}_{b,c,3}(\bar{\theta};\theta_0)
\end{pmatrix},
\end{align*}
where
\begin{align*}
\bar{I}_{a,3}(\bar{\alpha};\alpha_0)&:=I_{a,3}((0,\cdots,0,\alpha^{(k+1)},\cdots,\alpha^{(p)})^\top;\alpha_0), \\
\bar{I}_{b,c,3}(\bar{\theta};\theta_0)&:=I_{b,c,3}((0,\cdots,0,\alpha^{(k+1)},\cdots,\alpha^{(p)},0,\cdots,0,\beta^{(l+1)},\cdots,\beta^{(q)})^\top;\theta_0).
\end{align*}
Since, from {\bf [A12]}, 
$I(\theta_0;\theta_0)$ is non-singular,  $\bar{I}(\bar{\theta}_0;\theta_0)$ is non-singular, too.
Then it follows from Corollary \ref{thm2-1:jump} that
\begin{align*}
    \begin{pmatrix}
        \sqrt{n}(\bar{\hat{\alpha}}_n^*-\bar{\alpha}_0)\\
        \sqrt{nh_n}(\bar{\hat{\beta}}_n^*-\bar{\beta}_0)
    \end{pmatrix}\overset{d}{\to}N_{p+q-k-l}(0,\bar{I}(\bar{\theta}_0;\theta_0)^{-1}).
\end{align*}
Since 
\begin{align*}
    \begin{pmatrix}
        \sqrt{n}(\bar{\hat{\alpha}}_n^*-\bar{\alpha}_0)\\
        \sqrt{nh_n}(\bar{\hat{\beta}}_n^*-\bar{\beta}_0)
    \end{pmatrix}=O_p(1),
\end{align*}
 one has
\begin{align}\label{lemma2:theta*-dconv}
    D_n^{\frac{1}{2}}(\hat{\theta}_n^*-\theta_0)=O_p(1).
\end{align}
Let  $\varepsilon_0$ be a positive constant such that $\{\theta\in\Theta_0\ ;\ |\theta-\theta_0|<\varepsilon_0\}\subset\mathrm{Int}(\Theta_0)$, then, from \eqref{lemma2:theta-pconv} and \eqref{lemma2:theta*-pconv}, there exists a real valued sequence $\varepsilon_n<\varepsilon_0$ which satisfies $P(\hat{A}_n\cap\hat{A}_n^*)\to 1$, where
$\hat{A}_n:=\{\omega\in\Omega\ |\ |\hat{\theta}_n(\omega)-\theta_0|<\varepsilon_n\}$ and $\hat{A}_n^*:=\{\omega\in\Omega\ |\ |\hat{\theta}_n^*(\omega)-\theta_0|<\varepsilon_n\}$. 
On the set $\hat{A}_n^*$, it follows from Taylor's theorem that 
\begin{align}\label{lemma2:eq1}
D_n^{-\frac{1}{2}}\partial_{\theta}l_n(\hat{\theta}_n^*)-D_n^{-\frac{1}{2}}\partial_\theta l_n(\theta_0)=D_n^{-\frac{1}{2}}\int_0^1 \partial_\theta^2 l_n(\theta_0+u(\hat{\theta}_n^*-\theta_0))duD_n^{-\frac{1}{2}} D_n^{\frac{1}{2}}(\hat{\theta}_n^*-\theta_0)\ \ \ \ (\omega\in \hat{A}_n^*).
\end{align}
We define 
\begin{align*}
I_n^{(1)}(\theta,\theta_0)&:=\begin{pmatrix}\displaystyle
\int_{0}^{1}{\frac{1}{n}\partial_{{\alpha}}^2 l_n(\theta_0+u(\theta-\theta_0))du}&\displaystyle\int_{0}^{1}{\frac{1}{n\sqrt{h_n}}\partial_{{\alpha\beta}}^2 l_n(\theta_0+u(\theta-\theta_0))du}\\
\displaystyle\int_{0}^{1}{\frac{1}{n\sqrt{h_n}}\partial_{{\beta\alpha}}^2 l_n(\theta_0+u(\theta-\theta_0))du}&\displaystyle
\int_{0}^{1}{\frac{1}{nh_n}\partial_{{\beta}}^2 l_n(\theta_0+u(\theta-\theta_0))du}
\end{pmatrix},
\end{align*}
then since $|\{\theta_0+u(\hat{\theta}_n^*-\theta_0)\}-\theta_0|< \varepsilon_n$ on the set $\hat{A}_n^*$,  one has
\begin{align}\label{lemma2:eq:In}
\left|I_n^{(1)}(\hat{\theta}_n^*,\theta_0)+I(\theta_0;\theta_0)\right|&\le \left|I_n^{(1)}(\hat{\theta}_n^*,\theta_0)+I(\theta_0+u(\hat{\theta}_n^*-\theta_0);\theta_0)\right|+\left|I(\theta_0+u(\hat{\theta}_n^*-\theta_0);\theta_0)-I(\theta_0;\theta_0)\right|\nonumber\\
&\le \sup_{\theta\in\Theta_0}|I_n^{(1)}(\theta,\theta)+I(\theta;\theta_0)|+\sup_{\theta:|\theta-\theta_0|<\varepsilon_n}\left|I(\theta;\theta_0)-I(\theta_0;\theta_0)\right|.
\end{align}
By using \eqref{a-2der:col}, \eqref{b-2der:col} and \eqref{ab-2der:col}, we have
\begin{align}\label{lemma2:Cn-pconv}
\sup_{\theta\in\Theta_0}|I_n^{(1)}(\theta,\theta)+I(\theta;\theta_0)|\overset{P}{\to}0,
\end{align}
and for all $\varepsilon>0$, it follows from \eqref{lemma2:theta*-pconv}, \eqref{lemma2:eq:In}, \eqref{lemma2:Cn-pconv} and continuity of $I(\theta;\theta_0)$ that
\begin{align*}
P\left(\left|{I}_n^{(1)}(\hat{\theta}_n^*,\theta_0)+I(\theta_0;\theta_0)\right|>\varepsilon\right)&\le P\left(\left\{\left|{I}_n^{(1)}(\hat{\theta}_n^*,\theta_0)+I(\theta_0;\theta_0)\right|>\varepsilon\right\}\cap\hat{A}_n^*\right)+P(\hat{A}_n^{*c})\\
&\le P\left(\left\{\sup_{\theta\in\Theta_0}|I_n^{(1)}(\theta,\theta)+I(\theta;\theta_0)|>\frac{\varepsilon}{2}\right\}\cap\hat{A}_n^*\right)\\
&\quad +P\left(\left\{\sup_{\theta:|\theta-\theta_0|<\varepsilon_n}\left|I(\theta;\theta_0)-I(\theta_0;\theta_0)\right|>\frac{\varepsilon}{2}\right\}\cap\hat{A}_n^*\right)+P(\hat{A}_n^{*c})\\
&\le P\left(\sup_{\theta\in\Theta_0}|I_n^{(1)}(\theta,\theta)+I(\theta;\theta_0)|>\frac{\varepsilon}{2}\right)\\
&\quad +P\left(\sup_{\theta:|\theta-\theta_0|<\varepsilon_n}\left|I(\theta;\theta_0)-I(\theta_0;\theta_0)\right|>\frac{\varepsilon}{2}\right)+P(\hat{A}_n^{*c})\\
&\to 0.
\end{align*}
This implies 
\begin{align}\label{lemma2:In-pconv}
{I}_n^{(1)}(\hat{\theta}_n^*,\theta_0)\overset{P}{\to}-I(\theta_0;\theta_0).
\end{align}
We can rewrite \eqref{lemma2:eq1} as follows:
\begin{align*}
D_n^{-\frac{1}{2}}\partial_{\theta}l_n(\hat{\theta}_n^*)-D_n^{-\frac{1}{2}}\partial_\theta l_n(\theta_0)&=-I(\theta_0;\theta_0)D_n^{\frac{1}{2}}(\hat{\theta}_n^*-\theta_0)+({I}_n^{(1)}(\hat{\theta}_n^*,\theta_0)+I(\theta_0;\theta_0))D_n^{\frac{1}{2}}(\hat{\theta}_n^*-\theta_0)\ \ \ \ (\omega\in \hat{A}_n^*).
\end{align*}
Let us call the second term on the right-hand side $\hat{T}_n^*$.  It follows from \eqref{lemma2:In-pconv} and \eqref{lemma2:theta*-dconv} that
\begin{align}\label{lemma2:eq2}
&D_n^{-\frac{1}{2}}\partial_{\theta}l_n(\hat{\theta}_n^*)-D_n^{-\frac{1}{2}}\partial_\theta l_n(\theta_0)=-I(\theta_0;\theta_0)D_n^{\frac{1}{2}}(\hat{\theta}_n^*-\theta_0)+\hat{T}_n^*\ \ \ \ (\omega\in\hat{A}_n^*)
\end{align}
and
\begin{align}\label{lemma2:Tn*-pconv}
\hat{T}_n^*\overset{P}{\to}0.
\end{align}
Set $H=\begin{pmatrix}
    H_a&O\\
    O&H_{b,c}
\end{pmatrix}$, where
$H_a=\begin{pmatrix}
    O&O\\
    O&I_{a,3}(\alpha_0;\alpha_0)^{-1}
\end{pmatrix}$, $H_{b,c}=\begin{pmatrix}
    O&O\\
    O&I_{b,c,3}(\theta_0;\theta_0)^{-1}
\end{pmatrix}$.
Since
\begin{align*}
D_n^{-\frac{1}{2}}\partial_\alpha l_n(\hat{\alpha}_n^*)=\begin{pmatrix}
    \frac{1}{\sqrt{n}}\partial_{\alpha_{1}}l_n(\hat{\alpha}_n^*)\\
    \vdots\\
    \frac{1}{\sqrt{n}}\partial_{\alpha_{k}}l_n(\hat{\alpha}_n^*)\\
    0\\
    \vdots\\
    0
\end{pmatrix},\ 
D_n^{-\frac{1}{2}}\partial_\beta l_n(\hat{\beta}_n^*)=\begin{pmatrix}
    \frac{1}{\sqrt{nh_n}}\partial_{\beta_{1}}l_n(\hat{\beta}_n^*)\\
    \vdots\\
    \frac{1}{\sqrt{nh_n}}\partial_{\beta_{k}}l_n(\hat{\beta}_n^*)\\
    0\\
    \vdots\\
    0
\end{pmatrix},
\end{align*}
we have
\begin{align}\label{lemma2:eq3}
HD_n^{-\frac{1}{2}}\partial_{\theta}l_n(\hat{\theta}_n^*)=\begin{pmatrix}
    \displaystyle H_a\partial_\alpha l_n(\hat{\alpha}_n^*)\\
    \displaystyle H_{b,c}\partial_\beta l_n(\hat{\beta}_n^*)
\end{pmatrix}=0\ \ \ \ (\omega\in\hat{A}_n^*).
\end{align}
Moreover, since  
\begin{align}\label{lemma2:eq4}
HI(\theta_0;\theta_0)&=\begin{pmatrix}
    O&O&O&O\\
    I_{a,3}(\alpha_0;\alpha_0)^{-1}I_{a,2}(\alpha_0;\alpha_0)^\top&E_{p-k}&O&O\\
    O&O&O&O\\
    O&O&I_{b,c,3}(\theta_0;\theta_0)^{-1}I_{b,c,2}(\theta_0;\theta_0)^\top&E_{q-l}
\end{pmatrix},
\end{align}
by simple computation,  one has
\begin{align}\label{lemma2:eq5}
HI(\theta_0;\theta_0)D_n^{\frac{1}{2}}(\hat{\theta}_n^*-\theta_0)=D_n^{\frac{1}{2}}(\hat{\theta}_n^*-\theta_0).
\end{align}
By using \eqref{lemma2:eq3} and \eqref{lemma2:eq5}, and rearranging
\eqref{lemma2:eq2}, we obtain
\begin{align}\label{lemma2:eq6}
D_n^{\frac{1}{2}}(\hat{\theta}_n^*-\theta_0)=HD_n^{-\frac{1}{2}}\partial_\theta l_n(\theta_0)+H\hat{T}_n^*\ \ \ (\omega\in\hat{A}_n^*).
\end{align}
On the set $\hat{A}_n$, since it follows from $\hat{\theta}_n\in\mathrm{Int}(\Theta)$ that $\partial_\theta l_n(\hat{\theta}_n)=0$, it holds from Taylor's theorem that 
\begin{align}\label{lemma2:eq7}
D_n^{\frac{1}{2}}(\hat{\theta}_n-\theta_0)=I(\theta_0;\theta_0)^{-1}D_n^{-\frac{1}{2}}\partial_\theta l_n(\theta_0)+I(\theta_0;\theta_0)^{-1}\hat{T}_n\ \ \ (\omega\in\hat{A}_n),
\end{align}
where $\hat{T}_n:=({I}_n^{(1)}(\hat{\theta}_n,\theta_0)+I(\theta_0;\theta_0))D_n^{\frac{1}{2}}(\hat{\theta}_n-\theta_0)$.
In an analogous manner to \eqref{lemma2:In-pconv},  one has
\begin{align*}
{I}_n^{(1)}(\hat{\theta}_n,\theta_0)\overset{P}{\to}-I(\theta_0;\theta_0).
\end{align*}
Hence, 
\begin{align}\label{lemma2:Tn-pconv}
\hat{T}_n\overset{P}{\to}0.    
\end{align}
By using \eqref{lemma2:eq6} and \eqref{lemma2:eq7}, it follows on the set $\hat{A}_n\cap\hat{A}_n^*$ that 
\begin{align*}
D_n^{\frac{1}{2}}(\hat{\theta}_n-\hat{\theta}_n^*)&= D_n^{\frac{1}{2}}(\hat{\theta}_n-\theta_0)+D_n^{\frac{1}{2}}(\hat{\theta}_n^*-\theta_0)\\
&=I(\theta_0;\theta_0)^{-1}D_n^{-\frac{1}{2}}\partial_\theta l_n(\theta_0)+I(\theta_0;\theta_0)^{-1}\hat{T}_n\\
&\quad-HD_n^{-\frac{1}{2}}\partial_\theta l_n(\theta_0)-H\hat{T}_n^*\\
&= \left(I(\theta_0;\theta_0)^{-1}-H\right)D_n^{-\frac{1}{2}}\partial_\theta l_n(\theta_0)+I(\theta_0;\theta_0)^{-1}\hat{T}_n-H\hat{T}_n^*.
\end{align*}
For the right-hand side, it holds, by using  \eqref{lemma2:Tn*-pconv}, \eqref{lemma2:Tn-pconv}, 
Slutsky's theorem and  the continuous mapping theorem, that
\begin{align}\label{lemma2:rightside-dconv}
\left(I(\theta_0;\theta_0)^{-1}-H\right)D_n^{-\frac{1}{2}}\partial_\theta l_n(\theta_0)+I(\theta_0;\theta_0)^{-1}\hat{T}_n-H\hat{T}_n^*&\overset{d}{\to}\left(I(\theta_0;\theta_0)^{-1}-H\right)Y
\end{align}
since it follows from \eqref{col2:goal4-jump} that
\begin{align*}
D_n^{-\frac{1}{2}}\partial_\theta l_n(\theta_0)\overset{d}{\to}Y.
\end{align*}
Thus, for any closed set $F\subset\mathbb{R}^{p+q}$,  we see from \eqref{lemma2:rightside-dconv} that 
\begin{align*}
\limsup_{n\to\infty}P\left(D_n^{\frac{1}{2}}(\hat{\theta}_n-\hat{\theta}_n^*)\in F\right)&\le \limsup_{n\to\infty}P\left(\left\{D_n^{\frac{1}{2}}(\hat{\theta}_n-\hat{\theta}_n^*)\in F\right\}\cap\hat{A}_n\cap\hat{A}_n^*\right)+\limsup_{n\to\infty}P\left(\hat{A}_n^c\cup\hat{A}_n^{*c}\right)\\
&= \limsup_{n\to\infty}P\left(\left\{\left(I(\theta_0;\theta_0)^{-1}-H\right)D_n^{-\frac{1}{2}}\partial_\theta l_n(\theta_0)\right.\right.\\
&\qquad\qquad\qquad\qquad\qquad\qquad\left.\left.+I(\theta_0;\theta_0)^{-1}\hat{T}_n-H\hat{T}_n^*\in F\right\}\cap\hat{A}_n\cap\hat{A}_n^*\right)\\
&\le \limsup_{n\to\infty}P\left(\left(I(\theta_0;\theta_0)^{-1}-H\right)D_n^{-\frac{1}{2}}\partial_\theta l_n(\theta_0)+I(\theta_0;\theta_0)^{-1}\hat{T}_n-H\hat{T}_n^*\in F\right)\\
&\le P\left(\left(I(\theta_0;\theta_0)^{-1}-H\right)Y\in F\right).
\end{align*}
This implies
\begin{align}\label{lemma2:goal}
D_n^{\frac{1}{2}}(\hat{\theta}_n-\hat{\theta}_n^*)\overset{d}{\to}\left(I(\theta_0;\theta_0)^{-1}-H\right)Y.
\end{align}
\end{proof}
\begin{lemma}\label{lemma:H1}
 Let $\{ X_n \}_{n=1,2,\ldots}$ be a sequence of real valued random variables such that
 $X_n\overset{P}{\to}c$ for a positive constant $c$. 
Then 
for any real valued sequence $\{a_n\}_{n=1,2,\ldots}$ which satisfies $a_n\to 0$,
\[P(X_n\le a_n)\to 0.\]
\end{lemma}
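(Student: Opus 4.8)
The statement of Lemma \ref{lemma:H1} is elementary: if $X_n\overset{P}{\to}c$ with $c>0$ and $a_n\to 0$, then $P(X_n\le a_n)\to 0$. The plan is to exploit the fact that eventually $a_n$ is bounded away from $c$ from below, so the event $\{X_n\le a_n\}$ is eventually contained in an event of the form $\{|X_n-c|\ge \delta\}$ for a fixed $\delta>0$, whose probability vanishes by the assumed convergence in probability.

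Concretely, I would fix $\varepsilon>0$ and set $\delta=c/2>0$. Since $a_n\to 0$, there exists $N_0$ such that $a_n<c/2$ for all $n\ge N_0$. Then for $n\ge N_0$ we have the inclusion of events
\begin{align*}
\{X_n\le a_n\}\subset\{X_n\le c/2\}=\{X_n-c\le -c/2\}\subset\{|X_n-c|\ge c/2\},
\end{align*}
hence $P(X_n\le a_n)\le P(|X_n-c|\ge c/2)$. By the definition of convergence in probability, there exists $N_1\ge N_0$ such that $P(|X_n-c|\ge c/2)<\varepsilon$ for all $n\ge N_1$, and therefore $P(X_n\le a_n)<\varepsilon$ for all $n\ge N_1$. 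Since $\varepsilon>0$ was arbitrary, this shows $P(X_n\le a_n)\to 0$.

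There is essentially no obstacle here; the only mild point of care is to make sure the threshold $\delta$ is chosen depending only on $c$ (not on $n$) so that the convergence-in-probability estimate applies uniformly in the tail, and to choose $N_0$ large enough that $a_n$ has moved below that threshold. This lemma will be used later (for instance in establishing consistency of the test under $H_1$, where $X_n$ will be a scaled version of the test statistic $\Lambda_n$ diverging to a positive limit while $a_n$ plays the role of a fixed critical value divided by a diverging normalization).
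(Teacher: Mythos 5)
Your proof is correct and follows essentially the same route as the paper: both arguments use $a_n\to 0$ to bound $a_n$ below $c/2$ for large $n$, include the event $\{X_n\le a_n\}$ in $\{|X_n-c|\ge c/2\}$, and conclude via the convergence in probability of $X_n$ to $c$. No gaps.
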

\begin{proof}
    Since $a_n\to 0$, there exists a natural number $N\in\mathbb{N}$ such that for all $n\ge N$, $c-a_n\ge \frac{c}{2}$.  For $n\ge N$, we have 
    \begin{align*}
        0\le P(X_n\le a_n)&\le P(|X_n-c|\ge c-a_n)\\
        &\le P\left(|X_n-c|\ge \frac{c}{2}\right).
    \end{align*}
    Since $X_n\overset{P}{\to}c$, taking the limit as $n\to \infty$ on both sides leads to the conclusion.
\end{proof}
\subsubsection{Proof of Theorem \ref{thm5:H0test}}
\begin{proof}
\begin{enumerate}
    \item {Proof of the case that $\tilde{\theta}_n=\hat{\theta}_n$ and $\tilde{\theta}_n^*=\hat{\theta}_n^*$.}
We discuss  $\hat{\theta}_n$ and $\hat{\theta}_n^*$ under $H_0$.
Let  $\varepsilon_0$ be a positive constant such that $\{\theta\in\Theta_0\ ;\ |\theta-\theta_0|<3\varepsilon_0\}\subset\mathrm{Int}(\Theta_0)$, then, from \eqref{lemma2:theta-pconv} and \eqref{lemma2:theta*-pconv}, there exists a real valued sequence $\varepsilon_n<\varepsilon_0$ which satisfies $P(\hat{A}_n\cap\hat{A}_n^*)\to 1$, where
$\hat{A}_n:=\{\omega\in\Omega\ |\ |\hat{\theta}_n(\omega)-\theta_0|<\varepsilon_n\}$ and $\hat{A}_n^*:=\{\omega\in\Omega\ |\ |\hat{\theta}_n^*(\omega)-\theta_0|<\varepsilon_n\}$. 
 Let $B_n =\{\theta\in\Theta\ ;\ |\theta-\hat{\theta}_n|<2\varepsilon_n\}$ on the set $\hat{A}_n\cap\hat{A}_n^*$. 
Since   on the set $\hat{A}_n\cap\hat{A}_n^*$, for all $\theta\in B_n$, 
$|\theta-\theta_0|\le |\theta-\hat{\theta}_n|+|\hat{\theta}_n-\theta_0|< 2\varepsilon_n+\varepsilon_n=3\varepsilon_n$, 
we have 
 $B_n\subset\{\theta\in\Theta\ ;\ |\theta-\theta_0|<3\varepsilon_n\}\subset\mathrm{Int}(\Theta_0)$
  on the set $\hat{A}_n\cap\hat{A}_n^*$. 
 Therefore, on the set $\hat{A}_n\cap\hat{A}_n^*$, it follows from Taylor's theorem that 
 for all  $\theta\in B_n$,  
\begin{align*}
l_n(\theta)&=l_n(\hat{\theta}_n)+\partial_\theta l_n(\hat{\theta}_n)^\top(\theta-\hat{\theta}_n)\\
&\quad+(\theta-\hat{\theta}_n)^\top\int_0^1(1-u)\partial_\theta^2 l_n(\hat{\theta}_n+u(\theta-\hat{\theta}_n))du(\theta-\hat{\theta}_n).
\end{align*}
On the set $\hat{A}_n\cap\hat{A}_n^*$, 
 since $\hat{\theta}_n\in\mathrm{Int}(\Theta_0)$, 
it holds that $\partial_\theta l_n(\hat{\theta}_n)=0$ and 
 $\hat{\theta}_n^*\in B_n$. 
Hence,
\begin{align}\label{thm5:eq1}
l_n(\hat{\theta}_n^*)-l_n(\hat{\theta}_n)=\left(D_n^{\frac{1}{2}}(\hat{\theta}_n^*-\hat{\theta}_n)\right)^\top I_n^{(2)}(\hat{\theta}_n^*,\hat{\theta}_n)\left(D_n^{\frac{1}{2}}(\hat{\theta}_n^*-\hat{\theta}_n)\right)\ \ (\omega\in \hat{A}_n\cap\hat{A}_n^*),
\end{align}
where
\begin{align*}
    I_n^{(2)}(\theta^*,\theta):=D_n^{-\frac{1}{2}}\int_0^1(1-u)\partial_\theta^2 l_n(\theta+u(\theta^*-\theta))duD_n^{-\frac{1}{2}}.
\end{align*}
Set
\begin{align*}
C_n^{(2)}(\theta)&:=\begin{pmatrix}
\displaystyle
\frac{1}{n}\partial_{{\alpha}}^2 l_n(\theta)&\displaystyle\frac{1}{n\sqrt{h_n}}\partial_{{\alpha\beta}}^2 l_n(\theta)\\
\displaystyle\frac{1}{n\sqrt{h_n}}\partial_{{\beta\alpha}}^2 l_n(\theta)&\displaystyle
\frac{1}{nh_n}\partial_{{\beta}}^2 l_n(\theta)
\end{pmatrix}.
\end{align*}
On the set $\hat{A}_n\cap\hat{A}_n^*$, since $|\{\hat{\theta}_n+u(\hat{\theta}_n^*-\hat{\theta}_n)\}-\theta_0|\le 2\varepsilon_n$, 
 one has
\begin{align*}
    \left|{I}_n^{(2)}(\hat{\theta}_n^*,\hat{\theta}_n)+\frac{1}{2}I(\theta_0;\theta_0)\right|&=\left|D_n^{-\frac{1}{2}}\int_0^1(1-u)\partial_\theta^2 l_n(\hat{\theta}_n+u(\hat{\theta}_n^*-\hat{\theta}_n))duD_n^{-\frac{1}{2}}+\frac{1}{2}I(\theta_0;\theta_0)\right|\\
    &\le \left|\int_0^1(1-u)C_n^{(2)}(\hat{\theta}_n+u(\hat{\theta}_n^*-\hat{\theta}_n))du+\int_0^1(1-u)I(\hat{\theta}_n+u(\hat{\theta}_n^*-\hat{\theta}_n);\theta_0)du\right|\\
    &\quad+\left|-\int_0^1(1-u)I(\hat{\theta}_n+u(\hat{\theta}_n^*-\hat{\theta}_n);\theta_0)du+\int_0^1(1-u)I(\theta_0;\theta_0)du\right|\\
    &\le \int_0^1(1-u)\sup_{\theta\in\Theta}\left|C_n^{(2)}(\theta)+I(\theta;\theta_0)\right|du\\
    &\quad+\int_0^1(1-u)\left|I(\hat{\theta}_n+u(\hat{\theta}_n^*-\hat{\theta}_n);\theta_0)-I(\theta_0;\theta_0)\right|du\\
    &\le\int_0^1(1-u)du\left\{\sup_{\theta\in\Theta}\left|C_n^{(2)}(\theta)+I(\theta;\theta_0)\right|+\sup_{\theta:|\theta-\theta_0|<2\varepsilon_n}\left|I(\theta;\theta_0)-I(\theta_0;\theta_0)\right|\right\}\\
    &= 2\left\{\sup_{\theta\in\Theta}\left|C_n^{(2)}(\theta)+I(\theta;\theta_0)\right|+\sup_{\theta:|\theta-\theta_0|<2\varepsilon_n}\left|I(\theta;\theta_0)-I(\theta_0;\theta_0)\right|\right\}.
\end{align*}
By using \eqref{a-2der:col}, \eqref{b-2der:col} and \eqref{ab-2der:col}, we obtain 
\begin{align}\label{thm5:Cn-pconv}
\sup_{\theta\in\Theta}|C_n^{(2)}(\theta)+I(\theta;\theta_0)|\overset{P}{\to}0.
\end{align}
It follows from \eqref{lemma2:theta-pconv},
\eqref{lemma2:theta*-pconv}, \eqref{thm5:Cn-pconv} and continuity of $I(\theta;\theta_0)$ that for all $\varepsilon>0$, 
\begin{align*}
P\left(\left|{I}_n^{(2)}(\hat{\theta}_n^*,\hat{\theta}_n)+\frac{1}{2}I(\theta_0;\theta_0)\right|>\varepsilon\right)&\le P\left(\left\{\left|{I}_n^{(2)}(\hat{\theta}_n^*,\hat{\theta}_n)+\frac{1}{2}I(\theta_0;\theta_0)\right|>\varepsilon\right\}\cap\hat{A}_n\cap\hat{A}_n^*\right)+P(\hat{A}_n^c\cup\hat{A}_n^{*c})\\
&\le P\left(\left\{\sup_{\theta\in\Theta}|C_n^{(2)}(\theta)+I(\theta;\theta_0)|>\frac{\varepsilon}{4}\right\}\cap\hat{A}_n\cap\hat{A}_n^*\right)\\
&\quad +P\left(\left\{\sup_{\theta:|\theta-\theta_0|<2\varepsilon_n}\left|I(\theta;\theta_0)-I(\theta_0;\theta_0)\right|>\frac{\varepsilon}{4}\right\}\cap\hat{A}_n\cap\hat{A}_n^*\right)\\
&\qquad+P(\hat{A}_n^c\cup\hat{A}_n^{*c})\\
&\le P\left(\sup_{\theta\in\Theta}|C_n^{(2)}(\theta)+I(\theta;\theta_0)|>\frac{\varepsilon}{4}\right)\\
&\quad +P\left(\sup_{\theta:|\theta-\theta_0|<2\varepsilon_n}\left|I(\theta;\theta_0)-I(\theta_0;\theta_0)\right|>\frac{\varepsilon}{4}\right)+P(\hat{A}_n^c\cup\hat{A}_n^{*c})\\
&\to 0.
\end{align*}
This implies  
\begin{align}\label{thm5:Jn-pconv}
{I}_n^{(2)}(\hat{\theta}_n^*,\hat{\theta}_n)\overset{P}{\to}-\frac{1}{2}I(\theta_0;\theta_0).
\end{align}
We can express \eqref{thm5:eq1} as follows:
\begin{align}\label{thm5:eq2}
\Lambda_n(\hat{\theta}_n,\hat{\theta}_n^*)=-2\left(D_n^{\frac{1}{2}}(\hat{\theta}_n^*-\hat{\theta}_n)\right)^\top {I}_n^{(2)}(\hat{\theta}_n^*,\hat{\theta}_n) \left(D_n^{\frac{1}{2}}(\hat{\theta}_n^*-\hat{\theta}_n)\right)\ \ (\omega\in \hat{A}_n\cap\hat{A}_n^*).
\end{align}
Under $H_0$, it holds from Lemma \ref{lemma:H0test} and \eqref{thm5:Jn-pconv} that
\begin{align*}
    \left(D_n^{\frac{1}{2}}(\hat{\theta}_n-\hat{\theta}_n^*),{I}_n^{(2)}(\hat{\theta}_n^*,\hat{\theta}_n)\right)\overset{d}{\to}\left(\left(I(\theta_0;\theta_0)^{-1}-H\right)Y,-\frac{1}{2}I(\theta_0;\theta_0)\right),
\end{align*}
where $Y\sim N_{p+q}(0,I(\theta_0;\theta_0)).$ Thus, for the right-hand side of \eqref{thm5:eq2}, 
 it follows from the continuous mapping theorem that
\begin{align}\label{thm5-rightside-dconv}
-2\left(D_n^{\frac{1}{2}}(\hat{\theta}_n^*-\hat{\theta}_n)\right)^\top {I}_n^{(2)}(\hat{\theta}_n^*,\hat{\theta}_n) \left(D_n^{\frac{1}{2}}(\hat{\theta}_n^*-\hat{\theta}_n)\right)\overset{d}{\to} Y^\top\left(I(\theta_0;\theta_0)^{-1}-H\right)^\top I(\theta_0;\theta_0)\left(I(\theta_0;\theta_0)^{-1}-H\right)Y.
\end{align}
By using Lemma 3 in Chapter 9 of \citet{Ferguson}, we have
\begin{align*}
    Y^\top\left(I(\theta_0;\theta_0)^{-1}-H\right)^\top I(\theta_0;\theta_0)\left(I(\theta_0;\theta_0)^{-1}-H\right)Y\sim \chi_{k+l}^2.
\end{align*}
Therefore,  it follows from \eqref{thm5:eq2} and \eqref{thm5-rightside-dconv} that for any closed set $F\subset\mathbb{R}^{p+q}$, under $H_0$, 
\begin{align*}
&\limsup_{n\to\infty}P\left(\Lambda_n(\hat{\theta}_n,\hat{\theta}_n^*)\in F\right)\\
&\quad\le \limsup_{n\to\infty}P\left(\left\{\Lambda_n(\hat{\theta}_n,\hat{\theta}_n^*)\in F\right\}\cap\hat{A}_n\cap\hat{A}_n^*\right)+\limsup_{n\to\infty}P\left(\hat{A}_n^c\cup\hat{A}_n^{*c}\right)\\
&\quad=\limsup_{n\to\infty}P\left(\left\{-2\left(D_n^{\frac{1}{2}}(\hat{\theta}_n^*-\hat{\theta}_n)\right)^\top {I}_n^{(2)}(\hat{\theta}_n^*,\hat{\theta}_n) \left(D_n^{\frac{1}{2}}(\hat{\theta}_n^*-\hat{\theta}_n)\right)\in F\right\}\cap\hat{A}_n\cap\hat{A}_n^*\right)\\
&\quad\le \limsup_{n\to\infty}P\left(-2\left(D_n^{\frac{1}{2}}(\hat{\theta}_n^*-\hat{\theta}_n)\right)^\top {I}_n^{(2)}(\hat{\theta}_n^*,\hat{\theta}_n) \left(D_n^{\frac{1}{2}}(\hat{\theta}_n^*-\hat{\theta}_n)\right)\in F\right)\\
&\quad\le P(\chi_{k+l}^2\in F).
\end{align*}
This implies, under $H_0$, 
\begin{align}\label{thm5:goal1}
    \Lambda_n(\hat{\theta}_n,\hat{\theta}_n^*)\overset{d}{\to}\chi_{k+l}^2.
\end{align}
    \item {Proof of the case where the estimator $\tilde{\theta}_n$ and $\tilde{\theta}_n^*$ satisfy {\bf [T1]}.}
 One has
\begin{align*}
\Lambda_n(\tilde{\theta}_n,\tilde{\theta}_n^*)&=-2(l_n(\tilde{\theta}_n^*)-l_n(\tilde{\theta}_n))\\
&= -2(l_n(\tilde{\theta}_n^*)-l_n(\hat{\theta}_n^*))-2(l_n(\hat{\theta}_n^*)-l_n(\hat{\theta}_n))-2(l_n(\tilde{\theta}_n)-l_n(\hat{\theta}_n)).
\end{align*}
Since, from the proof of case 1, the second term on the right-hand side converges to $\chi_{k+l}^2$ in distribution under $H_0$, it is sufficient to show the following types of convergence under $H_0$ for the proof:
\begin{align}
    &-2(l_n(\tilde{\theta}_n^*)-l_n(\hat{\theta}_n^*))=o_p(1),  \label{thm5-2:eq1}\\
    &-2(l_n(\tilde{\theta}_n)-l_n(\hat{\theta}_n))=o_p(1).\label{thm5-2:eq2}
\end{align}

{\bf Proof of \eqref{thm5-2:eq1}.}
Let  $\varepsilon_0$ be a positive constant such that $\{\theta\in\Theta_0\ ;\ |\theta-\theta_0|<3\varepsilon_0\}\subset\mathrm{Int}(\Theta_0)$. It holds from {\bf [T1]} that $\tilde{\theta}_n\overset{P}{\to}\theta_0$ and  $\tilde{\theta}_n^*\overset{P}{\to}\theta_0$ under $H_0$. Moreover, from \eqref{lemma2:theta-pconv} and  \eqref{lemma2:theta*-pconv}, 
there exists a real valued sequence $\varepsilon_n<\varepsilon_0$ which satisfies $P(\tilde{A}_n\cap\tilde{A}_n^*\cap\hat{A}_n\cap\hat{A}_n^*)\to 1$, where
$\tilde{A}_n:=\{\omega\in\Omega\ |\ |\tilde{\theta}_n(\omega)-\theta_0|<\varepsilon_n\}, \tilde{A}_n^*:=\{\omega\in\Omega\ |\ |\tilde{\theta}_n^*(\omega)-\theta_0|<\varepsilon_n\}, \hat{A}_n:=\{\omega\in\Omega\ |\ |\hat{\theta}_n(\omega)-\theta_0|<\varepsilon_n\}$ and $\hat{A}_n^*:=\{\omega\in\Omega\ |\ |\hat{\theta}_n^*(\omega)-\theta_0|<\varepsilon_n\}$.
In a similar way to the proof of case 1,  we see from Taylor's theorem that
\begin{align}\label{thm5-2:eq3}
l_n(\tilde{\theta}_n^*)-l_n(\hat{\theta}_n^*)&= \left(D_n^{-\frac{1}{2}}\partial_\theta l_n(\hat{\theta}_n^*)\right)^\top D_n^{\frac{1}{2}}(\tilde{\theta}_n^*-\hat{\theta}_n^*)+\left(D_n^{\frac{1}{2}}(\tilde{\theta}_n^*-\hat{\theta}_n^*)\right){I}_n^{(2)}(\tilde{\theta}_n^*,\hat{\theta}_n^*)\left(D_n^{\frac{1}{2}}(\tilde{\theta}_n^*-\hat{\theta}_n^*)\right)
\end{align}
on the set $\tilde{A}_n^*\cap\hat{A}_n^*$.
Furthermore, by using Taylor's theorem, on the set $\hat{A}_n\cap\hat{A}_n^*$, it follows from $\partial_\theta l_n(\hat{\theta}_n)=0$ that
\begin{align}\label{thm5-2:eq4}
    D_n^{-\frac{1}{2}}\partial_\theta l_n(\hat{\theta}_n^*)&= \bar{I}_n^{(1)}(\hat{\theta}_n^*,\theta_0)D_n^{\frac{1}{2}}(\hat{\theta}_n^*-\hat{\theta}_n),
\end{align}
where
\begin{align*}
    \bar{I}_n^{(1)}(\hat{\theta}_n^*,\theta_0)&=\begin{cases}
        I_n^{(1)}(\hat{\theta}_n^*,\theta_0),&(\omega\in\hat{A}_n\cap\hat{A}_n^*\cap\tilde{A}_n^*)\\
        0&(\omega\notin\hat{A}_n\cap\hat{A}_n^*\cap\tilde{A}_n^*).
    \end{cases}.
\end{align*}
We set
\begin{align*}
C_n^{(1)}(\theta)&:=\begin{pmatrix}
\displaystyle
\frac{1}{\sqrt{n}}\partial_{{\alpha}} l_n(\theta)\\
\displaystyle
\frac{1}{\sqrt{nh_n}}\partial_{{\beta}} l_n(\theta)
\end{pmatrix}
\end{align*}
and define
\begin{align*}
    \bar{C}_n^{(1)}(\hat{\theta}_n^*)&=\begin{cases}
        C_n^{(1)}(\hat{\theta}_n^*)&(\omega\in\hat{A}_n\cap\hat{A}_n^*\cap\tilde{A}_n^*),\\
        0&(\omega\notin\hat{A}_n\cap\hat{A}_n^*\cap\tilde{A}_n^*).
    \end{cases}
\end{align*}
Then it holds from \eqref{thm5-2:eq3} that 
\begin{align}\label{thm5-2:eq5}
l_n(\tilde{\theta}_n^*)-l_n(\hat{\theta}_n^*)&= {\bar{C}_n^{(1)}(\hat{\theta}_n^*)}^\top D_n^{\frac{1}{2}}(\tilde{\theta}_n^*-\hat{\theta}_n^*)\nonumber\\
&\quad+\left(D_n^{\frac{1}{2}}(\tilde{\theta}_n^*-\hat{\theta}_n^*)\right){I}_n^{(2)}(\tilde{\theta}_n^*,\hat{\theta}_n^*)\left(D_n^{\frac{1}{2}}(\tilde{\theta}_n^*-\hat{\theta}_n^*)\right)\ \ (\omega\in\hat{A}_n\cap\hat{A}_n^*\cap\tilde{A}_n^*),
\end{align}
and from \eqref{thm5-2:eq4} that
\begin{align}\label{thm5-2:eq6}
\bar{C}_n^{(1)}(\hat{\theta}_n^*)=\bar{I}_n^{(1)}(\hat{\theta}_n,\hat{\theta}_n)D_n^{\frac{1}{2}}(\hat{\theta}_n^*-\hat{\theta}_n^*).
\end{align}
In an analogous manner to the proof of \eqref{lemma2:In-pconv}, we have $\bar{I}_n^{(1)}(\hat{\theta}_n,\hat{\theta}_n)\overset{P}{\to}-I(\theta_0;\theta_0)$ under $H_0$. Moreover, by using Lemma \ref{lemma:H0test}, it follows under $H_0$ that $D_n^{\frac{1}{2}}(\hat{\theta}_n^*-\hat{\theta}_n^*)=O_p(1)$. 
Hence,  it holds from \eqref{thm5-2:eq6} that $\bar{C}_n^{(1)}(\hat{\theta}_n^*)=O_p(1)$ under $H_0$. Furthermore, in a similar manner to the proof of \eqref{thm5:Jn-pconv}, we obtain ${I}_n^{(2)}(\tilde{\theta}_n^*,\hat{\theta}_n^*)\overset{P}{\to}-\frac{1}{2}I(\theta_0;\theta_0)$ under $H_0$.  Since, by {\bf [T1]}, $D_n^{\frac{1}{2}}(\tilde{\theta}_n^*-\hat{\theta}_n^*)=o_p(1)$ under
$H_0$, it follows for the right-hand side of \eqref{thm5-2:eq5} that under $H_0$, 
\begin{align*}
{\bar{C}_n^{(1)}(\hat{\theta}_n^*)}^\top D_n^{\frac{1}{2}}(\tilde{\theta}_n^*-\hat{\theta}_n^*)+\left(D_n^{\frac{1}{2}}(\tilde{\theta}_n^*-\hat{\theta}_n^*)\right){I}_n^{(2)}(\tilde{\theta}_n^*,\hat{\theta}_n^*)\left(D_n^{\frac{1}{2}}(\tilde{\theta}_n^*-\hat{\theta}_n^*)\right)=o_p(1).    
\end{align*}
Hence, for all $\varepsilon>0$,  we see from \eqref{thm5-2:eq5} that 
\begin{align*}
P\left(|l_n(\tilde{\theta}_n^*)-l_n(\hat{\theta}_n^*)|>\varepsilon\right)&\le P\left(\{|l_n(\tilde{\theta}_n^*)-l_n(\hat{\theta}_n^*)|>\varepsilon\}\cap\hat{A}_n\cap\hat{A}_n^*\cap\tilde{A}_n^*\right)+P(\hat{A}_n^c\cup\hat{A}_n^{*c}\cup\tilde{A}_n^{*c})\\
&\quad+P\left(\left\{\left|{\bar{C}_n^{(1)}(\hat{\theta}_n^*)}^\top D_n^{\frac{1}{2}}(\tilde{\theta}_n^*-\hat{\theta}_n^*)\right.\right.\right.\\
&\qquad\qquad+\left.\left.\left.\left(D_n^{\frac{1}{2}}(\tilde{\theta}_n^*-\hat{\theta}_n^*)\right){I}_n^{(2)}(\tilde{\theta}_n^*,\hat{\theta}_n^*)\left(D_n^{\frac{1}{2}}(\tilde{\theta}_n^*-\hat{\theta}_n^*)\right)\right|>\varepsilon\right\}\cap\hat{A}_n\cap\hat{A}_n^*\cap\tilde{A}_n^*\right)\\
&\quad + P(\hat{A}_n^c\cup\hat{A}_n^{*c}\cup\tilde{A}_n^{*c})\\
&\le P\left(\left|{\bar{C}_n^{(1)}(\hat{\theta}_n^*)}^\top D_n^{\frac{1}{2}}(\tilde{\theta}_n^*-\hat{\theta}_n^*)\right.\right.\\
&\qquad\qquad\left.\left.+\left(D_n^{\frac{1}{2}}(\tilde{\theta}_n^*-\hat{\theta}_n^*)\right){I}_n^{(2)}(\tilde{\theta}_n^*,\hat{\theta}_n^*)\left(D_n^{\frac{1}{2}}(\tilde{\theta}_n^*-\hat{\theta}_n^*)\right)\right|>\varepsilon\right)\\
&\quad+P(\hat{A}_n^c\cup\hat{A}_n^{*c}\cup\tilde{A}_n^{*c})\\
&\to 0.
\end{align*}
This implies \eqref{thm5-2:eq1}.

{\bf Proof of \eqref{thm5-2:eq2}.}
We can  show \eqref{thm5-2:eq2} in an analogous manner to the proof of \eqref{thm5-2:eq1}.
\end{enumerate}
\end{proof}
\subsubsection{Proof of Proposition \ref{prop:test}}
\begin{proof}
\begin{enumerate}
    \item{Proof of $D_n^{\frac{1}{2}}(\hat{\theta}_n-\check{\theta}_n)=o_p(1)$.}
The following types of convergence are sufficient to show the proof:
\begin{align}
    &\sqrt{n}(\hat{\alpha}_n-\check{\alpha}_n)=o_p(1), \label{propH0:goal1}\\
    &\sqrt{nh_n}(\hat{\beta}_n-\check{\beta}_n)=o_p(1).\label{propH0:goal2}
\end{align}
Let  $\varepsilon_0$ be a positive constant such that $\{\theta\in\Theta_0\ ;\ |\theta-\theta_0|<\varepsilon_0\}\subset\mathrm{Int}(\Theta_0)$. By using Theorem \ref{thm1-2:jump}, we have $\check{\theta}_n\overset{P}{\to}\theta_0$. Moreover, from \eqref{lemma2:theta-pconv}, there exists a real valued sequence $\varepsilon_n<\varepsilon_0$ which satisfies $P(\check{A}_n\cap\hat{A}_n)\to 1$, where
$\check{A}_n:=\{\omega\in\Omega\ |\ |\check{\alpha}_n(\omega)-\alpha_0|<\varepsilon_n,\ |\check{\beta}_n(\omega)-\beta_0|<\varepsilon_n\}$ and $\hat{A}_n:=\{\omega\in\Omega\ |\ |\hat{\theta}_n(\omega)-\theta_0|<\varepsilon_n\}$. 
 It follows from Taylor's theorem that on the set $\check{A}_n$,
\begin{align}
-\frac{1}{\sqrt{n}}\partial_{{\alpha}}l_n^{(1)}(\alpha_0)&=\left(\int_{0}^{1}{\frac{1}{n}\partial_{{\alpha}}^2 l_n^{(1)}(\alpha_0+u(\check{\alpha}_{n}-\alpha_0))du}\right)\sqrt{n}(\check{\alpha}_{n}-\alpha_0),\label{propH0:eq1}\\
-\frac{1}{\sqrt{nh_n}}\partial_{{\beta}}l_n^{(2)}(\beta_0|\check{\alpha}_{n})&=\left(\int_{0}^{1}{\frac{1}{nh_n}\partial_{{\beta}}^2 l_n^{(2)}(\beta_0+u(\check{\beta}_{n}-\beta_0)|\check{\alpha}_{n})du}\right)\sqrt{nh_n}(\check{\beta}_{n}-\beta_0).\label{propH0:eq2}
\end{align}
It holds from \eqref{thm2:goal1-jump} that  
\begin{align*}
    \int_{0}^{1}{\frac{1}{n}\partial_{{\alpha}}^2 l_n^{(1)}(\alpha_0+u(\check{\alpha}_{n}-\alpha_0))du}\overset{P}{\to}-I_a(\alpha_0;\alpha_0).
\end{align*}
Since $-I_a(\alpha_0)$ is non-singular, we obtain $P(\check{J}_n^{(1)})\to 1$, where 
\begin{align*}
\check{J}_n^{(1)}:=\left\{\omega\in\Omega\ |\ \int_{0}^{1}{\frac{1}{n}\partial_{{\alpha}}^2 l_n^{(1)}(\alpha_0+u(\check{\alpha}_{n}-\alpha_0))du}\ \text{is non-singular}\right\}.
\end{align*}
Thus, we have
\begin{align}\label{propH0:Yn1-pconv}
(\check{Y}_n^{(1)})^{-1}\overset{P}{\to}-I_a^{-1}(\alpha_0;\alpha_0),
\end{align}
where
\begin{align*}
    \check{Y}_n^{(1)}:=\begin{cases}\displaystyle
        \int_{0}^{1}{\frac{1}{n}\partial_{{\alpha}}^2 l_n^{(1)}(\alpha_0+u(\check{\alpha}_{n}-\alpha_0))du}&\omega\in \check{J}_n^{(1)}\\
        I_p&\omega\notin \check{J}_n^{(1)}
    \end{cases}.
\end{align*}
From \eqref{propH0:eq1}, it follows on the set $\check{A}_n\cap \check{J}_n^{(1)}$ that 
\begin{align*}
    \sqrt{n}(\check{\alpha}_n-\alpha_0)&= -(\check{Y}_n^{(1)})^{-1}\frac{1}{\sqrt{n}}\partial_\alpha l_n^{(1)}(\alpha_0)\\
    &= I_a^{-1}(\alpha_0;\alpha_0)\frac{1}{\sqrt{n}}\partial_\alpha l_n^{(1)}(\alpha_0)+\left((\check{Y}_n^{(1)})^{-1}+I_a^{-1}(\alpha_0;\alpha_0)\right)\left(-\frac{1}{\sqrt{n}}\partial_\alpha l_n^{(1)}(\alpha_0)\right).
\end{align*}
Let us call the second term on the right-hand side $\check{R}_n$, then it follows that
\begin{align}\label{propH0:essentialeq1}
    \sqrt{n}(\check{\alpha}_n-\alpha_0)&= I_a^{-1}(\alpha_0;\alpha_0)\frac{1}{\sqrt{n}}\partial_\alpha l_n^{(1)}(\alpha_0)+\check{R}_n\ \ \ (\omega\in \check{A}_n\cap \check{J}_n^{(1)}).
\end{align}
 Using \eqref{propH0:Yn1-pconv} and \eqref{thm2:goal4-jump}, we have
\begin{align}\label{propH0:Rn-pconv}
    \check{R}_n\overset{P}{\to}0.
\end{align}
 It holds from  \eqref{thm2:goal2-jump} that
\begin{align*}
    \int_{0}^{1}{\frac{1}{nh_n}\partial_{{\beta}}^2 l_n^{(2)}(\beta_0+u(\check{\beta}_{n}-\beta_0)|\check{\alpha}_{n})du}\overset{P}{\to}-I_{b,c}(\theta_0;\theta_0).
\end{align*}
Since $-I_{b,c}(\theta_0;\theta_0)$ is non-singular, we obtain $P(\check{J}_n^{(2)})\to 1$, where 
\begin{align*}
\check{J}_n^{(2)}:=\left\{\omega\in\Omega\ |\ \int_{0}^{1}{\frac{1}{nh_n}\partial_{{\beta}}^2 l_n^{(2)}(\beta_0+u(\check{\beta}_{n}-\beta_0)|\check{\alpha}_n)du}\ \text{is non-singular}\right\}
\end{align*}
Therefore, we have
\begin{align}\label{propH0:Yn2-pconv}
    (\check{Y}_n^{(2)})^{-1}\overset{P}{\to}-I_{b,c}^{-1}(\theta_0;\theta_0),
\end{align}
where
\begin{align*}
    \check{Y}_n^{(2)}&=\begin{cases}\displaystyle
        \int_{0}^{1}{\frac{1}{nh_n}\partial_{{\beta}}^2 l_n^{(2)}(\beta_0+u(\check{\beta}_{n}-\beta_0)|\check{\alpha}_n)du}& (\omega\in \check{J}_n^{(2)}), \\
        I_q&(\omega\notin \check{J}_n^{(2)}).
    \end{cases}
\end{align*}
On the set $\check{A}_n\cap \check{J}_n^{(2)}$, it follows from 
\eqref{propH0:eq2} that
\begin{align*}
    \sqrt{nh_n}(\check{\beta}_n-\beta_0)&= -(\check{Y}_n^{(2)})^{-1}\frac{1}{\sqrt{nh_n}}\partial_{\beta}l_n^{(2)}(\beta_0|\check{\alpha}_n)\\
    &= I_{b,c}^{-1}(\theta_0;\theta_0)\frac{1}{\sqrt{nh_n}}\partial_{\beta}l_n^{(2)}(\beta_0|\alpha_0)-\left((\check{Y}_n^{(2)})^{-1}+I_{b,c}^{-1}(\theta_0;\theta_0)\right)\frac{1}{\sqrt{nh_n}}\partial_{\beta}l_n^{(2)}(\beta_0|\alpha_0)\\
    &\quad +(\check{Y}_n^{(2)})^{-1}\frac{1}{\sqrt{nh_n}}\left(\partial_{\beta}l_n^{(2)}(\beta_0|\alpha_0)-\partial_{\beta}l_n^{(2)}(\beta_0|\check{\alpha}_n)\right).
\end{align*}
Let us call the second and third term on the right-hand side $\check{S}_n^{(1)}$ and $\check{S}_n^{(2)}$, respectively, and define $\check{S}_n:=\check{S}_n^{(1)}+\check{S}_n^{(2)}$. Then  one has
\begin{align}\label{propH0:essentialeq2}
    \sqrt{nh_n}(\check{\beta}_n-\beta_0)&=I_{b,c}^{-1}(\theta_0;\theta_0)\frac{1}{\sqrt{nh_n}}\partial_{\beta}l_n^{(2)}(\beta_0|\alpha_0)+\check{S}_n\ \ \ (\omega\in \check{A}_n\cap \check{J}_n^{(2)}).
\end{align}
By using \eqref{propH0:Yn2-pconv} and \eqref{thm2:goal4-jump}, we obtain  
\begin{align}\label{propH0:Sn1-pconv}
    \check{S}_n^{(1)}&\overset{P}{\to}0.
\end{align}
It follows from Taylor's theorem that 
\begin{align*}
    \frac{1}{\sqrt{nh_n}}\partial_{{\beta}}l_n^{(2)}(\beta_0|\check{\alpha}_{n})&=\frac{1}{\sqrt{nh_n}}\partial_{{\beta}}l_n^{(2)}(\beta_0|\alpha_0)+
\left(\int_{0}^{1}{\frac{1}{n\sqrt{h_n}}\partial_{{\alpha\beta}}^2 l_n^{(2)}(\beta_0|\alpha_0+u(\check{\alpha}_{n}-\alpha_0))du}\right)\sqrt{n}(\check{\alpha}_{n}-\alpha_0).
\end{align*}
By using \eqref{thm2:goal3-jump} and Theorem \ref{thm2-2:jump}, the second term on the right-hand side converges to 0 in probability.
Thus, 
\begin{align*}
    \frac{1}{\sqrt{nh_n}}\left(\partial_{\beta}l_n^{(2)}(\beta_0|\alpha_0)-\partial_{\beta}l_n^{(2)}(\beta_0|\check{\alpha}_n)\right)\overset{P}{\to}0,
\end{align*}
and since  we see from \eqref{propH0:Yn2-pconv} that  $(\check{Y}_n^{(2)})^{-1}=O_p(1)$, we obtain 
\begin{align}\label{propH0:Sn2-pconv}
    \check{S}_n^{(2)}\overset{P}{\to}0.
\end{align}
By using \eqref{propH0:Sn1-pconv} and \eqref{propH0:Sn2-pconv}, we have 
\begin{align}\label{propH0:Sn-pconv}
    \check{S}_n\overset{P}{\to}0.
\end{align}
Similarly, it follows from Taylor's theorem that
\begin{align}
-\frac{1}{\sqrt{n}}\partial_{{\alpha}}l_n(\theta_0)&=\left(\int_{0}^{1}{\frac{1}{n}\partial_{{\alpha}}^2 l_n(\theta_0+u(\hat{\theta}_{n}-\theta_0))du}\right)\sqrt{n}(\hat{\alpha}_{n}-\alpha_0),\label{propH0:eq3}\\
-\frac{1}{\sqrt{nh_n}}\partial_{{\beta}}l_n(\theta_0)&=\left(\int_{0}^{1}{\frac{1}{nh_n}\partial_{{\beta}}^2 l_n(\theta_0+u(\hat{\theta}_{n}-\theta_0))du}\right)\sqrt{nh_n}(\hat{\beta}_{n}-\beta_0)\label{propH0:eq4}
\end{align}
on the set $\hat{A}_n$.
By using \eqref{col2:goal1-jump} and \eqref{col2:goal2-jump},  one has
\begin{align*}
    &\int_{0}^{1}{\frac{1}{n}\partial_{{\alpha}}^2 l_n(\theta_0+u(\hat{\theta}_{n}-\theta_0))du}\overset{P}{\to}-I_a(\alpha_0;\alpha_0),\\
    &\int_{0}^{1}{\frac{1}{nh_n}\partial_{{\beta}}^2 l_n(\theta_0+u(\hat{\theta}_{n}-\theta_0))du}\overset{P}{\to}-I_{b,c}(\theta_0;\theta_0).
\end{align*}
Since $-I_a(\alpha_0;\alpha_0)$ and $-I_{b,c}(\theta_0;\theta_0)$  are non-singular, it holds that $P(\hat{J}_n^{(1)})\overset{P}{\to}1$ and $P(\hat{J}_n^{(2)})\overset{P}{\to}1$, where 
\begin{align*}
&\hat{J}_n^{(1)}:=\left\{\omega\in\Omega\ |\ \int_{0}^{1}{\frac{1}{n}\partial_{{\alpha}}^2 l_n(\theta_0+u(\hat{\theta}_{n}-\theta_0))du}\ \text{is non-singular}\right\},\\
&\hat{J}_n^{(2)}:=\left\{\omega\in\Omega\ |\ \int_{0}^{1}{\frac{1}{nh_n}\partial_{{\beta}}^2 l_n(\theta_0+u(\hat{\theta}_{n}-\theta_0))du}\ \text{is non-singular}\right\}.
\end{align*}
Therefore,  one has
\begin{align}
    &(\hat{Y}_n^{(1)})^{-1}\overset{P}{\to}-I_{a}^{-1}(\alpha_0;\alpha_0),\label{propH0:Yn1.j-pconv}\\
    &(\hat{Y}_n^{(2)})^{-1}\overset{P}{\to}-I_{b,c}^{-1}(\theta_0;\theta_0),\label{propH0:Yn2.j-pconv}
\end{align}
where
\begin{align*}
    \hat{Y}_n^{(1)}&=\begin{cases}\displaystyle
        \int_{0}^{1}{\frac{1}{n}\partial_{{\alpha}}^2 l_n(\theta_0+u(\hat{\theta}_{n}-\theta_0))du}& (\omega\in \hat{J}_n^{(1)}),\\
        I_p&(\omega\notin \hat{J}_n^{(1)}),
    \end{cases}\\
    \hat{Y}_n^{(2)}&=\begin{cases}\displaystyle
        \int_{0}^{1}{\frac{1}{nh_n}\partial_{{\beta}}^2 l_n(\theta_0+u(\hat{\theta}_{n}-\theta_0))du}& (\omega\in \hat{J}_n^{(2)}),\\
        I_p&(\omega\notin \hat{J}_n^{(2)}).
    \end{cases}
\end{align*}
 It follows from \eqref{propH0:eq3} that on the set $\hat{A}_n\cap\hat{J}_n^{(1)}$, 
\begin{align*}
    \sqrt{n}(\hat{\alpha}_n-\alpha_0)&= -(\hat{Y}_n^{(1)})^{-1}\frac{1}{\sqrt{n}}\partial_\alpha l_n(\theta_0)\\
    &=I_a^{-1}(\alpha_0;\alpha_0)\frac{1}{\sqrt{n}}\partial_\alpha l_n(\theta_0)-\left((\hat{Y}_n^{(1)})^{-1}+I_a^{-1}(\alpha_0;\alpha_0)\right)\frac{1}{\sqrt{n}}\partial_\alpha l_n(\theta_0).
\end{align*}
Let us call the second term on the right-hand side $\hat{R}_n$.  One has
\begin{align}\label{propH0:essentialeq3}
    \sqrt{n}(\hat{\alpha}_n-\alpha_0)&=I_a^{-1}(\alpha_0;\alpha_0)\frac{1}{\sqrt{n}}\partial_\alpha l_n(\theta_0)+\hat{R}_n\ \ \ (\omega\in\hat{A}_n\cap \hat{J}_n^{(1)}),
\end{align}
and we see from \eqref{propH0:Yn1.j-pconv} and \eqref{col2:goal4-jump} that 
\begin{align}\label{propH0:Rn.j-pconv}
    \hat{R}_n\overset{P}{\to}0.
\end{align}
From \eqref{propH0:eq4}, it holds that 
\begin{align*}
    \sqrt{nh_n}(\hat{\beta}_n-\beta_0)&=-(\hat{Y}_n^{(2)})^{-1}\frac{1}{\sqrt{nh_n}}\partial_\beta l_n(\theta_0)\\
    &=I_{b,c}^{-1}(\theta_0;\theta_0)\frac{1}{\sqrt{nh_n}}\partial_\beta l_n(\theta_0)-\left((\hat{Y}_n^{(2)})^{-1}+I_{b,c}^{-1}(\theta_0;\theta_0)\right)\frac{1}{\sqrt{nh_n}}\partial_\beta l_n(\theta_0)
\end{align*}
on the set $\hat{A}_n\cap\hat{J}_n^{(2)}$.
We call the the second term on the right-hand side $\hat{S}_n$.  We have
\begin{align}\label{propH0:essentialeq4}
    \sqrt{nh_n}(\hat{\beta}_n-\beta_0)&=I_{b,c}^{-1}(\theta_0;\theta_0)\frac{1}{\sqrt{nh_n}}\partial_\beta l_n(\theta_0)+\hat{S}_n\ \ \ (\omega\in\hat{A}_n\cap \hat{J}_n^{(2)}).
\end{align}
 Using \eqref{propH0:Yn2.j-pconv} and \eqref{col2:goal4-jump}, we obtain
\begin{align}\label{propH0:Sn.j-pconv}
    \hat{S}_n\overset{P}{\to}0.
\end{align}
 It holds from \eqref{propH0:essentialeq1} and \eqref{propH0:essentialeq3} that 
on the set $\check{A}_n\cap\hat{A}_n\cap\check{J}_n^{(1)}\cap\hat{J}_n^{(1)}$,
\begin{align}
    \sqrt{n}(\hat{\alpha}_n-\check{\alpha}_n)&=\sqrt{n}(\hat{\alpha}_n-\alpha_0)-\sqrt{n}(\check{\alpha}_n-\alpha_0)\nonumber\\
    &=I_a^{-1}(\alpha_0;\alpha_0)\frac{1}{\sqrt{n}}\left(\partial_\alpha l_n(\theta_0)-\partial_\alpha l_n^{(1)}(\alpha_0)\right)+\hat{R}_n-\check{R}_n.\label{propH0:essentialeq5}
\end{align}
By using $P(\check{A}_n\cap\hat{A}_n\cap\check{J}_n^{(1)}\cap\hat{J}_n^{(1)})\to 1$, \eqref{propH0:Rn-pconv} and \eqref{propH0:Rn.j-pconv}, it is sufficient to show the following convergence for the proof of \eqref{propH0:goal1}:
\begin{align}\label{propH0:ada.joint.diff-alpha}
    \frac{1}{\sqrt{n}}\left(\partial_\alpha l_n(\theta_0)-\partial_\alpha l_n^{(1)}(\alpha_0)\right)=o_p(1).
\end{align}

{\bf Proof of \eqref{propH0:ada.joint.diff-alpha}.}
In order to distinguish the thresholds contained in the joint quasi-log likelihood function and in the adaptive quasi-log likelihood function, 
 we express the thresholds in joint function as follows: $\{|\Delta X_i^n|\le \bar{D}_1h_n^{\bar{\rho}_1}\}$ and $\{|\Delta X_i^n|> \bar{D}_2h_n^{\bar{\rho}_2}\}$.
First, we show the case  where $D_1=D_2=D_3=\bar{D}_1=\bar{D}_2=1$.
Since $\Delta X_i^n=\bar{X}_{i,n}(\beta_0)-h_nb_{i-1}(\beta_0)$, for $1\le m_1\le p$, we can calculate 
\begin{align*}
    &\frac{1}{\sqrt{n}}\left(\partial_{\alpha_{m_1}} l_n(\theta_0)-\partial_{\alpha_{m_1}} l_n^{(1)}(\alpha_0)\right)\\
    &=-\frac{1}{2\sqrt{n}h_n}\sum_{i=1}^n\left\{\left(\bar{X}_{i,n}\right)^\top\partial_{\alpha_{m_1}}S_{i-1}^{-1}\bar{X}_{i,n}\boldsymbol{1}_{\{|\Delta X_i^n|\le h_n^{\bar{\rho}_1}\}}-\left(\Delta X_i^n\right)^\top\partial_{\alpha_{m_1}}S_{i-1}^{-1}\Delta X_i^n\boldsymbol{1}_{\{|\Delta X_i^n|\le h_n^{\rho_1}\}}\right\}\\
    &\quad-\frac{1}{2\sqrt{n}}\sum_{i=1}^n\partial_{\alpha_{m_1}}\log\det S_{i-1}\left(\boldsymbol{1}_{\{|\Delta X_i^n|\le h_n^{\bar{\rho}_1}\}}-\boldsymbol{1}_{\{|\Delta X_i^n|\le h_n^{\rho_1}\}}\right)\\
    &=-\frac{1}{2\sqrt{n}h_n}\sum_{i=1}^n\left(\bar{X}_{i,n}\right)^\top\partial_{\alpha_{m_1}}S_{i-1}^{-1}\bar{X}_{i,n}\left(\boldsymbol{1}_{\{|\Delta X_i^n|\le h_n^{\bar{\rho}_1}\}}-\boldsymbol{1}_{\{|\Delta X_i^n|\le h_n^{\rho_1}\}}\right)\\
    &\quad+\sqrt{nh_n^2}\frac{1}{nh_n}\sum_{i=1}^n(b_{i-1})^\top\partial_{\alpha_{m_1}}S_{i-1}^{-1}\bar{X}_{i,n}\boldsymbol{1}_{\{|\Delta X_i^n|\le h_n^{\rho_1}\}}\\
    &\quad+\sqrt{nh_n^2}\frac{1}{2n}\sum_{i=1}^n(b_{i-1})^\top\partial_{\alpha_{m_1}}S_{i-1}^{-1}b_{i-1}\boldsymbol{1}_{\{|\Delta X_i^n|\le h_n^{\rho_1}\}}\\
    &\quad-\frac{1}{2\sqrt{n}}\sum_{i=1}^n\partial_{\alpha_{m_1}}\log\det S_{i-1}\left(\boldsymbol{1}_{\{|\Delta X_i^n|\le h_n^{\bar{\rho}_1}\}}-\boldsymbol{1}_{\{|\Delta X_i^n|\le h_n^{\rho_1}\}}\right)\\
    &=-\frac{1}{2\sqrt{n}h_n}\sum_{i=1}^n\left(\bar{X}_{i,n}\right)^\top\partial_{\alpha_{m_1}}S_{i-1}^{-1}\bar{X}_{i,n}\left(\boldsymbol{1}_{\{|\Delta X_i^n|\le h_n^{\bar{\rho}_1}\}}-\boldsymbol{1}_{\{|\Delta X_i^n|\le h_n^{\rho_1}\}}\right)\\
    &\quad+\sqrt{nh_n^2}\frac{1}{nh_n}\sum_{i=1}^n(b_{i-1})^\top\partial_{\alpha_{m_1}}S_{i-1}^{-1}\bar{X}_{i,n}\boldsymbol{1}_{\{|\Delta X_i^n|\le h_n^{\rho_1}\}}\\
    &\quad+\frac{1}{n}\sum_{i=1}^n R(\theta,\sqrt{nh_n^2},X_{t_{i-1}^n})\\
    &\quad-\frac{1}{2\sqrt{n}}\sum_{i=1}^n\partial_{\alpha_{m_1}}\log\det S_{i-1}\left(\boldsymbol{1}_{\{|\Delta X_i^n|\le h_n^{\bar{\rho}_1}\}}-\boldsymbol{1}_{\{|\Delta X_i^n|\le h_n^{\rho_1}\}}\right).
\end{align*}
By using Proposition \ref{prop6:jump}, the second term on the right-hand side converges to 0 in probability, and it is obvious that the third term converges to 0 in probability. Therefore, we show the first and fourth  terms converge to 0 in probability.
If $\bar{\rho}_1=\rho_1$, then both terms equal to 0. We evaluate the case  where $\bar{\rho}_1\neq\rho_1$. First, we discuss the case   where  $\bar{\rho}_1>\rho_1$. Since $\{|\Delta X_i^n|\le h_n^{\bar{\rho}_1}\}\subset \{|\Delta X_i^n|\le h_n^{\rho_1}\}$, we have 
\begin{align}\label{propH0:rho1-diff}
    \boldsymbol{1}_{\{|\Delta X_i^n|\le h_n^{\bar{\rho}_1}\}}-\boldsymbol{1}_{\{|\Delta X_i^n|\le h_n^{\rho_1}\}}=-\boldsymbol{1}_{\{|\Delta X_i^n|\le h_n^{\rho_1}\}}\boldsymbol{1}_{\{|\Delta X_i^n|> h_n^{\bar{\rho}_1}\}}.
\end{align}
Hence, since $|\bar{X}_{i,n}(\beta_0)|^2\boldsymbol{1}_{\{|\Delta X_i^n|\le h_n^{\rho_1}\}}=R(\theta,h_n^{2\rho_1},X_{t_{i-1}^n})$, it follows from \eqref{propH0:rho1-diff}, Markov's inequality, and Proposition \ref{prop2:jump} that, for the first term, for all $\varepsilon>0$, 
\begin{align*}
    &P\left(\left|\frac{1}{2\sqrt{n}h_n}\sum_{i=1}^n\left(\bar{X}_{i,n}\right)^\top\partial_{\alpha_{m_1}}S_{i-1}^{-1}\bar{X}_{i,n}\boldsymbol{1}_{\{|\Delta X_i^n|\le h_n^{\rho_1}\}}\boldsymbol{1}_{\{|\Delta X_i^n|> h_n^{\bar{\rho}_1}\}}\right|>\varepsilon\right)\\
    &\le \frac{1}{2\varepsilon\sqrt{n}h_n}\sum_{i=1}^n\sum_{k_1,k_2=1}^d\mathbb{E}\left[\left|\bar{X}_{i,n}^{(k_1)}{\partial_{\alpha_{m_1}}S_{i-1}^{-1}}^{(k_1,k_2)}\bar{X}_{i,n}^{(k_2)}\boldsymbol{1}_{\{|\Delta X_i^n|\le h_n^{\rho_1}\}}\boldsymbol{1}_{\{|\Delta X_i^n|> h_n^{\bar{\rho}_1}\}}\right|\right]\\
    &\le \frac{1}{2\varepsilon\sqrt{n}h_n}\sum_{i=1}^n\sum_{k_1,k_2=1}^d\mathbb{E}\left[R(\theta,h_n^{2\rho_1},X_{t_{i-1}^n})P\left(\{|\Delta X_i^n|\le h_n^{\rho_1}\}\cap\{|\Delta X_i^n|> h_n^{\bar{\rho}_1}\}\ |\mathcal{F}_{i-1}^n\right)\right]\\
    &\le C\sqrt{n}h_n^{3\rho_1}\\
    &= O(\sqrt{nh_n^{6\rho_1}}).
\end{align*}
By $\rho_1\ge \frac{1+\delta}{6}$, the first term converges to 0.
In a similar way, for the fourth term,  it holds from \eqref{propH0:rho1-diff}, Markov's inequality, and Proposition \ref{prop2:jump} that for all $\varepsilon>0$, 
\begin{align*}
    &P\left(\left|\frac{1}{2\sqrt{n}}\sum_{i=1}^n\partial_{\alpha_{m_1}}\log\det S_{i-1}\boldsymbol{1}_{\{|\Delta X_i^n|\le h_n^{\rho_1}\}}\boldsymbol{1}_{\{|\Delta X_i^n|> h_n^{\bar{\rho}_1}\}}\right|>\varepsilon\right)\\
    &\le \frac{1}{2\varepsilon\sqrt{n}}\sum_{i=1}^n\mathbb{E}\left[|\partial_{\alpha_{m_1}}\log\det S_{i-1}|P\left(\{|\Delta X_i^n|\le h_n^{\rho_1}\}\cap\{|\Delta X_i^n|> h_n^{\bar{\rho}_1}\}\ |\mathcal{F}_{i-1}^n\right)\right]\\
    &\le \frac{1}{2\varepsilon\sqrt{n}}\sum_{i=1}^n\mathbb{E}\left[R(\theta,h_n^{1+\rho_1},X_{t_{i-1}^n})\right]\\
    &\le C\sqrt{n}h_n^{1+\rho_1}\\
    &= O(\sqrt{nh_n^2}\cdot h_n^{\rho_1})\\
    &\to 0.
\end{align*}
This implies the fourth term converges to 0, and \eqref{propH0:ada.joint.diff-alpha} holds if $D_1=D_2=D_3=\bar{D}_1=\bar{D}_2=1$ and $\bar{\rho}_1>\rho_1$. 
Note that, by the statement of Theorem \ref{thm2-2:jump} and Corollary \ref{thm2-1:jump}, the two sets including $\rho_1$ and  $\bar{\rho}_1$ are the same.
Hence, we can show the case  where $D_1=D_2=D_3=\bar{D}_1=\bar{D}_2=1$ and $\bar{\rho}_1<\rho_1$ in a similar way. After all, \eqref{propH0:ada.joint.diff-alpha} holds if $D_1=D_2=D_3=\bar{D}_1=\bar{D}_2=1$.
In order to evaluate more general case, we take $D,\bar{D}>0,\ \rho,\bar{\rho}\in(0,\frac{1}{2})$, and discuss $\{|\Delta X_i^n|\le Dh_n^{\rho}\}$ and $\{|\Delta X_i^n|\le \bar{D}h_n^{\bar{\rho}}\}$. 
Since these both sets are related to the upper bound of $|\Delta X_i^n|$, the inclusion relationship holds for all $n\in\mathbb{N}$:
\begin{enumerate}
    \item[(i)] If $Dh_n^{\rho}\le \bar{D}h_n^{\bar{\rho}}$, we have
    $\{|\Delta X_i^n|\le Dh_n^{\rho}\}\subset\{|\Delta X_i^n|\le \bar{D}h_n^{\bar{\rho}}\}$. Therefore,
    \begin{align}\label{propH0:boldsymbol:eq1}
        \boldsymbol{1}_{\{|\Delta X_i^n|\le Dh_n^{\rho}\}}-\boldsymbol{1}_{\{|\Delta X_i^n|\le \bar{D}h_n^{\bar{\rho}}\}}&= -\boldsymbol{1}_{\{|\Delta X_i^n|\le \bar{D}h_n^{\bar{\rho}}\}}\boldsymbol{1}_{\{|\Delta X_i^n|> Dh_n^{\rho}\}},
    \end{align}
    and by $Dh_n^{\rho}\le \bar{D}h_n^{\bar{\rho}}$,
    \begin{align}\label{propH0:boldsymbol:eq2}
        \boldsymbol{1}_{\{|\Delta X_i^n|\le Dh_n^{\rho}\}}\boldsymbol{1}_{\{|\Delta X_i^n|> \bar{D}h_n^{\bar{\rho}}\}}&=0.
    \end{align}
    \item [(ii)] If $Dh_n^{\rho}> \bar{D}h_n^{\bar{\rho}}$,  one has $\{|\Delta X_i^n|\le Dh_n^{\rho}\}\supset\{|\Delta X_i^n|\le \bar{D}h_n^{\bar{\rho}}\}$. Hence, 
    \begin{align}\label{propH0:boldsymbol:eq3}
        \boldsymbol{1}_{\{|\Delta X_i^n|\le Dh_n^{\rho}\}}-\boldsymbol{1}_{\{|\Delta X_i^n|\le \bar{D}h_n^{\bar{\rho}}\}}&= \boldsymbol{1}_{\{|\Delta X_i^n|\le Dh_n^{\rho}\}}\boldsymbol{1}_{\{|\Delta X_i^n|> \bar{D}h_n^{\bar{\rho}}\}},
    \end{align}
    and by $Dh_n^{\rho}> \bar{D}h_n^{\bar{\rho}}$,
    \begin{align}\label{propH0:boldsymbol:eq4}
        -\boldsymbol{1}_{\{|\Delta X_i^n|\le \bar{D}h_n^{\bar{\rho}}\}}\boldsymbol{1}_{\{|\Delta X_i^n|> Dh_n^{\rho}\}}&=0.
    \end{align}
\end{enumerate}
By using \eqref{propH0:boldsymbol:eq1}, \eqref{propH0:boldsymbol:eq2}, \eqref{propH0:boldsymbol:eq3} and \eqref{propH0:boldsymbol:eq4}, it follows for $D,\bar{D}>0$ and $\rho,\bar{\rho}\in(0,\frac{1}{2})$ that for all $n\in\mathbb{N}$,
\begin{align}\label{propH0:boldsymbol:result1}
        \boldsymbol{1}_{\{|\Delta X_i^n|\le Dh_n^{\rho}\}}-\boldsymbol{1}_{\{|\Delta X_i^n|\le \bar{D}h_n^{\bar{\rho}}\}}&= \boldsymbol{1}_{\{|\Delta X_i^n|\le Dh_n^{\rho}\}}\boldsymbol{1}_{\{|\Delta X_i^n|> \bar{D}h_n^{\bar{\rho}}\}}-\boldsymbol{1}_{\{|\Delta X_i^n|\le \bar{D}h_n^{\bar{\rho}}\}}\boldsymbol{1}_{\{|\Delta X_i^n|> Dh_n^{\rho}\}}.
\end{align}
Thus, we can show the general case by replacing \eqref{propH0:rho1-diff} with \eqref{propH0:boldsymbol:result1}. Therefore, it holds that \eqref{propH0:ada.joint.diff-alpha}, and we obtain \eqref{propH0:goal1}.
Next, by using \eqref{propH0:essentialeq2} and \eqref{propH0:essentialeq4},  we see that on the set $\check{A}_n\cap\hat{A}_n\cap\check{J}_n^{(2)}\cap\hat{J}_n^{(2)}$, 
\begin{align}
    \sqrt{nh_n}(\hat{\beta}_n-\check{\beta}_n)&=\sqrt{nh_n}(\hat{\beta}_n-\beta_0)-\sqrt{nh_n}(\check{\beta}_n-\beta_0)\nonumber\\
    &=I_{b,c}^{-1}(\theta_0;\theta_0)\frac{1}{\sqrt{nh_n}}\left(\partial_\beta l_n(\theta_0)-\partial_\beta l_n^{(2)}(\beta_0|\alpha_0)\right)+\hat{S}_n-\check{S}_n.\label{propH0:essentialeq6}
\end{align}
Since $P(\check{A}_n\cap\hat{A}_n\cap\check{J}_n^{(2)}\cap\hat{J}_n^{(2)})\to 1$, \eqref{propH0:Sn-pconv} and \eqref{propH0:Sn.j-pconv}, it is sufficient to show the following convergence for the proof of \eqref{propH0:goal2}: 
\begin{align}\label{propH0:ada.joint.diff-beta}
    \frac{1}{\sqrt{nh_n}}\left(\partial_\beta l_n(\theta_0)-\partial_\beta l_n^{(2)}(\beta_0|\alpha_0)\right)=o_p(1).
\end{align}

{\bf Proof of \eqref{propH0:ada.joint.diff-beta}.}
First, we show the case  where $D_1=D_2=D_3=\bar{D}_1=\bar{D}_2=1$. For $1\le m_2\le q$, 
\begin{align*}
    &\frac{1}{\sqrt{nh_n}}\left(\partial_\beta l_n(\theta_0)-\partial_\beta l_n^{(2)}(\beta_0|\alpha_0)\right)\\
    &= \frac{1}{\sqrt{nh_n}}\sum_{i=1}^n    \left(\partial_{\beta_{m_2}}b_{i-1}\right)^\top S_{i-1}^{-1}\bar{X}_{i,n}\left(\boldsymbol{1}_{\{|\Delta X_i^n|\le h_n^{\bar{\rho}_1}\}}-\boldsymbol{1}_{\{|\Delta X_i^n|\le h_n^{\rho_3}\}}\right)\\
    &\quad+\frac{1}{\sqrt{nh_n}}\sum_{i=1}^n\partial_{\beta_{m_2}}\log\Psi_{\beta_0}(\Delta X_i^n,X_{t_{i-1}^n})\varphi_n(X_{t_{i-1}^n},\Delta X_i^n)\left(\boldsymbol{1}_{\{|\Delta X_i^n|> h_n^{\bar{\rho}_2}\}}-\boldsymbol{1}_{\{|\Delta X_i^n|> h_n^{\rho_2}\}}\right).
\end{align*}
We show  that both terms on the right-hand side converge to 0 in probability. If $\bar{\rho}_1=\rho_3$, the first term equals to 0.
Hence, we evaluate the case  where $\bar{\rho}_1\neq\rho_3$. First, we discuss the case  where $\bar{\rho}_1>\rho_3$. From \eqref{propH0:rho1-diff}, it holds that 
\begin{align}\label{propH0:rho1-diff2}
    \boldsymbol{1}_{\{|\Delta X_i^n|\le h_n^{\bar{\rho}_1}\}}-\boldsymbol{1}_{\{|\Delta X_i^n|\le h_n^{\rho_3}\}}=-\boldsymbol{1}_{\{|\Delta X_i^n|\le h_n^{\rho_3}\}}\boldsymbol{1}_{\{|\Delta X_i^n|> h_n^{\bar{\rho}_1}\}}.
\end{align}
Since $|\bar{X}_{i,n}(\beta_0)|\boldsymbol{1}_{\{|\Delta X_i^n|\le h_n^{\rho_3}\}}=R(\theta,h_n^{\rho_3},X_{t_{i-1}^n})$, by using \eqref{propH0:rho1-diff2}, Markov's inequality, and Proposition \ref{prop2:jump},  it holds that for all $\varepsilon>0$, 
\begin{align*}
    &P\left(\left|-\frac{1}{\sqrt{nh_n}}\sum_{i=1}^n    \left(\partial_{\beta_{m_2}}b_{i-1}\right)^\top S_{i-1}^{-1}\bar{X}_{i,n}\boldsymbol{1}_{\{|\Delta X_i^n|\le h_n^{\rho_3}\}}\boldsymbol{1}_{\{|\Delta X_i^n|> h_n^{\bar{\rho}_1}\}}\right|>\varepsilon\right)\\
    &\le \frac{1}{\varepsilon\sqrt{nh_n}}\sum_{i=1}^n\sum_{k_1,k_2=1}^d\left|\mathbb{E}\left[\partial_{\beta_{m_2}}b_{i-1}^{(k_1)} {S_{i-1}^{-1}}^{(k_1,k_2)}\bar{X}_{i,n}^{(k_2)}\boldsymbol{1}_{\{|\Delta X_i^n|\le h_n^{\rho_3}\}}\boldsymbol{1}_{\{|\Delta X_i^n|> h_n^{\bar{\rho}_1}\}}\right]\right|\\
    &\le \frac{1}{\varepsilon\sqrt{nh_n}}\sum_{i=1}^n\sum_{k_1,k_2=1}^d\left|\mathbb{E}\left[R(\theta,h_n^{\rho_3},X_{t_{i-1}^n})P\left(\{|\Delta X_i^n|\le h_n^{\rho_3}\}\cap\{|\Delta X_i^n|> h_n^{\bar{\rho}_1}\}\ |\mathcal{F}_{i-1}^n\right)\right]\right|\\
    &\le C\sqrt{n}h_n^{\frac{1}{2}+2\rho_3}\\
    &= O\left(\sqrt{nh_n^{1+4\rho_3}}\right).
\end{align*}
Since $\rho_3\ge\frac{\delta}{4}$, the first term converges to 0 in probability if $D_1=D_2=D_3=\bar{D}_1=\bar{D}_2=1$ and $\bar{\rho}_1>\rho_3$.
In a similar way, if $D_1=D_2=D_3=\bar{D}_1=\bar{D}_2=1$ and $\bar{\rho}_1<\rho_3$, we have 
\begin{align*}
    &P\left(\left|-\frac{1}{\sqrt{nh_n}}\sum_{i=1}^n    \left(\partial_{\beta_{m_2}}b_{i-1}\right)^\top S_{i-1}^{-1}\bar{X}_{i,n}\boldsymbol{1}_{\{|\Delta X_i^n|\le h_n^{\rho_3}\}}\boldsymbol{1}_{\{|\Delta X_i^n|> h_n^{\bar{\rho}_1}\}}\right|>\varepsilon\right)\\
    &\le O\left(\sqrt{nh_n^{1+4\bar{\rho}_1}}\right).
\end{align*}
Since $\bar{\rho}_1\ge\frac{1+\delta}{6}\ge\frac{\delta}{4}$ by $\delta\in(0,\frac{1}{2})$, this converges to 0 in probability, too.
Next, If $\bar{\rho}_2=\rho_2$, the second term equals to 0. Hence we evaluate the case  where $\bar{\rho}_2\neq\rho_2$. First, we discuss the case  where $\bar{\rho}_2>\rho_2$. Then, since $\{|\Delta X_i^n|> h_n^{\bar{\rho}_2}\}\supset \{|\Delta X_i^n|> h_n^{\rho_2}\}$, we have
\begin{align}\label{propH0:rho2-diff}
    \boldsymbol{1}_{\{|\Delta X_i^n|> h_n^{\bar{\rho}_2}\}}-\boldsymbol{1}_{\{|\Delta X_i^n|> h_n^{\rho_2}\}}=\boldsymbol{1}_{\{|\Delta X_i^n|\le h_n^{\rho_2}\}}\boldsymbol{1}_{\{|\Delta X_i^n|> h_n^{\bar{\rho}_2}\}}.
\end{align}
Since $(1+|\Delta X_i^n|)^C\boldsymbol{1}_{\{|\Delta X_i^n|\le h_n^{\rho_2}\}}=2C\boldsymbol{1}_{\{|\Delta X_i^n|\le h_n^{\rho_2}\}}$, it follows from  Proposition \ref{prop2:jump} that for $\varepsilon>0$, 
\begin{align*}
&P\left(\left|\frac{1}{\sqrt{nh_n}}\sum_{i=1}^n\partial_{\beta_{m_2}}\log\Psi_{\beta_0}(\Delta X_i^n,X_{t_{i-1}^n})\varphi_n(X_{t_{i-1}^n},\Delta X_i^n)\boldsymbol{1}_{\{|\Delta X_i^n|\le h_n^{\rho_2}\}}\boldsymbol{1}_{\{|\Delta X_i^n|> h_n^{\bar{\rho}_2}\}}\right|>\varepsilon\right)\\
&\le\begin{cases}
\displaystyle
\frac{C}{\sqrt{nh_n}}\sum_{i=1}^{n}\mathbb{E}\left[(1+|X_{t_{i-1}^n}|)^C\mathbb{E}\left[(1+|\Delta X_i^n|)^C\boldsymbol{1}_{\{|\Delta X_i^n|\le h_n^{\rho_2}\}}\boldsymbol{1}_{\{|\Delta X_i^n|> h_n^{\bar{\rho}_2}\}} \ |\mathcal{F}_{i-1}^n\right]\right]   &(\text{under}\ [\textbf{C}_2\textbf{1}])\\
\displaystyle
\frac{C}{\sqrt{nh_n}\varepsilon_n}\sum_{i=1}^{n} \mathbb{E}\left[(1+|X_{t_{i-1}^n}|)^C P\left({\{|\Delta X_i^n|\le h_n^{\rho_2}\}}\cap{\{|\Delta X_i^n|> h_n^{\bar{\rho}_2}\}}\ |\mathcal{F}_{i-1}^n\right)\right]    &(\text{under}\ [\textbf{C}_2\textbf{2}])
\end{cases}\\
&\le\begin{cases}
\displaystyle
\frac{C}{\sqrt{nh_n}}\sum_{i=1}^{n}\mathbb{E}\left[2C(1+|X_{t_{i-1}^n}|)^CP\left({\{|\Delta X_i^n|\le h_n^{\rho_2}\}}\cap{\{|\Delta X_i^n|> h_n^{\bar{\rho}_2}\}}\ |\mathcal{F}_{i-1}^n\right)\right]   &(\text{under}\ [\textbf{C}_2\textbf{1}])\\
\displaystyle
C\left(nh_n^{1+2\rho_2}\varepsilon_n^{-2}\right)^\frac{1}{2}    &(\text{under}\ [\textbf{C}_2\textbf{2}])
\end{cases}\\
&\le\begin{cases}
\displaystyle
C\left(nh_n^{1+2\rho_2}\right)^\frac{1}{2}   &(\text{under}\ [\textbf{C}_2\textbf{1}]),\\
\displaystyle
C\left(nh_n^{1+2\rho_2}\varepsilon_n^{-2}\right)^\frac{1}{2}    &(\text{under}\ [\textbf{C}_2\textbf{2}]).
\end{cases}
\end{align*}
Since $\rho_2\ge \frac{\delta}{2}$ under $[\textbf{C}_2\textbf{1}]$
and $nh_n^{1+2\rho_2}\varepsilon_n^{-2}\to 0$ under $[\textbf{C}_2\textbf{2}]$, the second term converges to 0 in probability if $D_1=D_2=D_3=\bar{D}_1=\bar{D}_2=1$ and $\bar{\rho}_2>\rho_2$. Moreover, since the two sets including $\rho_2$ and $\bar{\rho}_2$ are the same, we can show the case of $D_1=D_2=D_3=\bar{D}_1=\bar{D}_2=1$ and $\bar{\rho}_2<\rho_2$ in an analogous manner. After all, \eqref{propH0:ada.joint.diff-beta} holds if $D_1=D_2=D_3=\bar{D}_1=\bar{D}_2=1$. Next, we evaluate the more general case. By \eqref{propH0:boldsymbol:result1}, for
$D,\bar{D}>0$ and $\rho,\bar{\rho}\in(0,\frac{1}{2})$, it holds that for all $n\in\mathbb{N}$,
\begin{align}\label{propH0:boldsymbol:result2}
        \boldsymbol{1}_{\{|\Delta X_i^n|> Dh_n^{\rho}\}}-\boldsymbol{1}_{\{|\Delta X_i^n|> \bar{D}h_n^{\bar{\rho}}\}}&= \left(1-\boldsymbol{1}_{\{|\Delta X_i^n|\le Dh_n^{\rho}\}}\right)-\left(1-\boldsymbol{1}_{\{|\Delta X_i^n|\le \bar{D}h_n^{\bar{\rho}}\}}\right)\nonumber\\
        &= -\left(\boldsymbol{1}_{\{|\Delta X_i^n|\le Dh_n^{\rho}\}}-\boldsymbol{1}_{\{|\Delta X_i^n|\le \bar{D}h_n^{\bar{\rho}}\}}\right)\nonumber\\
        &= -\boldsymbol{1}_{\{|\Delta X_i^n|\le Dh_n^{\rho}\}}\boldsymbol{1}_{\{|\Delta X_i^n|> \bar{D}h_n^{\bar{\rho}}\}}+\boldsymbol{1}_{\{|\Delta X_i^n|\le \bar{D}h_n^{\bar{\rho}}\}}\boldsymbol{1}_{\{|\Delta X_i^n|> Dh_n^{\rho}\}}.
\end{align}
Therefore, we can show the more general case by replacing \eqref{propH0:rho1-diff2} and \eqref{propH0:rho2-diff} with \eqref{propH0:boldsymbol:result1} and \eqref{propH0:boldsymbol:result2}, respectively.
Thus, \eqref{propH0:ada.joint.diff-beta} holds and we obtain \eqref{propH0:goal2}. This implies $D_n^{\frac{1}{2}}(\hat{\theta}_n-\check{\theta}_n)=o_p(1)$.

\item{Proof of $D_n^{\frac{1}{2}}(\hat{\theta}_n^*-\check{\theta}_n^*)=o_p(1)$ under $H_0$.}
    In an analogous manner to the evaluation of \eqref{lemma2:theta*-pconv} and \eqref{lemma2:theta*-dconv} in the proof of Lemma \ref{lemma:H0test}, we can show   the desired result in a similar way to case 1 under $H_0$.
Let $\bar{\check{\theta}}_n^*=(\bar{\check{\alpha}}_n^*, \bar{\check{\beta}}_n^*)$ as follows:
\begin{align*}
\bar{\check{\alpha}}_n^* &= ({\check{\alpha}_n}^{*(k+1)},\cdots,{\check{\alpha}_n}^{*(p)})^\top,\\
\bar{\check{\beta}}_n^* &= ({\check{\beta}_n}^{*(l+1)},\cdots,{\check{\beta}_n}^{*(q)})^\top,
\end{align*}
where
\begin{align*}
\check{\alpha}_n^* &= (0,\cdots,0,{\check{\alpha}_n}^{*(k+1)},\cdots,{\check{\alpha}_n}^{*(p)},0)^\top,\\
\check{\beta}_n^* &= (0,\cdots,0,{\check{\beta}_n}^{*(l+1)},\cdots,{\check{\beta}_n}^{*(q)})^\top,
\end{align*}
and let $\bar{\Theta}_{\alpha_0}:=\{\bar{\alpha}\in\mathbb{R}^{p-k}\ |\ \exists\alpha\in\Theta_{\alpha_0},\ \bar{\alpha}= (\alpha^{(k+1)},\cdots,\alpha^{(p)})^\top\}$ and $\bar{\Theta}_{\beta_0}:=\{\bar{\beta}\in\mathbb{R}^{q-l}\ |\ \exists\beta\in\Theta_{\beta_0},\ \bar{\beta}= (\beta^{(l+1)},\cdots,\beta^{(q)})^\top\}$. We define $U_n^{(1)}(\bar{\alpha})$ and $U_n^{(2)}(\bar{\beta}|\bar{\alpha})$ as follows with $l_n^{(1)}(\alpha)$ and $l_n^{(2)}(\beta|\alpha)$:
\begin{align*}
&U_n^{(1)}\left((\alpha^{(k+1)},\cdots,\alpha^{(p)})^\top\right):=l_n^{(1)}\left((0,\cdots,0,\alpha^{(k+1)},\cdots,\alpha^{(p)})^\top\right), \\
&U_n^{(2)}\left((\beta^{(l+1)},\cdots,\beta^{(q)})^\top|(\alpha^{(k+1)},\cdots,\alpha^{(p)})^\top\right)\\
&\quad:=l_n^{(2)}\left((0,\cdots,0,\beta^{(l+1)},\cdots,\beta^{(q)})^\top|(0,\cdots,0,\alpha^{(k+1)},\cdots,\alpha^{(p)})^\top\right).
\end{align*}  
Then $U_n^{(1)}(\bar{\alpha})$ and $U_n^{(2)}(\bar{\beta}|\bar{\alpha})$ can be regarded as an adaptive quasi-log likelihood function in $(p+q)-(k+l)$ dimensions. In particular, by the definition of $\bar{\check{\alpha}}_n^*,\bar{\check{\beta}}_n^*, U_n^{(1)}(\bar{\alpha})$ and $U_n^{(2)}(\bar{\beta}|\bar{\alpha})$, we have $\argsup_{\bar{\alpha}\in\bar{\Theta}_{\alpha_0}} U_n^{(1)}(\bar{\alpha})=\bar{\hat{\alpha}}_n^*$ and $\argsup_{\bar{\beta}\in\bar{\Theta}_{\beta_0}} U_n^{(2)}(\bar{\beta})=\bar{\hat{\beta}}_n^*$.
Since
\begin{align*}
\hat{\theta}_n^*-\check{\theta}_n^*=(0,\cdots,0,\hat{\alpha}_n^{*(k+1)}-\check{\alpha}_n^{*(k+1)},\cdots,\hat{\alpha}_n^{*(p)}-\check{\alpha}_n^{*(p)},0,\cdots,0,\hat{\beta}_n^{*(l+1)}-\check{\beta}_n^{*(l+1)},\cdots,\hat{\beta}_n^{*(q)}-\check{\beta}_n^{*(q)})^\top,
\end{align*}
the following types of convergence are sufficient for the proof of $D_n^{\frac{1}{2}}(\hat{\theta}_n^*-\check{\theta}_n^*)=o_p(1)$ under $H_0$:
\begin{align*}
&\sqrt{n}(\bar{\hat{\alpha}}_n^*-\bar{\check{\alpha}}_n^*)=o_p(1)\\
&\sqrt{nh_n}(\bar{\hat{\beta}}_n^*-\bar{\check{\beta}}_n^*)=o_p(1).
\end{align*}
We can prove this in an analogous manner to the proof of $D_n^{\frac{1}{2}}(\hat{\theta}_n-\check{\theta}_n)=o_p(1)$.
This completes the proof.
\end{enumerate}
\end{proof}
\subsubsection{Proof of Theorem \ref{thm6:H1test}}
\begin{proof}
Since
\begin{align*}
    H_1:\ \alpha^{(1)}\neq 0\ \mathrm{or}\ \ldots\ \alpha^{(k)}\neq 0\ \ \mathrm{or}\ \ \beta^{(1)}\neq 0\ \ldots\ \beta^{(l)}\neq 0,
\end{align*} 
we divide $H_1$ into 
\begin{align*}
    &H_1^{(1)}:\ \alpha^{(i)}\neq 0\ \mathrm{for\ some}\ i\in\{1,\ldots,k\},\\
    &H_1^{(2)}:\ \alpha^{(1)}=\cdots=\alpha^{(k)}=0\ \ \mathrm{and}\ \ \beta^{(j)}\neq 0\ \mathrm{for\ some}\ j\in\{1,\ldots,l\}.
\end{align*}
\begin{enumerate}
    \item For the case of $H_1^{(1)}$, 
     it follows from \eqref{a-cons:col1:uni} and \eqref{b-cons:col1:uni_jump} that
    \begin{align*}
        \sup_{\theta\in\Theta}\left|\frac{1}{n}l_n(\theta)-U_1^*(\alpha,\alpha_1)\right|&\le \sup_{\theta\in\Theta}\left|\frac{1}{n}\bar{l}_n(\theta)-U_1^*(\alpha,\alpha_0)\right|+h_n\sup_{\beta\in\Theta_\beta}\left|\frac{1}{nh_n}\tilde{l}_n(\beta)-\tilde{U}_{\beta_1}^{(2)^*}(\beta)\right|+h_n\tilde{U}_{\beta_1}^{(2)^*}(\beta)\\
        &\overset{P}{\to}0,
    \end{align*}
    and since $\tilde{\theta}_n\overset{P}{\to}\theta_1$, $\tilde{\theta}_n^*\overset{P}{\to}\theta^*$ under $H_1^{(1)}$,
     we see from continuity of $U_1^*(\alpha,\alpha_1)$ that under $H_1^{(1)}$, 
    \begin{align*}
        &\left|\frac{1}{n}\Lambda_n(\tilde{\theta}_n,\tilde{\theta}_n^*)+2\left(U_1^*(\alpha^*,\alpha_1)-U_1^*(\alpha_1,\alpha_1)\right)\right|\\
        &=2\left|\frac{1}{n}l_n(\tilde{\theta}_n)-U_1^*(\tilde{\alpha}_n,\alpha_1)\right|+2|U_1^*(\tilde{\alpha}_n,\alpha_1)-U_1^*(\alpha_1,\alpha_1)|\\
        &\quad+2\left|\frac{1}{n}l_n(\tilde{\theta}_n^*)-U_1^*(\tilde{\alpha}_n^*,\alpha_1)\right|+2|U_1^*(\tilde{\alpha}_n^*,\alpha_1)-U_1^*(\alpha^*,\alpha_1)|\\
        &\le 4\sup_{\theta\in\Theta}\left|\frac{1}{n}l_n(\theta)-U_1^*(\alpha,\alpha_1)\right|+2|U_1^*(\tilde{\alpha}_n,\alpha_1)-U_1^*(\alpha_1,\alpha_1)|+2|U_1^*(\tilde{\alpha}_n^*,\alpha_1)-U_1^*(\alpha^*,\alpha_1)|\\
        &\overset{P}{\to}0.
    \end{align*}
    Since $\alpha^*\neq \alpha_1$ under $H_1^{(1)}$, it follows from the identifiability condition that $U_1^*(\alpha_1,\alpha_1)-U(\alpha^*,\alpha_1)>0$. Then by using Lemma \ref{lemma:H1} with $X_n=\frac{1}{n}\Lambda_n(\tilde{\theta}_n,\tilde{\theta}_n^*)$ and $a_n=\frac{1}{n}\chi_{k+l,\varepsilon}^2$, we obtain 
    \begin{align*}
        0\le P(\Lambda_n(\tilde{\theta}_n,\tilde{\theta}_n^*)\le \chi_{k+l,\varepsilon}^2)&=P\left(\frac{1}{n}\Lambda_n(\tilde{\theta}_n,\tilde{\theta}_n^*)\le \frac{1}{n}\chi_{k+l,\varepsilon}^2\right)\\
        &\to 0
    \end{align*}
    under $H_1^{(1)}$.
    Therefore,  one has 
    \begin{align*}        P(\Lambda_n(\tilde{\theta}_n,\tilde{\theta}_n^*)>\chi_{k+l,\varepsilon}^2)\to 1
    \end{align*}
    under $H_1^{(1)}$.
    
    \item For the case of $H_1^{(2)}$, since $l_n(\hat{\theta})=\sup_{\theta\in\Theta}l_n(\theta)\ge l_n(\tilde{\alpha}_n^*,\beta_1)$, it follows that
    \begin{align*}
    \Lambda_n(\tilde{\theta}_n,\tilde{\theta}_n^*)&=2(l_n(\tilde{\theta}_n)-l_n(\tilde{\theta}_n^*))\\
        &=2\left\{(l_n(\tilde{\theta}_n)-l_n(\hat{\theta}_n))+(l_n(\hat{\theta}_n)-l_n(\tilde{\alpha}_n^*,\beta_1))+(l_n(\tilde{\alpha}_n^*,\beta_1)-l_n(\tilde{\theta}_n^*))\right\}\\
        &\ge 2\left\{(l_n(\tilde{\theta}_n)-l_n(\hat{\theta}_n))+(l_n(\tilde{\alpha}_n^*,\beta_1)-l_n(\tilde{\theta}_n^*))\right\}\\
        &=:\bar{\Lambda}_n(\tilde{\theta}_n,\tilde{\theta}_n^*,\hat{\theta}_n).
    \end{align*}
    We discuss the behavior of $\frac{1}{nh_n}\bar{\Lambda}_n(\tilde{\theta}_n,\tilde{\theta}_n^*,\hat{\theta}_n)$. In a similar way to the proof of \eqref{thm5-2:eq2},  one has $\frac{2}{nh_n}(l_n(\tilde{\theta}_n)-l_n(\hat{\theta}_n))=o_p(1)$ under $H_1^{(2)}$. By using 
    Theorem \ref{thm1-2:jump}, $\tilde{\theta}_n^*\overset{P}{\to}\theta^*$ under $H_1^{(2)}$, \eqref{b-cons:col1:uni_jump}, \eqref{b-cons:col1:uni} and continuity of $V_{\beta_1}^*(\alpha,\beta)$, it holds under $H_1^{(2)}$ that 
    \begin{align*}
        &\left|\frac{2}{nh_n}(l_n(\tilde{\alpha}_n^*,\beta_1)-l_n(\tilde{\theta}_n^*))+2V_{\beta_1}^*(\alpha^*,\beta^*)\right|\\
        &\quad\le 2\left|\frac{1}{nh_n}(l_n(\tilde{\theta}_n^*)-l_n(\tilde{\alpha}_n^*,\beta_1))-V_{\beta_1}^*(\tilde{\alpha}_n^*,\tilde{\beta}_n^*)\right|+2\left|V_{\beta_1}^*(\tilde{\alpha}_n^*,\tilde{\beta}_n^*)-V_{\beta_1}^*(\alpha^*,\beta^*)\right|\\
        &\quad\le 2\sup_{\theta\in\Theta}\left|\frac{1}{nh_n}(l_n(\theta)-l_n(\alpha,\beta_1))-V_{\beta_1}^*(\alpha,\beta)\right|+2\left|V_{\beta_1}^*(\tilde{\alpha}_n^*,\tilde{\beta}_n^*)-V_{\beta_1}^*(\alpha^*,\beta^*)\right|\\
        &\quad\le 2\sup_{\theta\in\Theta}\left|\frac{1}{nh_n}(\bar{l}_n(\theta)-\bar{l}_n(\alpha,\beta_1))-\bar{U}_{\beta_1}^{(2)^*}(\alpha,\beta)\right|+4\sup_{\theta\in\Theta}\left|\frac{1}{nh_n}\tilde{l}_n(\theta)-\tilde{U}_{\beta_1}^{(2)^*}(\beta)\right|\\
        &\qquad+2\left|V_{\beta_1}^*(\tilde{\alpha}_n^*,\tilde{\beta}_n^*)-V_{\beta_1}^*(\alpha^*,\beta^*)\right|\\
        &\overset{P}{\to}0.
    \end{align*}
    Hence, we obtain  
    \begin{align*}
        \frac{1}{nh_n}\bar{\Lambda}_n(\tilde{\theta}_n,\tilde{\theta}_n^*,\hat{\theta}_n)\overset{P}{\to}-2V_{\beta_1}^*(\alpha^*,\beta^*)
    \end{align*}
    under $H_1^{(2)}$, and since $\alpha_1=\alpha^*,\ \beta_1\neq \beta^*$, it follows from the  identifiability condition that $-2V_{\beta_1}^*(\alpha^*,\beta^*)=-2V_{\beta_1}^*(\alpha_1,\beta^*)>-2V_{\beta_1}^*(\alpha_1,\beta_1)=0$ under $H_1^{(2)}$. By using Lemma \ref{lemma:H1} with $X_n=\frac{1}{nh_n}\bar{\Lambda}_n(\tilde{\theta}_n,\tilde{\theta}_n^*,\hat{\theta}_n)$ and $a_n=-\frac{2}{nh_n}V_{\beta_1}^*(\alpha^*,\beta^*)$, it holds under $H_1^{(2)}$ that 
    \begin{align*}
        0\le P(\Lambda_n(\tilde{\theta}_n,\tilde{\theta}_n^*)\le \chi_{k+l,\varepsilon}^2)&\le P(\bar{\Lambda}_n(\tilde{\theta}_n,\tilde{\theta}_n^*,\hat{\theta}_n)\le \chi_{k+l,\varepsilon}^2)\\
        &=P\left(\frac{1}{nh_n}\bar{\Lambda}_n(\tilde{\theta}_n,\tilde{\theta}_n^*,\hat{\theta}_n)\le \frac{1}{nh_n}\chi_{k+l,\varepsilon}^2\right)\\
        &\overset{P}{\to}0.
    \end{align*}
    This implies, under $H_1^{(2)}$, 
    \begin{align*}
        P(\Lambda_n(\tilde{\theta}_n,\tilde{\theta}_n^*)>\chi_{k+l,\varepsilon}^2)\to 1.
    \end{align*}
    This completes the proof.
\end{enumerate}
\end{proof}
\subsubsection{Proof of Proposition \ref{prop:H1test}}
\begin{proof}
     It follows from the proof of Proposition \ref{prop:test} that $D_n^{\frac{1}{2}}(\hat{\theta}_n-\check{\theta}_n)=o_p(1)$. Moreover, it holds from Theorem \ref{thm1-2:jump} that $\check{\theta}_n\overset{P}{\to}\theta_1$. Hence, we show that $\check{\theta}_n^*\overset{P}{\to}\theta^*$ under $H_1$. 
     Set $\bar{\theta}^*$ as follows:
\begin{align*}
\bar{\theta}^*&= (\alpha^{*(k+1)},\cdots,\alpha^{*(p)},\beta^{*(l+1)},\cdots,\beta^{*(q)})^\top.
\end{align*}
Then  by using {\bf{[E1]}}, the identifiability condition for $\theta^*$ holds. Thus, similarly to the construction of $U_n^{(1)}(\bar{\alpha})$ and $\bar{U}_n^{(2)}(\bar{\beta}|\bar{\alpha})$, by redefining the domain of $U_1^*(\alpha,\alpha_1),V_{\beta_1}(\alpha,\beta)$ with  the reduced dimension, 
the identifiability conditions for $\bar{\alpha}^*$ and $\bar{\beta}^*$ hold. Therefore, in an analogous manner to the proof of Theorem \ref{thm1-2:jump}, we have $\bar{\check{\theta}}_n^*\overset{P}{\to}\bar{\theta}^*$ under $H_1$. This implies ${\check{\theta}}_n^*\overset{P}{\to}{\theta}^*$.
\end{proof}

%
\bibliographystyle{apalike}
\nocite{*}

\begin{thebibliography}{}

\bibitem[Amorino and Gloter, 2018]{Gloter}
Amorino, C. and Gloter, A. (2018).
\newblock Contrast function estimation for the drift parameter of ergodic jump diffusion process.
\newblock {\em Scandinavian Journal of Statistics}, 47:279 -- 346.

\bibitem[Ferguson, 1996]{Ferguson}
Ferguson, T.~S. (1996).
\newblock A course in large sample theory.

\bibitem[Genon-Catalot and Jacod, 1993]{Genon-Catalot}
Genon-Catalot, V. and Jacod, J. (1993).
\newblock On the estimation of the diffusion coefficient for multi-dimensional diffusion processes.
\newblock {\em Annales de l'I.H.P. Probabilit\'es et statistiques}, 29(1):119--151.

\bibitem[Hall and Heyde, 1980]{Hall-Heyde}
Hall, P. and Heyde, C. (1980).
\newblock Martingale limit theory and its application.
\newblock Academic Press.

\bibitem[Inatsugu and Yoshida, 2021]{Inatsugu-Yoshida}
Inatsugu, H. and Yoshida, N. (2021).
\newblock Global jump filters and quasi-likelihood analysis for volatility.
\newblock {\em Annals of the Institute of Statistical Mathematics}, 73:555 -- 598.

\bibitem[Kawai and Uchida, 2022]{Kawai-Uchida}
Kawai, T. and Uchida, M. (2022).
\newblock Adaptive testing method for ergodic diffusion processes based on high frequency data.
\newblock {\em Journal of Statistical Planning and Inference}, 217:241--278.

\bibitem[Kessler, 1997]{Kessler}
Kessler, M. (1997).
\newblock Estimation of an ergodic diffusion from discrete observations.
\newblock {\em Scandinavian Journal of Statistics}, 24.

\bibitem[Kitagawa and Uchida, 2014]{Kitagawa-Uchida}
Kitagawa, H. and Uchida, M. (2014).
\newblock Adaptive test statistics for ergodic diffusion processes sampled at discrete times.
\newblock {\em Journal of Statistical Planning and Inference}, 150:84--110.

\bibitem[Mancini, 2004]{Mancini}
Mancini, C. (2004).
\newblock Estimation of the characteristics of the jumps of a general poisson-diffusion model.
\newblock {\em Scandinavian Actuarial Journal}, 2004:42 -- 52.

\bibitem[Masuda, 2007]{Masuda}
Masuda, H. (2007).
\newblock Ergodicity and exponential $\beta$-mixing bounds for multidimensional diffusions with jumps.
\newblock {\em Stochastic Processes and their Applications}, 117(1):35--56.

\bibitem[Nakakita and Uchida, 2019]{Nakakita-Uchida}
Nakakita, S.~H. and Uchida, M. (2019).
\newblock Adaptive test for ergodic diffusions plus noise.
\newblock {\em Journal of Statistical Planning and Inference}, 203:131--150.

\bibitem[Ogihara and Uehara, 2023]{Ogihara-Uehara}
Ogihara, T. and Uehara, Y. (2023).
\newblock {Local asymptotic normality for ergodic jump-diffusion processes via transition density approximation}.
\newblock {\em Bernoulli}, 29(3):2342 -- 2366.

\bibitem[Ogihara and Yoshida, 2011]{Ogihara-Yoshida}
Ogihara, T. and Yoshida, N. (2011).
\newblock Quasi-likelihood analysis for the stochastic differential equation with jumps.
\newblock {\em Statistical Inference for Stochastic Processes}, 14:189--229.

\bibitem[Shimizu and Yoshida, 2003]{Shimizu-Yoshida_JP}
Shimizu, Y. and Yoshida, N. (2003).
\newblock Janpugata kakusankatei no risankansokukarano suiteinitsuite.
\newblock {\em The 2003 Japanese Joint Statistical Meeting}.

\bibitem[Shimizu and Yoshida, 2006]{Shimizu-Yoshida}
Shimizu, Y. and Yoshida, N. (2006).
\newblock Estimation of parameters for diffusion processes with jumps from discrete observations.
\newblock {\em Statistical Inference for Stochastic Processes}, 9:227--277.

\end{thebibliography}

\end{document}